\providecommand\NOINDEX[1]{}
\providecommand\ASLSTYLE[1]{}
\newcommand{\omegaunCK}{\omega_1^{\textrm{CK}}}
\title{Surreal fields stable under exponential, logarithmic, derivative and anti-derivative functions}
\author{\stackunder{Olivier Bournez}{\small olivier.bournez@lix.polytechnique.fr}}
\author{\stackunder{Quentin Guilmant}{\small quentin.guilmant@lix.polytechnique.fr}}
\affil{École Polytechnique, LIX, 91128 Palaiseau Cedex, France\newline
\small{This work was partially supported by ANR Project $\partial$IFFERENCE.}}
\date{}
\newtheorem{Blabla}{Blabla}[section]
\newtheorem{theorem}[Blabla]{Theorem}
\newtheorem{proposition}[theorem]{Proposition}
\newtheorem{lemma}[theorem]{Lemma}
\newtheorem{corollary}[theorem]{Corollary}
\theoremstyle{definition}
\newtheorem{definition}[theorem]{Definition}
\newtheorem{remark}[theorem]{Remark}
\begin{document}
	\maketitle

\begin{abstract}
	The class of surreal numbers, denoted by $\Nobf$, initially proposed by Conway, is a universal ordered field in the sense that any ordered field can be embedded in it. They include in particular the real numbers and the ordinal numbers. They have strong relations with other fields such as field of transseries.  Following Gonshor, surreal numbers can be seen as signs sequences of ordinal length, with some exponential and logarithmic functions that extend the usual functions over the reals.  $\Nobf$   can actually be seen as an elegant (generalized) power series field with real coefficients, namely Hahn series with exponents in $\Nobf$ itself. 

	Some years ago, Berarducci and Mantova considered derivation over the surreal numbers, seeing them as germs of functions, in correspondence to transseries.
	In this article, following our previous work, we exhibit a sufficient condition on the structure of a surreal field to be stable under all operations among exponential, logarithm, derivation and anti-derivation. Motivated, in the long term, by computability considerations, we also provide a non-trivial application of this theorem:
	the existence of a pretty reasonable field that only requires ordinals up to $\epsilon_\omega$, which is far smaller than $\omegaunCK$ (resp. $\omega_1$), the first non-computable (resp. uncountable) ordinal. 

\end{abstract}

\section{Introduction}
Conway introduced in  \cite{conway2000numbers}  the class of surreal numbers. They were later on  popularized by Knuth \cite{knuth1974surreal}, and then formalized later on by Gonshor \cite{gonshor1986introduction}, and by many other authors.  The general initial idea is to define a class of numbers, based on a concept of ``simplicity''. This permits to obtain a real closed field that both contains the real numbers and the ordinals, as this provides an unification of Dedekind's construction of real numbers in terms of cuts of the rational numbers, and of von Neumann's construction of ordinal numbers by transfinite induction in terms of set membership.

Following the alternative presentation from Gonshor in \cite{gonshor1986introduction}, a surreal number can also be seen as an ordinal-length sequence over $\{+,-\}$, that we call a  \textbf{signs sequence}. Basically, the idea is that such sequences are ordered lexicographically, and have a tree-like structure. Namely, a $+$ (respectively $-$) added to a sequence $x$ denotes the simplest number greater (resp. smaller) than $x$ but smaller (resp. greater) than all the prefixes of $x$ which are greater (resp. smaller) than $x$.   With this definition of surreal numbers, it is possible to define operations such as addition, substraction, multiplication, division, obtaining a real closed field.  
%
Following Gonshor \cite{gonshor1986introduction}, based on ideas from Kruskal, it is also possible to define consistently classical functions such as the exponential function and the  logarithmic function over $\Nobf$, and to do analysis of this fields of numbers. 

It can be considered as ``the'' field that includes ``all numbers great and small'' \cite{ehrlich2012absolute}.  In particular, any divisible ordered Abelian group is isomorphic to an initial subgroup of $\Nobf$, and any real closed field is isomorphic to an initial subfield of $\Nobf$ \cite[Theorems 9 and 19]{ehrlich2001number}, \cite[Theorems 28 and 29]{conway2000numbers}. This leads to the fact that it can be considered as ``the'' field that includes ``all numbers great and small'' \cite{ehrlich2012absolute}.  
%
$\Nobf$ can also be equipped with a derivation, so that it can be considered as a fields of transseries \cite{berarducci2018surreal}. See example \cite{mantova2017surreal} for a survey of fascinating recent results in all these directions.  
%

More concretely, $\Nobf$ can also be seen as a field of (generalized) power series with real coefficients, namely as Hahn series where exponents are surreal numbers themselves. More precisely, write $\HahnField{\Kbb}{G}$ for the set of Hahn series with coefficients in $\Kbb$ and terms corresponding to elements of $G$, where $\Kbb$ is a field, and $G$ is some divisible ordered Abelian group: This means that $\HahnField{\Kbb}{G}$ corresponds to formal power series of the form $s=\sum_{g \in S} a_{g} t^{g}$, where $S$ is a well-ordered subset of $G$ and $a_{g} \in \Kbb .$ The support of $s$ is $\supp(s)=\enstq{g \in S}{a_{g} \neq 0}$ and the length of the serie of $s$ is the order type of $\supp(s)$. The field operations on $\HahnField{\Kbb}{G}$ are defined as expected, considering elements of $\HahnField{\Kbb}{G}$ as formal power series.
We have $\Nobf= \HahnField{\Rbb}{\Nobf}$.

Our previous work \cite{bournez2022surreal} has shown that some acceptable subfields of $\Nobf$ are stable by both exponential and logarithm. This fields are built with some restriction on ordinals allowed in the ordinal sum.  In the current article, we pursue the work by exhibiting some acceptable subfields of $\Nobf$ are stable by both exponential and logarithm, derivation and anti-derivation. We actually provide some sufficient condition on the structure of a surreal field to be stable under all these operations among exponential, logarithm, derivation and anti-derivation, and use these to derive such subfields.

\paragraph{More precise statements}
Given some ordinal $\gamma$ (or more generally a class of ordinals), we write $\HahnFieldOrd{\Kbb}{G}{\gamma}$ for the restriction of $\HahnField{K}{G}$ to formal power series whose support has an order type in $\gamma$ (that is to say, corresponds to some ordinal less than $\gamma$). We  have of course $\Nobf= \HahnFieldOrd{\Rbb}{\Nobf}{\Ord}$.
In this point of view, \textbf{$\epsilon$-numbers}, \ie{} ordinals $\lambda$, such that
$\omega^\lambda=\lambda$, play a major role as they are such limit on ordinal we can accept in our fields.

As we will often play with exponents of formal power series considered in the Hahn series, we propose to introduce the following notation:  We denote $$\SRF{\lambda}{\Gamma}=\HahnFieldOrd\Rbb{\Gamma}\lambda$$ when  $\lambda$ is an $\epsilon$-number and $\Gamma$ a divisible Abelian group.

As a consequence of MacLane's theorem (Theorem \ref{thm:macLane} below from \cite{maclane1939}, see also \cite[section 6.23]{alling1987foundations}), we know that $\SRF\lambda{\Nolt\mu}$ is a real-closed field when $\mu$ is a \textbf{multiplicative ordinal} (i.e. $\mu=\omega^{\omega^\alpha}$ for some ordinal $\alpha$) and $\lambda$ an $\epsilon$-number.

Furthermore:

\begin{restatable}[{\cite[Proposition 4.7]{DriesEhrlich01}}]{theorem}{thEhrlichquatresept}
	\label{th:Ehrlichquatresept}
	Let $\lambda$ be an $\epsilon$-number. Then
	\begin{enumerate}
		\item The field $\Nolambda$ can be expressed as \begin{equation} \label{eq:troissix} 
			\Nolambda=\bigcup_{\mu} \SRF{\lambda}{\Nolt\mu},
		\end{equation}
		where $\mu$ ranges over the additive ordinals less than $\lambda$ (equivalently, $\mu$ ranges over the multiplicative ordinals less than $\lambda$ ).
		\item  $\Nolambda$ is a real closed subfield of $\No$, and is closed under the restricted analytic functions of $\No$.
		\item $\Nolambda=\SRF{\lambda}{\Nolt\lambda}$ if and only if $\lambda$ is a regular cardinal.
	\end{enumerate}
\end{restatable}


Actually, even if we always can write $\Nolambda$ as an increasing union of fields by Equation \eqref{eq:troissix},  and even if $\Nolt\lambda$ is stable under exponential and logarithmic functions (Theorem \ref{thm:NoltStableExpLn}) none of the fields in this union has stability property beyond the fact that they are fields. Indeed:

\begin{proposition}[{\cite[Proposition 1.5]{bournez2022surreal}}]
	\label{prop:instable}
	$\SRF\lambda{\Nolt\mu}$ is never closed under exponential function for $\mu<\lambda$ a multiplicative ordinal. 
\end{proposition}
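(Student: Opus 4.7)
The plan is to exhibit, for each multiplicative ordinal $\mu<\lambda$, a concrete element $a\in\SRF{\lambda}{\Nolt\mu}$ whose exponential lies outside this field. Using Gonshor's description of $\exp$, if $a$ is a purely infinite surreal then $\exp(a)=\omega^{g(a)}$ is a single $\omega$-monomial for a specific order-preserving function $g$, so it suffices to find such an $a$ for which $g(a)$ has birthday at least $\mu$ (hence $g(a)\notin\Nolt\mu$).

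For the smallest case $\mu=\omega$ I would take $a=\omega$, which lies in $\SRF{\lambda}{\Nolt\omega}$ via its Hahn representation $1\cdot\omega^{1}$ whose sole exponent $1$ has birthday $1<\omega$; yet $\exp(\omega)=\omega^{\omega}$ has exponent $\omega$ of birthday $\omega$, so $\omega\notin\Nolt\omega$ and $\exp(\omega)\notin\SRF{\lambda}{\Nolt\omega}$. For a general multiplicative $\mu=\omega^{\omega^\alpha}$, the same scheme works with $a=\omega^{\omega^\alpha}$: its Hahn representation $1\cdot\omega^{\omega^\alpha}$ has exponent $\omega^\alpha$ of birthday $\omega^\alpha<\mu$, hence $a\in\SRF{\lambda}{\Nolt\mu}$; but by the Gonshor formula $\exp(\omega^c)=\omega^{\omega^{c}}$ valid for a positive ordinal exponent $c$, one obtains $\exp(a)=\omega^{\omega^{\omega^\alpha}}=\omega^\mu$, a monomial whose exponent $\mu$ has birthday $\mu\not<\mu$, so $\exp(a)\notin\SRF{\lambda}{\Nolt\mu}$.

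The main obstacle is justifying the formula $\exp(\omega^c)=\omega^{\omega^c}$ for the relevant range of $c$; this requires unwinding Gonshor's inductive definition of $\exp$ on sign sequences, or equivalently invoking the transseries identification of $\No$ in which $\omega^c$ corresponds to a single log-atomic monomial. Once that formula is in hand, the birthday computation is immediate from the standard identity $\mathrm{bd}(\omega^{c})=\omega^{c}$ for a positive ordinal $c$, and the argument shows uniformly that no multiplicative cut $\mu<\lambda$ yields a subfield $\SRF{\lambda}{\Nolt\mu}$ closed under the exponential. A more conceptual alternative is to compare $\SRF{\lambda}{\Nolt\mu}$ with the exp-stable subfield $\Nolambda$ of Theorem~\ref{th:Ehrlichquatresept}: the Hahn-sum structure of $\SRF{\lambda}{\Nolt\mu}$ already produces elements whose purely infinite part has birthday cofinal in $\mu$, and $\exp$ must then send them to monomials whose $\omega$-exponents escape $\Nolt\mu$.
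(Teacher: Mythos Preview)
Your argument has a genuine gap when $\mu$ is itself an $\epsilon$-number. Writing $\mu=\omega^{\omega^\alpha}$, you need the exponent $\omega^\alpha$ of $a=\omega^{\omega^\alpha}$ to lie in $\Nolt\mu$, i.e.\ $\omega^\alpha<\mu=\omega^{\omega^\alpha}$, i.e.\ $\alpha<\omega^\alpha$. This fails exactly when $\alpha$ (equivalently $\mu$) is an $\epsilon$-number, and then your $a$ is not in $\SRF{\lambda}{\Nolt\mu}$ to begin with. Worse, the single-monomial approach cannot be repaired in that case: for any positive $c\in\Nolt\mu$ one has $\length{g(c)}\le\length{c}+1<\mu$ by Lemma~\ref{lem:lengthGA}, hence $\length{\omega^{g(c)}}\le\omega^{\length{g(c)}}<\omega^\mu=\mu$ by Lemma~\ref{lem:lengthOmegaA}, so $\exp(\omega^c)=\omega^{\omega^{g(c)}}$ always stays inside $\SRF{\lambda}{\Nolt\mu}$.

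The feature you are not exploiting is that $\mu<\lambda$ permits Hahn sums of order type $\mu$. Taking any strictly decreasing sequence $(a_i)_{i<\mu}$ in $(\Nolt\mu)^*_+$ (for instance $a_i=\omega^{-i}$ once $\mu>\omega$), the purely infinite $a=\sum_{i<\mu}\omega^{a_i}$ lies in $\SRF{\lambda}{\Nolt\mu}$, while $\exp(a)=\omega^b$ with $b=\sum_{i<\mu}\omega^{g(a_i)}$ a normal form with $\mu$ distinct terms; by Theorem~\ref{thm:serieToSignExp} each term contributes at least one sign, so $\length{b}\ge\mu$ and $b\notin\Nolt\mu$. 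Your treatment of $\mu=\omega$ together with this argument covers all cases. (As a side remark, the identity $\exp(\omega^c)=\omega^{\omega^c}$ you invoke is not valid for every positive ordinal $c$: Proposition~\ref{prop:gord} gives $g(c)=c+1$ whenever $c\in[\epsilon,\epsilon+\omega)$ for some $\epsilon$-number $\epsilon$; for your $c=\omega^\alpha$ this again only bites in the same problematic $\epsilon$-number case.)
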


In our previous work \cite{bournez2022surreal}, we studied stability of subfields of $\Nobf$ by exponential and logarithm. 
For the sake of effectiveness and representations for ordinal Turing machines, we kept \textbf{ordinals} as small as possible to identify natural subfields stable by both these functions. This paper will keep the same spirit and we will give an example construction that only involves ordinals up to $\epsilon_\omega$, which is much smaller than $\omega_1$, the first \textbf{uncountable} ordinal, and even $\omegaunCK$, the first \textbf{non-computable} ordinal.

To achieve that purpose, we will have to handle carefully the $\epsilon$-numbers that are involved. Recall that there is some enumeration $(\epsilon_{\alpha})_{\alpha \in \Ord}$of $\epsilon$-numbers: 
Any $\epsilon$-number ordinal $\lambda$ is $\epsilon_\alpha$ for some ordinal $\alpha$.

\begin{definition}[Canonical sequence defining an $\epsilon$-number]
	\label{def:gammaLambda}
	Let $\lambda$ be an $\epsilon$-number.  Ordinal $\lambda$ can always  be written as $\lambda=\sup\suitelt{e_\beta}\beta{\gamma_\lambda}$ for some \textbf{canonical sequence}, where $\gamma_\lambda$ is the length of  this sequence, and this sequence is defined as follows:
	\begin{itemize}
		\item If $\lambda=\epsilon_{0}$ then we can write $\epsilon_0=\sup\{\omega,\omega^\omega,\omega^{\omega^\omega},\dots\}$ and  we take\linebreak $\omega,\omega^\omega,\omega^{\omega^\omega},\dots$ as canonical sequence for $\epsilon_0$. Its length is $\omega$, and for $\beta<\lambda$, $e_\beta$ is $\omega^{\iddots^\omega}$ where there are $\beta$ occurrences of $\omega$ in the exponent.
		
		\item If $\lambda=\epsilon_{\alpha}$, where $\alpha$ is a non-zero limit ordinal, then we can write $\lambda=\underset{\beta<\alpha}\sup\epsilon_\beta$ and we take $\suitelt{\epsilon_\beta}\beta\alpha$ as the canonical sequence of $\lambda$. Its length is $\alpha$ and for $\beta<\alpha$, $e_\beta=\epsilon_\beta$.
		
		\item If $\lambda=\epsilon_{\alpha}$, where  $\alpha$ is a successor ordinal, then we can write 
		$$\lambda=\sup\{\epsilon_{\alpha-1}, {\epsilon_{\alpha-1}}^{\epsilon_{\alpha-1}}, {\epsilon_{\alpha-1}}^{{\epsilon_{\alpha-1}}^{\epsilon_{\alpha-1}}},\dots\}$$
		and we take $\epsilon_{\alpha-1}, {\epsilon_{\alpha-1}}^{\epsilon_{\alpha-1}}, {\epsilon_{\alpha-1}}^{{\epsilon_{\alpha-1}}^{\epsilon_{\alpha-1}}},\dots$ as the canonical sequence of $\lambda$. Its length is $\omega$, and for $\beta<\omega$, $e_\beta={\epsilon_{\alpha-1}}^{\iddots^{\epsilon_{\alpha-1}}}$ where there are $\beta$ occurrences of $\epsilon_{\alpha-1}$ in the exponent.
	\end{itemize}
\end{definition}

For example, the canonical sequence defining $\epsilon_1$ is $\epsilon_0,{\epsilon_0}^{\epsilon_0},{\epsilon_0}^{{\epsilon_0}^{\epsilon_0}},\dots$, the canonical sequence defining $\epsilon_\omega$ is $\epsilon_0,\epsilon_1,\epsilon_2,\dots$, 
the canonical sequence of $\epsilon_{\omega2}$ is 
$\epsilon_0,\epsilon_1,\epsilon_2,\dots,\epsilon_\omega,\epsilon_{\omega+1},\dots$ and the canonical sequence of $\epsilon_{\omega2+1}$ is \linebreak ${\epsilon_{\omega2}},{\epsilon_{\omega2}}^{\epsilon_{\omega2}}, {\epsilon_{\omega2}}^{{\epsilon_{\omega2}}^{\epsilon_{\omega2}}},\dots$

\begin{definition}
	\label{def:uparrow}
	Let $\Gamma$ be an Abelian subgroup of \Nobf{} and $\lambda$ be an $\epsilon$-number whose canonical sequence is $\suitelt{e_\beta}\beta{\gamma_\lambda}$. 
	We denote $\Gamma^{\uparrow\lambda}$ for the family of group $\suitelt{\Gamma_\beta}\beta{\gamma_\lambda}$ defined as follows:
	\begin{itemize}
		\item $\Gamma_0=\Gamma$;
		
		\item $\Gamma_{\beta+1}$ is the group generated by the groups $\Gamma_\beta$, $\SRF{e_\beta}{g\pa{(\Gamma_\beta)^*_+}}$ and the set $\enstq{h(a_i)}{\aSurreal\in\Gamma_\beta}$
		where $g$ and $h$ are Gonshor's functions associated to exponential and logarithm (see Section \ref{sec:expoLn} below for some details);
		
		\item For a limit ordinal number $\beta$, $\Gamma_\beta=\Unionlt\gamma\beta \Gamma_\gamma$.
	\end{itemize}
\end{definition}

When considering a family of set $(S_i)_{i\in I}$, we denote
\centre{$
\SRF\lambda{(S_i)_{i\in I}} = \Unionin iI \SRF\lambda{S_i}
$}
\lc{In particular,}
{$
\SRF\lambda{\Gamma^{\uparrow\lambda}} = \Unionlt i{\gamma_\lambda}\SRF\lambda{\Gamma_i}
$}
\begin{remark}\label{rk:inclusionUparrow}
	By construction, if $\Gamma\subseteq\Gamma'$ then $\SRF\lambda{\Gamma^{\uparrow\lambda}} \subseteq \SRF\lambda{{\Gamma'}^{\uparrow\lambda}}$.
\end{remark}
The idea behind the definition of $\Gamma^{\uparrow}$ is that at step $i+1$ we add new elements to close $\SRF\lambda{\Gamma_i}$ under exponential and logarithm. The reason why we add $\SRF{e_\beta}{g\pa{(\Gamma_\beta)^*_+}}$ to $\Gamma_\beta$ rather than $\SRF\lambda{g\pa{(\Gamma_\beta)^*_+}}$ is that we want to keep control on what we add in the new group. In our previous work \cite{bournez2022surreal} we came up with the following three statements:

\begin{lemma}[{\cite[Lemma 5.7]{bournez2022surreal}}]
	\label{lem:LogAtomicDesGammai}
	Write $\Gamma^{\uparrow\lambda}=\suitelt{\Gamma_\beta}\beta{\gamma_\lambda}$, and let 
	$$L=\enstq{\exp_n x, \ln_nx}{\begin{array}{c}
			x\in\Lbb,\ n\in\Nbb,\\
			\exists y\in\RlG\ \exists P\in\Pcal(y)\ \exists k\in\Nbb\quad P(k)=x
		\end{array}}$$
	we have for all $i<\gamma_\lambda$,
	$$L=\enstq{\exp_n x, \ln_nx}{\begin{array}{c}
		x\in\Lbb, n\in\Nbb,\\ 
		\exists y\in\SRF\lambda{\Gamma_i}\ \exists P\in\Pcal(y)\ \exists k\in\Nbb\quad P(k)=x
		\end{array}}$$
\end{lemma}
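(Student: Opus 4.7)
I would prove the statement by transfinite induction on $i<\gamma_\lambda$. The inclusion of the set defined with $y\in\RlG$ into the one defined with $y\in\SRF\lambda{\Gamma_i}$ is immediate, since $\Gamma_0=\Gamma\subseteq\Gamma_i$ implies $\RlG\subseteq\SRF\lambda{\Gamma_i}$, so only the reverse inclusion is at stake: any $x\in\Lbb$ that appears as some $\exp_n$ or $\ln_n$ of a path-value $P(k)$ with $P\in\Pcal(y)$ and $y\in\SRF\lambda{\Gamma_i}$ must already be reachable through some $y'\in\RlG$.

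The base case $i=0$ is tautological, and the limit case reduces immediately to the induction hypothesis: for limit $i$ one has $\Gamma_i=\Unionlt{\gamma}{i}\Gamma_\gamma$, and the initial segment of a path $P$ up to its step $k$ involves only finitely many descents into Hahn-series terms, so the data actually used to compute $P(k)$ lies in $\SRF\lambda{\Gamma_\gamma}$ for some $\gamma<i$; apply the hypothesis for that $\gamma$.

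The successor step $i=j+1$ is where the real work happens. Recall that $\Gamma_{j+1}$ is generated by $\Gamma_j$ together with (i) elements of $\SRF{e_j}{g((\Gamma_j)^*_+)}$, whose nonzero monomials have the form $g(a)$ for $a\in(\Gamma_j)^*_+$, and (ii) the logarithmic generators $h(a_i)$ obtained from elements $a\in\Gamma_j$ through Gonshor's decomposition. Given $y\in\SRF\lambda{\Gamma_{j+1}}$ and a path $P\in\Pcal(y)$, I would trace $P$ step by step: as long as the successive leading monomials lie in $\Gamma_j$, the prefix of $P$ is literally a prefix of a path through an element of $\SRF\lambda{\Gamma_j}$. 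When the path first descends into one of the newly added generators, two cases occur. If the monomial is a $g(a)$ with $a\in(\Gamma_j)^*_+$, then the natural next log-reduction of the path replaces this by $a$ itself, which is already in $\Gamma_j$; the bookkeeping just shifts the index $n$ by a bounded amount (accounting for the extra $\ln$ consumed). If the monomial is an $h(a_i)$, then Gonshor's construction identifies $h(a_i)$ with $\ln$ of a leading term of $a\in\Gamma_j$, so the subsequent reductions of $P$ coincide with those of a path through $a\in\SRF\lambda{\Gamma_j}$, again up to a constant shift of $n$. In either case, the $x\in\Lbb$ captured at step $k$ of $P$ is captured by a path through some $y'\in\SRF\lambda{\Gamma_j}$ at some step $k'$ and index $n'$. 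The induction hypothesis applied at $j$ then yields a witness in $\RlG$.

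\textbf{Main obstacle.} The principal difficulty is the successor step, and within it, the careful analysis of what happens when a path $P$ meets a newly introduced generator $g(a)$ or $h(a_i)$. One needs a structural lemma, essentially an unfolding of Gonshor's definitions of $g$ and $h$ together with the definition of $\Pcal$, stating that such an encounter can be simulated, modulo a bounded shift in the exp/log index, by a path through the underlying $a\in\Gamma_j$. Once this local ``path-rerouting'' is established, the transfinite induction goes through uniformly and the lemma follows.
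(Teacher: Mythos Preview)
The paper does not prove this lemma; it is merely recalled from the authors' earlier article \cite[Lemma~5.7]{bournez2022surreal}, so there is no in-paper argument against which to compare your proposal.

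That said, your transfinite-induction plan is the natural one and is sound in outline. Two refinements are worth noting. For the limit step, $y\in\SRF\lambda{\Gamma_i}$ need not itself lie in any single $\SRF\lambda{\Gamma_\gamma}$, but the single term $P(0)=r\omega^{c}$ has $c\in\Gamma_i=\bigcup_{\gamma<i}\Gamma_\gamma$, so $c\in\Gamma_\gamma$ for one $\gamma$ and replacing $y$ by $P(0)$ already suffices. For the successor step, the clean organisation is via additivity: writing $c=u+v+\sum_k h(w_k)$ one has $\ln\omega^{c}=\ln\omega^{u}+\ln\omega^{v}+\sum_k\ln\omega^{h(w_k)}$, and $P(1)$ is (up to its real coefficient) a term of one summand. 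If of $\ln\omega^{u}$, reroute through $z=\omega^{u}\in\omega^{\Gamma_j}$ with $Q(0)=\omega^u$, $Q(1)=P(1)$. If of $\ln\omega^{v}$, then since $h\circ g=\id$ one has $\ln\omega^{v}=\sum s_i\omega^{b_i}$ with $b_i\in\Gamma_j$, so already $P(1)\in\Rbb\,\omega^{\Gamma_j}$. If of $\ln\omega^{h(w_k)}$, reroute through $z=\omega^{d}$ where $d\in\Gamma_j$ has $w_k$ among its exponents, taking $Q(0)=\omega^{d}$, $Q(1)=t\,\omega^{h(w_k)}$, $Q(2)=P(1)$. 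Your phrasing of the $h$-case (``$h(a_i)$ is $\ln$ of a leading term of $a$'') misstates the identity; the correct relation is $\omega^{h(w_k)}=\ln\omega^{\omega^{w_k}}$, which is what makes the two-step rerouting above work. With these adjustments the induction closes.
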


\begin{corollary}[{\cite[Corollary 5.8]{bournez2022surreal}}]
	\label{cor:LogAtomicDeGammaUp}
	Let $\Gamma$ be an Abelian additive subgroup of $\Nobf$ and
	$$L=\enstq{\exp_n x, \ln_nx}{\begin{array}{c}
			x\in\Lbb, n\in\Nbb, \\
			\exists y\in\RlG\ \exists P\in\Pcal(y)\ \exists k\in\Nbb\quad P(k)=x
		\end{array}}$$
	Then,
	$$L = \enstq{\exp_n x, \ln_nx}{\begin{array}{c}
			x\in\Lbb,\quad n\in\Nbb,\\
			\exists y\in\RlGup\ \exists P\in\Pcal(y)\ \exists k\in\Nbb\quad P(k)=x
	\end{array}}$$
\end{corollary}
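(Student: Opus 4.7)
The plan is to derive this corollary as a direct limit of Lemma \ref{lem:LogAtomicDesGammai}, exploiting the identification $\SRF\lambda{\Gamma^{\uparrow\lambda}} = \Unionlt{i}{\gamma_\lambda}\SRF\lambda{\Gamma_i}$ recorded just above Remark \ref{rk:inclusionUparrow}. The whole strategy is to ``lift'' any witness $(y,P,k)$ with $y\in\RlGup$ to one landing inside a single $\SRF\lambda{\Gamma_i}$, so that the corollary reduces to a single application of the lemma.

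For the inclusion of $L$ into the right-hand set, the argument is a one-liner: since $\Gamma_0 = \Gamma$ is by definition the first term of the family $\Gamma^{\uparrow\lambda}$, we have $\RlG \subseteq \RlGup$, and any triple $(y,P,k)$ realizing an element of $L$ realizes the same element on the right-hand side.

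For the reverse inclusion, I would fix $\exp_n x$ or $\ln_n x$ in the right-hand set and pick a witness $y \in \RlGup$, $P \in \Pcal(y)$, $k \in \Nbb$ with $P(k) = x$. The union expression of $\RlGup$ guarantees the existence of some $i<\gamma_\lambda$ with $y \in \SRF\lambda{\Gamma_i}$. Applying Lemma \ref{lem:LogAtomicDesGammai} at that specific index, the set of $\exp_n x,\ln_n x$ produced by witnesses with $y\in\SRF\lambda{\Gamma_i}$ already coincides with $L$, so the triple $(y,P,k)$ places $\exp_n x$ (resp. $\ln_n x$) in $L$, finishing the argument.

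The main obstacle is already absorbed inside Lemma \ref{lem:LogAtomicDesGammai}, which presumably proceeds by transfinite induction on $i<\gamma_\lambda$, handling the successor step through the explicit shape of $\Gamma_{\beta+1}$ in Definition \ref{def:uparrow} (the pieces $\SRF{e_\beta}{g((\Gamma_\beta)^*_+)}$ and the log-atomic elements $h(a_i)$, all of which are accounted for by the exponential/logarithmic processes generating $L$) and the limit step by passing to unions. Granted that lemma, no further difficulty arises: the corollary is a purely set-theoretic repackaging of its pointwise equalities.
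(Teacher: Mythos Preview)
Your proposal is correct and follows exactly the natural route: the paper states this result as a citation from \cite{bournez2022surreal} without reproducing a proof, and the intended derivation is precisely the one you give, namely reading off the corollary from Lemma~\ref{lem:LogAtomicDesGammai} via the decomposition $\RlGup=\Unionlt{i}{\gamma_\lambda}\SRF\lambda{\Gamma_i}$. There is nothing to add.
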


\begin{restatable}[{\cite[Theorem 1.10]{bournez2022surreal}}]{theorem}{thmSRFGammaUpStableExpLn}
	\label{thm:SRFGammaUpStableExpLn}
	Let $\Gamma$ be an Abelian subgroup of \Nobf{} and $\lambda$ be an $\epsilon$-number, then
	$\SRF\lambda{\Gamma^{\uparrow\lambda}}$ is stable under exponential and logarithmic functions.
\end{restatable}

With such a notion, we managed to make a link between the two types of field involved in Theorems \ref{thm:NoltStableExpLn} and \ref{thm:SRFGammaUpStableExpLn}. More precisely, the fields $\SRF\lambda{\Gamma^{\uparrow\lambda}}$ are part of the fields $\Nolt\lambda$.

\begin{restatable}[{\cite[Theorem 1.11]{bournez2022surreal}}]{theorem}{thmNolambdaDecompCorpsStables}
	\label{thm:NolambdaDecompCorpsStables}
	$\Nolambda=\bigcup_{\mu} \SRF{\lambda}{{\Nolt{\mu}}^{\uparrow \lambda}}$, where $\mu$ ranges over the additive ordinals less than $\lambda$ (equivalently, $\mu$ ranges over the multiplicative ordinals less $\lambda$),
\end{restatable}
Notice that now,  $\Nolambda$ is  expressed as a increasing union of fields, each of them closed by $\exp$ and $\ln$. Indeed, by definition, if $\mu<\mu'$ then $\Nolt\mu\subseteq\Nolt{\mu'}$ and Remark \ref{rk:inclusionUparrow} gives $\SRF{\lambda}{{\Nolt{\mu}}^{\uparrow \lambda}} \subseteq \SRF{\lambda}{{\Nolt{\mu'}}^{\uparrow \lambda}}$. 

Finally, we proved that each field $\SRF{\lambda}{{\Nolt{\mu}}^{\uparrow \lambda}}$ is interesting for itself since none of them is $\Nolambda$. More precisely: 

\begin{restatable}[{\cite[Theorem 1.12]{bournez2022surreal}}]{theorem}{thmhierarchieUparrow}
	\label{thm:hierarchieUparrow}
	For all $\epsilon$-number $\lambda$, the hierarchy in previous theorem is strict:
	$$\SRF{\lambda}{{\Nolt{\mu}}^{\uparrow \lambda}} \subsetneq \SRF{\lambda}{{\Nolt{\mu'}}^{\uparrow \lambda}}$$	
	for all multiplicative ordinals $\mu$ and $\mu'$ such that $\omega<\mu<\mu'<\lambda$.
\end{restatable}

In this article, we will go further and investigate the case of stability under derivative and anti-derivative.

\begin{restatable}{mainthm}{thmcorpsStable}
	\label{thm:corpsStable}
	Let $\alpha$ be a limit ordinal 
	and $\suitelt{\Gamma_\beta}\beta\alpha$ be a sequence of Abelian subgroups of $\Nobf$ such that
	\begin{itemize}
		\item $\forall\beta<\alpha\quad \forall\gamma<\beta\qquad \Gamma_\gamma\subseteq\Gamma_\beta$
		
		\item $\forall\beta<\alpha\qquad \omega^{(\Gamma_\beta)^*_+}\succ^K\kappa_{-\epsilon_\beta}$
		
		\item  $\forall\beta<\alpha\quad \forall\gamma<\epsilon_\beta\qquad \kappa_{-\gamma}\in\omega^{\Gamma_\beta}$
		
		\item $\forall\beta<\alpha\quad\exists\eta_\beta<\epsilon_\beta\quad \forall x\in\omega^{\Gamma_\beta}\qquad \NR(x)<\eta_\beta$
	\end{itemize}
	Then $\Unionlt\beta\alpha \SRF{\epsilon_\beta}{\Gamma_\beta^{\uparrow\epsilon_\beta}}$ is stable under $\exp$, $\ln$, $\partial$ and anti-derivation (see section \ref{sec:deriv}).
\end{restatable}

Actually, we mainly focus on properties of the derivation suggested by Beraducci and Mantova and its anti-derivation. In particular, we establish various bounds that are useful to find fields stable under derivation and anti-derivation. We also prove the following:

\begin{restatable}{proposition}{propmajorationNuPartial}
	\label{prop:majorationNuPartial}
	For any $x\in\Nobf$, the set $\Pcal_\Lbb(x)$ is well-ordered with order type
	$\beta<\omega^{\omega^{\omega(\NR(x)+1)}}$. In particular,
	\centre{$\nu(\partial x)<\omega^{\omega^{\omega(\NR(x)+1)}}$}
\end{restatable}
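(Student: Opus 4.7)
The plan is to proceed by transfinite induction on $\NR(x)$. Writing $x$ as a Hahn series $x = \sum_{\alpha<\nu(x)} r_\alpha\,\omega^{y_\alpha}$, a path $P\in\Pcal_\Lbb(x)$ is specified by first choosing an initial monomial $\omega^{y_\alpha}$ in the support and then continuing via a path in $\Pcal_\Lbb(y_\alpha)$, iterating until a log-atomic number is reached. The natural order on $\Pcal_\Lbb(x)$ compares first the selected monomial via the order on the support and then the tails lexicographically; with this order, $\Pcal_\Lbb(x)$ becomes a disjoint ordinal sum indexed by the support of $x$, and well-foundedness is inherited from the well-foundedness of the supports at every level together with the termination of paths at log-atomic elements.

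First I would dispose of the base case: when $x$ is itself log-atomic, $\NR(x)=0$ and $\Pcal_\Lbb(x)$ reduces to the single trivial path, of order type $1<\omega^{\omega^{\omega}}$, matching the bound at $\NR(x)=0$. For the inductive step two ingredients are needed. One is a bound on $\nu(x)$ in terms of $\NR(x)$, coming from the very definition of nested rank (which forces $\nu(x)$ to live below a fixed tower $\omega^{\omega^{\NR(x)+1}}$, or something of the same shape). The other is the inductive hypothesis applied to each exponent $y_\alpha$, which is available because $\NR(y_\alpha)<\NR(x)$ by construction of $\NR$ from the nested $\omega$-power decomposition.

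Combining, the order type of $\Pcal_\Lbb(x)$ is bounded by the ordinal sum $\sum_{\alpha<\nu(x)} \mathrm{otp}(\Pcal_\Lbb(y_\alpha))$. Using the inductive bound $\mathrm{otp}(\Pcal_\Lbb(y_\alpha))<\omega^{\omega^{\omega\cdot\NR(y_\alpha)+\omega}}$ and Cantor normal form arithmetic, together with the estimate on $\nu(x)$, this sum is absorbed into $\omega^{\omega^{\omega\cdot\NR(x)+\omega}}=\omega^{\omega^{\omega(\NR(x)+1)}}$, which is the claimed bound. The ``in particular'' clause follows from the Berarducci--Mantova formula for the derivation: the support of $\partial x$ is indexed (with possible collisions) by $\Pcal_\Lbb(x)$, so $\nu(\partial x)$ inherits the same bound.

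The main obstacle is the ordinal bookkeeping, ensuring that the triple-exponential tower closes up with \emph{exactly} $\omega(\NR(x)+1)$ in the innermost exponent rather than some larger expression. The key arithmetic identity $\omega\cdot n+\omega=\omega(n+1)$ makes the $+1$ picked up when passing from the induction hypothesis to the parent produce precisely the right exponent. Extra care is required when $\NR(x)$ is a limit ordinal: in that case the path set is an increasing union of those associated with approximations of $x$, and one must check that the supremum of the corresponding order types remains under the tower, which amounts to verifying continuity of $\alpha\mapsto \omega^{\omega^{\omega(\alpha+1)}}$ at limits — an immediate consequence of the continuity of ordinal exponentiation in the exponent.
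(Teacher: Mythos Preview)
Your inductive scheme does not go through, because the key claim that the nested truncation rank strictly drops when you pass from $x$ to the next level is false. If $x=\sum_{\alpha<\nu} r_\alpha\omega^{y_\alpha}$, the path continues through $\ell(P(0))=\ln\omega^{y_\alpha}$, and $\NR(\ln\omega^{y_\alpha})=\NR(\omega^{y_\alpha})$ by Proposition~\ref{prop:NRStableExp}. Now Proposition~\ref{prop:NRComparaisonTerme} only gives $\NR(r_\alpha\omega^{y_\alpha})\le\NR(x)$, with \emph{equality} whenever $\alpha$ is the last index; combined with Proposition~\ref{prop:NRComparaisonCoefReel}, you get $\NR(\ln\omega^{y_\alpha})=\NR(x)$ whenever $\alpha$ is last and $r_\alpha=\pm1$. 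So for the single term of a monomial $x=\omega^a$, the rank does not decrease at all, and your induction hypothesis is unavailable precisely where you need it. This is not a corner case: it recurs along the entire ``rightmost'' branch of the tree, and Lemma~\ref{lem:NRpath} makes this explicit. The paper's proof is organised around exactly this difficulty: it argues by contradiction on the order type under $<_{\Pcal}$, and the two cases $\Ecal_\mu^{(1)}$, $\Ecal_\mu^{(2)}$ separate the situation where one can strictly lower the rank (and invoke induction) from the situation where one must instead build a genuinely shorter nested truncation $y\nested x$ and apply induction to that.

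There is a second structural problem. The order you put on $\Pcal_\Lbb(x)$ is the lexicographic one on the index sequences $(\alpha_0(P),\dots,\alpha_{k_P}(P))$, but the ``in particular'' clause needs a bound on $\nu(\partial x)$, i.e.\ on the order type of $\supp\partial x$ under the natural order on monomials. That order is controlled by $<_\Pcal$ (compare $\partial P$ and $\partial Q$), not by $<_{lex}$, and the two do not agree: Lemma~\ref{lem:partialPQprec} shows that the relation between them is delicate and only partial. A bound on the $<_{lex}$-order type of $\Pcal_\Lbb(x)$ gives you a cardinality bound on $\supp\partial x$, not an order-type bound, so the passage to $\nu(\partial x)$ is not justified. (Incidentally, the bound you quote on $\nu(x)$ is much weaker than what actually holds: Lemma~\ref{lem:NRvsNu} gives $\nu(x)\le\NR(x)+1$, a linear bound, not a tower; and the map $\alpha\mapsto\omega^{\omega^{\omega(\alpha+1)}}$ is not continuous at limits because of the $+1$.)
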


In the above proposition and the above theorem, $\NR$ is the \textbf{nested truncation rank} which is defined in Definition \ref{def:nestedTruncRk} and $\nu(x)$ is the length of the series of the normal form the surreal number $x$ (see Definition \ref{def:nu}). The previous proposition is essential to control derivatives of surreal numbers and then get field stable under derivation. To handle anti-derivation, we came up with the following proposition:

\begin{restatable}{proposition}{propsupportPhi}
	\label{prop:supportPhi}
	Let $x$ be a surreal number. Let $\gamma$ be the smallest ordinal such that $\kappa_{-\gamma}\prec^K P(k_P)$ for all path $P\in\Pcal_\Lbb(x)$. Let $\lambda$ be the least $\epsilon$-number greater than $\NR(x)$ and $\gamma$. Then $\Unionin i\Nbb \supp\Phi^i(x)$ (see Definition \ref{def:phi}) is reverse well-ordered with order type less than $\omega^{\omega^{\lambda+2}}$.
\end{restatable}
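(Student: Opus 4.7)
The plan is to bound the supports $\supp \Phi^i(x)$ individually using Proposition \ref{prop:majorationNuPartial}, show that they all live inside a common bounded region of $\Nobf$ determined by $\lambda$, and then sum their order types.

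First I would control the nested truncation rank along the iteration. Since $\Phi$ is built from the anti-derivative recursion using $\partial$ and truncations of paths already arising in the normal form of $x$, an induction on $i$ should yield $\NR(\Phi^i(x)) \leq \NR(x) < \lambda$. This is the delicate combinatorial step and I expect it to be the main obstacle, because one has to track how the terms introduced at each step of $\Phi$ interact with the nested-truncation structure without creating deeper nesting. With this bound in hand, Proposition \ref{prop:majorationNuPartial} yields
\[
  \nu(\partial \Phi^i(x)) < \omega^{\omega^{\omega(\NR(x)+1)}} < \omega^{\omega^{\lambda}},
\]
using that $\lambda$ is an $\epsilon$-number strictly greater than $\NR(x)$, and an analogous estimate gives $|\supp \Phi^i(x)| < \omega^{\omega^{\lambda+1}}$ uniformly in $i \in \Nbb$.

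To check that the union is reverse well-ordered, I would exploit the hypothesis on $\gamma$: the condition $\kappa_{-\gamma} \prec^K P(k_P)$ for every path $P \in \Pcal_\Lbb(x)$ provides a Kruskal lower bound on all log-atomic terms that can appear in any $\Phi^i(x)$. Combined with the upper bound coming from the dominant term of $x$, this places $\bigcup_{i\in\Nbb} \supp \Phi^i(x)$ inside a Hahn-series subfield built over a group lying in $\Nolambda$, whose supports are automatically reverse well-ordered of length strictly less than $\lambda$. Any infinite ascending chain in the union would therefore have to fit inside a single such support, which is impossible.

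Finally, gathering the bounds: the countable union of sets of order type $<\omega^{\omega^{\lambda+1}}$ has order type at most $\omega\cdot\omega^{\omega^{\lambda+1}} = \omega^{1+\omega^{\lambda+1}} = \omega^{\omega^{\lambda+1}} < \omega^{\omega^{\lambda+2}}$, which is the desired estimate. The main obstacle remains the inductive control of $\NR(\Phi^i(x))$: one must use that the terms $\Phi$ introduces at step $i+1$ come from derivation and from log-atomic paths already present in $\Phi^i(x)$, so that no genuinely new nesting is created, while simultaneously verifying that the coefficients stay within the class where Proposition \ref{prop:majorationNuPartial} can be invoked uniformly in $i$.
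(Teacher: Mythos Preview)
Your central inductive claim, that $\NR(\Phi^i(x))\leq\NR(x)$, is false in general and this breaks the whole argument. A single application of $\partial$ already raises the nested truncation rank dramatically: Proposition~\ref{prop:majorationNRPartial} only gives $\NR(\partial y)\leq\omega^{\omega^{\omega(\NR(y)+1)}+\alpha}$, and since $\Phi=\id-\partial\circ\Acal$, the monomials introduced by $\Phi$ involve expressions like $\partial_\Lbb P(k_P)=\exp\bigl(-\sum_{\beta,m}\ln_m\kappa_{-\beta}+\sum_n\ln_n P(k_P)\bigr)$, whose nested rank is at least $\omega$ even when $\NR(x)$ is finite. So you cannot keep $\NR(\Phi^i(x))$ bounded by $\NR(x)$, nor even by any fixed ordinal below $\lambda$, through the iteration.

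Your well-orderedness argument for the union is also not sound. Saying that every element of $\bigcup_i\supp\Phi^i(x)$ lies in some Hahn subfield over a group in $\Nolt\lambda$ does not make an arbitrary subset reverse well-ordered; an ascending chain could perfectly well pick one element from each $\supp\Phi^i(x)$. And the final order-type computation presupposes the union is already reverse well-ordered, so it is circular.

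The paper proceeds along an entirely different line. Rather than tracking $\NR$ along the iteration, it gives an \emph{explicit} description of the monomials that can occur in any $\supp\Phi^\ell(\omega^a)$: writing $\omega^a=\partial u\exp\epsilon$ with $u=\ln_n\kappa_{-\alpha}$, Propositions~\ref{prop:supportPhi1} and~\ref{prop:supportPhi2} show every such monomial has the form $\partial(\ln_m\kappa_{-\beta})\exp(\epsilon+\eta+y)$ where $y$ belongs to a fixed monoid $\langle E\rangle$ generated by a set $E$ built out of paths of the original $\epsilon$ (not of the iterates). Propositions~\ref{prop:bonOrderPhi1} and~\ref{prop:bonOrderPhi2} then bound the order type of $E$ by roughly $2\lambda+\omega(\gamma+1)$, and Proposition~\ref{prop:orderTypeMonoid} bounds $\langle E\rangle$. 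The final proof of Proposition~\ref{prop:supportPhi} decomposes $x$ according to the parameters $(\alpha,n,r)$ or $(\alpha,n,s\omega^{a_0})$, applies these structural results to each piece, and assembles the bounds. The key idea you are missing is that one must identify a single ambient monoid, depending only on $x$, that contains all the iterates simultaneously; bounding them one at a time cannot succeed.
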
 

Thanks to Propositions \ref{prop:majorationNuPartial} and \ref{prop:supportPhi}, we will be able to prove our main theorem:

\paragraph{Organization of the paper}

This article is organized as follows. Section \ref{sec:toolbox} is a quick reminder of some lemmas about order types that will be useful at the end of this article. Section \ref{sec:introsurreals} recalls basics of the concepts and definitions of the theory of surreal numbers, and fixes the notations used in the rest of the paper. Section \ref{sec:erhlichandco} recalls what is known about the stability properties of various subfields of $\Nobf$ according to their signs sequence representation or Hahn series representation.  In Section \ref{sec:expoLn} we recall the definitions and properties of exponential and logarithm. In Section \ref{sec:deriv}, we recall some existing literature about log-atomic numbers and derivation and established some result about the nested truncation rank, a notion of rank related to the structure of the surreal numbers and to log-atomic numbers. Finally, in Section \ref{sec:stable}, we build surreal fields that are stable under exponentiation, logarithm, derivation and anti-derivation. We also show how this construction can lead too an example which  only uses ``small'' ordinals, which is good from a Computability Theory point of view.

\section{Order type toolbox}
\label{sec:toolbox}
In this section, we quickly take a look at some useful lemma about order type of well ordered sets. In all the following, circled operators ($\oplus,\otimes$) stand for usual operations over ordinal numbers. The usual symbols ($+,\times$) stand for natural operations, which are commutative.

Our first proposition is about the union of well ordered sets. This result is already knows but we still provide a proof since it is hard to find it in the literature.

\begin{lemma}[Folklore]
	\label{lem:ajoutDUnElementEnsBienOrd}
	Let $\Gamma$ be a totally ordered set, $A\subseteq\Gamma$ be a well-ordered subset with order type $\alpha$. Let $g\in\Gamma$. Then the
	set $A\cup \{g\}$ is well ordered with order type at most $\alpha+1$.
\end{lemma}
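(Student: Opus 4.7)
The plan is to handle two simple cases (whether $g$ already belongs to $A$ or not) and then reduce the problem to a basic inequality about ordinal sums. First I would verify that $A \cup \{g\}$ is still well-ordered: any non-empty subset $B$ of $A \cup \{g\}$ either meets $A$, in which case $B \cap A$ has a minimum $a$ by hypothesis on $A$, and the smaller of $a$ and $g$ (if $g \in B$) is a minimum of $B$; or else $B = \{g\}$. This step is immediate and out of the way.

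Next, if $g \in A$ then $A \cup \{g\} = A$ has order type $\alpha \leq \alpha + 1$ and there is nothing to do. Otherwise, I split $A$ at $g$ by setting
\[
A_{<g} = \{a \in A \mid a < g\}, \qquad A_{>g} = \{a \in A \mid a > g\},
\]
and let $\beta$ and $\gamma$ denote their respective order types. Since every element of $A_{<g}$ is strictly below every element of $A_{>g}$, the order type of $A = A_{<g} \sqcup A_{>g}$ is $\alpha = \beta \oplus \gamma$ (here ordinal sum). Inserting $g$ between the two parts yields an order-isomorphism between $A \cup \{g\}$ and the ordinal $\beta + 1 + \gamma$.

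The main (and only) real step is now to compare $\beta + 1 + \gamma$ with $\alpha + 1 = \beta + \gamma + 1$. I would invoke the standard identity for ordinal addition: $1 + \gamma = \gamma$ when $\gamma$ is infinite, and $1 + \gamma = \gamma + 1$ when $\gamma < \omega$. In the infinite case,
\[
\beta + 1 + \gamma = \beta + (1 + \gamma) = \beta + \gamma \leq \beta + \gamma + 1 = \alpha + 1,
\]
while in the finite case the two sides are equal. Either way, the order type of $A \cup \{g\}$ is at most $\alpha + 1$, which is the desired bound.

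There is essentially no obstacle here; the only subtlety is remembering that ordinal addition is non-commutative, which is exactly why the bound is $\alpha + 1$ rather than an equality, and why the result fails if one tries to absorb the extra element on the left. I would keep the proof short, since the statement is invoked elsewhere merely as a convenient folklore lemma.
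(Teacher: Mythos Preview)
Your proof is correct and takes a genuinely different route from the paper. The paper proceeds by transfinite induction on $\alpha$, splitting into the cases $\alpha=0$, $\alpha$ successor (remove the maximum of $A$ and compare it to $g$), and $\alpha$ limit (write $A\cup\{g\}$ as an increasing union of initial segments and bound each by the induction hypothesis). Your argument is more direct: you split $A$ once and for all at $g$, identify the order type of $A\cup\{g\}$ explicitly as $\beta\oplus 1\oplus\gamma$, and reduce everything to the elementary inequality $1\oplus\gamma\le\gamma\oplus 1$. This avoids induction entirely and is shorter; the paper's approach, by contrast, foreshadows the inductive structure used in the next proposition on unions of two well-ordered sets. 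One small point of presentation: you switch silently from $\oplus$ to $+$ midway, whereas in this paper $+$ denotes the natural (Hessenberg) sum and $\oplus$ the usual ordinal sum; since $\alpha+1=\alpha\oplus 1$ anyway there is no error, but keeping $\oplus$ throughout would match the paper's conventions.
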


\begin{proof}
	We prove it by induction on $\alpha$.
	\begin{itemize}
		\item If $\alpha=0$ then $A\cup\{g\}$ has only one element, and then has order type $1=\alpha+1$.
		
		\item If $\alpha=\gamma+1$ is a successor ordinal. Let $u$ the largest element in $A$. If $u\leq g$ then $A\cup\{g\}$ has indeed order type at most $\alpha+1$. If not, then, by induction hypothesis, $\pa{A\setminus\{u\}}\cup\{g\}$ has order type at most $\gamma+1=\alpha$. Then $A\cup\{g\}=\pa{\pa{A\setminus\{u\}}\cup\{g\}}\cup\{u\}$ has order type at most $\alpha+1$.
		
		\item If $\alpha$ is a limit ordinal. If $g$ is larger than any element of $A$, then $A\cup\{g\}$ has order type $\alpha+1$. If not, let $a_0\in A$ such that $a_0\geq g$. For $a\in A$ such that $a> a_0$ set 
		\centre{$B_a=\{g\}\cup\enstq{a'\in A}{a'< a}$}
		\lc{Since $\alpha$ is limit, we have}{$A\cup \{g\} = \Union{a>a_0}{}B_a$}
		and each of the element in the union is an initial segment of $A\cup\{g\}$. 
		We also denote $\alpha_a$ the order type of the set~$\enstq{a'\in A}{a'< a}$. In particular, $\alpha_a<\alpha$. Using induction hypothesis, $B_a$ has order type at most $\alpha_a+1$. Then, since we have an increasing union of initial segments, the order type of $A\cup\{g\}$ is at most 
		\centre{$\sup\enstq{\alpha_a+1}{a>a_0}=\sup\enstq{\alpha'+1}{\alpha'<\alpha}= \alpha$} 
		since $\alpha$ is a limit ordinal.
	\end{itemize}
	We conclude thanks to the induction principle.
\end{proof}

\begin{proposition}[Union of well-ordered sets, folklore]
	\label{prop:unionEnsBienOrd}
	Let $\Gamma$ be a totally ordered set $A,B\subseteq\Gamma$ be non-empty well-ordered subsets with respective order types $\alpha$ and $\beta$. Then the
	subset $A\cup B$ is well ordered with order type at most $\alpha+\beta$.
\end{proposition}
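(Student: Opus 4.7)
The plan is to bound the order type of $A\cup B$ by transfinite induction on the pair $(\alpha,\beta)$, using the single-element addition of Lemma~\ref{lem:ajoutDUnElementEnsBienOrd} only implicitly, and relying crucially on the commutativity and strict monotonicity of the ``$+$'' fixed at the start of Section~\ref{sec:toolbox}.

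Before starting the induction, I would quickly dispatch well-orderedness of $A\cup B$: for any non-empty $C\subseteq A\cup B$, the set $C$ meets at least one of $A,B$, and the minimum of $C$ is the smaller of $\min(C\cap A)$ and $\min(C\cap B)$ (discarding whichever side of the intersection is empty). Hence $A\cup B$ has a well-defined order type, which is the quantity we then want to bound.

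For the bound itself, I would fix an arbitrary $y\in A\cup B$ and consider the strict initial segment $I_y:=\{x\in A\cup B : x<y\}$. Writing $\alpha_y$ and $\beta_y$ for the order types of $I_y\cap A$ and $I_y\cap B$, these are initial segments of $A$ and $B$, so $\alpha_y\leq\alpha$ and $\beta_y\leq\beta$; moreover, at least one of these inequalities is strict, since $y$ itself lies in $A$ or in $B$ and is excluded from the corresponding intersection. Invoking that $+$ is commutative and strictly monotone in each argument, this yields $\alpha_y+\beta_y<\alpha+\beta$. Applying the induction hypothesis to $(I_y\cap A,\, I_y\cap B)$---the degenerate subcases, where one side is empty and $I_y$ coincides with an initial segment of $A$ or $B$, being trivial---I conclude that the order type of $I_y$ is at most $\alpha_y+\beta_y$, and hence strictly less than $\alpha+\beta$.

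Since every proper initial segment of $A\cup B$ has order type strictly below $\alpha+\beta$, the order type of $A\cup B$ itself is at most $\alpha+\beta$, which closes the induction. The main step to keep honest is the strict monotonicity of $+$ in each argument when the other argument is non-zero; this is the ingredient that drives the induction and is specific to the commutative ``$+$'' chosen in Section~\ref{sec:toolbox}. Once that is in hand, the remainder is bookkeeping and could alternatively be organised as a double induction on $\beta$ (with successor step a direct application of Lemma~\ref{lem:ajoutDUnElementEnsBienOrd} to a maximum element of $B$, and limit step handled by taking suprema of initial segments), but the initial-segment formulation above keeps successor and limit cases under a single uniform argument.
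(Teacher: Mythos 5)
Your proof is correct, and it follows a genuinely different route from the paper's. The paper splits into three cases according to the ordinal-arithmetic type of $\alpha$ and $\beta$ (both $1$, at least one a successor, both limits), applying Lemma~\ref{lem:ajoutDUnElementEnsBienOrd} in the successor case and writing $A\cup B$ as an increasing union of initial segments indexed by a cofinal copy of $A$ or $B$ in the limit case. You instead bound every proper initial segment $I_y$ of $A\cup B$ uniformly: observing that $I_y\cap A$ and $I_y\cap B$ are initial segments of $A$ and $B$ with at least one strictly shorter, invoking strict monotonicity of the natural (Hessenberg) sum to get $\alpha_y+\beta_y<\alpha+\beta$, and closing with the induction hypothesis. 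What the paper's approach buys is elementariness --- it only ever removes one top element or takes a supremum, never needing the strict monotonicity of $+$ as a lemma --- at the cost of the three-way case analysis and the auxiliary single-element lemma. What your approach buys is a single uniform step with no successor/limit distinction, but it leans on the algebraic fact that the natural sum is strictly increasing in each argument, which is correct but must be cited or checked (it is not true of the ordinary ordinal sum $\oplus$ in the left argument). Your preliminary argument for well-orderedness via $\min(C\cap A)$ and $\min(C\cap B)$ is also cleaner than the paper's extraction-of-infinite-decreasing-subsequence argument; both are fine. Your closing remark correctly identifies that the double-induction variant you sketch is essentially what the paper does.
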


\begin{proof} $A\cup B$ is well-ordered. Indeed, if we have an infinite decreasing	 	
	sequence of $A\cup B$, then we can extract either an infinite one for either $A$ or $B$ which is not possible. It remains to show the bound on its order type.
	We do it by induction over $\alpha$ and $\beta$.
	\begin{itemize}
		\item If $\alpha=\beta=1$, then $A\cup B$ has at most two elements. Then, its order type is at most $2=\alpha+\beta$.
		
		\item If $\alpha$ or $\beta$ is a successor ordinal. Since both cases are symmetric, we assume without loss of generality that $\beta=\gamma+1$. Let $u$ be the largest element of $B$ and $C=B\setminus\{u\}$. Then, by induction hypothesis, $A\cup C$ has order type at most $\alpha+\gamma$. Using Lemma \ref{lem:ajoutDUnElementEnsBienOrd}, we get that the order type of $A\cup B$ is at most $\alpha+\gamma+1=\alpha+\beta$.
		
		\item If $\alpha$ and $\beta$ are limit ordinal. $A$ or $B$ must be cofinal with $A\cup B$. For instance say it is $A$. For $a\in A$, let
		\centre{$A_a=\enstq{a'\in A}{a'<a}\qqandqq B_a=\enstq{b\in B}{b<a}$}
		\lc{We have}{$A\cup B = \Union{a\in A}{}A_a\cup B_a$}
		Since $A$ is cofinal with $A\cup B$, it is an increasing union of initial segments. Let $\alpha_a$ be the order type of $A_a$ and $\beta_a$ the one of $B_a$. We have $\alpha_a<\alpha$ and $\beta_a\leq\beta$. By induction hypothesis, $A_a\cup B_a$ has order type at most $\alpha_a+\beta_a$. Then $A\cup B$ has order type at most
		\centre{$\sup\enstq{\alpha_a+\beta_a}{a\in A}\leq \alpha+\beta$}
	\end{itemize}
	We conclude the proof using the induction principle.
\end{proof}

We know move to addition of well ordered subset of a group. Again this result in know but its proof is not easily findable in the literature. 

\begin{proposition}[Folklore]
	\label{prop:sommeEnsBienOrd} 
	Let $\Gamma$ be an ordered Abelian additive monoid and $A,B\subseteq\Gamma$ be non-empty well-ordered subsets with respective order types $\alpha$ and $\beta$. Then the
	subset $A+B=\enstq{a+b}{a\in A\quad B\in B}$ is well ordered with order type at most $\alpha\beta$.
\end{proposition}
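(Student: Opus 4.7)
First I would establish well-orderedness of $A+B$ by ruling out infinite strictly decreasing sequences. Given any sequence $(x_n)_n$ in $A+B$, write $x_n=a_n+b_n$; since $A$ is well-ordered, the sequence $(a_n)_n$ admits a non-decreasing subsequence, and restricting to that subsequence, $(b_n)_n$ in turn admits a non-decreasing subsequence (a standard fact proved by looking at the set of indices $n$ such that $b_n\leq b_m$ for all $m>n$). On this final subsequence, $(a_n+b_n)_n$ is non-decreasing, so $A+B$ cannot host an infinite strictly decreasing sequence.

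For the order-type bound I would proceed by transfinite induction on $\beta$. The base case $\beta=1$ is immediate: $A+\{b\}$ is order-isomorphic to $A$ by strict monotonicity of the monoid operation, and thus has order type $\alpha=\alpha\cdot 1$. In the successor case $\beta=\gamma+1$, let $b^\star=\max B$ and decompose $A+B=(A+(B\setminus\{b^\star\}))\cup(A+b^\star)$. The induction hypothesis bounds the order type of the first piece by $\alpha\gamma$, the second piece has order type $\alpha$, and Proposition \ref{prop:unionEnsBienOrd} yields a bound of $\alpha\gamma+\alpha=\alpha(\gamma+1)=\alpha\beta$, using distributivity of the natural product over the natural sum.

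The main obstacle is the limit case, because the obvious decompositions $A+B=\bigcup_{b\in B}(A+B_{\leq b})$ fail to display the pieces as initial segments of $A+B$: an element $y=a+b$ with $b$ large but $a$ small can still fall below some $x=a_0+b_0$ with $b_0$ small, yet have no representation with second coordinate in $B_{\leq b_0}$. To bypass this, I would work on the product $A\times B$ endowed with the total order in which $(a_1,b_1)<(a_2,b_2)$ iff $a_1+b_1<a_2+b_2$, or $a_1+b_1=a_2+b_2$ with a lexicographic tiebreak. Strict monotonicity of addition shows this total order is a linear extension of the coordinatewise product order on $A\times B$, which as a partially ordered set is isomorphic to $\alpha\times\beta$. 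The classical characterization of the natural product (due to de Jongh and Parikh) identifies $\alpha\beta$ with the supremum of order types of linear extensions of the product order on $\alpha\times\beta$, capping the order type of $(A\times B,<)$ by $\alpha\beta$. Since the map $(a,b)\mapsto a+b$ is a weakly order-preserving surjection from this well-order onto $A+B$, the order type of $A+B$ is at most $\alpha\beta$, closing the induction.
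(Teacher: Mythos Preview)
Your proof is correct, but the route differs substantially from the paper's, and in fact your induction scaffolding is unnecessary.

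The paper argues by simultaneous induction on $(\alpha,\beta)$, splitting according to whether one of $\alpha,\beta$ is additively decomposable. In the decomposable case it cuts $B$ (say) into two pieces and applies Proposition~\ref{prop:unionEnsBienOrd}. When both $\alpha$ and $\beta$ are additive it argues by contradiction: if the order type exceeded $\alpha\beta$, pick $a+b$ whose strict initial segment $C$ in $A+B$ already has order type $\alpha\beta$; then $C\subseteq(A_{<a}+B)\cup(A+B_{<b})$, which by induction and Proposition~\ref{prop:unionEnsBienOrd} has order type at most $\alpha_0\beta+\alpha\beta_0<\alpha\beta$, a contradiction.

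Your argument instead passes through the product poset $A\times B$, exhibits a total order on it (compare sums first, then break ties lexicographically) that extends the coordinatewise order, and invokes the de~Jongh--Parikh theorem identifying the natural product $\alpha\beta$ with the maximal order type of any linearization of $\alpha\times\beta$. Two remarks. First, your limit-case argument nowhere uses the induction hypothesis: it handles arbitrary $\beta$ in one stroke, so the base and successor cases are redundant and you could have presented this as a direct proof. Second, you appeal to ``strict monotonicity'' of addition, but an ordered monoid is only required to be weakly monotone; fortunately weak monotonicity is all you actually need (it still makes your total order a linear extension of the product order, and the projection $(a,b)\mapsto a+b$ is still weakly order-preserving). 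The trade-off is the usual one: the paper's argument is elementary and self-contained, while yours is shorter and more conceptual but imports a nontrivial structural theorem about well-partial-orders.
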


\begin{proof}
	We do it by induction over $\alpha$ and $\beta$.
	\begin{itemize}
		\item If $\alpha=\beta=1$, then $A+B$ has only one element, then has order type $1=\alpha\beta$.
		
		\item If $\alpha$ or $\beta$ is not an additive ordinal\index{Ordinal number!additive ordinal}. Let say $\beta=\gamma + \delta$ with $\gamma,\delta<\beta$. We choose $\gamma,\delta$ such that $\gamma+\delta=\gamma\oplus\delta$. Let $B_1$ the initial segment of length $\gamma$ of $B$. Let $B_2=B\setminus B_1$. $B_2$ has order type $\delta$. Then, by induction hypothesis, $A+B_1$ has order type at most $\alpha\gamma$ and $A+B_2$ has order type at most $\alpha\delta$. Then, using Proposition \ref{prop:unionEnsBienOrd}, $A+B$ has order type at most $\alpha\gamma+\alpha\delta=\alpha\beta$.
		
		\item If both $\alpha$ and $\beta$ are additive ordinals. Assume $A+B$ has order type more than $\alpha\beta$. Let $a+b\in A+B$ such that the set $C$ defined by
		$$C:=\enstq{c\in A+B}{c< a+b}$$ 
		has order type $\alpha\beta$. Let 
		\centre{$A_0=\enstq{a'\in A}{a'<a}$ and $B_0=\enstq{b'\in B}{b'<b}$}
		and $\alpha_0$ and $\beta_0$ their respective order types. We have
		\centre{$C\subseteq \pa{A_0+B}\cup\pa{A+B_0}$}
		Using induction hypothesis and Proposition \ref{prop:unionEnsBienOrd}, $C$ has order type at most $\alpha_0\beta+\alpha\beta_0$. Since $\alpha_0<\alpha$ and $\beta_0<\beta$, we have $\alpha_0\beta<\alpha\beta$ and $\alpha\beta_0<\alpha\beta$. $\alpha$ and $\beta$ being additive ordinal, $\alpha\beta$ is itself an additive ordinal and then $C$ has order type less than $\alpha\beta$, what is a contradiction. Then $A+B$ has order type at most $\alpha\beta$.
	\end{itemize}
	We conclude thanks to the induction principle.
\end{proof}

In the same idea, we can take a look at a well ordered non-negative subset of an ordered group. The proof is less easy so we refer to \cite{weiermannMaximalOrderType} for the details.

\begin{proposition}[{\cite[Corollary 1]{weiermannMaximalOrderType}}]
	\label{prop:orderTypeMonoid}
	Let $\Gamma$ be an ordered Abelian group and $S\subseteq\Gamma_+$ be a well-ordered subset with order type $\alpha$. Then, $\inner S$, the monoid generated by $S$ in $\Gamma$ is itself well-ordered with order type at most $\omega^{\hat{\alpha}}$
	where, if the Cantor normal form of $\alpha$ is
	\centre{$\alpha=\Sum{i=1}{n}\omega^{\alpha_i}n_i$}
	\lc{then}{$\hat\alpha = \Sum{i=1}{n}\omega^{\alpha_i'}n_i$}
	\lc{and}{$\beta'=\begin{accolade}
			\beta+1 & \text{if $\beta$ is an $\epsilon$-number}\\
			\beta & \text{otherwise}
		\end{accolade}$}
	In particular, $\inner S$ has order type at most $\omega^{\omega\alpha}$ (commutative multiplication).
\end{proposition}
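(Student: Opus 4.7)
Since the proposition is quoted from Weiermann's work (itself building on de Jongh and Parikh's maximal order type for the multiset ordering), the plan is to sketch how one reduces it to the two folklore bounds just proved, and to pinpoint where the delicate $\epsilon$-number correction enters.

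First I would write $\inner{S} = \{0\} \cup \bigcup_{k \geq 1} S^{+k}$, where $S^{+k} = \underbrace{S + \cdots + S}_{k \text{ times}}$ is the $k$-fold Minkowski sum of $S$ inside $\Gamma$. Iterating Proposition \ref{prop:sommeEnsBienOrd} shows that each $S^{+k}$ is well-ordered with order type at most $\alpha^k$ computed with natural multiplication, and Proposition \ref{prop:unionEnsBienOrd} then certifies that $\inner{S}$, seen as the increasing union of the prefixes $\{0\} \cup S \cup \cdots \cup S^{+k}$, remains well-ordered. Thus well-orderedness of $\inner{S}$ is free, and the content lies in the sharp upper bound on its order type.

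Next I would run a transfinite induction on $\alpha$ keyed to its Cantor normal form $\alpha = \omega^{\alpha_1}n_1 + \cdots + \omega^{\alpha_n}n_n$. In the base case $\alpha = 1$, $S = \{s\}$ and $\inner{S} = \{ks : k \in \Nbb\}$ has order type $\omega = \omega^{\hat{1}}$. For the inductive step, I would partition $S$ as a disjoint decreasing union $S_1 \sqcup \cdots \sqcup S_n$ with $S_i$ of order type $\omega^{\alpha_i}n_i$ and $S_1 > S_2 > \cdots > S_n$ elementwise, so that every element of $\inner{S}$ decomposes uniquely as a sum of one contribution from each $\inner{S_i}$; applying the induction hypothesis to each $\inner{S_i}$ and recombining via Proposition \ref{prop:sommeEnsBienOrd} then yields the target bound, provided the inductive step can absorb an extra factor of $\omega$ coming from iterating the Minkowski sums at each rank.

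Precisely this last point is the main obstacle and the reason for the $\epsilon$-number correction in $\hat{\alpha}$: when $\alpha_i$ is an $\epsilon$-number, the identity $1 \oplus \alpha_i = \alpha_i$ (usual ordinal sum) causes the naive estimate $\omega \otimes \omega^{\alpha_i} = \omega^{1 \oplus \alpha_i} = \omega^{\alpha_i}$ to collapse, so the exponent fails to grow strictly and the induction stalls; one repairs this by bumping $\alpha_i$ to $\alpha_i' = \alpha_i + 1$, which is the combinatorial heart of the argument in \cite{weiermannMaximalOrderType}. Once the sharp bound $\omega^{\hat{\alpha}}$ is established, the ``in particular'' clause follows immediately: for each $i$ one has $\omega^{\alpha_i'} \leq \omega \times \omega^{\alpha_i}$ in the natural product, hence $\hat{\alpha} \leq \omega \times \alpha$, and therefore $\omega^{\hat{\alpha}} \leq \omega^{\omega\alpha}$.
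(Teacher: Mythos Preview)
The paper does not give its own proof of this proposition: it simply remarks that ``the proof is less easy'' and refers the reader to \cite{weiermannMaximalOrderType}. So there is no in-paper argument to compare against, and your sketch is already more than what the paper provides.

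Your outline is broadly the right shape --- well-orderedness via the folklore bounds, then a Cantor-normal-form induction for the sharp estimate --- but your explanation of the $\epsilon$-number correction is off. You write that ``when $\alpha_i$ is an $\epsilon$-number, the identity $1 \oplus \alpha_i = \alpha_i$ causes $\omega \otimes \omega^{\alpha_i} = \omega^{\alpha_i}$ to collapse.'' That identity holds for \emph{every} infinite ordinal $\alpha_i$, not only for $\epsilon$-numbers, so it cannot be the source of the special case. The genuine phenomenon is the fixed-point equation $\omega^{\alpha_i} = \alpha_i$ that characterises $\epsilon$-numbers: when $\alpha_i$ is an $\epsilon$-number, the Cantor-normal-form block $\omega^{\alpha_i}$ coincides with $\alpha_i$ itself, and the naive bound $\omega^{\omega^{\alpha_i}}$ collapses to $\omega^{\alpha_i} = \alpha_i$, which is too small (the monoid $\inner S$ visibly has order type strictly larger than that of $S$ when the generators are algebraically independent). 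Bumping $\alpha_i$ to $\alpha_i + 1$ breaks this fixed point. If you keep the sketch, you should correct that sentence; otherwise the sketch is a fair pointer to the literature, which is all the paper itself offers.
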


Finally, we consider finite sequences over a well ordered set.

\begin{theorem}[{\cite[Theorem 3.11]{DEJONGH1977195}} and  {\cite[Theorem 2.9]{SchmidtOrderTypes}}]
	\label{thm:borneTypeOrdreSuitesFinies}
	Let $(X,\leq)$ be a well ordered set with order type $\alpha$. Let $X^*$ be the set of finite sequences over $X$. Let $\beta$ the order type of $X^*$. We have
	\centre{$\beta \leq \begin{accolade}
			\omega^{\omega^{\alpha-1}} & \text{if }\alpha\text{ is finite}\\
			\omega^{\omega^{\alpha+1}} & \textit{if }\epsilon\leq\alpha<\epsilon+\omega\text{ for some $\epsilon$-number }\epsilon\\
			\omega^{\omega^\alpha} & \text{ otherwise}
		\end{accolade}$}
\end{theorem}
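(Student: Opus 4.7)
The plan is to read ``order type of $X^*$'' as the maximal order type of $X^*$ when it is equipped with the Higman embedding $v \preceq w$ (meaning $v$ is obtained from $w$ by deleting letters and weakly decreasing the remaining ones in $X$); Higman's lemma guarantees this is a well-partial-order whose reification, i.e.\ the supremum over all linearizations extending $\preceq$, is a well-defined ordinal, and this is the $\beta$ we want to bound. Both cited references compute precisely this invariant, so the goal is to reproduce their upper and lower bounds.

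First I would handle the three cases of the trichotomy separately. When $\alpha$ is finite, $X^*$ is essentially words over an $\alpha$-letter alphabet and a classical direct computation by induction on $\alpha$ gives maximal order type $\omega^{\omega^{\alpha-1}}$; this also serves as the base of the transfinite induction. For the infinite cases, I would proceed by transfinite induction on $\alpha$: writing $X_\gamma$ for the initial segment of $X$ of order type $\gamma < \alpha$, one has $X^* = \bigcup_{\gamma < \alpha} X_\gamma^*$, an increasing union of initial segments in the Higman order (Proposition~\ref{prop:unionEnsBienOrd}). The inductive step analyses how adjoining one new top letter enlarges the maximal order type: each word of $X_{\gamma+1}^*$ is an interleaving of a word from $X_\gamma^*$ with a finite block of the new letter, which via Proposition~\ref{prop:sommeEnsBienOrd} and Proposition~\ref{prop:orderTypeMonoid} contributes only a factor that stays inside $\omega^{\omega^{\gamma+1}}$. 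Taking the supremum over $\gamma < \alpha$ gives $\omega^{\omega^\alpha}$ in the generic infinite case.

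The main obstacle, and the very reason for the trichotomy, is the $\epsilon$-number case $\epsilon \leq \alpha < \epsilon + \omega$. Here the naive supremum argument breaks down because $\sup_{\gamma < \epsilon} \omega^\gamma = \omega^\epsilon = \epsilon$ is a fixed point, so passing to the limit does not strictly increase the bound; one must pay one extra level of exponentiation, yielding $\omega^{\omega^{\alpha+1}}$. This is exactly the $\hat\alpha$-style adjustment appearing in Proposition~\ref{prop:orderTypeMonoid}, and the delicate part is to show that the correction is needed only for a finite window after each $\epsilon$-number and is absorbed as soon as $\alpha \geq \epsilon + \omega$. For the matching lower bound I would, in each case, exhibit an explicit strictly decreasing sequence in $X^*$ of the claimed length, built by concatenation and nesting of words indexed by ordinals below $\alpha$ in the de~Jongh--Parikh style; the $\epsilon$-case requires exactly one extra level of nesting to realise the additional $\omega$ in the exponent. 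Gluing the upper and lower bounds across the three cases yields the stated formula.
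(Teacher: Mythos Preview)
The paper does not give its own proof of this theorem: it is stated with citations to de~Jongh--Parikh and Schmidt and then the section ends. There is therefore nothing in the paper to compare your sketch against; the result is imported wholesale from those references.

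As a standalone sketch, your outline follows the spirit of the cited sources, but two points would need repair before it could be called a proof. First, the propositions you invoke from the paper (Propositions~\ref{prop:unionEnsBienOrd}, \ref{prop:sommeEnsBienOrd}, \ref{prop:orderTypeMonoid}) are all about \emph{totally} ordered sets, whereas $X^*$ under Higman embedding is only a well-partial-order; bounding its maximal order type requires the wpo versions of these lemmas (sums and products of maximal order types, as in de~Jongh--Parikh), not the linear ones stated here. Second, your proposed lower-bound construction---``exhibit an explicit strictly decreasing sequence in $X^*$ of the claimed length''---cannot work as stated: a well-partial-order has no infinite strictly decreasing sequence at all, let alone one of length $\omega^{\omega^\alpha}$. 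The lower bound is obtained instead by exhibiting a \emph{linearization} of $X^*$ of the required order type (equivalently, a bad-sequence argument of sufficient rank), which is what de~Jongh--Parikh and Schmidt actually do.
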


\section{Surreal numbers}
\label{sec:introsurreals}


We assume some familiarity with the ordered field of surreal numbers (refer to  \cite{conway2000numbers,gonshor1986introduction} for presentations) which we denote by $\No$. In this section we give a brief presentation of the basic definitions and results, and we fix the notations that will be used in the rest of the paper.

\subsection{Order and simplicity}

The class $\No$ of surreal numbers can be defined either by transfinite recursion, as in  \cite{conway2000numbers} or by transfinite length sequences of $+$ and $-$ as done in 
\cite{gonshor1986introduction}. We  will mostly follow \cite{gonshor1986introduction}, as well as \cite{berarducci2018surreal} for their presentation.

We introduce the class $\No = 2^{<\On}$ of all binary sequences of some ordinal length $\alpha \in \On$, where $\On$ denotes the class of the ordinals. In other words,  $\No$ corresponds to functions of the form $x : \alpha \to \{-,+\}$. The \textbf{length} (sometimes also called \textbf{birthday} in  literature) of a surreal number $x$ is the ordinal number $\alpha = \dom(x)$. We will also write $\alpha=\length{x}$ (the point of this notation is to ``count'' the number of pluses and minuses).
Note that $\No$ is not a set but a proper class, and all the relations and functions we shall define on $\No$ are going to be class-relations and class-functions, usually constructed by transfinite induction.

We say that $x \in \No$ is \textbf{simpler} than $y \in \No$, denoted $x \simpler y$, i.e., if $x$ is a strict \textbf{initial segment} (also called \textbf{prefix}) of $y$ as a binary sequence. We say that $x$ is simpler than or equal to $y$, written $x \simplereq y$, if $x \simpler y$ or $x = y$ i.e., $x$ is an initial segment of $y$. The simplicity relation is a binary tree-like partial order on $\No$, with the immediate successors of a node $x\in\No$ being the sequences $x_-$ and $x_+$ obtained by appending $-$ or $+$ at the end of the signs sequence of $x$. Observe in particular that the simplicity relation $\simpler $ is well-founded, and the empty sequence, which will play the role of the number zero, is simpler than any other surreal number. 

We can introduce a total order $<$ on $\No$ which is basically the lexicographic order over the corresponding sequences: More precisely,  we consider the order  $-<\square<+$ where $\square$ is the blank symbol. Now to compare two signs sequences,  append blank symbols to the shortest so that they have the same length. Then,  just compare them with the corresponding lexicographic order to get the total order $<$.

Given two sets $A \subseteq \No$ and $B \subseteq \No$ with $A < B$ (meaning that $a < b$ for all $a \in A$ and $b \in B$),
it is quite easy to understand why there is a simplest surreal number, denoted $\crotq AB$ such that $A<\crotq AB < B$.  However, a formal proof is long. See \cite[Theorem 2.1]{gonshor1986introduction} for details.
If $x=\crotq AB$, we say that Such a pair $\crotq{A}{B}$ is \textbf{representation} of $x$.

Every surreal number $x$ has several different representations $x = \crotq{A}{ B} = \crotq{A'}{ B'}$, for instance, if $A$ is cofinal with $A'$ and $B$ is coinitial with $B'$. In this situation, we shall say that $\crotq{A}{ B} = \crotq{A'}{ B'}$ by cofinality. On the other hand, as discussed in \cite{berarducci2018surreal}, 
it may well happen that $\crotq{A}{ B} = \crotq{A'}{ B'}$ even if $A$ is not cofinal with $A'$ or $B$ is not coinitial with $B'$. The \textbf{canonical representation} $x = \crotq{A}{ B}$ is the unique one such that $A \cup B$ is exactly the set of all surreal numbers strictly simpler than $x$. Indeed it turns out that is $A=\enstq{y\sqsubset x}{y<x}$ and $B=\enstq{y\sqsubset x}{y>x}$, then $x=\crotq AB$.

\begin{remark} 
By definition, if $x = \crotq{A}{ B}$ and $A < y < B$, then $x \simplereq y$.
\end{remark}
To make the reading easier we may forget $\{\}$ when writing explicitly $A$ and $B$. For instance $\crotq xy$ will often stand for $\crotq{\{x\}}{\{y\}}$ when $x,y\in\No$.


\subsection{Field operations}


Ring operations $+$, $·$ on $\No$ are defined by transfinite induction on simplicity as follows:
$$x+y:=\crotq{x' +y, x+y'}{x'' +y, x+y''}$$
$$
xy := \crotq{\begin{array}{c}
		x'y + xy' - x'y' \\ x''y + xy'' - x''y''
	\end{array}}{\begin{array}{c}
		x'y + xy'' - x'y'' \\ x''y + xy' - x''y'
	\end{array}}
$$
where $x'$ (resp. $y'$) ranges over the numbers simpler than $x$ (resp. $y$) such that $x' < x$ (resp. $y'<y$) and $x''$ (resp. $y''$) ranges over the numbers simpler than $x$ (resp. $y$) such that $x < x''$ (resp. $y<y''$); in other words, when $x = \crotq{x'}{x''}$ and $y = \crotq{y'} {y''}$ are the canonical representations of $x$ and $y$ respectively. The expression for the product may seem not intuitive, but actually, it is basically inspired by the fact that we expect $(x-x')(y-y')>0$, $(x-x'')(y-y'')>0$, $(x-x')(y-y'')<0$ and $(x-x'')(y-y')<0$.

\begin{remark}
The definitions of sum and product are uniform in the sense of \cite[page 15]{gonshor1986introduction}. Namely the equations that define $x + y$ and $xy$ does not require the canonical representations of $x$ and $y$ but any representation. In particular, if $x=\crotq AB$ and $y=\crotq CD$, the variables $x', x'', y', y''$ may range over $A$, $B$, $C$, $D$ respectively.
\end{remark}

It is an early result that these operations, together with the order, give $\No$ a structure of ordered field, and even a structure of real closed field (see \cite[Theorem 5.10]{gonshor1986introduction}). 
Consequently, there is a unique embedding of the rational numbers in $\No$ so we can
identify $\Qbb$ with a subfield of $\No$. 
Actually, the subgroup of the dyadic rationals $m/2^{n}\in \Qbb$, 
with $m\in\Zbb$ and $n \in \Nbb$, correspond exactly to the surreal numbers $s : k \to \{-, +\}$ of finite length $k \in \Nbb.$

The field $\Rbb$ can be isomorphically identified with a subfield of $\No$ by sending $x\in\Rbb$ to the number $\crotq{A}{ B}$ where $A\subseteq\No$ is the set of rationals (equivalently: dyadics) lower than $x$ and $B\subseteq\No$ is the set of (equivalently: dyadics) greater than $x$. This embedding is consistent with the one of $\Qbb$ into $\No$. We may thus write $\Qbb\subseteq\Rbb\subseteq\No$. By \cite[page 33]{gonshor1986introduction}, the length of a real number is at most $\omega$ (the least infinite ordinal). There are however surreal numbers of length $\omega$ which are not real numbers, such as $\omega$ itself or its inverse that is a positive infinitesimal.  

The ordinal numbers can be identified with a subclass of $\No$ by sending the ordinal $\alpha$ to the sequence $s : \alpha \rightarrow\{+,-\}$ with constant value $+$. Under this identification, the ring operations of $\No$, when restricted to the ordinals $\Ord \subseteq \No$, coincide with the Hessenberg sum and product (also called natural operations) of ordinal numbers. Similarly, the sequence $s : \alpha\rightarrow\{+,-\}$ with constant value $-$ corresponds to the opposite (inverse for the additive law) of the ordinal $\alpha$, namely $-\alpha$. We remark that $x \in \Ord$ if and only if $x$ admits a representation of the form $x = \crotq AB$ with $B=\emptyset$, and similarly $x \in -\Ord$ if and only if we can write $x = \crotq AB$ with $A=\emptyset$.

Under the above identification of $\Qbb$ as a subfield of $\No$, the natural numbers $\Nbb \subseteq\Qbb$ are exactly the finite ordinals.

\subsection{Hahn series}
\label{sec:hahn}

%
%
%
%
\subsubsection{Generalities}
Let $\Kbb$ be a field, and let $G$ be a divisible ordered Abelian group.

\begin{definition}[Hahn series \cite{hahn1995nichtarchimedischen}]
The Hahn series (obtained from $\Kbb$ and $G$) are formal power series of the form $s=\sum_{g \in S} a_{g} t^{g}$, where $S$ is a well-ordered subset of $G$ and $a_{g} \in \Kbb .$ The support of s is $\supp(s)=\enstq{g \in S}{ a_{g} \neq 0}$ and the length of $s$ is the order type of $\supp(s)$.

We write $\HahnField{\Kbb}{G}$ for the set of Hahn series with coefficients in $\Kbb$ and terms corresponding to elements of $G$. 
\end{definition}

\begin{definition}[Operations on $\HahnField{\Kbb}{G}$]
The operations on $\HahnField{K}{G}$ are defined in the natural way:
 Let $s=\sum_{g \in S} a_{g} t^{g}, s^{\prime}=\sum_{g \in S^{\prime}} a_{g}^{\prime} t^{g}$, where $S, S^{\prime}$ are well
ordered.
\begin{itemize}
\item $s+s^{\prime}=\sum_{g \in S \cup S^{\prime}}\left(a_{g}+a_{g}^{\prime}\right) t^{g}$, where $a_{g}=0$ if $g \notin S$, and $a_{g}^{\prime}=0$ if $g \notin S^{\prime}$.
\item $s \cdot s^{\prime}=\sum_{g \in T} b_{g} t^{g}$, where $T=\enstq{g_{1}+g_{2}}{g_{1} \in S \wedge g_{2} \in S^{\prime}}$, and for
each $g \in T$, we set
$b_{g}=\Sum{g_{1}\in S, g_{2}\in S' | g_1+g_2=g}{} b_{g_{1}} \cdot b_{g_{2}}$
\end{itemize}
\end{definition}

Hahn fields inherits a lot of from the structure of the coefficient field. In particular if $\Kbb$ is algebraically closed, and if  $G$ is some divisible (i.e. for any $n\in\Nbb$ and $g\in G$ there is some $g'\in G$ such that $ng'=g$) ordered Abelian group, then the corresponding Hahn field is also algebraically closed. More precisely: 

\begin{theorem}[Generalized Newton-Puiseux Theorem, Maclane \cite{maclane1939}]
	\label{thm:macLane}
	Let $G$ be a divisible ordered Abelian group, and let $\Kbb$ be a field that is algebraically closed of characteristic $0$. Then $\HahnField{\Kbb}{G}$ is also algebraically closed.
\end{theorem}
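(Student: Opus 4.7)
The plan is to prove that every monic polynomial $P(Y) \in \HahnField{\Kbb}{G}[Y]$ of positive degree $n$ has a root in $\HahnField{\Kbb}{G}$. Reduction to the monic case is immediate since $\Kbb$ is a field. A Tschirnhaus substitution $Y = Z - a_{n-1}/n$, available because $\mathrm{char}(\Kbb) = 0$, removes the sub-leading coefficient; this is not essential but helps normalize valuations.

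The core argument is a transfinite Newton--Puiseux construction. I would equip $\HahnField{\Kbb}{G}$ with the natural valuation $v(s) = \min \supp(s)$ (with $v(0) = +\infty$) and write $\mathrm{ac}(s) \in \Kbb$ for the leading coefficient at $v(s)$. The root is built as a Hahn series $y = \sum_{\beta < \alpha^{*}} c_\beta t^{g_\beta}$ by transfinite recursion. At a successor stage $\alpha+1$, expand
\[ P(y_\alpha + Z) = \sum_{k=0}^{n} A_k\, Z^k \in \HahnField{\Kbb}{G}[Z], \]
form the Newton polygon from the points $(k, v(A_k))$, and select the leftmost edge with finite slope $g_\alpha$. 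Reading off the coefficients $\mathrm{ac}(A_k)$ along that edge yields an associated polynomial in $\Kbb[c]$; since $\Kbb$ is algebraically closed of characteristic zero, it has a nonzero root $c_\alpha$, and I set $y_{\alpha+1} = y_\alpha + c_\alpha t^{g_\alpha}$. By construction $v(P(y_{\alpha+1})) > v(P(y_\alpha))$ and the exponents $g_\beta$ are strictly increasing. At a limit ordinal $\alpha$, I set $y_\alpha$ to be the formal sum of all previous terms, provided this defines a legitimate Hahn series.

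The hard part is precisely this last caveat: ensuring that the set $\{g_\beta : \beta < \alpha\}$ is well-ordered at every stage and that the recursion eventually reaches $v(P(y)) = +\infty$. For well-orderedness I would induct jointly on $\alpha$ and use that at each successor stage we append a single element larger than all previous exponents, while at a limit stage the set is order-isomorphic to the index ordinal $\alpha$. For termination, I would invoke a cardinality/spherical-completeness argument: the recursion cannot continue past an ordinal of cardinality strictly greater than $|\Kbb| + |G|$, because the strictly ascending chain of partial sums would otherwise produce a pseudo-Cauchy sequence with no pseudo-limit in $\HahnField{\Kbb}{G}$, contradicting the well-known fact that Hahn fields are maximally (spherically) complete for their valuation. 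Hence the construction halts at some $\alpha^{*}$ with $P(y_{\alpha^{*}}) = 0$; continuity of polynomial evaluation with respect to the valuation topology guarantees that this is a genuine root, establishing that $\HahnField{\Kbb}{G}$ is algebraically closed.
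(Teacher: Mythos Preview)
The paper does not supply its own proof of this theorem; it is quoted as an external result from MacLane and is used only to derive the real-closedness corollary that immediately follows. So there is no argument in the paper to compare yours against.

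Your outline is the standard transfinite Newton--Puiseux construction and the successor step is fine, but the termination argument is backwards. You claim that if the recursion ran past an ordinal of size greater than $|\Kbb|+|G|$ the partial sums would form a pseudo-Cauchy sequence \emph{with no pseudo-limit}, contradicting spherical completeness. Spherical completeness says exactly the opposite: such a sequence \emph{does} have a pseudo-limit, namely the formal sum $\sum_\beta c_\beta t^{g_\beta}$, which is a legitimate Hahn series because its support is well-ordered. Spherical completeness is what lets you \emph{continue} at limit stages, not what forces the recursion to stop. The correct termination is the trivial counting argument: the exponents $g_\beta$ are pairwise distinct elements of the set $G$, so the recursion cannot be indexed by an ordinal of cardinality exceeding $|G|$, and hence halts at some $\alpha^{*}$ with $P(y_{\alpha^{*}})=0$. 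You should also make explicit that at each limit stage $\lambda$ one has $v\bigl(P(y_\lambda)\bigr)\ge \sup_{\beta<\lambda} v\bigl(P(y_\beta)\bigr)$; this continuity fact is needed during the recursion to ensure progress at limits, not only in the final sentence.
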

As noticed in \cite{alling1987foundations}, we can deduce the following:
\begin{corollary}
	Let $G$ be a divisible ordered Abelian group, and let $\Kbb$ be a field that is real closed of characteristic $0$. Then $\HahnField{\Kbb}{G}$ is also real closed.
\end{corollary}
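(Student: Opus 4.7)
The plan is to reduce everything to MacLane's theorem (Theorem \ref{thm:macLane}) via the Artin–Schreier characterization of real closed fields: a field $F$ is real closed if and only if $F\neq\bar F$ and $[\bar F:F]$ is finite (in which case it equals $2$ and $\bar F=F(i)$). So the strategy is to exhibit the algebraic closure of $\HahnField{\Kbb}{G}$ as $\HahnField{\Kbb}{G}(i)$.

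First, I would equip $\HahnField{\Kbb}{G}$ with a natural order extending the one on $\Kbb$: for a nonzero Hahn series $s=\sum_{g\in\supp(s)} a_g t^g$, the support is well-ordered, hence has a minimum $g_0$, and we declare $s>0$ iff $a_{g_0}>0$ in $\Kbb$. A short verification shows this is compatible with the field operations (addition works because the minimum of the union of supports dominates; multiplication works because the minimum of the support of a product is the sum of the minima), so $\HahnField{\Kbb}{G}$ is an ordered field, and in particular is not algebraically closed (it is formally real, since $-1$ is not a sum of squares).

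Next, let $\Kbb(i)=\Kbb[X]/(X^2+1)$; since $\Kbb$ is real closed of characteristic $0$, this is the algebraic closure of $\Kbb$. I would check the natural isomorphism
\[
\HahnField{\Kbb(i)}{G} \;\cong\; \HahnField{\Kbb}{G}(i),
\]
which comes from decomposing each coefficient $c_g\in\Kbb(i)$ uniquely as $a_g+ib_g$ with $a_g,b_g\in\Kbb$, so that $\sum_g c_g t^g = \sum_g a_g t^g + i\sum_g b_g t^g$ (both of the latter series have support contained in $\supp(\sum_g c_g t^g)$, hence well-ordered). Applying Theorem \ref{thm:macLane} to the algebraically closed field $\Kbb(i)$ and the divisible ordered Abelian group $G$, the field $\HahnField{\Kbb(i)}{G}$ is algebraically closed; consequently $\HahnField{\Kbb}{G}(i)$ is algebraically closed.

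Finally, since $\HahnField{\Kbb}{G}$ is a formally real (hence non–algebraically closed) subfield of its algebraic closure $\HahnField{\Kbb}{G}(i)$ of finite index $2$, the Artin–Schreier theorem yields that $\HahnField{\Kbb}{G}$ is real closed. The only step that requires care, and is really the only non-routine part, is checking that the identification $\HahnField{\Kbb(i)}{G}\cong\HahnField{\Kbb}{G}(i)$ is a genuine isomorphism of fields, i.e. that the decomposition of coefficients produces Hahn series with well-ordered supports and that the ring operations match; everything else is a direct appeal to MacLane and Artin–Schreier.
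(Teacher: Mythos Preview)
Your proof is correct and follows essentially the same route as the paper: identify $\HahnField{\Kbb(i)}{G}$ with $\HahnField{\Kbb}{G}(i)$, apply MacLane's theorem to the algebraically closed field $\Kbb(i)$, and conclude via the Artin--Schreier characterization. The paper's version is slightly terser---it simply notes that $-1$ is not a square in $\HahnField{\Kbb}{G}$ (immediate from looking at the leading coefficient of a square) rather than building the order explicitly---but the argument is the same.
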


\begin{proof}
	$\Kbb$ is real closed. That is to say that $-1$ is not a square in $\Kbb$ and that $\Kbb[i]$ is algebraically closed. Notice that $\Kbb[i]((G))=\left(\Kbb((G))\right)[i]$. Therefore, Theorem \ref{thm:macLane} ensures that $\left(\Kbb((G))\right)[i]$ is algebraically closed. Also, $-1$ is not a square in $\Kbb((G))$. Therefore, $\Kbb((G))$ is real closed. 
\end{proof}


\subsubsection{Restricting length of ordinals}


In this article, will often restrict the class of ordinals allowed in the ordinal sum, namely by restricting to ordinals up to some ordinal $\lambda$. We then give the following notation:
\begin{definition}[$\HahnFieldOrd{\Kbb}{G}{\gamma}$]
Let $\lambda$ be some ordinal.  We define $\HahnFieldOrd{\Kbb}{G}{\gamma}$ for the restriction of $\HahnField{\Kbb}{G}$ to formal power series whose support has an order type in~$\gamma$ (that is to say, corresponds to some ordinal less than $\gamma$).
\end{definition}


%

\begin{theorem}
Assume $\gamma$ is some $\epsilon$-number. Then $\HahnFieldOrd{\Kbb}{G}{\gamma}$ is a field.
\end{theorem}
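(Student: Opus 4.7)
The plan is to verify the three field-closure conditions (addition, multiplication, multiplicative inverse) for a non-zero element, since $0$ and $1$ have finite (hence order type $<\gamma$) support and the other field axioms are inherited from $\HahnField{\Kbb}{G}$. Throughout I will use that an $\epsilon$-number $\gamma$ satisfies $\omega^\gamma=\gamma$, which makes $\gamma$ closed under Hessenberg sum, under Hessenberg product, and under the map $\alpha\mapsto\omega^\alpha$; these closure properties are the only non-trivial things we need about $\gamma$.

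For addition, given $s,s'\in\HahnFieldOrd{\Kbb}{G}{\gamma}$ of lengths $\alpha,\beta<\gamma$, we have $\supp(s+s')\subseteq\supp(s)\cup\supp(s')$. By Proposition \ref{prop:unionEnsBienOrd} this union is well-ordered with order type at most $\alpha+\beta<\gamma$ (using that $\gamma$ is additively closed). For multiplication, $\supp(s\cdot s')\subseteq\supp(s)+\supp(s')$, and by Proposition \ref{prop:sommeEnsBienOrd} this has order type at most $\alpha\beta$ (natural product), which is again less than $\gamma$ by multiplicative closure. A subset of a well-ordered set is well-ordered of at most the same order type, so both $s+s'$ and $s\cdot s'$ lie in $\HahnFieldOrd{\Kbb}{G}{\gamma}$.

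The main obstacle is inverses. Let $s\in\HahnFieldOrd{\Kbb}{G}{\gamma}$ be non-zero. Let $g_0=\min\supp(s)$ and write $s=a_{g_0}t^{g_0}(1+\varepsilon)$ where $\varepsilon=\sum_{g>g_0}(a_g/a_{g_0})t^{g-g_0}$ has support $T\subseteq G_+$ well-ordered of some order type $\alpha'<\gamma$. Formally
\centre{$
s^{-1}=a_{g_0}^{-1}t^{-g_0}\sum_{n=0}^{\infty}(-1)^n\varepsilon^n,
$}
and $\supp(s^{-1})\subseteq -g_0+\langle T\rangle$, where $\langle T\rangle$ is the additive monoid generated by $T$ inside $G$. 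The standard argument shows this series is well-defined in $\HahnField{\Kbb}{G}$ (each coefficient is a finite sum). By Proposition \ref{prop:orderTypeMonoid}, $\langle T\rangle$ is well-ordered with order type at most $\omega^{\omega\alpha'}$. Since $\alpha'<\gamma$ and $\gamma$ is closed under Hessenberg product, $\omega\alpha'<\gamma$; since $\gamma=\omega^\gamma$, we conclude $\omega^{\omega\alpha'}<\gamma$. Translation by $-g_0$ preserves order type, so $s^{-1}\in\HahnFieldOrd{\Kbb}{G}{\gamma}$.

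The only subtlety is justifying the formal geometric expansion above: one checks that the family $\{\varepsilon^n\}_{n\in\Nbb}$ is summable in $\HahnField{\Kbb}{G}$ (supports are pairwise bounded below and together well-ordered in $\langle T\rangle$, and each element of $\langle T\rangle$ arises from only finitely many $\varepsilon^n$ because $T\subseteq G_+$ is bounded below by a strictly positive element), and that the resulting series is indeed a two-sided inverse of $1+\varepsilon$. This is the classical Neumann lemma argument for Hahn fields, and the only new input needed for the length restriction is the order-type bound supplied by Proposition \ref{prop:orderTypeMonoid} combined with the $\epsilon$-number closure properties of $\gamma$.
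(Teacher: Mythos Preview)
Your proof is correct and follows exactly the approach the paper sketches: the paper's proof consists of a single sentence pointing to Proposition~\ref{prop:orderTypeMonoid} for the inverse, and you have filled in the details (together with the easier closure under $+$ and $\cdot$ via Propositions~\ref{prop:unionEnsBienOrd} and~\ref{prop:sommeEnsBienOrd}). One small remark: your parenthetical justification for summability (``bounded below by a strictly positive element'') is not literally the reason in a non-Archimedean $G$, but since you correctly invoke the classical Neumann lemma this is harmless.
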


\begin{proof}
This basically relies on the observation that the length of the inverse of some Hahn series in this field remains in the field: This is basically a consequence of Proposition \ref{prop:orderTypeMonoid}.
\end{proof}


We also get:

\begin{proposition}[{\cite[Lemma 4.6]{DriesEhrlich01}}] 
	\label{prop:hahnFieldRealClosed}
Assume $\Kbb$ is some real closed field, and $G$ is some abelian divisible group.  Then $\HahnFieldOrd{\Kbb}{G}{\gamma}$ is real closed.
\end{proposition}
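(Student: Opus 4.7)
My plan is to mimic the derivation of the corollary that followed Theorem \ref{thm:macLane}, but this time threading order-type bounds throughout so that everything stays inside $\HahnFieldOrd{\Kbb}{G}{\gamma}$. Throughout I assume implicitly that $\gamma$ is an $\epsilon$-number, which is what was needed to make $\HahnFieldOrd{\Kbb}{G}{\gamma}$ a field in the first place, and which gives the combinatorial closure of $\gamma$ under $+$, $\cdot$ and $\omega^{(-)}$ that I will keep invoking.

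The first step is to check that $\HahnFieldOrd{\Kbb}{G}{\gamma}[i] = \HahnFieldOrd{\Kbb[i]}{G}{\gamma}$. Any element $s + i s'$ of the left-hand side rewrites as a single Hahn series with coefficients in $\Kbb[i]$ whose support sits inside $\supp(s) \cup \supp(s')$; this is of order type $<\gamma$ by Proposition \ref{prop:unionEnsBienOrd} because $\gamma$ is an additive ordinal. Conversely, every element of the right-hand side splits into real and imaginary parts whose supports are subsets of the original support.

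The second, harder step is a length-restricted version of Theorem \ref{thm:macLane}: if $\Kbb'$ is algebraically closed of characteristic $0$ and $\gamma$ is an $\epsilon$-number, then $\HahnFieldOrd{\Kbb'}{G}{\gamma}$ is already algebraically closed. By MacLane any nonzero polynomial $P$ with coefficients in $\HahnFieldOrd{\Kbb'}{G}{\gamma}$ has a root $s$ in the ambient $\HahnField{\Kbb'}{G}$; what remains is to bound the order type of $\supp(s)$. Here I would reinspect the transfinite Newton--Puiseux construction of $s$: it produces $\supp(s)$ inside a translate of the submonoid of $G$ generated by a set $S$ built from finitely many pairwise differences of support elements of the coefficients of $P$. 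Since $P$ has finitely many coefficients, each with support of order type $<\gamma$, Propositions \ref{prop:unionEnsBienOrd} and \ref{prop:sommeEnsBienOrd} give that $S$ itself has order type $<\gamma$, and Proposition \ref{prop:orderTypeMonoid} then bounds $\inner S$ by $\omega^{\omega \alpha}$ for some $\alpha < \gamma$, which is still less than $\gamma$ because $\gamma$ is an $\epsilon$-number. So $s$ lies in $\HahnFieldOrd{\Kbb'}{G}{\gamma}$.

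The conclusion is then immediate: $-1$ is not a square in $\HahnFieldOrd{\Kbb}{G}{\gamma}$, since it is not a square even in the bigger ordered field $\HahnField{\Kbb}{G}$, and by the two previous steps $\HahnFieldOrd{\Kbb}{G}{\gamma}[i] = \HahnFieldOrd{\Kbb[i]}{G}{\gamma}$ is algebraically closed, so $\HahnFieldOrd{\Kbb}{G}{\gamma}$ is real closed. The main obstacle is Step 2, where the real work is auditing MacLane's transfinite construction and verifying that every intermediate approximation keeps its support of order type $<\gamma$; alternatively, since the statement is attributed to \cite[Lemma 4.6]{DriesEhrlich01}, one can simply invoke that reference directly.
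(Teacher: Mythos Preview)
The paper does not actually give a proof of this proposition: it simply cites \cite[Lemma 4.6]{DriesEhrlich01} and adds the one-line remark that the proof there, stated for $\Kbb=\Rbb$, only uses that $\Rbb$ is real closed and hence goes through for any real closed $\Kbb$. Your final sentence (``one can simply invoke that reference directly'') is therefore precisely what the paper does.

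Your proposal goes further by sketching the underlying argument that \cite{DriesEhrlich01} carries out: the identification $\HahnFieldOrd{\Kbb}{G}{\gamma}[i]=\HahnFieldOrd{\Kbb[i]}{G}{\gamma}$, a length-restricted Newton--Puiseux/MacLane step bounding the support of a root inside the monoid generated by the differences of support elements of the coefficients, and the standard Artin--Schreier conclusion. This is the right shape of argument and matches what is behind the cited lemma; the only caveat is that your Step~2 is a sketch rather than a proof, and verifying that the transfinite Newton--Puiseux approximations stay below $\gamma$ is exactly the nontrivial content one is borrowing from \cite{DriesEhrlich01}. So: your approach is correct and strictly more informative than the paper's treatment, but for the purposes of this paper the citation alone suffices.
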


Actually, this was stated in \cite[Lemma 4.6]{DriesEhrlich01} for the case $\Kbb=\Rbb$, but the proof ony uses the fact that $\Rbb$ is real-closed. 

\subsubsection{Normal form theorem for surreal numbers}

\begin{definition}
	For $a$ and $b$ two surreal numbers, we define the following relations: 
	\begin{itemize}
		\item $a\prec b$ if for all $n\in\Nbb$, $n|a|<|b|$.
		\item $a\preceq b$ if there is some natural number $n\in\Nbb$ such that $|a|<n|b|$.
		\item $a\asymp b$ if $a\preceq b$ and $b\preceq a$.
	\end{itemize}
\end{definition}
With this definition, $\preceq$ is a preorder and $\prec$ is the corresponding strict preorder. The associated equivalence relation is $\asymp$ and the equivalence classes are the Archimedean classes.

\begin{theorem}[{\cite[Theorem 5.1]{gonshor1986introduction}}]
	For all surreal number $a$ there is a unique positive surreal $x$ of minimal length such that $a\asymp x$.
\end{theorem}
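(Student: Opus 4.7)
The plan is to exploit convexity of Archimedean classes together with the tree-like simplicity structure of $\No$ to pin down a unique simplest positive representative.

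First I would observe that the theorem implicitly requires $a\neq 0$, since $b\asymp 0$ would force $b=0$ which is not positive; and that by replacing $a$ with $|a|$ we may assume $a>0$. I would then set
\[
    C \;=\; \enstq{b\in\No}{b>0 \text{ and } b\asymp a},
\]
and verify that $C$ is \emph{order-convex}: if $b_1\le b\le b_2$ with $b_1,b_2\in C$, then $b\ge b_1>0$, and the inequalities $b\le b_2$ and $b_1\le b$ combined with $b_1\asymp b_2\asymp a$ force $b\asymp a$, hence $b\in C$. This is a short chase through the definitions of $\preceq$ and $\prec$.

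Next, since $C\neq\emptyset$ (it contains $a$), the class of lengths $\enstq{\length{c}}{c\in C}$ admits a minimum $\alpha_0$; set $C_0 = \enstq{c\in C}{\length{c}=\alpha_0}$. The crux of the proof is to show $|C_0|=1$. Given two distinct $c_1,c_2\in C_0$, they cannot be $\simplereq$-comparable, since that would make one a proper initial segment of the other, contradicting equality of lengths. Their greatest common initial segment $d$ therefore has length $\beta<\alpha_0$, and at position $\beta$ the signs of $c_1$ and $c_2$ differ: one is $-$ and the other is $+$. Under the lexicographic comparison (with blank symbol $\square$ satisfying $-<\square<+$) this forces, up to relabeling, $c_1<d<c_2$. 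Convexity of $C$ now yields $d\in C$, contradicting $\length{d}=\beta<\alpha_0$.

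The main obstacle is this tree-surgery step producing $d$ with $c_1<d<c_2$: it is a purely combinatorial use of the signs-sequence representation of the order and is the key bridge between simplicity and length-minimality. Once it is in place, convexity collapses any hypothetical second minimum-length representative into a strictly simpler one, yielding both existence (the unique element of $C_0$) and uniqueness of the desired positive surreal $x$.
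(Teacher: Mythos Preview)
Your argument is correct. Note, however, that the paper does not itself prove this statement: it is quoted verbatim as \cite[Theorem 5.1]{gonshor1986introduction} and left unproved, so there is no in-paper proof to compare against. Your approach---show that the positive part of an Archimedean class is order-convex, then use the tree structure of signs sequences to show that any nonempty convex class has a unique element of minimal length (via the common-prefix argument yielding $c_1<d<c_2$)---is exactly the standard proof, and is essentially Gonshor's own argument in \cite{gonshor1986introduction}. One cosmetic remark: you might make explicit that the minimum of $\enstq{\length c}{c\in C}$ exists simply because it is a nonempty class of ordinals, but this is implicit in your phrasing already.
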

The unique element of minimal length in its Archimedean class has many properties  similar to those of exponentiation:

\begin{definition}
	For all surreal number $a$ written in canonical representation $a=\crotq{a'}{a''}$, we define
	$$
		\omega^a=\crotq{0,\enstq{n\omega^{a'}}{n\in\Nbb}}{\enstq{\frac1{2^n}\omega^{a''}}{n\in\Nbb}}
	$$
	we call such surreal numbers \textbf{monomials}.
\end{definition}
Actually this definition is uniform (\cite[Corollary 5]{gonshor1986introduction}) and therefore, we can use any representation of $a$ in this definition. Another point is that we can easily check that this notation is consistent with the ordinal exponentiation. More precisely, if $a$ is an ordinal, $\omega^a$ is indeed the ordinal corresponding to the ordinal exponentiation (see \cite[Theorem 5.4]{gonshor1986introduction}). Finally, as announced, this definition gives the simplest elements among the Archimedean classes.

\begin{theorem}[{\cite[Theorem 5.3]{gonshor1986introduction}}]
	A surreal number is of the form $\omega^a$ if and only if it is simplest positive element in its Archimedean class. More precisely,
	$$
		\forall a\in\No\qquad (\exists c\in\No\quad a=\omega^c)\implies (\forall b\in\Nobf\quad b\asymp a\implies a\sqsubseteq |b|)
	$$
\end{theorem}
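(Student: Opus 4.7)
The plan is to prove the displayed implication by transfinite induction on the length of the exponent $c$ in $a=\omega^c$; the reverse direction of the ``if and only if'' then follows from uniqueness of the simplest positive element in an Archimedean class, combined with the fact that every positive surreal is Archimedean-equivalent to some $\omega^c$.

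First I would isolate a preliminary lemma: whenever $c_1<c_2$ in $\No$, one has $\omega^{c_1}\prec\omega^{c_2}$. This step controls the interaction between the surreal order on exponents and the Archimedean ordering of the resulting monomials, and it really is the crux of the argument. I expect to establish it by nested induction on simplicity: when $c_1\sqsubset c_2$ is already a left option in the canonical representation of $c_2$, the defining formula for $\omega^{c_2}$ directly lists $n\omega^{c_1}$ among its left options, giving $n\omega^{c_1}<\omega^{c_2}$ for every $n$; in the general case one interpolates, using the tree structure of $\No$, a surreal $c_0$ simpler than both $c_1$ and $c_2$ with $c_1\le c_0<c_2$ (or $c_1<c_0\le c_2$), and applies the induction hypothesis on each side.

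With the lemma in hand, the main argument runs as follows. Write $\omega^c=\crotq{L}{R}$ with $L=\{0\}\cup\{n\omega^{c'}:n\in\Nbb\}$ and $R=\{\frac{1}{2^n}\omega^{c''}:n\in\Nbb\}$, where $c'$ and $c''$ range over the left and right options of the canonical representation of $c$. Take any $b\asymp\omega^c$, and replacing $b$ by $|b|$ assume $b>0$. Positivity gives $0<b$; the lemma together with $b\asymp\omega^c$ yields $n\omega^{c'}\prec\omega^c\asymp b$, so $n\omega^{c'}<b$ for every $n$; symmetrically $b<\frac{1}{2^n}\omega^{c''}$ for every $n$. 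Hence $L<b<R$. I then invoke the simplicity principle that the surreal defined by a representation $\crotq{L}{R}$ is a prefix of every surreal lying strictly between $L$ and $R$: this forces $\omega^c\sqsubseteq b$, exactly as required.

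For the converse direction of the iff, I would note that given the simplest positive $a$ in an Archimedean class, there must exist some $c$ with $a\asymp\omega^c$ (using that $c\mapsto\omega^c$ surjects onto Archimedean classes of positive surreals, which itself follows from the theorem on unique minimal-length representatives cited just before the statement). The forward direction then shows $\omega^c$ is already the simplest positive in its class, and by uniqueness of the simplest such element one concludes $a=\omega^c$. The main obstacle will be the preliminary lemma $\omega^{c_1}\prec\omega^{c_2}$: it is where one has to weave the inductive definition of $\omega^{(\cdot)}$ with the tree structure of $\No$, and care is required to manage the cofinality subtleties between arbitrary representations and canonical representations. Everything else reduces to carefully matching the inductive hypothesis against the explicit left and right options of $\omega^c$.
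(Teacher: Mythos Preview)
The paper does not supply a proof of this statement; it is quoted verbatim from Gonshor's monograph and used as a black box. Your outline is essentially Gonshor's own argument: the preliminary lemma that $c_1<c_2$ implies $\omega^{c_1}\prec\omega^{c_2}$ is Gonshor's Theorem~5.2, and the deduction of the displayed implication by sandwiching $|b|$ between the explicit left and right options of $\omega^c$ and invoking the simplicity property of $\crotq{L}{R}$ is exactly the content of his Theorem~5.3.

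One small caveat concerns your converse. You appeal to the surjectivity of $c\mapsto\omega^c$ onto the Archimedean classes of positive surreals and say this ``follows from the theorem on unique minimal-length representatives cited just before the statement.'' It does not: Theorem~5.1 in the paper only asserts that each class has a unique simplest positive element, not that this element is of the form $\omega^c$. Surjectivity is a separate (and nontrivial) fact, established in Gonshor by a dedicated inductive construction; in the present paper it reappears only later, packaged as the function $c(x)$ in Proposition~\ref{prop:formeExpXPurelyInfiniteOmegaAg}. With that gap patched, your proof is correct and matches the source.
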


Elements of the form $\omega^a$ are by definition positive and have the following property: 

\begin{proposition}[{\cite[Theorem 5.4]{gonshor1986introduction}}]
	We have 
	\begin{itemize}
		\item $\omega^0=1$
		\item $\forall a,b\in\No\qquad \omega^a\omega^b=\omega^{a+b}$
	\end{itemize}
\end{proposition}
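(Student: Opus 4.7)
The statement splits into two parts: $\omega^0=1$, which is immediate from the definition, and the functional equation $\omega^a\omega^b=\omega^{a+b}$, which requires a transfinite induction.

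For the first item, when $a=0$ the canonical representation is $0=\crotq{}{}$, so neither $a'$ nor $a''$ exists. The defining formula collapses to $\omega^0=\crotq{0}{}$. Since $\crotq{0}{}=1$ by Gonshor's identification of finite dyadics with short signs sequences, we get $\omega^0=1$.

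For the product formula, the plan is a double induction on the simplicity of $(a,b)$. Write $a=\crotq{a'}{a''}$ and $b=\crotq{b'}{b''}$ in canonical form. First I would expand $\omega^{a+b}$ using the definition of $\omega^{\cdot}$ applied to the representation $a+b=\crotq{a'+b,\,a+b'}{a''+b,\,a+b''}$. This yields, for $n\in\Nbb$,
\[
\omega^{a+b}=\crotq{0,\,n\omega^{a'+b},\,n\omega^{a+b'}}{\tfrac{1}{2^n}\omega^{a''+b},\,\tfrac{1}{2^n}\omega^{a+b''}}.
\]
By the induction hypothesis applied to the simpler pairs, the right-hand monomials equal $n\omega^{a'}\omega^b$, $n\omega^a\omega^{b'}$, $\tfrac{1}{2^n}\omega^{a''}\omega^b$, $\tfrac{1}{2^n}\omega^a\omega^{b''}$ respectively. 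Next I would expand $\omega^a\omega^b$ with the uniform product formula, using the representations of $\omega^a$ and $\omega^b$ provided by the definition (whose left options are $0$ together with the $n\omega^{a'}$, and whose right options are the $\tfrac{1}{2^n}\omega^{a''}$, and symmetrically for $\omega^b$). Each of the four families $x'y+xy'-x'y'$, etc.\ is then a concrete algebraic expression in products $\omega^{a'}\omega^b,\,\omega^a\omega^{b'},\,\omega^{a'}\omega^{b'}$ with rational coefficients; using the induction hypothesis again these can be rewritten in terms of $\omega^{a'+b},\omega^{a+b'},\omega^{a'+b'}$.

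The remaining step, which is the main technical point, is to check that the two representations of the same intended number $\omega^{a+b}$ are equal by cofinality. Concretely, I would show:

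\begin{itemize}
\item every left option of $\omega^a\omega^b$ is dominated by some $n\omega^{a'+b}$ or $n\omega^{a+b'}$ (and by $0$ when no such $a'$ or $b'$ exists), and conversely each $n\omega^{a'+b}$ is dominated by a left option of $\omega^a\omega^b$;
\item the analogous cofinality holds between right options.
\end{itemize}

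The verification uses the two algebraic identities $n\omega^{a'}\omega^b=(n\omega^{a'})\omega^b+\omega^a\cdot 0-(n\omega^{a'})\cdot 0$ (so left-left-left terms already appear in the expansion) and, for the mixed ``left$\times$left'' term $(n\omega^{a'})(m\omega^{b'})-\ldots$, the monotonicity fact $\omega^{a'}\omega^{b'}\prec\omega^{a'}\omega^b$ (since $\omega^{b'}$ is infinitesimal compared to $\omega^b$ when $b'<b$), so that any cross term is absorbed by a large enough multiple $N\omega^{a'+b}$. This is precisely where I expect the subtle bookkeeping: one has to check that the Archimedean comparisons go the right way in all four quadrants (left$\times$left, left$\times$right, right$\times$left, right$\times$right), and handle the boundary case when one of the sides $a',a'',b',b''$ is empty, which is where the explicit $0$ in the left options of $\omega^{a+b}$ becomes essential. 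Once all four cofinalities are verified, $\omega^a\omega^b$ and $\omega^{a+b}$ have representations with the same simplest number between them, hence are equal, completing the induction.
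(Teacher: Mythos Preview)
The paper does not supply its own proof of this proposition; it is stated with a citation to \cite[Theorem 5.4]{gonshor1986introduction} and used as background. Your outline is essentially Gonshor's argument: verify $\omega^0=1$ directly from the bracket, then run a simultaneous induction on $(a,b)$, expand $\omega^{a+b}$ via the uniform representation $a+b=\crotq{a'+b,\,a+b'}{a''+b,\,a+b''}$, expand $\omega^a\omega^b$ via the product rule, and match the two brackets by cofinality using the Archimedean relations between $\omega^{a'}$, $\omega^a$, $\omega^{a''}$. So there is nothing to contrast; your plan and the reference agree.
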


Thanks to this definition of the $\omega$-exponentiation, we are now ready to expose a normal form for surreal numbers which is analogous to the Cantor normal form for ordinal normal. 

\begin{definition}[{\cite[Section 5C, page 59]{gonshor1986introduction}}]
	For $\nu$ an ordinal number, $\suitelt{r_i}i\nu$ a sequence of non-zero real numbers and $\suitelt{a_i}i\nu$ a decreasing sequence of surreal numbers, we define $\aSurreal$ inductively as follows:
	\begin{itemize}
		\item If $\nu=0$, then $\aSurreal=0$
		\item If $\nu=\nu'+1$ then $\aSurreal=\aSurrealPrefix+r_{\nu'}\omega^{a_{\nu'}}$
		\item If $\nu$ is a limit ordinal, $\aSurreal$ is defined as the following bracket:
		$$
			\crotq{\enstq{\aSurrealPrefix + s\omega^{a_{\nu'}}}{
					\begin{array}{c}
						\nu'<\nu\\ s<r_{\nu'}
					\end{array}}}{\enstq{\aSurrealPrefix+s\omega^{a_{\nu'}}}{\begin{array}{c}
					\nu'<\nu\\ s>r_{\nu'}
				\end{array}}}
		$$
	\end{itemize}
\end{definition}
Note that if $0$ is seen as a limit ordinal, then both definition are consistent.

\begin{theorem}[{\cite[Theorem 5.6]{gonshor1986introduction}}]
	\label{thm:normalForm}
	Every surreal number can has a unique writing of the form $\aSurreal$. This expression will be called its \textbf{normal form}. 
\end{theorem}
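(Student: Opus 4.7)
The plan is to prove existence and uniqueness simultaneously via a transfinite extraction procedure driven by Archimedean classes. For existence, given a nonzero $x\in\No$, I would set $y_0=x$ and recursively define, at a successor stage $\nu+1$ with $y_\nu\ne 0$, the unique surreal $a_\nu$ such that $y_\nu\asymp\omega^{a_\nu}$ (furnished by the immediately preceding theorem identifying $\omega^c$ as the simplest positive element of its Archimedean class), then the unique nonzero real $r_\nu$ such that $y_\nu-r_\nu\omega^{a_\nu}\prec\omega^{a_\nu}$, and set $y_{\nu+1}=y_\nu-r_\nu\omega^{a_\nu}$. At a limit stage $\nu$, I would let $s_\nu$ be the surreal defined by the bracket formula from the definition of normal form applied to the already-constructed $(r_i)_{i<\nu}$ and $(a_i)_{i<\nu}$, and set $y_\nu=x-s_\nu$. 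The procedure halts as soon as some $y_\nu$ equals zero, giving $x=\sum_{i<\nu}r_i\omega^{a_i}$.

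Before tackling termination, three structural facts must be checked. First, $r_\nu$ exists and is nonzero: by definition of $\asymp$, the quotient $y_\nu/\omega^{a_\nu}$ is bounded in absolute value by an integer and bounded away from zero by a dyadic, so it has a unique nonzero real part. Second, the sequence $(a_\nu)$ is strictly decreasing: by construction $y_{\nu+1}\prec\omega^{a_\nu}$, hence $\omega^{a_{\nu+1}}\prec\omega^{a_\nu}$, which gives $a_{\nu+1}<a_\nu$ by monotonicity of $a\mapsto\omega^a$. Third, at a limit stage the bracket in the definition is legitimate: the left and right classes are non-empty and every element of the left class is strictly less than every element of the right class, so Conway's simplicity principle produces a unique simplest separating surreal, and one verifies by induction on $\nu'<\nu$ that it is consistent with the approximating partial sums.

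The main obstacle will be proving termination of the recursion. A naive birthday induction on $y_\nu$ does not work, since $y_{\nu+1}$ can have birthday larger than $y_\nu$. The cleanest strategy I would pursue is to exhibit, for each nontrivial stage $\nu$, a distinguished position in the signs sequence of $x$ at which the signs sequence of the partial sum $s_{\nu+1}$ agrees with $x$ strictly more deeply than $s_\nu$ does, giving an order-preserving injection from the set of stages into the ordinal $\length{x}+1$; this bounds the length of the recursion and forces $y_\nu=0$ at some stage. An equivalent, more algebraic formulation works in terms of the Hahn-series viewpoint, bounding the order type of the reverse well-ordered family $\{a_\nu\}$ in terms of the birthday of $x$.

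Uniqueness is then proved by transfinite induction on the length $\nu$ of the normal form. If $x$ admits two normal forms, their leading exponents both coincide with the unique $a$ satisfying $x\asymp\omega^a$, their leading coefficients both coincide with the unique real part of $x/\omega^{a}$, and subtracting the common leading term reduces the problem to a surreal whose candidate normal forms are strictly shorter, to which the induction hypothesis applies. At limit stages, the bracket expression is intrinsically determined by the prescribed sequences $(r_i)$ and $(a_i)$, so uniqueness propagates through the transfinite induction without further complication.
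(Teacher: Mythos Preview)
The paper does not prove this theorem; it merely cites Gonshor \cite[Theorem 5.6]{gonshor1986introduction}. Your outline is essentially Gonshor's own argument: transfinite extraction of leading terms via Archimedean classes, with termination controlled by the signs-sequence length and uniqueness by induction on the length of the expansion. So there is nothing to contrast at the level of the paper itself; your sketch is the standard proof and is correct in spirit.

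One point worth tightening: at a limit stage $\nu$ you define $s_\nu$ by the bracket and set $y_\nu=x-s_\nu$, but you do not explicitly verify the crucial estimate $y_\nu\prec\omega^{a_i}$ for every $i<\nu$, which is what guarantees the extracted exponents continue to strictly decrease past the limit. In Gonshor this is the content of the lemmas immediately preceding Theorem~5.6, and it is where the real work at limit ordinals lies; your phrase ``one verifies by induction on $\nu'<\nu$ that it is consistent with the approximating partial sums'' gestures at it but does not discharge it. Likewise, your termination strategy---injecting stages order-preservingly into $\length{x}+1$ via depth of agreement of signs sequences---is correct, but making it precise requires the explicit description of the signs sequence of a normal form (what the paper records as Theorem~\ref{thm:serieToSignExp}), so in a self-contained write-up you would need either to develop that first or to argue termination more directly as Gonshor does.
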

Note that  if $a$ is an ordinal number, then its normal form coincides with its Cantor normal form. In such a sum, elements $r_i\omega^{a_i}$ will be called the \textbf{terms} of the series.

\begin{definition}
	\label{def:nu}
	The length of the series in the normal form of a surreal number $x$ is denoted $\nu(x)$.
\end{definition}

\begin{definition}
	A surreal number $a$ in normal form $a=\aSurreal$ is
	\begin{itemize}
		\item \textbf{purely infinite} if for all $i<\nu$, $a_i>0$. $\Nobf_\infty$ will stand for the class of purely infinite numbers.
		\item \textbf{infinitesimal} if for all $i<\nu$, $a_i<0$ (or equivalently if $a\prec 1$). 
		\item \textbf{appreciable} if for all $i<\nu$, $a_i\leq0$ (or equivalently if $a\preceq 1$).
	\end{itemize}
	If $\nu'\leq\nu$ is the first ordinal such that $a_i\leq0$, then $\aSurrealPrefix$ is called the \textbf{purely infinite part} of $a$. Similarly, if $\nu'\leq\nu$ is the first ordinal such that $a_i<0$, $\Sumlt {\nu'\leq i}\nu r_i\omega^{a_i}$ is called the \textbf{infinitesimal part} of $a$.
\end{definition}

\begin{theorem}[{\cite[Theorems 5.7 and 5.8]{gonshor1986introduction}}]
	\label{thm:normalFormOp}
	Operation over surreal numbers coincides with formal addition and formal multiplication over the normal forms. More precisely,
	
	$$
		\aSurreal+\Sumlt i{\nu'}s_i\omega^{b_i} = \Sum{x\in\No}{}t_x\omega^x
	$$
	where
	\begin{itemize}
		\item $t_x=r_i$ if $i$ is such that $a_i=x$ and there is no $i$ such that $b_i=x$.
		\item $t_x=s_i$ if $i$ is such that $b_i=x$ and there is no $i$ such that $a_i=x$.
		\item $t_x=r_i+s_j$ if $i$ is such that $a_i=x$ and $j$ is such that $b_j=x$
	\end{itemize}
	and
	$$
		\pa{\aSurreal}\pa{\Sumlt i{\nu'}s_i\omega^{b_i}}= \Sum{x\in\No}{}\pa{\Sum{\tiny\enstq{\begin{array}{c}
						i<\nu\\ j<\nu'
				\end{array}}{a_i+b_j=x}}{}r_is_j}\omega^x
	$$
\end{theorem}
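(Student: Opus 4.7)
The plan is to prove both formulas by transfinite induction on the length of the second series, exploiting the recursive definition of the normal form given just above the statement. A crucial technical tool is the \emph{uniformity} of the definitions of $+$ and $\cdot$ on $\No$: the inductive formulas hold for any representation $\crotq{A}{B}$ of the operands, not just the canonical one. Consequently, whenever $\aSurreal$ is displayed as a bracket at a limit stage, we may feed that bracket directly into the ring operation without first unfolding it.

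For addition I would first establish a \emph{monomial lemma}: given $x$ already in normal form and a single monomial $s\omega^b$, the sum $x+s\omega^b$ is again in normal form, obtained by the rule described in the statement (insert the new monomial in the right place, merge coefficients if some $a_i=b$, or simply extend the series). The successor case follows immediately from the inductive clause of the normal form definition combined with associativity of $+$; the limit case is handled by uniformity, pushing the addition inside the bracket and matching the left/right options of the single monomial by cofinality. The general identity for $x+y$ then follows by a second transfinite induction on the length of $y$: at a successor ordinal one applies the monomial lemma, and at a limit ordinal one uses uniformity again to push the sum inside the bracket defining $y$, checking that the resulting bracket is the canonical normal form bracket for the combined series.

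For multiplication the same pattern works once we establish the monomial case: if $x=\aSurreal$, then $x\cdot(s\omega^b)=\sum_{i<\nu}(r_i s)\omega^{a_i+b}$. This reduces, via distributivity and the behaviour of the embedding $\Rbb\hookrightarrow\No$ on coefficients, to the already-stated identity $\omega^a\cdot\omega^b=\omega^{a+b}$. One then iterates: $x\cdot(y+s\omega^b)=x\cdot y+x\cdot s\omega^b$ gives the successor case, and the limit case is again handled by uniformity applied to the bracket representation of $y$. Finally one has to verify that the resulting double sum $\sum_{x\in\No} t_x\omega^x$ with $t_x=\sum_{a_i+b_j=x}r_i s_j$ is a legitimate normal form: the exponents $\{a_i+b_j\}$ form a reverse well-ordered subset of $\No$, and for each $x$ only finitely many pairs $(i,j)$ contribute to $t_x$. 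Both facts follow from the reverse well-orderedness of $(a_i)_{i<\nu}$ and $(b_j)_{j<\nu'}$ by a standard Minkowski-sum argument.

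The main obstacle is the limit step, particularly for multiplication, where one must verify that the bracket representing $y$ at a limit stage, when multiplied by $x$, produces exactly the bracket one would write down for the formal product. This is a cofinality and coinitiality computation that boils down to identifying matching left and right options on both sides; it is routine but bookkeeping-intensive, and is where the uniformity of the ring operations does most of the work. All other steps are direct applications of the uniform definitions of $+$ and $\cdot$ together with the induction hypothesis.
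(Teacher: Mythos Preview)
The paper does not give its own proof of this theorem: it is stated as a citation to Gonshor's book \cite[Theorems 5.7 and 5.8]{gonshor1986introduction}, with no argument supplied. So there is nothing in the paper to compare your proposal against directly.

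That said, your outline is essentially the strategy Gonshor himself uses: one first treats the monomial case (adding or multiplying by a single $s\omega^b$), then proceeds by transfinite induction on the length of the second series, with the limit step handled by the uniformity of the definitions of $+$ and $\cdot$ applied to the bracket representation of the normal form at a limit ordinal. Your identification of the cofinality/coinitiality check at the limit stage as the main technical point is accurate, and the well-orderedness verification for the product's support is exactly the ingredient needed to see that the formal product is again a legitimate normal form. As a sketch of Gonshor's argument, your proposal is correct.
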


We stated that every surreal number has a normal form. However, in the other direction, it is possible to get back the sign expansion from a normal form. 

\begin{definition}[Reduced sign expansion, Gonshor, \cite{gonshor1986introduction}]
	\label{def:reducedSignExpansion}
	Let $x=\Sum{i<\nu}{}r_i\omega^{a_i}$ be a surreal number. The reduced 
	sign expansion of $a_i$, denoted $a_i^\circ$ is inductively defined as follows:
	\begin{itemize}
		\item $a_0^\circ=a_0$
		\item For $i>0$, if $a_i(\delta)=-$ and if there is there is $j<i$ 
		such that for $\gamma\leq\delta$, $a_j(\gamma)=a_i(\gamma)$, then we discard
		the minus in position $\delta$ in the sign expansion of $a_i$.
		\item If $i>0$ is a non-limit ordinal and $(a_{i-1})_-$ (as a sign expansion)
		is a prefix of $a_i$, then we discard this minus after $a_{i-1}$ if $r_{i-1}$
		is not a dyadic rational number. 
	\end{itemize}
\end{definition}
More informally, $a_i^\circ$ is the sign expansion obtained when copying $a_i$ omitting 
the minuses that have already been treated before, in an other exponent of the serie. 
We just keep the new one brought by $a_i$. However, the later case give a condition where
even a new minus can be omitted.

\begin{theorem}[\cite{gonshor1986introduction}, Theorems 5.11 and 5.12]
	\label{thm:serieToSignExp}
	For $\alpha$ an ordinal and a surreal $a$, we write $|a[:\alpha]|_+$ for the (ordinal) number of pluses in $\alpha[:\alpha]$ the prefix of length of $\alpha$ of $x$. Then,
	\begin{itemize}
		\item The sign expansion of $\omega^a$ is as follows: we start with a plus and the for any ordinal $\alpha<|a|$ we add $\omega^{|a[:\alpha]|_++1}$ occurrences of $a(\alpha)$ (the sign in position $\alpha$ in the signs sequence of $a$).
		\item The sign expansion of $\omega^an$ is the signs sequence of $\omega^a$ followed by $\omega^{|a|_+}(n-1)$ pluses.
		\item The sign expansion of $\omega^a\f1{2^n}$ is the sign expansion of $\omega^a$ followed by $\omega^{|a|_+}n$ minuses.
		\item The sign expansion of $\omega^ar$ for $r$ a positive real is the sign expansion of $\omega^a$ to which we add each sign of $r$ $\omega^{|a|_+}$ times excepted the first plus which is omitted. 
		\item The sign expansion of $\omega^ar$ for $r$ a negative real is the sign expansion of $\omega^a(-r)$ in which we change every plus in a minus and conversely.
		\item The sign expansion of $\Sum{i<\nu}{}r_i\omega^{a_i}$ is the juxtaposition of the sign expansions of the $\omega^{a_i^\circ}r_i$
	\end{itemize}
\end{theorem}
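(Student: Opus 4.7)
The plan is to prove each bullet in turn by transfinite induction, starting with pure monomials $\omega^a$ and then building up through dyadic multiples, real multiples, sign reversal, and finally general normal-form sums.

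For the sign expansion of $\omega^a$, I would proceed by induction on the length of $a$ using the canonical representation $a=\crotq{a'}{a''}$ and Gonshor's definition $\omega^a=\crotq{0,\enstq{n\omega^{a'}}{n\in\Nbb}}{\enstq{\omega^{a''}/2^n}{n\in\Nbb}}$. The sign sequence of $\omega^a$ is obtained by reading off the simplest surreal in that bracket. The base case $a=0$ gives $\omega^0=1$ with sign expansion $+$, matching the formula. Appending $+$ to $a$ moves $a$ upwards and makes $\omega^{a_+}$ cofinal with $(n\omega^a)_n$ from below; a simplicity argument shows the new segment of signs to append has length exactly $\omega^{|a|_+ + 1}$. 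Appending $-$ is dual, using $\omega^a/2^n$ from above. At limit stages, cofinality of the canonical bracket representations produces the juxtaposition automatically.

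The formulas for $\omega^a n$, $\omega^a/2^n$, and $\omega^a r$ then follow by elementary manipulation. For $\omega^a n$ one writes $\omega^a n=\omega^a(n-1)+\omega^a$ and locates it in the bracket $\bigl\{\omega^a(n-1)+k\omega^{a'}\bigr\} \mid \bigl\{\omega^a(n-1)+\omega^{a''}/2^k\bigr\}$; its simplest representative is obtained by appending exactly $\omega^{|a|_+}$ pluses, and induction on $n$ gives the claimed $\omega^{|a|_+}(n-1)$ total. The dyadic case is symmetric. For a positive real $r$, write its own sign expansion (of length at most $\omega$), use dyadic bracketings of $r$ from below and above to bracket $\omega^a r$, and verify that each sign of $r$ beyond the leading $+$ contributes $\omega^{|a|_+}$ copies to the expansion of $\omega^a r$. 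The case $r<0$ reduces to $r>0$ via the general fact that negation in $\Nobf$ swaps every $+$ and $-$ in the sign expansion, itself a one-line induction on simplicity.

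The delicate part is the last bullet. The naive concatenation of the sign expansions of the $\omega^{a_i}r_i$ fails: minuses of $a_i$ that already appeared in some $a_j$ with $j<i$ have in effect already ``moved us left'' in the tree, and reusing them would double-count. This is exactly why Definition \ref{def:reducedSignExpansion} discards (i) every minus of $a_i$ matching a minus of an earlier $a_j$ at the same position, and (ii) the leading minus after $a_{i-1}$ when $r_{i-1}$ is not dyadic. I would argue by induction on $\nu$: at a successor $\nu=\nu'+1$ one compares the bracket defining $\aSurrealPrefix + r_{\nu'}\omega^{a_{\nu'}}$ with the proposed concatenation and checks that its simplest element is obtained precisely by appending the sign expansion of $\omega^{a_{\nu'}^\circ}r_{\nu'}$; limit stages follow from the cofinality of the bracket used to define $\aSurreal$. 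The main obstacle is exactly this bookkeeping: justifying clause (i) requires tracking which signs earlier terms have already committed, and clause (ii) reflects the subtle fact that non-dyadic real coefficients carry an infinite tail that ``swallows'' the separating minus of the next term — matching the two correction rules in the definition of $a_i^\circ$ on the nose.
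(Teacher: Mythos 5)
The paper does not prove this theorem: it cites it directly from Gonshor's book (his Theorems 5.11 and 5.12), so there is no in-paper proof to compare against. Your sketch does outline the standard route — essentially Gonshor's own argument: transfinite induction on $\length{a}$ via the canonical bracket for $\omega^a$, then bootstrapping to $\omega^a n$, $\omega^a 2^{-n}$, $\omega^a r$, and finally the general normal-form sum via the reduced sign expansion — and you correctly flag the two genuine subtleties (the length-$\omega^{|a|_+ + 1}$ blocks and the discard rules in Definition \ref{def:reducedSignExpansion}). That said, the two hardest claims are asserted rather than argued: you say ``a simplicity argument shows the new segment of signs to append has length exactly $\omega^{|a|_+ + 1}$'', but that simplicity argument \emph{is} the content of Gonshor's Theorem 5.11 and is not at all obvious (it needs a careful analysis of which surreals are sandwiched between $\{n\omega^a\}_n$ and the relevant upper options, together with the already-established inductive description of the sign expansion of $\omega^{a'}$ for prefixes $a'$); and for the last bullet you correctly diagnose the double-counting problem but only gesture at the bookkeeping that makes the reduced expansion work, in particular the non-dyadic tail phenomenon behind the second discard rule. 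As a plan it is sound and matches the cited source; as a proof it leaves precisely the steps that require work unproved.
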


As a final note of this subsection, we give some bounds on the length of monomials and terms.

\begin{lemma}[{\cite[Lemma 4.1]{DriesEhrlich01}}]
	\label{lem:lengthOmegaA}
	For all surreal number $a\in\Nobf$, $$\length a\leq \length{\omega^a}\leq\omega^{\length a}$$
\end{lemma}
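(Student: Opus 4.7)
The plan is to apply Theorem~\ref{thm:serieToSignExp}, which explicitly describes the sign expansion of $\omega^a$ in terms of that of $a$: starting with a single $+$, one appends, for each $\alpha < \length{a}$, a block of $\omega^{|a[:\alpha]|_+ + 1}$ copies of the sign $a(\alpha)$. Reading off lengths, this yields
\begin{equation*}
\length{\omega^a} \;=\; 1 + \sum_{\alpha < \length{a}} \omega^{|a[:\alpha]|_+ + 1},
\end{equation*}
where the right-hand side is an ordinal sum. Both desired inequalities will follow by elementary manipulation of this expression, with the trivial case $a = 0$ handled separately via $\omega^0 = 1$ of length $1$.

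For the lower bound $\length{a} \leq \length{\omega^a}$, I would observe that the displayed sum has $\length{a}$ summands, each a positive ordinal (indeed each at least $\omega$). Since an ordinal sum of $\length{a}$ strictly positive ordinals is always at least $\length{a}$ (a routine transfinite induction on $\length{a}$), the bound follows immediately.

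For the upper bound $\length{\omega^a} \leq \omega^{\length{a}}$, the key observation is that $|a[:\alpha]|_+ \leq \alpha$ for every $\alpha < \length{a}$, since a prefix of length $\alpha$ contains at most $\alpha$ plus signs. Hence each exponent satisfies $|a[:\alpha]|_+ + 1 \leq \alpha + 1$, so
\begin{equation*}
\length{\omega^a} \;\leq\; 1 + \sum_{\alpha < \length{a}} \omega^{\alpha+1}.
\end{equation*}
I would then invoke the standard identity $\sum_{\alpha < \gamma} \omega^{\alpha+1} = \omega^{\gamma}$ valid for all $\gamma \geq 1$, verified by a short transfinite induction via absorption of smaller Cantor normal form terms. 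Combined with the estimate above (and the trivial $1 + \omega^{\length{a}} = \omega^{\length{a}}$ for $\length{a} \geq 1$), this yields $\length{\omega^a} \leq \omega^{\length{a}}$.

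No serious obstacle is anticipated: the whole argument is a direct computation from the sign-expansion formula of Theorem~\ref{thm:serieToSignExp}, and the only delicate points are the two elementary ordinal arithmetic identities invoked in each bound, which are independent of surreal machinery.
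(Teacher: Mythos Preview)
Your argument is correct. The paper does not actually give a proof of this lemma: it is simply quoted from \cite[Lemma~4.1]{DriesEhrlich01}. Your approach via the explicit sign-expansion description of $\omega^a$ (Theorem~\ref{thm:serieToSignExp}) is precisely the natural one, and is essentially the argument in the cited reference as well; the two ordinal identities you invoke (a transfinite sum of $\gamma$ positive ordinals is $\geq\gamma$, and $\bigoplus_{\alpha<\gamma}\omega^{\alpha+1}=\omega^{\gamma}$ for $\gamma\geq1$) are standard and correctly stated.
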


\begin{lemma}[{\cite[Lemma 6.3]{gonshor1986introduction}}]
	\label{lem:lengthTerm}
	Let $x=\aSurreal$ a surreal number. We have for all $i<\nu$, $\length{r_i\omega^{a_i}}\leq\length x$.
\end{lemma}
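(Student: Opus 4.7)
The plan is to proceed by transfinite induction on $\nu$, using the explicit description of the sign expansion of a normal-form surreal number from Theorem \ref{thm:serieToSignExp}. The base case $\nu=1$ is immediate since then $x=r_0\omega^{a_0}$. For $\nu>1$, the last bullet of Theorem \ref{thm:serieToSignExp} states that the sign expansion of $x$ is the concatenation of the sign expansions of the $\omega^{a_j^\circ}r_j$, so
$$\length{x}=\sum_{j<\nu}\length{\omega^{a_j^\circ}r_j}$$
as an ordinal sum. This immediately yields $\length{\omega^{a_i^\circ}r_i}\leq\length{x}$ for every $i<\nu$, and the desired bound follows ``for free'' whenever $a_i=a_i^\circ$. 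The only real work is to pass from $a_i^\circ$ back to $a_i$ when the reduction has removed some signs.

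For this, I would use the explicit formula
$$\length{\omega^a}=1+\sum_{\alpha<|a|}\omega^{|a[:\alpha]|_++1}$$
from the first bullet of Theorem \ref{thm:serieToSignExp}: each discarded sign of $a_i$ at position $\alpha$ contributes an ordinal weight $\omega^{|a_i[:\alpha]|_++1}$ to the difference $\length{\omega^{a_i}}-\length{\omega^{a_i^\circ}}$. By Definition \ref{def:reducedSignExpansion}, such a discarded sign matches a sign at the same position $\alpha$ in some earlier exponent $a_j$ ($j<i$) with $a_j(\gamma)=a_i(\gamma)$ for all $\gamma\leq\alpha$; in particular $|a_j[:\alpha]|_+=|a_i[:\alpha]|_+$, so the corresponding sign in the expansion of $\omega^{a_j}$ carries the very same ordinal weight, and is retained in $\omega^{a_j^\circ}$ for the least such index $j$. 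I would then argue that the sum of all these ``extra'' weights from discarded signs in $a_i$ is absorbed by the lengths $\length{\omega^{a_j^\circ}r_j}$ for $j<i$, which together form the piece of $\length{x}$ preceding $\length{\omega^{a_i^\circ}r_i}$.

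The main obstacle will be the combinatorial bookkeeping: matching discarded positions in $a_i$ injectively to distinct absorbing positions in the earlier reduced exponents $a_j^\circ$ so that no weight is counted twice, and treating separately the exceptional single-sign discard governed by the dyadic/non-dyadic rule on $r_{i-1}$ (third bullet of Definition \ref{def:reducedSignExpansion}). Once this accounting is in place, summing the inequalities position by position and invoking monotonicity of ordinal addition on the right yields $\length{r_i\omega^{a_i}}\leq\length{x}$, closing the induction.
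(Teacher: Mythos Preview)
The paper does not prove this lemma at all: it is stated with a bare citation to Gonshor \cite[Lemma~6.3]{gonshor1986introduction} and no argument is given. So there is no in-paper proof to compare against.

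Your outline is essentially Gonshor's own strategy and is sound. A couple of remarks that may help you close the bookkeeping you flag. First, forming $a_j^\circ$ from $a_j$ removes only minuses, so $|a_j^\circ|_+=|a_j|_+$ and more generally the plus-count of any prefix is unchanged; this is what lets you identify the ordinal weight $\omega^{|a_i[:\delta]|_++1}$ of a discarded position in $a_i$ with the weight of the matching retained position in the earliest $a_j^\circ$ witnessing the discard. Taking the least such $j$ for each discarded position gives the injection you need, and the images are pairwise distinct because the discard rule is hereditary in $j$. Second, for the non-dyadic exception: when $r_{i-1}$ is not dyadic its sign sequence has length exactly $\omega$, so by the fourth bullet of Theorem~\ref{thm:serieToSignExp} the ``real part'' of $\omega^{a_{i-1}^\circ}r_{i-1}$ contributes length $\omega^{|a_{i-1}^\circ|_++1}=\omega^{|a_{i-1}|_++1}$, which is exactly the weight of the single extra discarded minus at position $|a_{i-1}|$ in $a_i$. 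That absorbs the exceptional case cleanly. With these two observations in hand, the ordinal-sum comparison you describe goes through.
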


\subsubsection{Hahn series and surreal numbers}
As a consequence of Theorems \ref{thm:normalForm} and \ref{thm:normalFormOp}, the field $\No$ in in fact a Hahn serie field. More precisely,

\begin{corollary}
	The fields $\No$ and $\Rbb((t^{\No}))$ are isomorphic.
\end{corollary}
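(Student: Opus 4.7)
The plan is to construct an explicit isomorphism $\phi : \No \to \Rbb((t^{\No}))$ using the normal form representation, and then derive the ring isomorphism property directly from the operation rules for normal forms. Given $x \in \No$ with normal form $x = \sum_{i<\nu} r_i \omega^{a_i}$ furnished by Theorem \ref{thm:normalForm}, I would set $\phi(x) = \sum_{i<\nu} r_i t^{a_i}$ and verify that this formula realizes a field isomorphism.

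First I would check that $\phi$ is a well-defined bijection. Well-definedness rests on matching the admissibility conditions on supports: a normal form lists the exponents $a_i$ in strictly decreasing order, while the Hahn series $\Rbb((t^{\No}))$ as defined in Section \ref{sec:hahn} requires its support to be well-ordered. After a standard orientation convention (either reversing the order on $\No$ when regarded as the exponent group, or equivalently identifying $t^a$ with something behaving like $\omega^{-a}$), the two conditions coincide. Injectivity of $\phi$ is then the uniqueness clause of Theorem \ref{thm:normalForm}, and surjectivity follows by enumerating the support of any Hahn series in its order type to read off a legitimate normal form, which corresponds to some surreal number by the existence part of the same theorem.

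Next, preservation of the field operations is precisely the content of Theorem \ref{thm:normalFormOp}: the coefficient-wise sum rule for normal forms is exactly the Hahn series addition rule, and the product formula with index condition $a_i + b_j = x$ is exactly the Hahn series convolution rule. Preservation of the natural embedding of $\Rbb$ is a degenerate special case ($\nu = 1$ with $a_0 = 0$).

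The deep work has already been done in Theorems \ref{thm:normalForm} and \ref{thm:normalFormOp}, so there is no genuine obstacle; the corollary amounts to unwinding definitions. The only mild subtlety is the orientation convention for supports, which is purely notational rather than mathematical.
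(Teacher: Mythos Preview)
Your proposal is correct and follows essentially the same route as the paper. The paper's proof is a one-liner that sends $t^a$ to $\omega^{-a}$; the minus sign there is exactly the ``orientation convention'' you identify as the only subtlety, and the implicit verification that ``all the definitions match'' is precisely your appeal to Theorems \ref{thm:normalForm} and \ref{thm:normalFormOp}.
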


\begin{proof}
	Sending $t^a$ to $\omega^{-a}$ for all surreal number $a$, we notice that all the definitions match to each other.
\end{proof}

Notice that we have of course $\Nobf= \HahnFieldOrd{\Rbb}{\Nobf}{\Ord}$.

\section{Surreal subfields}
\label{sec:erhlichandco}

\subsection{Subfields defined by Gonshor's representation}


%
  Let $\Nolambda$ denote the set surreal number whose signs 
  sequences have length less than $\lambda$ where $\lambda$ is some ordinal.
  We have of course $\Nobf = \bigcup_{\lambda \in \On} \Nolambda$.  
  
Van den Dries and Ehrlich have proved the following: 

  \begin{theorem}[\cite{DriesEhrlich01,van2001erratum}]
   The ordinals $\lambda$
  such that $\Nolambda$ is closed under the various fields operations of $\No$ can be
  characterised as follows:
  \begin{itemize}
  \item $\Nolambda$ is an additive subgroup of $\No$ iff
    $\lambda=\omega^\alpha$ for some ordinal $\alpha$.
  \item $\Nolambda$ is a subring of $\No$ iff
    $\lambda=\omega^{\omega^\alpha}$ for some ordinal $\alpha$.
  \item $\Nolambda$ is a subfield of $\No$ iff
    $\omega^\lambda=\lambda$.
  \label{ou}
\end{itemize}
\end{theorem}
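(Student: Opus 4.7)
The plan is to translate each closure requirement into a closure property of the ordinal $\lambda$ under a specific ordinal operation, by bounding the length of $x \ast y$ for each field operation $\ast$ using the Hahn-series structure of $\No$.

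First I would establish a length estimate: for $x$ with normal form $\sum_{i<\nu(x)} r_i \omega^{a_i}$, Theorem~\ref{thm:serieToSignExp} expresses its sign expansion as the juxtaposition of the sign expansions of the terms $\omega^{a_i^\circ} r_i$. Combined with Lemmas~\ref{lem:lengthOmegaA} and~\ref{lem:lengthTerm}, this yields a length estimate controlled by $\nu(x)$ and by $\sup_i \length{a_i}$, with each term contributing something of the form $\omega^{1+\length{a_i}} \cdot n_i$ for a finite $n_i$. The additive case then follows: since $\supp(x+y) \subseteq \supp x \cup \supp y$ by Theorem~\ref{thm:normalFormOp}, which has order type at most $\nu(x)+\nu(y)$ by Proposition~\ref{prop:unionEnsBienOrd}, careful bookkeeping on sign expansions yields $\length{x+y} \leq \length x \oplus \length y$ (natural sum). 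Hence $\Nolambda$ is closed under $+$ iff $\lambda$ is closed under $\oplus$, iff $\lambda$ is additively indecomposable, iff $\lambda = \omega^\alpha$; for the converse, ordinals $\alpha, \beta < \lambda$ with $\alpha \oplus \beta \geq \lambda$ provide explicit witnesses.

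For the ring case, $\supp(xy) \subseteq \supp x + \supp y$ (Minkowski sum) has order type at most $\nu(x) \cdot \nu(y)$ by Proposition~\ref{prop:sommeEnsBienOrd}, and the length estimate converts this into a natural-product bound on $\length{xy}$. Closure thus holds iff $\lambda$ is multiplicatively indecomposable, i.e.\ $\lambda = \omega^{\omega^\alpha}$. For the field case, any nonzero $x = r_0 \omega^{a_0}(1+\varepsilon)$ with $\varepsilon$ infinitesimal admits the geometric expansion
$$x^{-1} = r_0^{-1}\, \omega^{-a_0} \sum_{n \geq 0} (-\varepsilon)^n,$$
so $\supp x^{-1} \subseteq -a_0 + \inner{\supp \varepsilon}$. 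By Proposition~\ref{prop:orderTypeMonoid}, $\nu(x^{-1})$ is bounded essentially by $\omega^{\omega \cdot \nu(x)}$. Closure under inversion therefore demands $\lambda$ be closed under $\alpha \mapsto \omega^{\omega\alpha}$, which, together with the ring condition, forces $\omega^\lambda = \lambda$.

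The main obstacle is keeping the length estimates tight enough that the converse implications go through. The subfield case is the hardest: one must exhibit an explicit $x \in \Nolambda$ whose inverse actually attains the monoid bound of Proposition~\ref{prop:orderTypeMonoid}, so that $x^{-1}$ escapes $\Nolambda$ whenever $\omega^\lambda \neq \lambda$.
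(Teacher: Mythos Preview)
The paper does not prove this theorem: it is stated with attribution to van den Dries and Ehrlich \cite{DriesEhrlich01,van2001erratum} and immediately followed by terminology (additive, multiplicative, $\epsilon$-numbers) with no argument. There is nothing in the paper to compare your sketch against.

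For what it is worth, your outline follows the spirit of the original van den Dries--Ehrlich argument: bound $\length{x+y}$, $\length{xy}$ and $\length{x^{-1}}$ via the normal form, then translate closure into an ordinal condition on $\lambda$. A couple of places where your sketch is looser than what an actual proof requires: the length bound for products is not simply the natural product of the lengths---getting a bound that survives inside a multiplicatively indecomposable $\lambda$ takes a more careful normal-form estimate than ``the length estimate converts this into a natural-product bound''; and for all three converse directions you need to exhibit concrete $x,y\in\Nolambda$ (respectively $x\in\Nolambda$) whose sum, product or inverse has length at least $\lambda$, which you flag as an obstacle but do not address. These are exactly the points on which the cited paper does real work.
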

The ordinals $\lambda$ satisfying first (respectively: second) item
are often said to be additively (resp. multiplicatively)
indecomposable but for the sake of brevity we shall just call them
\textbf{additive} (resp. \textbf{multiplicative}). Multiplicative ordinals are exactly
the ordinals $\lambda>1$ such that $\mu \nu <\lambda$ whenever
$\mu,\nu < \lambda$. The ordinal satisfying third item are called
\textbf{$\epsilon$-numbers}. The smallest $\epsilon$-number is usually denoted
by $\epsilon_0$ and is given by
$$\epsilon_0:=\sup\{\omega,\omega^\omega,\omega^{\omega^\omega},\dots\}.$$

\begin{remark} \label{rqdouze}
	Since rational numbers have length at most $\omega$, we have that if $\lambda$ is multiplicative, then $\Nolt\lambda$ is a divisible group.
\end{remark}

If $\lambda$ is an $\epsilon$-number, $\Nolambda$ is actually more than only a field: 

\begin{theorem}[\cite{DriesEhrlich01,van2001erratum}]
Let $\lambda$ be any $\epsilon$-number. Then 
  $\Nolambda$ is a real closed field. 
\end{theorem}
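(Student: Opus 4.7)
The plan is to leverage the decomposition already established in the paper rather than argue from scratch about signs sequences. First, I would invoke Theorem~\ref{th:Ehrlichquatresept}(1) to write
$$\Nolambda = \bigcup_{\mu} \SRF{\lambda}{\Nolt\mu},$$
where $\mu$ ranges over the multiplicative ordinals strictly below $\lambda$. Since $\lambda$ is an $\epsilon$-number, this is an increasing (in particular directed) union of subsets of $\No$.

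Second, I would show that each piece $\SRF{\lambda}{\Nolt\mu}$ appearing in this union is already a real closed field. For a given multiplicative ordinal $\mu<\lambda$, Remark~\ref{rqdouze} says that $\Nolt\mu$ is a divisible ordered abelian group. Since $\Rbb$ is real closed and $\lambda$ is an $\epsilon$-number, Proposition~\ref{prop:hahnFieldRealClosed} applies with $\Kbb=\Rbb$, $G=\Nolt\mu$, $\gamma=\lambda$, and yields that $\SRF{\lambda}{\Nolt\mu} = \HahnFieldOrd{\Rbb}{\Nolt\mu}{\lambda}$ is real closed. (This is the same observation the authors record just after Theorem~\ref{thm:macLane} in the introduction.)

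Third, I would argue that an increasing union of real closed subfields of a common ordered field is again a real closed field. That $\Nolambda$ is a subfield of $\No$ is immediate from the chain of subfields. The order on $\Nolambda$ is inherited from $\No$. For real closedness, given an odd-degree polynomial $P\in\Nolambda[X]$, its finitely many coefficients all lie in some single $\SRF{\lambda}{\Nolt\mu}$ of the chain; since that field is real closed, $P$ has a root there, hence in $\Nolambda$. The same argument gives square roots of positive elements. Thus $\Nolambda$ is a real closed field.

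The main obstacle is essentially already discharged by the cited results: the decomposition theorem of van den Dries--Ehrlich and the Hahn-series real-closedness result of MacLane/Alling. Once those are in hand, the statement is a routine bookkeeping argument on a directed union.
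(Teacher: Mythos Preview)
The paper does not give its own proof of this theorem: it is simply cited from \cite{DriesEhrlich01,van2001erratum}, and in fact the same conclusion is already packaged into Theorem~\ref{th:Ehrlichquatresept}(2), also quoted without proof from that source. So there is no argument in the text to compare against.

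That said, your reconstruction is correct and is essentially the route taken in the van den Dries--Ehrlich paper itself: one first establishes the decomposition $\Nolambda=\bigcup_\mu \SRF{\lambda}{\Nolt\mu}$, then observes that each $\SRF{\lambda}{\Nolt\mu}$ is real closed via MacLane's theorem (this is Proposition~\ref{prop:hahnFieldRealClosed} here), and concludes by the directed-union argument you give. The only caveat is a mild logical one: you are invoking Theorem~\ref{th:Ehrlichquatresept}(1), which is drawn from the same reference as the statement you are proving, so you should make sure the decomposition does not itself rely on real closedness of $\Nolambda$ (it does not; it is a computation on supports and lengths). With that checked, your proof stands on its own.
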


\subsection{Subfields defined from Hahn's series representation}
 
%
%


As we will often play with exponents of formal power series consided in the Hahn series, we propose to introduce the following notation: 

\begin{definition}
	If $\lambda$ is an $\epsilon$-number and $\Gamma$ a divisible Abelian group, we denote  $$\SRF{\lambda}{\Gamma}=\HahnFieldOrd\Rbb{\Gamma}\lambda$$
\end{definition}
As a consequence of Proposition \ref{prop:hahnFieldRealClosed} we have 

\begin{corollary}
$\SRF\lambda{\Nolt\mu}$ is a real-closed field when $\mu$ is a multiplicative ordinal and $\lambda$ an $\epsilon$-number. 
\end{corollary}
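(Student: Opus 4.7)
The plan is to recognize this as a straightforward specialization of Proposition \ref{prop:hahnFieldRealClosed}, which already does the real work. That proposition states that if $\Kbb$ is a real closed field and $G$ is a divisible abelian group, then $\HahnFieldOrd{\Kbb}{G}{\gamma}$ is real closed (with the implicit understanding that $\gamma$ should be an $\epsilon$-number so that the object is a field to begin with, as stated in the earlier theorem of Section \ref{sec:hahn}). So I just need to match the three inputs: take $\Kbb=\Rbb$, $G=\Nolt\mu$, and $\gamma=\lambda$.

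The only nontrivial verification is that $\Nolt\mu$ really is a divisible abelian group. For additivity, I would invoke the theorem labelled \ref{ou} (the van den Dries--Ehrlich characterization): since $\mu$ is multiplicative, $\mu=\omega^{\omega^\alpha}$ for some $\alpha$, which in particular is additive, so $\Nolt\mu$ is an additive subgroup of $\No$. In fact $\mu$ multiplicative tells us even more — that $\Nolt\mu$ is a subring of $\No$ — but for the corollary only the group structure is required. For divisibility, Remark \ref{rqdouze} applies directly: since $\mu$ is multiplicative, $\Nolt\mu$ is divisible. So the group hypothesis of Proposition \ref{prop:hahnFieldRealClosed} is satisfied. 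Since $\Rbb$ is real closed and $\lambda$ is an $\epsilon$-number, the proposition immediately yields that
\[
\SRF{\lambda}{\Nolt\mu} \;=\; \HahnFieldOrd{\Rbb}{\Nolt\mu}{\lambda}
\]
is real closed, which is the claim.

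There is no real obstacle here; the only thing to be careful about is making sure one quotes the correct characterization of which $\mu$ make $\Nolt\mu$ into a divisible abelian group, and that one does not conflate ``multiplicative'' (which gives a subring, and hence divisibility via Remark \ref{rqdouze}) with ``additive'' (which gives only the group structure). Everything else is a citation of the stated proposition.
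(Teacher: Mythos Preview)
Your proposal is correct and matches the paper's approach exactly: the paper simply states this corollary as ``a consequence of Proposition~\ref{prop:hahnFieldRealClosed}'' without further argument, and your write-up supplies precisely the verification (via the van den Dries--Ehrlich characterization and Remark~\ref{rqdouze}) that $\Nolt\mu$ is a divisible abelian group so that the proposition applies.
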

This fields are somehow the atoms constituting the fields $\Nolambda$.

\thEhrlichquatresept*


\begin{remark}
The fact that if $\lambda$ is not a regular cardinal, then $\Nolambda \neq\SRF\lambda{\Nolt\lambda}$ can be seen as follows: Suppose that $\lambda$ is not a regular cardinal. This means that we can take some strictly increasing sequence $(\mu_{\alpha})_{\alpha < \beta}$ that is cofinal in $\lambda$ with $\beta<\lambda$. Then $\sum_{\alpha <\beta} \omega^{-\mu_{\alpha}}$ is in $\SRF{\lambda}{\Nolt\lambda}$ by definition, but is not in $\Nolambda$.
\end{remark}

\section{Exponentiation and logarithm}
\label{sec:expoLn}

\subsection{Gonshor's exponentiation}


The field surreal numbers $\No$ admits an exponential function $\exp$ defined as follows.

\begin{definition}[{Function $\exp$, \cite[page 145]{gonshor1986introduction}}]
	Let $x = \crotq{x'}{x''}$ be the canonical representation of $x$. We define inductively
	$$\exp x = \crotq {\begin{array}{c}
			0, \exp(x')[x-x']_{n},\\
			\exp(x'')[x-x'']_{2n+1}
		\end{array}} {\f{\exp(x')}{[x'-x]_{2n+1}}, \f{\exp(x'')}{[x''-x]_{2n+1}}}$$
	where $n$ ranges in $\Nbb$ and
	$$[x]_{n}  = 1+ \frac{x}{1!} + \dots + \frac{x}{n!},$$
	with the further convention that the expressions containing terms of the form
	$[y]_{2n+1}$ are to be considered only when $[y]_{2n+1} > 0$.
\end{definition}
It can be shown that the function $\exp$ is a surjective homomorphism from $(\No, +)$ to $(\No^{>0}, ·)$ which extends $\exp$ on $\Rbb$ and makes $(\No, +, ·, \exp)$ into an elementary extension of $(\Rbb,+,·,\exp)$ (see \cite[Corollaries 2.11 and 4.6]{van1994elementary}, \cite{DriesEhrlich01} and \cite{ressayre1993integer}). As $\exp$ is surjective, and from its properties, it can be shown that it has some inverse $\ln : \No^{>0} \to \No$ (called logarithm).

\begin{definition}[Functions $\log$, $\log_{n}$, $\exp_{n}$]
 Let $\ln : \No^{>0} \to \No$ (called logarithm) be the inverse of $\exp$. We let $\exp_{n}$ and $\ln_{n}$ be the $n$-fold iterated compositions of $\exp$ and $\ln$ with
themselves.
\end{definition}

We recall some other basic properties of the exponential functions:

\begin{theorem}[{\cite[Theorems 10.2, 10.3 and 10.4]{gonshor1986introduction}}]
	\label{thm:expAppreciables}
	For all $r\in\Rbb$ and $\epsilon$ infinitesimal, we have 
	\centre{$\exp r = \Sum{k=0}\infty{\f{r^k}{k!}} \qandq \exp \epsilon = \Sum{k=0}{\infty}\f{\epsilon^k}{k!}$}
 	\lc{and}{$
		\exp(r+\epsilon) = \exp(r)\exp(\epsilon) = \Sum{k=0}{\infty}\f{(r+\epsilon)^k}{k!}$}
	Moreover for all purely infinite number $x$,
		$$\exp(x+r+\epsilon) = \exp(x)\exp(r+\epsilon)$$
\end{theorem}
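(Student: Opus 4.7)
The strategy is to reduce the four assertions to two essential series identities and then to prove those by induction on simplicity. The homomorphism property $\exp(a+b)=\exp(a)\exp(b)$, recorded just before the theorem as part of $\exp$ being a surjective ordered group homomorphism from $(\No,+)$ to $(\No^{>0},\cdot)$, immediately delivers both the middle equality in the second displayed formula and, after decomposing $x+r+\epsilon=x+(r+\epsilon)$, the third displayed formula. What remains is
\[
\exp r = \sum_{k\in\Nbb}\frac{r^k}{k!}\quad (r\in\Rbb)\qquad\text{and}\qquad \exp\epsilon=\sum_{k\in\Nbb}\frac{\epsilon^k}{k!}\quad(\epsilon\text{ infinitesimal}),
\]
after which Cauchy-multiplying the two series will yield the series form of $\exp(r+\epsilon)$ by the binomial theorem.

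The real case is immediate from the fact that the restriction of Gonshor's $\exp$ to $\Rbb$ is the standard real exponential, whose Taylor series converges to it; since each $r^k/k!$ is a real number of length at most $\omega$, the value of the series in $\No$ coincides with the real limit. For the infinitesimal case I would first verify that the right-hand side defines a bona fide surreal number. Writing the normal form $\epsilon=\sum_{i<\mu}r_i\omega^{a_i}$ with $a_0<0$ (since $\epsilon\prec 1$), iterated applications of Propositions \ref{prop:sommeEnsBienOrd} and \ref{prop:orderTypeMonoid} show that $\supp(\epsilon^k)$ is reverse well-ordered and bounded above by $ka_0$, which tends to $-\infty$; hence $\bigcup_k\supp(\epsilon^k/k!)$ is reverse well-ordered and each fixed exponent receives contributions from only finitely many $k$, so the Hahn sum is well-defined.

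To prove $\exp\epsilon=\sum_k\epsilon^k/k!$ I would then argue by transfinite induction on the simplicity of $\epsilon$. In the canonical representation $\epsilon=\crotq{\epsilon'}{\epsilon''}$ every simpler $\epsilon',\epsilon''$ is itself infinitesimal, so the induction hypothesis gives $\exp(\epsilon')=\sum_k(\epsilon')^k/k!$ and similarly for $\epsilon''$. Substituting these into Gonshor's bracket definition of $\exp\epsilon$ and using that the truncated Taylor polynomials $[y]_n$ approximate $\exp y$ alternately from above and below (depending on the sign of $y$ and the parity of $n$), the lower and upper sides of the bracket become explicit Hahn series that squeeze the formal sum $\sum_k\epsilon^k/k!$ from both sides. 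The hard part will be to verify that this formal sum is precisely the simplest surreal number sitting between the resulting cuts: expanding each $\exp(\epsilon')[\epsilon-\epsilon']_n$ term-by-term yields an Archimedean stratification of the approximating polynomials, and matching this stratification to the normal form of $\sum_k\epsilon^k/k!$ will rely on the uniqueness part of Theorem \ref{thm:normalForm} together with the sign-expansion recipe of Theorem \ref{thm:serieToSignExp} to rule out any strictly simpler sign sequence lying between the bounds. Once this identification is in hand, the remaining assertions are formal: the Cauchy product of the two series is legitimate in the Hahn sense (by the same support analysis as above) and equals $\sum_n(r+\epsilon)^n/n!$ via the binomial theorem, while the purely infinite case is one further application of multiplicativity.
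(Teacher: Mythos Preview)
The paper does not prove this theorem at all: it is stated with the citation \cite[Theorems 10.2, 10.3 and 10.4]{gonshor1986introduction} and no argument is given, so there is no ``paper's own proof'' to compare against. Your proposal must therefore be judged on its own merits.

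There is a genuine gap in the induction for the infinitesimal case. You write that in the canonical representation $\epsilon=\crotq{\epsilon'}{\epsilon''}$ ``every simpler $\epsilon',\epsilon''$ is itself infinitesimal'', and base the induction hypothesis on this. That claim is false. Take $\epsilon=\omega^{-1}$, whose sign sequence is $(+)(-)^\omega$: its proper prefixes are $0,1,\tfrac12,\tfrac14,\dots$, so every upper option $\epsilon''$ is a non-zero dyadic rational, not an infinitesimal. In general, an infinitesimal of positive length always has finite-length (hence dyadic rational) prefixes, and the induction hypothesis ``$\exp(\epsilon')=\sum_k(\epsilon')^k/k!$'' does not even make sense for those real options unless you have already established the real case in a form that interacts correctly with the bracket. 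Your argument as written never handles the mixed situation where one side of the cut consists of non-infinitesimal reals while the other side may contain infinitesimals.

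Gonshor's actual route avoids this trap: he first proves the real case directly (your sketch of this part is fine), then proves $\exp$ is a homomorphism (Theorem~10.4) via a separate cofinality argument, and only then obtains the infinitesimal series identity from the homomorphism property together with the real result, rather than by an independent simplicity induction on infinitesimals. If you want a self-contained induction, you must either work with a non-canonical representation of $\epsilon$ whose options are genuinely infinitesimal (and check uniformity of the $\exp$ definition for that representation), or carry through the induction for all appreciable numbers simultaneously so that real options are covered by the hypothesis.
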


\begin{proposition}[{\cite[Theorem 10.7]{gonshor1986introduction}}]
	\label{prop:formeExpXPurelyInfiniteOmegaA}
	If $x$ is purely infinite, then $\exp x=\omega^a$ for some surreal number $a$. 
\end{proposition}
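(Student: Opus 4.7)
The plan is to proceed by transfinite induction on the length of the purely infinite surreal $x$, constructing uniformly from the canonical representation $x = \crotq{x'}{x''}$ a surreal $a$ with $\exp x = \omega^a$ (this is effectively the ``$g$ function'' alluded to in Definition \ref{def:uparrow}). The strategy is to start from Gonshor's defining cut
$$\exp x = \crotq{0,\,\exp(x')[x-x']_{n},\,\exp(x'')[x-x'']_{2n+1}}{\tfrac{\exp(x')}{[x'-x]_{2n+1}},\,\tfrac{\exp(x'')}{[x''-x]_{2n+1}}}$$
and massage it into the canonical shape $\crotq{0,\,n\omega^{a'}}{\tfrac{1}{2^n}\omega^{a''}}$ that defines $\omega^a$.

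First, I would analyse each simpler predecessor $y \in \{x',x''\}$ of $x$. Every such $y$ decomposes uniquely as $y = y_\infty + r + \varepsilon$ with $y_\infty$ purely infinite, $r \in \Rbb$, and $\varepsilon$ infinitesimal. Theorem \ref{thm:expAppreciables} together with the induction hypothesis applied to $y_\infty$ (whose length does not exceed that of $y$, hence is strictly smaller than $\length{x}$) yields $\exp y = \omega^{b_y} \cdot u_y$ with $u_y$ appreciable. Moreover, because $x$ is purely infinite, each difference $x-x'$ (resp.\ $x''-x$) is itself positive purely infinite up to an appreciable correction, so the truncation $[x-x']_n = 1+(x-x')+\dots+(x-x')^n/n!$ is a monic polynomial of degree $n$ in an infinite surreal and hence has monomial type $\omega^{n c_{x'}}$ times an appreciable factor, where $c_{x'}$ is the leading $\omega$-exponent of $x-x'$. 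Multiplying by $\exp(x')$ gives each left-hand cut element a leading monomial $\omega^{b_{x'}+n c_{x'}}$, and a symmetric analysis on the right produces leading monomials of the form $\omega^{b_{x''}-(2n+1)c_{x''}}$ in the denominators. Collecting these exponents gives the candidate $a := \crotq{a'}{a''}$, with $a'$ (resp.\ $a''$) running over the exponents extracted on the left (resp.\ right).

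The main obstacle will be to verify mutual cofinality of this cut with the canonical cut for $\omega^a$: one must check that every left-hand element of Gonshor's cut is bounded above by some $n'\omega^{a'}$ with $a'$ strictly simpler than $a$, and that every right-hand element is bounded below by some $\omega^{a''}/2^{n'}$ with $a''$ strictly simpler than $a$, and conversely that every simpler predecessor of $a$ is so witnessed. This is essentially the bookkeeping of showing that the appreciable factors $u_y$ and the lower-order terms in $[x-x']_n$ that I have swept aside into the leading monomial do not perturb the simplicity class of the resulting surreal. Once this is established, the uniformity of the bracket notation (\cite[page 15]{gonshor1986introduction}) yields $\exp x = \omega^a$, closing the induction. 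The purely infinite hypothesis enters decisively here: without it, $x-x'$ could be infinitesimal, in which case $[x-x']_n$ is appreciable rather than of monomial type and the reduction to a single $\omega^a$ breaks down entirely.
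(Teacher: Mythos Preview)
The paper itself gives no proof of this proposition; it merely cites Gonshor's Theorem~10.7. So there is no in-paper argument to compare against, and one must look to Gonshor's original proof.

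Your strategy has the right spirit but contains two genuine gaps. First, the induction you set up is not well-founded as stated: you apply the inductive hypothesis to $y_\infty$, the purely infinite part of a predecessor $y\simpler x$. You have $\length{y}<\length{x}$ because $y$ is a sign-sequence prefix, but $y_\infty$ is obtained by a \emph{normal-form} truncation of $y$, not a sign-sequence truncation, and you have not argued that $\length{y_\infty}\le\length{y}$.

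Second, and more seriously, the entire weight of the theorem rests on the claim you slip in without proof: that for purely infinite $x$ and $y\simpler x$, the difference $|x-y|$ is infinite. This is true, but it is exactly the nontrivial content of Gonshor's argument; predecessors of a purely infinite number need not themselves be purely infinite (e.g.\ $1\simpler\omega$), so the claim requires real work relating the sign sequence to the normal form. Once one has this lemma, the conclusion follows much more cleanly than your outline suggests: since $x-x'$ is positive infinite, $\exp(x')[x-x']_n/\exp x=[x-x']_n\cdot\exp(x'-x)\prec 1$ (exponential dominates polynomials on infinite arguments), and symmetrically the right options satisfy $\exp(x'')/[x''-x]_{2n+1}\succ\exp x$. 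Thus every element simpler than $\exp x$ lies in a different Archimedean class, so $\exp x$ is the simplest positive member of its class and hence equals some $\omega^a$. There is no need to decompose each $y$, extract exponents $b_{x'}+nc_{x'}$, or verify cofinality with the $\omega^a$-cut by hand. Your sketch conflates Theorem~10.7 (existence of \emph{some} $a$) with Theorem~10.13 (the explicit description via the function $g$), which Gonshor establishes separately and afterwards.
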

More precisely:
\begin{proposition}[Function $g$, {\cite[Theorem 10.13]{gonshor1986introduction}}]
	\label{prop:formeExpXPurelyInfiniteOmegaAg}
	If $x$ is purely infinite, \textit{i.e.} $x=\Sum{i<\nu}{}r_i\omega^{a_i}$ with $a_i>0$ for all $i$, then 
	$$\exp x=\omega^{\Sum{i<\nu}{}r_i\omega^{g(a_i)}},$$
	for some function $g: \No^{>0 }\to \No$. 	Function $g$ satisfies for all $x$, 
	$$g(x) = \crotq{c(x),g(x')}{g(x'')}$$
	where $c(x)$ is the unique number such that $\omega^{c(x)}$ and $x$ are in the same Archimedean class \cite[Thm. 10.11]{gonshor1986introduction} (i.e. such that 
	$x\asymp\omega^{c(x)}$),  where $x'$ ranges over the lower non-zero prefixes of $x$ and $x''$ over the upper prefixes of $x$.
\end{proposition}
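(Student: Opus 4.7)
The plan is to proceed in two stages: first establish the single-monomial formula $\exp \omega^a = \omega^{\omega^{g(a)}}$ for $a > 0$, and then bootstrap to arbitrary purely infinite series using the homomorphism property of $\exp$ from Theorem \ref{thm:expAppreciables}.

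First I would define $g$ on positive surreals by transfinite recursion on simplicity via $g(x) = \crotq{c(x), g(x')}{g(x'')}$, where $x'$ and $x''$ range over the lower (nonzero) and upper simpler prefixes of $x$ respectively. The first task is to verify well-formedness of the bracket: that $c(x) < g(x'')$ and $g(x') < g(x'')$ whenever $x' < x < x''$. Monotonicity $g(x') < g(x'')$ follows by mutual induction once $g$ is shown to be strictly increasing on positives. The bound $c(x) < g(x'')$ is the substantive one, and I would argue it by noting that $\omega^{c(x'')}$ lives in the Archimedean class of $x''$, whereas $\omega^{g(x'')}$ is destined to be the exponent of $\ln \exp \omega^{x''} = \omega^{x''}$, which dominates every natural multiple of $\omega^{c(x'')}$.

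For the single-monomial identity $\exp \omega^a = \omega^{\omega^{g(a)}}$, I would use induction on the simplicity of $a$. Unfolding Gonshor's definition of $\exp$ on $\omega^a = \crotq{n \omega^{a'}}{2^{-n}\omega^{a''}}$, the lower options of $\exp \omega^a$ are: positive integers $n$, products of the form $\exp(n\omega^{a'})\,[\omega^a - n\omega^{a'}]_k$, and their upper counterparts; by the induction hypothesis each is a monomial $\omega^b$ with $b < \omega^{g(a)}$, and dually for upper options. Since Proposition \ref{prop:formeExpXPurelyInfiniteOmegaA} already guarantees $\exp \omega^a = \omega^b$ for some $b$, matching this bracket against the canonical bracket of $\omega^{\omega^{g(a)}}$ (obtained by unfolding the definition of $\omega^{\cdot}$ twice) forces $b = \omega^{g(a)}$. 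The term $c(a)$ in the definition of $g(a)$ is precisely what absorbs the family of integer lower options into the simplicity comparison: it guarantees $\omega^{g(a)}$ strictly dominates any natural multiple of $\omega^{c(a)}$.

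Finally, for a general purely infinite $x = \sum_{i<\nu} r_i \omega^{a_i}$, I would apply the additive homomorphism $\exp(y+z) = \exp(y)\exp(z)$. For finite $\nu$ this gives $\exp x = \prod_i \exp(r_i \omega^{a_i})$, each factor evaluates to $\omega^{r_i \omega^{g(a_i)}}$ (for integer $r_i$ by iterating the monomial case, for dyadic $r_i$ by divisibility in the group $\omega^{\No}$, for general real $r_i$ by cofinal dyadic approximation), and the product collapses via $\omega^u \omega^v = \omega^{u+v}$ to $\omega^{\sum r_i \omega^{g(a_i)}}$. The transfinite case follows by taking brackets: $\sum_{i<\nu} r_i \omega^{a_i}$ is the simplest surreal cut by its proper truncations, and monotonicity of $\exp$ commutes with that cut. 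I expect the main obstacle to be the bracket-matching step in the single-monomial case, because three nested operations ($\omega$-power, inner $\omega$-power, and Gonshor's $\exp$) interact through the truncated Taylor polynomials $[\omega^a - n\omega^{a'}]_k$; the auxiliary quantity $c(x)$ is tailor-made so that the simplicity comparison closes on the nose.
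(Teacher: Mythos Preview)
The paper does not prove this proposition at all: it is stated as a citation of \cite[Theorem 10.13]{gonshor1986introduction} and no proof is given in the paper. So there is nothing to compare your proposal against here.

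That said, your outline is broadly the strategy Gonshor follows. A few remarks on the sketch itself. Your well-formedness check for the bracket defining $g$ is stated but not really argued: the claim ``$c(x) < g(x'')$'' needs the inductive hypothesis $\exp \omega^{x''} = \omega^{\omega^{g(x'')}}$ together with the fact that $\exp \omega^{x''}$ dominates every power $(\omega^{x''})^n$, hence $\omega^{g(x'')} > c(x'') \ge c(x)$; you gesture at this but the actual inequality chain should be written out. In the extension step, the passage from integer to real coefficients via ``dyadic approximation and cofinality'' is the genuinely delicate part: one has to check that the map $r \mapsto \omega^{r\omega^{g(a)}}$ and the map $r \mapsto \exp(r\omega^a)$ have the same canonical cut, which requires knowing that the Taylor-polynomial options in Gonshor's $\exp$ are cofinal/coinitial with the monomial options of the $\omega$-map. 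Gonshor handles this by a separate lemma; your sketch hides this under ``monotonicity of $\exp$ commutes with that cut'', which is not automatic. The transfinite case likewise needs the uniformity of the $\exp$ definition, not just monotonicity.
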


\subsection{About some properties of function $g$}

\begin{proposition}[{\cite[Theorem 10.14]{gonshor1986introduction}}]
	\label{prop:gord}
	If $a$ is an ordinal number then
	$$ g(a)=\begin{accolade}
		a+1 & \text{if } \lambda\leq a <\lambda+\omega \text{ for some }\epsilon\text{-number }\lambda\\
		a& \text{otherwise}
	\end{accolade}$$
\end{proposition}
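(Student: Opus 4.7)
The plan is to proceed by transfinite induction on the ordinal $a\geq 1$, using the recursive definition of $g$ from Proposition \ref{prop:formeExpXPurelyInfiniteOmegaAg}. For an ordinal $a$ the signs sequence consists only of pluses, so its lower nonzero prefixes are exactly the nonzero ordinals $\beta<a$ and it has no upper prefix; hence
\[
g(a)=\crotq{c(a),\,g(\beta)}{}
\]
where $\beta$ ranges over the nonzero ordinals strictly below $a$. When the left class of such a bracket consists only of ordinals, the bracket itself is an ordinal, namely the least ordinal strictly greater than every element of the class, because any signs sequence majorizing a collection of all-plus sequences can be replaced by a pure-plus sequence of at most the same length. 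To compute $c(a)$: writing the Cantor normal form $a=\omega^{\alpha_1}n_1+\dots+\omega^{\alpha_k}n_k$, one has $a\asymp\omega^{\alpha_1}$ and so $c(a)=\alpha_1$; crucially $c(a)\leq a$, with equality if and only if $a$ is an $\epsilon$-number.

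Assuming as induction hypothesis that $g(\beta)\in\{\beta,\beta+1\}$ for every $1\leq\beta<a$, with $g(\beta)=\beta+1$ exactly when $\beta\in[\mu,\mu+\omega)$ for some $\epsilon$-number $\mu$, I first treat the case where no $\epsilon$-number $\lambda$ satisfies $a\in[\lambda,\lambda+\omega)$. Then $a$ is not itself an $\epsilon$-number, so $c(a)<a$. If $a$ is a limit, every $\beta<a$ gives $g(\beta)\leq\beta+1<a$, and these values are cofinal in $a$; the least ordinal strictly above the left class is then $a$ itself. If $a=\alpha+1$ is a successor, then because $\epsilon$-numbers are limits, $\alpha$ lies in some $[\mu,\mu+\omega)$ if and only if $a$ does, so here $\alpha\notin[\mu,\mu+\omega)$ and the induction hypothesis yields $g(\alpha)=\alpha$; the left class then has maximum $\alpha$ and the bracket equals $\alpha+1=a$. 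In both subcases $g(a)=a$.

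For the remaining case $a\in[\lambda,\lambda+\omega)$ with $\lambda$ an $\epsilon$-number, if $a=\lambda$ then $c(a)=a$ belongs to the left class, while every $\beta<\lambda$ satisfies $\beta+1<\lambda$ and so $g(\beta)<a$; the maximum is $a$, giving $g(a)=a+1$. If $a=\lambda+n$ with $1\leq n<\omega$, set $\alpha=a-1$; since $\alpha\in[\lambda,\lambda+\omega)$, the induction hypothesis provides $g(\alpha)=\alpha+1=a$, which sits in the left class, while all other $g(\beta)$ remain strictly below $a$, again giving $g(a)=a+1$. The main delicate point throughout is the bookkeeping at the transition between the two regimes: specifically, one must verify that in the Case~1 successor step no $g(\beta)$ accidentally reaches $a$, which holds because $\epsilon$-numbers are limit ordinals and so an $\omega$-window $[\mu,\mu+\omega)$ cannot straddle the gap between $a-1$ and $a$ unless it contains both.
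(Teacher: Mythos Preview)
The paper does not supply its own proof of this proposition; it simply cites Gonshor's Theorem~10.14. Your argument by transfinite induction on $a$, using the recursion $g(a)=\crotq{c(a),\,g(\beta):0<\beta<a}{}$ together with the identification of $\crotq{A}{\emptyset}$ (for $A$ a set of ordinals) as the least ordinal strictly above $A$, is correct and is essentially the natural proof one would expect. The case analysis is complete: the key observations that $c(a)=a$ exactly when $a$ is an $\epsilon$-number, and that the $\omega$-window $[\mu,\mu+\omega)$ is closed under predecessor and successor within $\omega$ steps, handle the boundary bookkeeping cleanly.
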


Note that in the previous proposition, $a\neq 0$ since $g$ is defined only for positive elements.

\begin{proposition}[{\cite[Theorem 10.15]{gonshor1986introduction}}]
	\label{prop:gMonomeInfinitesimal}
	Let $n$ be a natural number and $b$ be an ordinal. We have
	$ g(2^{-n}\omega^{-b})=-b+2^{-n}.$
\end{proposition}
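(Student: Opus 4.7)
The plan is to proceed by transfinite induction on the length of $x := 2^{-n}\omega^{-b}$. The base case $b = n = 0$ gives $x = 1$, for which $g(1) = \crotq{c(1)}{} = \crotq 0 {} = 1$, matching $-0 + 2^{0}$.

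For the inductive step, I would first compute the strict positive prefixes of $x$. By Theorem~\ref{thm:serieToSignExp} applied with $a = -b$ (so $|a|_+ = 0$), the sign expansion of $x$ is $+$ followed by $\omega b + n$ minuses; hence the non-zero strict prefixes correspond to indices $\gamma < \omega b + n$. Writing $\gamma = \omega\beta + k$ with $\beta$ an ordinal and $k \in \Nbb$, these prefixes partition into $\enstq{2^{-k}\omega^{-\beta}}{\beta < b,\ k \in \Nbb}$ (when $\gamma < \omega b$) and $\enstq{2^{-k}\omega^{-b}}{0 \leq k < n}$ (when $\omega b \leq \gamma < \omega b + n$). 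Each is strictly greater than $x$: the first family because $\omega^{-\beta}$ lies in a higher Archimedean class than $\omega^{-b}$, the second because $2^{-k} > 2^{-n}$ when $k < n$. Thus $x$ has no lower non-zero prefix, and $c(x) = -b$ since $x \asymp \omega^{-b}$. The recursive definition of $g$ together with the induction hypothesis therefore gives
\[
g(x) = \crotq{-b}{\enstq{-\beta + 2^{-k}}{\beta < b,\ k \in \Nbb} \cup \enstq{-b + 2^{-k}}{0 \leq k < n}}.
\]

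To evaluate this bracket I would invoke the uniformity of surreal addition. From the canonical representations $-b = \crotq{}{\enstq{-\beta}{\beta < b}}$ and $2^{-n} = \crotq{0}{\enstq{2^{-k}}{0 \leq k < n}}$, the sum formula produces
\[
-b + 2^{-n} = \crotq{-b}{\enstq{-\beta + 2^{-n}}{\beta < b} \cup \enstq{-b + 2^{-k}}{0 \leq k < n}}.
\]
Write $U_{\mathrm{sum}}$ and $U_g$ for the upper sets of the last two displays; taking $k = n$ in the first family of $U_g$ shows $U_{\mathrm{sum}} \subseteq U_g$, so the open interval $I_g$ defined by the pair $(-b, U_g)$ is contained in the analogous interval $I_{\mathrm{sum}}$.

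It then suffices to check that $-b + 2^{-n} \in I_g$, which reduces to $b - \beta > 2^{-n} - 2^{-k}$ for every $\beta < b$ and $k \in \Nbb$; this holds because $b - \beta \geq 1 > |2^{-n} - 2^{-k}|$. Since $-b + 2^{-n}$ is the simplest element of $I_{\mathrm{sum}}$ by the second display, and $I_g \subseteq I_{\mathrm{sum}}$, any strictly simpler element of $I_g$ would also belong to $I_{\mathrm{sum}}$, contradicting minimality. Hence $-b + 2^{-n}$ is the simplest element of $I_g$, the first display evaluates to $-b + 2^{-n}$, and the induction closes. The main point is the comparison $U_{\mathrm{sum}}\subseteq U_g$, which bypasses any direct sign-sequence computation of $-b + 2^{-n}$; the edge cases $n = 0$ and $b = 0$ require no separate treatment because the empty family on one side matches the absence of $(2^{-n})''$ or $(-b)''$ in the sum formula on the other.
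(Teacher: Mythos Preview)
Your proof is correct. The paper does not supply its own proof of this proposition: it is quoted verbatim from Gonshor \cite[Theorem 10.15]{gonshor1986introduction}, so there is nothing in the paper to compare against. Your argument---induction on the length of $x=2^{-n}\omega^{-b}$, identification of the non-zero prefixes via Theorem~\ref{thm:serieToSignExp}, and the cofinality trick $U_{\mathrm{sum}}\subseteq U_g$ combined with membership of $-b+2^{-n}$ in the smaller interval---is sound; in particular the inequality $b-\beta\geq 1$ for ordinals $\beta<b$ holds because the surreal sum $\beta+1$ coincides with the ordinal successor, and the edge cases $n=0$, $b=0$ are indeed absorbed by the empty families as you note.
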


\begin{proposition}[{\cite[Theorems 10.17, 10.19 and 10.20]{gonshor1986introduction}}]
	If $b$ is a surreal number such that for some $\epsilon$-number $\epsilon_i$, some ordinal $\alpha$ and for all natural number $n$,
	$\epsilon_i+n<b<\alpha<\epsilon_{i+1}$, then $g(b)=b$. This is also true if there is some ordinal $\alpha<\epsilon_0$ such that for all natural number $b$, $n\omega^{-1}<b<\alpha <\epsilon_0$.
\end{proposition}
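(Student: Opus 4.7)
The plan is to prove $g(b) = b$ by transfinite induction on the simplicity of $b$, exploiting the recursive formula $g(b) = \crotq{c(b), g(b')}{g(b'')}$ from Proposition \ref{prop:formeExpXPurelyInfiniteOmegaAg}. I would show that $b$ itself satisfies the simplicity condition characterizing the right-hand side: every element of the left set must be strictly less than $b$, every element of the right set strictly greater than $b$, and $b$ must be the simplest surreal lying in that cut.

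First, I would establish $c(b) < b$. Since $\epsilon_i < b < \epsilon_{i+1}$ and $b$ is not itself an $\epsilon$-number (it sits strictly between two consecutive ones), the exponent $c(b)$ of its Archimedean class is bounded above by the leading Cantor-like exponent of the monomial approximation of $b$, which is strictly less than $b$. This disposes of $c(b)$ as a potential obstruction on the left.

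Next, I would perform a case analysis on the prefixes of $b$. For a lower prefix $b'$: if $b'$ still lies in the interval of the hypothesis (that is, $\epsilon_i + \omega \leq b' < \alpha$), induction yields $g(b') = b' < b$; if $b'$ is an ordinal of the form $\epsilon_j + m$ with $j \leq i$ and $m < \omega$, then Proposition \ref{prop:gord} gives $g(b') = \epsilon_j + m + 1$, which is still strictly below $b$ because $b \geq \epsilon_i + \omega$; other ordinal prefixes are fixed by $g$, and infinitesimal monomial prefixes are handled via Proposition \ref{prop:gMonomeInfinitesimal}. A symmetric analysis applies to upper prefixes $b''$: either the inductive hypothesis or the identity case of Proposition \ref{prop:gord} gives $g(b'') > b$, or $b''$ has already crossed $\alpha$ or reached $\epsilon_{i+1}$, in which case $g(b'')$ is manifestly above $b$. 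With both sides controlled, the simplicity characterization forces $g(b) = b$.

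The main obstacle I expect lies in the case analysis for upper prefixes $b''$ approaching $\epsilon_{i+1}$: one must verify that the $+1$ shift that $g$ introduces on ordinals $\lambda \leq \beta < \lambda + \omega$ (for an $\epsilon$-number $\lambda$) never destroys the required inequality $g(b'') > b$, which requires tracking exactly when a prefix becomes an ordinal of that exceptional shape. A secondary subtlety is that prefixes of a non-ordinal $b$ need not themselves be ordinals, so one has to confirm that the condition $b' \geq \epsilon_i + \omega$ (or its analogue) persists down the inductive chain. The second clause of the proposition, concerning $b$ with $n\omega^{-1} < b < \alpha < \epsilon_0$, would be handled by the same scheme: $c(b)$ is now a small, possibly non-positive quantity, and the exceptional prefixes near $n\omega^{-1}$ are resolved by arguments analogous to Proposition \ref{prop:gMonomeInfinitesimal}, keeping the value of $g$ on those prefixes strictly below $b$ and preserving the simplicity argument.
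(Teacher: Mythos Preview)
The paper does not give a proof of this proposition: it is stated purely as a citation of Theorems~10.17, 10.19 and 10.20 in Gonshor's book, with no argument supplied. There is therefore no ``paper's own proof'' to compare your proposal against.

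Your outline is nonetheless the correct strategy, and it is essentially Gonshor's: transfinite induction on simplicity using the recursion $g(b)=\crotq{c(b),g(b')}{g(b'')}$, combined with a case analysis on the prefixes of $b$ that appeals to the already-known values of $g$ on ordinals (Proposition~\ref{prop:gord}) and on infinitesimal monomials (Proposition~\ref{prop:gMonomeInfinitesimal}). The structure you describe---show $c(b)<b$, show $g(b')<b$ for lower prefixes, show $g(b'')>b$ for upper prefixes, conclude by simplicity---is exactly right.

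One step is more delicate than your sketch suggests: the inequality $c(b)<b$. Your justification (``the leading Cantor-like exponent of the monomial approximation of $b$ is strictly less than $b$'') is immediate only when $b$ is an ordinal. For a general surreal $b$ in the stated range you must argue that $b$ is not a generalized $\epsilon$-number (a fixed point of $x\mapsto\omega^x$ in $\Nobf$); this uses the hypothesis $b<\alpha$ with $\alpha$ an \emph{ordinal} strictly below $\epsilon_{i+1}$, so that $b\preceq\alpha\asymp\omega^{\gamma}$ for some ordinal $\gamma<\alpha$, forcing $c(b)\leq\gamma<\alpha$, and then one checks $c(b)<b$ from $b\geq\epsilon_i+\omega$. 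Gonshor spells this out; your sketch should too. The treatment of the second clause (the $n\omega^{-1}<b<\alpha<\epsilon_0$ case) is similarly thin but follows the same pattern.
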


\begin{proposition}[{\cite[Theorem 10.18]{gonshor1986introduction}}]
	If $\epsilon\leq b\leq\epsilon + n$ for some $\epsilon$-number $\epsilon$ and some integer $n$. In particular, the sign expansion of $b$ is the sign expansion of $\epsilon$ followed by some sign expansion $S$. Then, the sign expansion of $g(b)$ is the sign expansion of $\epsilon$ followed by a $+$ and then $S$. In particular, $g(b)=b+1$. 
\end{proposition}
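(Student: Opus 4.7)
The plan is to establish both assertions simultaneously by transfinite induction on the simplicity of $b$, using the recursive definition of $g$ given in Proposition~\ref{prop:formeExpXPurelyInfiniteOmegaAg}. The base case $b = \epsilon$ (i.e.\ $S$ empty) is immediate from Proposition~\ref{prop:gord}, which gives $g(\epsilon) = \epsilon + 1$; the sign expansion of $\epsilon + 1$ is $\epsilon$ followed by a single $+$, exactly matching the claim.

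For the inductive step, I unfold the defining bracket
$$g(b) = \{c(b),\ g(b') \mid g(b'')\},$$
where $b'$ (resp.\ $b''$) ranges over the proper prefixes of $b$ strictly less (resp.\ greater) than $b$, and I would establish three facts. First, since $\epsilon \leq b \leq \epsilon + n \asymp \epsilon = \omega^{\epsilon}$, we have $c(b) = \epsilon$. Second, because $\epsilon$ is an ordinal, every proper prefix of $\epsilon$ is an ordinal $p < \epsilon$ (hence $p < b$), and Proposition~\ref{prop:gord} gives $g(p) \in \{p, p+1\}$; since $\epsilon$ is a limit ordinal this forces $g(p) < \epsilon = c(b)$, so these ``short'' prefixes are dominated in the left bracket by $c(b)$ itself. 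Third, every remaining prefix of $b$ has the form $p = \epsilon \cdot T$ for some proper prefix $T$ of $S$; a small sub-lemma shows that such $p$ still lies in $[\epsilon, \epsilon + n]$, the key point being that if $\alpha \in [0, n]$ has sign expansion starting with $+$ (or empty), then every prefix of that expansion represents a surreal in $[0, n]$ (the expansion visits the integers $0, 1, \ldots, \lceil \alpha \rceil \leq n$ in order and then oscillates within a single unit interval). The induction hypothesis therefore applies and yields $g(p) = p + 1$ for every such ``long'' prefix $p$.

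Combining these three facts, the bracket for $g(b)$ collapses to
$$g(b) = \{\epsilon,\ p + 1 : p \text{ long lower prefix of } b\} \mid \{p + 1 : p \text{ long upper prefix of } b\}.$$
I would then compare this with the canonical bracket $b + 1 = \{b,\ b' + 1 : b' < b\} \mid \{b'' + 1 : b'' > b\}$ (where $b', b''$ are prefixes of $b$) and argue by cofinality/coinitiality: the short-prefix contributions $b' + 1 \leq \epsilon$ on the left of $b + 1$ are dominated by $\epsilon = c(b)$, while the entry $b$ is cofinal with the long-lower-prefix family together with $\epsilon$; the right sides coincide. This yields $g(b) = b + 1$. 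The sign-expansion claim then follows by a direct arithmetic check: writing $b = \epsilon + \alpha$ with $\alpha \in [0, n]$ and $\alpha$'s sign expansion equal to $S$, the surreal $\alpha + 1$ is obtained from $\alpha$ by prepending a single $+$ (this is immediate whenever $\alpha \geq 0$, by case analysis on whether $\alpha$ is an integer or lies in some open unit interval $(k, k+1)$), so $b + 1 = \epsilon + (\alpha + 1)$ has sign expansion $\epsilon \cdot + \cdot S$. I expect the main obstacle to be making the cofinality comparison of the two brackets fully rigorous: concretely, one must verify that no surreal strictly simpler than $b + 1$ fits between $\sup L$ and $\inf R$ of the $g(b)$-bracket, which reduces to checking that every proper prefix of $b + 1$ lies outside this open interval.
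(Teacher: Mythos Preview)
The paper does not give its own proof of this proposition; it is simply quoted from Gonshor's book with the citation \cite[Theorem 10.18]{gonshor1986introduction}, so there is nothing in the paper to compare your argument against.

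That said, your direct argument is sound and is essentially how Gonshor proceeds. Two comments on the points you flag as uncertain. First, your sub-lemma that every long prefix $p=\epsilon\cdot T$ stays in $[\epsilon,\epsilon+n]$ is correct for exactly the reason you give: writing $b=\epsilon+\alpha$ with $\alpha\in[0,n]$, one checks (as in Theorem~\ref{thm:serieToSignExp}) that $S$ is the sign expansion of $\alpha$, so $T$ is the sign expansion of some prefix $\tau$ of $\alpha$; since $\alpha\ge 0$ every such prefix satisfies $0\le\tau\le\lceil\alpha\rceil\le n$, whence $p=\epsilon+\tau\in[\epsilon,\epsilon+n]$. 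Second, the cofinality comparison you worry about goes through cleanly: the right sets literally coincide (all upper prefixes of $b$ are long, since short prefixes are ordinals $<\epsilon\le b$), and on the left the only point needing care is that $b$ itself is dominated by some $p+1$ with $p$ a long lower prefix. Taking $\tau=\lfloor\alpha\rfloor$ (or $\tau=\alpha-1$ when $\alpha$ is a positive integer) gives a proper lower prefix $\tau$ of $\alpha$ with $\tau+1\ge\alpha$, hence $p=\epsilon+\tau$ is a long lower prefix with $p+1\ge b$.

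Alternatively, and perhaps more transparently, you can bypass cofinality entirely: once you know the left and right sets of the $g(b)$-bracket are $\{p+1:p\text{ long lower prefix}\}$ and $\{p+1:p\text{ long upper prefix}\}$ (the entries $c(b)=\epsilon$ and $g(b')$ for short $b'$ being absorbed by $\epsilon+1=g(\epsilon)$), observe that the proper prefixes of the surreal with sign expansion $+^{\epsilon+1}S$ are exactly the ordinals $\le\epsilon$ together with the numbers $p+1$ for $p$ a long proper prefix of $b$. None of these lies strictly between the two sets, while $b+1$ itself does; hence $g(b)=b+1$. Your final paragraph already points at this route.
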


It is possible to bound the length of $g(a)$ depending on the length of $a$. 

\begin{lemma}[{\cite[Lemma 5.1]{DriesEhrlich01}}]
	\label{lem:lengthGA}
	For all $a\in\Nobf$, $\length{g(a)}\leq \length a +1$.
\end{lemma}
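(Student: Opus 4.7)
The plan is to proceed by transfinite induction on $\length{a}$. In the base case $a=1$ (the only positive surreal of length~$1$), Proposition~\ref{prop:gord} gives $g(1)=1$, so $\length{g(1)}=1\leq 2$. For the inductive step, take $a$ in its canonical representation $a=\crotq{a'}{a''}$, where $a'$ ranges over the positive strict prefixes of $a$ with $a'<a$ and $a''$ over the strict prefixes with $a''>a$. The recursive definition of $g$ gives
\[
g(a) = \crotq{c(a),\,g(a')}{g(a'')}.
\]

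The idea is to bound the length of every element occurring in this bracket by $\length{a}$, and then to invoke the following folklore estimate: if a surreal $x$ is represented as $x=\crotq{L}{R}$ (not necessarily canonically) and every element of $L\cup R$ has length strictly less than some ordinal $\gamma$, then $\length{x}\leq\gamma$. Applied with $\gamma=\length{a}+1$, this will yield the desired inequality.

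For the images of the prefixes, $\length{a'}<\length{a}$ and $\length{a''}<\length{a}$, hence $\length{a'}+1\leq\length{a}$, and the inductive hypothesis gives $\length{g(a')}\leq\length{a'}+1\leq\length{a}$, and similarly for $g(a'')$. For the term $c(a)$, I would write $a$ in its normal form $a=\sum_{i<\nu} r_i\omega^{a_i}$ so that $c(a)=a_0$ (since $a\asymp r_0\omega^{a_0}\asymp\omega^{a_0}$), and then chain three inequalities: $\length{r_0\omega^{a_0}}\leq\length{a}$ by Lemma~\ref{lem:lengthTerm}; $\length{\omega^{a_0}}\leq\length{r_0\omega^{a_0}}$, because Theorem~\ref{thm:serieToSignExp} shows that the sign sequence of $\omega^{a_0}$ is an initial segment of that of $r_0\omega^{a_0}$, possibly after flipping every sign; and $\length{a_0}\leq\length{\omega^{a_0}}$ by Lemma~\ref{lem:lengthOmegaA}. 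Combining yields $\length{c(a)}\leq\length{a}$, and applying the bracket estimate closes the induction.

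The only non-immediate step is the folklore bracket estimate itself. I would prove it by checking that the truncation $x[:\gamma]$ already lies in the interval $(L,R)$: any $y\in L\cup R$ with $\length{y}<\gamma$ compares with $x$ and with $x[:\gamma]$ in exactly the same way, because either $y$ and $x$ first differ at some position strictly less than $\gamma$ (so the same branching happens with $x[:\gamma]$), or $y$ is a proper prefix of $x[:\gamma]$ and the deciding sign sits at position $\length{y}<\gamma$. Simplicity of $x$ in $(L,R)$ then forces $x\simplereq x[:\gamma]$, hence $\length{x}\leq\gamma$. This ambient tree-theoretic lemma is the one technical point that has to be spelled out; the rest of the argument is a straightforward bookkeeping of lengths through the recursive definition of $g$.
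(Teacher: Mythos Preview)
The paper does not supply its own proof of this lemma; it merely cites \cite[Lemma~5.1]{DriesEhrlich01}. Your argument is a correct reconstruction of the standard proof found there: bound the length of every option in the defining bracket $g(a)=\crotq{c(a),g(a')}{g(a'')}$ by $\length{a}$ (via the inductive hypothesis for $g(a'),g(a'')$ and via Lemmas~\ref{lem:lengthOmegaA} and~\ref{lem:lengthTerm} together with Theorem~\ref{thm:serieToSignExp} for $c(a)$), then conclude $\length{g(a)}\leq\length{a}+1$ from the general bracket estimate. Both the inductive bookkeeping and the auxiliary ``folklore'' bracket lemma are handled correctly.
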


The function $g$ has a inverse function, $h$ defined as follows

$$h(b) = \crotq{0,h(b')}{h(b''),\f{\omega^b}{n}}$$
This expression is uniform (see \cite{gonshor1986introduction}) and then does not depend of the expression of $b$ as $\crotq{b'}{b''}$.

\begin{corollary}
	\label{cor:hmord}
	If $a$ is an ordinal number then $h(-a)=\omega^{-a-1}$.
\end{corollary}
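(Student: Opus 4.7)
The plan is to directly invert Proposition \ref{prop:gMonomeInfinitesimal}, which computes $g$ on the monomial infinitesimals of the form $2^{-n}\omega^{-b}$. That proposition gives $g(2^{-n}\omega^{-b}) = -b + 2^{-n}$ for any natural number $n$ and any ordinal $b$. To obtain the target value $-a$, I would specialize to $n = 0$ (so that $2^{-n} = 1$) and $b = a+1$, which is again a nonzero ordinal since ordinals are closed under successor.

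Substituting these choices yields $g(\omega^{-(a+1)}) = -(a+1) + 1 = -a$. Since $h$ is defined as the inverse of $g$, this identity rewrites immediately as $h(-a) = \omega^{-a-1}$, which is exactly the claim. There is no genuine obstacle: the argument is a one-line specialization of an already-established formula, and the only hypothesis to verify (that $b = a+1$ lies in the scope of Proposition \ref{prop:gMonomeInfinitesimal}) is trivial. As a sanity check, the edge case $a = 0$ reads $h(0) = \omega^{-1}$, consistent with $g(\omega^{-1}) = -1 + 1 = 0$.
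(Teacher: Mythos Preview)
Your proof is correct and takes essentially the same approach as the paper's own proof, which simply states that the result is a direct consequence of Proposition~\ref{prop:gMonomeInfinitesimal} together with the fact that $h=g^{-1}$. You have merely made explicit the specialization $n=0$, $b=a+1$ that the paper leaves implicit.
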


\begin{proof}
	It is a direct consequence of Proposition \ref{prop:gMonomeInfinitesimal} and the fact that $h=g^{-1}$.
\end{proof}

As for $g$, we can bound the length of $h(a)$ in function of the length of $a$.

\begin{lemma}[{\cite[Proposition 3.1]{aschenbrenner:hal-02350421}}]
	\label{lem:lengthH}
	For all $a\in\Nobf$ we have, 
	$$\length{h(a)}\leq\omega^{\length a +1} $$
\end{lemma}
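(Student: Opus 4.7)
The plan is to proceed by transfinite induction on $\alpha = \length{a}$. For the base case $a = 0$, one computes $h(0) = \crotq{0}{1/n}$ (with $n$ ranging over positive integers), which is the simplest positive infinitesimal; its sign expansion is a single plus followed by $\omega$ minuses, of length $\omega = \omega^{0+1}$, so the bound holds.

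For the inductive step, I would exploit the recursive definition
$$h(a) = \crotq{0,h(a')}{h(a''),\omega^a/n}$$
where $a'$ (respectively $a''$) ranges over the proper prefixes of $a$ that are strictly less (resp.\ greater) than $a$, and $n$ ranges over the positive integers. By the induction hypothesis, $\length{h(a')}\leq \omega^{\length{a'}+1} < \omega^{\alpha+1}$, and similarly $\length{h(a'')} < \omega^{\alpha+1}$. From Lemma~\ref{lem:lengthOmegaA} we have $\length{\omega^a}\leq \omega^{\alpha}$, and Theorem~\ref{thm:serieToSignExp} shows that the sign expansion of $\omega^a/2^n$ is that of $\omega^a$ followed by at most $\omega^{\alpha}\cdot n$ minuses, so $\length{\omega^a/n}<\omega^{\alpha+1}$ for every $n$. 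Consequently every option appearing in the left or right set defining $h(a)$ has length strictly less than $\omega^{\alpha+1}$.

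To conclude I would invoke the following general principle about brackets: if every element of $L\cup R$ has length strictly less than some additive ordinal $\lambda$, then the simplest element $\crotq{L}{R}$ has length at most $\lambda$. Intuitively, the sign expansion of $\crotq{L}{R}$ is built position by position by comparing partial prefixes with $L$ and $R$; once the current prefix has length $\lambda$, no option of strictly smaller length can force an additional sign, so the construction must have already terminated. Applying this with the additive ordinal $\lambda = \omega^{\alpha+1}$ gives $\length{h(a)}\leq \omega^{\alpha+1}$, closing the induction.

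The main obstacle is this general cut-length lemma. The ``additive ceiling absorbs all options'' heuristic is convincing, but a rigorous proof requires a careful transfinite tracking of which options remain \emph{active} at each position of the bracket, handling separately the cases where $L$ has no maximum or $R$ has no minimum so that an agreement might in principle extend further than the nominal bound. An alternative, perhaps more direct route, is the one suggested by \cite{aschenbrenner:hal-02350421}: extract an explicit description of the sign expansion of $h(a)$ from the recursion, and bound its length directly using Theorem~\ref{thm:serieToSignExp}, which yields the same $\omega^{\length a + 1}$ bound without passing through an auxiliary cut-length lemma.
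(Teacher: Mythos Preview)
The paper does not supply its own proof of this lemma; it is quoted from \cite[Proposition~3.1]{aschenbrenner:hal-02350421}. Your inductive argument via the recursion for $h$ is correct and is the natural route.

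Your worry about the cut-length principle is misplaced: it is a two-line fact, and additivity of $\lambda$ plays no role. If $x=\crotq LR$ had $\length x>\lambda$, let $y$ be the length-$\lambda$ prefix of $x$. For any $l\in L$ the comparison $l<x$ is decided at some position $\leq\length l<\lambda$, where $y$ still agrees with $x$; hence $l<y$, and dually $y<R$. Then $y\sqsubset x$ with $L<y<R$, contradicting simplicity of $x$. No transfinite bookkeeping of active options is needed.

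One genuine but minor slip: you bound $\length{\omega^a/2^n}$ and then assert $\length{\omega^a/n}<\omega^{\alpha+1}$ for every $n$. This fails when $n$ is not a power of $2$: the real $1/n$ then has sign sequence of length $\omega$, and Theorem~\ref{thm:serieToSignExp} can give $\length{\omega^a/n}=\omega^{\alpha+1}$ (take $a$ an ordinal, so that every sign of $a$ is a plus). The repair is immediate: the family $\{\omega^a/2^n\}_n$ is coinitial with $\{\omega^a/n\}_n$, so by uniformity of the bracket you may work with the dyadic family, for which the strict bound holds as you computed.
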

We will also prove another lemma, Lemma \ref{lem:lengthOmegaGA}, that looks like the previous lemma but that is better in many cases but not always. To do so we first prove another technical lemma.

\begin{lemma}
	\label{lem:gomegacCasSpec}
	For all $c$, denote $c_+$ the surreal number whose signs sequence is the one of $c$ followed by a plus. Assume $g(a)<c$ for all $a\sqsubset\omega^c$ such that $0<a<\omega^c$. Then $g(\omega^c)$ is $c_+$ if $c$ does not have a longest prefix greater than itself, otherwise, $g(\omega^c)=c''$ where $c''$ is the longest prefix of $c$ such that $c''>c$.
\end{lemma}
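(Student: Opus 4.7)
The plan is to apply the bracket definition
$$g(\omega^c)=\crotq{\,c,\;g(a'):a'\sqsubset\omega^c,\;0<a'<\omega^c\,}{\,g(a''):a''\sqsubset\omega^c,\;a''>\omega^c\,},$$
using $c(\omega^c)=c$. The hypothesis forces $g(a')<c$ for every lower positive prefix $a'$, so $c$ is a maximum of the left set and the bracket collapses to the search for the simplest surreal $y$ with $c<y$ and $y<g(a'')$ for every upper prefix $a''$. I would then proceed by induction on the length of $c$, the base case $c=0$ being immediate since $\omega^0=1$ has no positive strict prefix, so the right set is empty and $g(1)=1=0_+$.

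For the inductive step, I would use Theorem~\ref{thm:serieToSignExp} to describe the sign expansion of $\omega^c$ as an initial $+$ followed, for each $\alpha<|c|$, by a block of $\omega^{|c[:\alpha]|_+ +1}$ copies of $c(\alpha)$. Upper prefixes of $\omega^c$ are then exactly those ending at a position where $c$ has a minus; they split into block-openings $\omega^{c[:\alpha]}$ (with $c(\alpha)=-$) and within-block extensions obtained by appending further minuses. A crucial observation is that the lemma's hypothesis is inherited from $c$ to each upper prefix $c[:\alpha]$ of $c$: every lower positive prefix of $\omega^{c[:\alpha]}$ is also a lower positive prefix of $\omega^c$ (the two sign sequences agree up to length $|\omega^{c[:\alpha]}|$), so its $g$-value is $<c<c[:\alpha]$. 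Induction therefore yields $g(\omega^{c[:\alpha]})>c[:\alpha]$ for every such block-opening. Within-block upper prefixes sit in the same Archimedean class as the corresponding $\omega^{c[:\alpha]}$, so $c[:\alpha]$ reappears in their own left bracket set and the lower bound $g(a'')>c[:\alpha]$ propagates to all of them.

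Finally, I would split the cases of the statement. If $c$ has no longest upper prefix, either there are no upper prefixes of $c$ at all, so the right set of $g(\omega^c)$ is empty and $g(\omega^c)$ is simply the simplest surreal above $c$, namely $c_+$; or the block-openings $c[:\alpha]$ cluster down towards $c$, and the inequalities $c_+<c[:\alpha]<g(\omega^{c[:\alpha]})$ place $c_+$ safely in the cut, while its simplicity is immediate as $c_+$ is a direct child of $c$. If $c$ has a longest upper prefix $c''$, corresponding to the last minus of $c$, then $g(\omega^{c''})$ is the coinitial element of the right set and, being strictly greater than $c''$, leaves $c''$ itself in the gap $(c,g(\omega^{c''}))$; any strictly simpler candidate would be a proper prefix of $c''$, hence either $\leq c$ or a shorter upper prefix of $c$ and therefore strictly larger than $g(\omega^{c''})$, so not in the cut. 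This gives $g(\omega^c)=c''$. I expect the main obstacle to be the precise handling of the within-block upper prefixes and the verification that they do not lower the right-hand cut below the target, since no explicit formula for $g$ on those surreals is available; the argument above reduces this to the inductive computation at the block openings combined with the Archimedean-class observation used inside the bracket for $g(a'')$.
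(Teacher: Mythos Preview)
Your overall strategy matches the paper's: induction on $c$, collapse the left set of the bracket to $\{c\}$ via the hypothesis, then split on whether $c$ has a longest upper prefix. The block analysis of upper prefixes of $\omega^c$ via Theorem~\ref{thm:serieToSignExp} and the Archimedean-class argument for within-block prefixes are correct and useful.

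The real gap is the simplicity check in the sub-case where $c$ has upper prefixes but no longest one. You assert that simplicity of $c_+$ is ``immediate as $c_+$ is a direct child of $c$'', but being a child of $c$ only disposes of the prefix $c$ itself; the remaining proper prefixes of $c_+$ are the proper prefixes of $c$, and among them every upper prefix $d$ of $c$ still satisfies $d>c$, so you must show that each such $d$ fails the right-hand inequality, i.e.\ that $d\geq g(a'')$ for some upper prefix $a''$ of $\omega^c$. Your bound $g(a'')>c[:\alpha]$ points the wrong way for this. The paper closes this gap by using the \emph{full} strength of the induction hypothesis, not just the inequality $g(\omega^{c[:\alpha]})>c[:\alpha]$: since $c$ has no longest upper prefix, for every upper prefix $d$ of $c$ there is a minimal $d_1$ with $d\sqsubset d_1\sqsubset c$ and $d_1>c$, and one checks that $d$ is then the longest upper prefix of $d_1$; applying the lemma (with inherited hypothesis) to $d_1$ gives $g(\omega^{d_1})=d$ exactly, so $d$ itself appears in the right set and is therefore excluded from the cut.

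The longest-upper-prefix case has a milder version of the same issue: you claim a shorter upper prefix $c'$ of $c$ is ``strictly larger than $g(\omega^{c''})$'' without justification. This does follow once you apply the induction hypothesis to $c''$ and pin down $g(\omega^{c''})$ as either $c''_+$ or the longest upper prefix of $c''$ (both are $<c'$), but you never invoke that precise conclusion, only the crude bound $g(\omega^{c''})>c''$. Incidentally, $g(\omega^{c''})$ is not coinitial in the right set --- the within-block prefixes $\omega^{c''}(-)^\delta$ produce smaller $g$-values --- though this is harmless since your Archimedean argument already gives $g(a'')>c''$ for those as well.
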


\begin{proof}
	By induction on $c$:
	
	\begin{itemize}
		\item For $c=0$, $g(\omega^0)=g(1)=1$ whose signs sequence is indeed the one of $0$ followed by a plus.
		
		\item Assume the property for $b\sqsubset c$. Assume $g(a')<c$ for all $a'\sqsubset\omega^c$ such that $0<a'<\omega^c$. Then,
		$$
			g(\omega^c)=\crotq{c}{g(a'')}
		$$
		where $a''$ ranges over the elements such that $a''\sqsubset \omega^c$ and $a''>\omega^c$.
		
		\begin{itemize}
			\item First case: $c$ has a longest prefix $c_0$ such that $c_0>c$. Then, for all $a''$ such that $a''\sqsubset \omega^c$ and $a''>\omega^c$, $a''\succeq \omega^{c_0}$, hence $g(a'')> c_0$. Since $c<c_0<g(a'')$, the simplicity property ensures $g(\omega^c)\sqsubseteq c_0\sqsubset c$. Then $g(\omega^c)$ is some prefix $c''$ of $c$, greater than $c$. 	We look at $\omega^{c''}$. Notice that for all $b\sqsubset c''$ is such that $0<b<c''$, $b\sqsubset c$ and $b<c$, hence $g(b)<c<c''$. Therefore we can apply the induction hypothesis to $c''$ and $g(\omega^{c''})$ is $c''_+$ if the signs sequence of $c$ does not end with only minuses, otherwise, $g(\omega^{c''})$ is the last (strict) prefix of $c''$ greater than $c''$. 
			
			\begin{itemize}
				\item First subcase: $g(\omega^{c''})=c''_+$. If there is some $b$ such that $c''\sqsubset b\sqsubset c$ and $b>c$, then $g(\omega^b)$ is a prefix of $g(\omega^c)=c''$. But, $c''=g(\omega^c)<g(\omega^b)<g(\omega^{c''})=c''_+$. Then $c''$ must be a strict prefix of $g(\omega^b)$ which is a contradiction. Then $c''$ is indeed the last strict prefix of $c$ greater than $c$.
				
				\item Second subcase:  $g(\omega^{c''})$ is the last (strict) prefix of $c''$ greater than $c''$. If there is some $b$ such that $c''\sqsubset b\sqsubset c$ and $b>c$, then $g(\omega^b)$ is a prefix of $g(\omega^c)=c''$. Since $g(b)<g(c'')$, $g(b)$ is prefix of $c''$ smaller than $c''$. But this contradicts the fact that $g(\omega^b)>g(\omega^c)=c''$. Therefore, $c''$ is the last prefix of $c$ greater than $c$.
			\end{itemize}

			\item Second case: $c$ does not have a longest prefix greater than $c$. Then,
			$$
				g(\omega^c) = \crotq c{g(\omega^{c''})}
			$$
			where $c''$ ranges over the prefixes of $c$ greater than $c$. Let $d\sqsubset c$ such that $d>c$. Then there is $d_1$ or minimal length such that $d\sqsubset d_1\sqsubset c$ and $d_1>c$. By minimality of $d_1$, $d$ is the longest prefix of $d_1$ greater than $d_1$. As in the first case, we can apply the induction hypothesis on $d_1$ and get $g(\omega^{d_1})=d$. Therefore, again by induction hypothesis,
			$$
				g(\omega^c)=\crotq c{c'',c''_+} = \crotq c{c''}
			$$
			where $c''$ ranges over the prefixes of $c$ greater than $c$. We finally conclude that $g(\omega^c)=c_+$.
		\end{itemize}
	\end{itemize}
\end{proof}

In the following we denote $\oplus$ the usual addition over the ordinal numbers and~$\otimes$ the usual product over ordinal numbers.

\begin{lemma}
	\label{lem:lengthOmegaGA}
	For all $a>0$, $\length a\leq\length{\omega^{g(a)}}\otimes(\omega+1)$.
\end{lemma}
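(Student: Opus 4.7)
The plan is to prove the inequality by transfinite induction on $a$ using the simplicity relation, exploiting the explicit description of $g$ on monomials provided by Lemma~\ref{lem:gomegacCasSpec} above together with the sign-expansion calculus of Theorem~\ref{thm:serieToSignExp}.

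For the base case (e.g.\ $a=1$, where $g(1)=1$ and $\omega^{g(1)}=\omega$) the inequality is verified directly from Proposition~\ref{prop:gord}. For the induction step I would split into two principal cases. In the first case, $a$ is a pure monomial $\omega^c$ with $c>0$. The hypothesis of Lemma~\ref{lem:gomegacCasSpec} — namely that $g(a')<c$ for every positive $a'\sqsubset\omega^c$ with $a'<\omega^c$ — is to be discharged via the induction hypothesis together with Lemma~\ref{lem:lengthGA} (controlling $\length{g(a')}$). This pins $g(\omega^c)$ down as either $c_+$ or the longest strict prefix $c''$ of $c$ with $c''>c$. In either sub-case, Theorem~\ref{thm:serieToSignExp} lets one write
\[
\length{\omega^c}=1\oplus\bigoplus_{\alpha<\length c}\omega^{|c[:\alpha]|_+\oplus 1},
\qquad
\length{\omega^{g(\omega^c)}}=1\oplus\bigoplus_{\alpha<\length{g(\omega^c)}}\omega^{|g(\omega^c)[:\alpha]|_+\oplus 1},
\]
and the desired comparison reduces to an elementary ordinal estimate in which the factor $\omega+1$ provides exactly the slack needed to absorb either the extra trailing $+$ (when $g(\omega^c)=c_+$) or the deletion of a long tail of $c$ (when $g(\omega^c)=c''$).

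In the second case, $a$ is not of monomial form. Writing the canonical representation $a=\crotq{a'}{a''}$ and invoking $g(a)=\crotq{c(a),g(a')}{g(a'')}$ (where $a'$ runs over positive lower predecessors), one observes that $c(a)=a_0$, the leading exponent in the normal form of $a$, so that $\omega^{c(a)}$ captures the dominant Archimedean class of $a$. The induction hypothesis applied to each proper initial segment, combined with Theorem~\ref{thm:serieToSignExp} applied to the normal-form expansion of $a$ to express $\length a$ as a sum of lengths of individual terms $r_i\omega^{a_i^\circ}$, then yields the bound after reassembly.

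The main obstacle is the monomial case when $g(\omega^c)$ collapses to a strict prefix of $c$: here $\length{\omega^{g(\omega^c)}}$ can be dramatically smaller than $\length{\omega^c}$, and one must argue carefully that the trailing contribution $\bigoplus_{\alpha}\omega^{|c[:\alpha]|_+\oplus 1}$ between $\length{c''}$ and $\length c$ is swallowed by the $\omega+1$ multiplicative slack on the right-hand side. This is precisely the situation that distinguishes this lemma from Lemma~\ref{lem:lengthH} and that justifies the announced remark that the new bound is sharper than the earlier one in many, but not all, regimes.
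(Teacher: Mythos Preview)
Your case split (monomial versus non-monomial) is not the right one, and the first case contains a genuine gap. You propose to discharge the hypothesis of Lemma~\ref{lem:gomegacCasSpec} --- that $g(a')<c$ for every positive lower prefix $a'$ of $\omega^c$ --- using the induction hypothesis and Lemma~\ref{lem:lengthGA}. But neither of these says anything about the \emph{value} of $g(a')$ relative to $c$; they are length bounds. In fact that hypothesis can simply fail, and when it does you have no argument. The paper's proof makes the dichotomy precisely here: either some lower prefix $a_0\sqsubset a$ satisfies $g(a_0)\geq c$, or none does. In the former case the crucial observation (which you are missing) is that the sign sequence of $g(a)$ is that of $g(a_0)$ followed by the \emph{same} tail $S$ that takes $a_0$ to $a$; this gives $\length{\omega^{g(a)}}\geq\length{\omega^{g(a_0)}}\oplus(\omega\otimes\length S)$, and the induction hypothesis on $a_0$ finishes. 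In the latter case one checks that $a$ is $\omega^c$ followed only by minuses, so either one reduces to $\omega^c$ (again by the tail argument) or $a=\omega^c$ and \emph{now} Lemma~\ref{lem:gomegacCasSpec} applies because its hypothesis is exactly the case assumption.

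Your second case is also too vague to succeed as stated. Invoking the normal form of $a$ and ``reassembly'' via Theorem~\ref{thm:serieToSignExp} does not by itself control $\length{\omega^{g(a)}}$, because $g$ does not act termwise on the normal form of $a$. The actual mechanism, as above, is the sign-sequence extension property of $g$: once $c(a)$ drops out of the bracket $g(a)=\crotq{c(a),g(a')}{g(a'')}$ (i.e.\ once some $g(a')$ dominates $c$), the map $g$ simply copies suffixes. That is the structural fact doing all the work, and it is absent from your sketch.
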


\begin{proof}
	We proceed by induction on $\length a$. 
	\begin{itemize}
		\item For $a=1$, $g(a)=1$ and we indeed have $1\leq \omega^2$.
		
		\item Assume the property for all $b\sqsubset a$. Let $c$ such that $\omega^c\asymp a$. Then 
		$$
			g(a)=\crotq{c,g(a')}{g(a'')}
		$$
		We split into two cases:
		\begin{itemize}
			\item If there is some $a_0\sqsubset a$ such that $a_0<a$ and $g(a_0)\geq c$ then
			$$
			g(a)=\crotq{g(a')}{g(a'')}
			$$
			and if $S$ stand for the signs sequence such that $a$ is the signs sequence of $a_0$ followed by $S$, $g(a)$ is the signs sequence of $g(a_0)$ followed $S$. Let $\alpha$ the length of $S$. Therefore using Theorem \ref{thm:serieToSignExp},
			$$
			\length{\omega^{g(a)}}\geq\length{\omega^{g(a_0)}}\oplus(\omega\otimes\alpha)
			$$
			and then,
			\begin{align*}
				\length{\omega^{g(a)}}\otimes(\omega+1) &\geq \length{\omega^{g(a_0)}}\otimes\omega\oplus \length{\omega^{g(a_0)}}\oplus\alpha \\
				&\geq\length{\omega^{g(a_0)}}\otimes(\omega+1)\oplus\alpha
			\end{align*}
			and by induction hypothesis on $a_0$,
			$$
			\length{\omega^{g(a)}}\otimes(\omega+1)\geq\length{a_0}\oplus\alpha=\length a
			$$
			
			\item Otherwise, for any $a_0\sqsubset a$ such that $a_0<a$, $g(a_0)<c$. Therefore,
			$$ g(a)=\crotq{c}{g(a'')}$$
			Also, since $a>0$, we can write the signs sequence of $a$ as the one of $\omega^c$ followed by some signs sequence $S$. If $S$ contains a plus, then there is a prefix of $a$, $a_0$ such that $a_0<a$ and still $a_0\asymp\omega^c$ and then $g(a_0)>c$ what is not the case by assumption. Then, $S$ is a sequence of minuses. If $S$ is not the empty sequence, let $\alpha$ be the length of $S$. Then the signs sequence of $g(a)$ is the one of $g(\omega^c)$ followed by $S$. Hence,
			$$
				\length{\omega^{g(a)}}\geq \length{\omega^{g(\omega^c)}}\oplus(\omega\otimes\alpha)
			$$ 
			As in the previous case, but using the induction hypothesis on $\omega^c$,
			$$
				\length{\omega^{g(a)}}\otimes(\omega+1)\geq\length{\omega^c}\oplus\alpha=\length a
			$$
			Now if $S$ is the empty sequence, $a=\omega^c$. Applying Lemma \ref{lem:gomegacCasSpec} to $c$ we get that either $g(a)=c_+$ or $g(a)$ is the last prefix of $c$ greater than $c$. If the first case occurs then $a$ is a prefix of $\omega^{g(a)}$ and then $\length{\omega^{g(a)}}\geq\length a$. Now assume that the second case occurs. Then for any $b$ such that $g(a)\sqsubset b\sqsubset c$, $b<c$.
			If for all $b'\sqsubset b$ such that $b'<b$, $g(b')<b$, then Lemma \ref{lem:gomegacCasSpec} applies. Since $b$ has a last prefix greater than itself, $g(a)$, $g(\omega^b)=g(a)$ and we reach a contradiction since $b<c$ and therefore $\omega^b<\omega^c=a$. Then for all  $b$ such that $g(a)\sqsubset b\sqsubset c$, there is some $b'\sqsubset b$, $b'<b$ such that $g(\omega^{b'})>b$. Since the signs sequence of $b$ consists in the one of $g(a)$ a minus and then a bunch of pluses, and since $g(\omega^{b'})$ must also a a prefix of $c$, $g(\omega^{b'})\sqsubseteq g(a)\sqsubset b$. Therefore to ensure $g(b')>b$, we must have $g(\omega^{b'})\geq g(a)$. Since $\omega^{b'}$ is a prefix of $a$ lower than $a$, it is a contradiction. Therefore, there is no $b$ such that $g(a)\sqsubset b\sqsubset c$ and $b<c$, and finally, the signs sequence of $c$ is the one $g(a)$ followed by a minus. In particular, $g(a)$ and $c$ have the same amount of pluses, say $\alpha$. Then, using Theorem \ref{thm:serieToSignExp},
			\begin{align*}
				\length a &= \length{\omega^{g(a)}}\oplus\omega^{\alpha+1}\\
				&\leq \length{\omega^{g(a)}}\oplus \length{\omega^{g(a)}}\otimes\omega = \length{\omega^{g(a)}}\otimes\omega\\
				&\leq \length{\omega^{g(a)}}\otimes(\omega+1)
			\end{align*}
		\end{itemize}
		The induction principle concludes.
	\end{itemize}
\end{proof}

\begin{corollary}
	\label{cor:lengthOmegaGA}
	For all $a>0$ and for all multiplicative ordinal greater than $\omega$, if $\length a\geq\mu$, then $\length{\omega^{g(a)}}\geq\mu$.
\end{corollary}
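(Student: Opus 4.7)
The plan is to derive this directly from Lemma \ref{lem:lengthOmegaGA} by contradiction, using the defining property of multiplicative ordinals (namely, closure under ordinal multiplication of smaller ordinals).

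First I would recall that a multiplicative ordinal $\mu > \omega$ must satisfy $\mu \geq \omega^\omega$: since multiplicative ordinals have the form $\omega^{\omega^\alpha}$, the constraint $\mu > \omega = \omega^{\omega^0}$ forces $\omega^\alpha \geq \omega$, hence $\mu \geq \omega^\omega$. In particular $\omega + 1 < \mu$.

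Then, suppose for contradiction that $\length a \geq \mu$ but $\length{\omega^{g(a)}} < \mu$. Set $\alpha = \length{\omega^{g(a)}}$. Since $\mu$ is multiplicative, from $\alpha < \mu$ and $\omega + 1 < \mu$ we obtain $\alpha \otimes (\omega+1) < \mu$. But Lemma \ref{lem:lengthOmegaGA} gives $\length a \leq \alpha \otimes (\omega + 1) < \mu$, contradicting $\length a \geq \mu$. Hence $\length{\omega^{g(a)}} \geq \mu$.

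There is no real obstacle here: the only point requiring slight care is the small case analysis to ensure that $\omega + 1$ is strictly below every multiplicative $\mu > \omega$, so that the closure of multiplicative ordinals under $\otimes$ applies. Everything else is immediate from the lemma.
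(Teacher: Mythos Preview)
Your proof is correct and follows exactly the same approach as the paper: assume $\length{\omega^{g(a)}}<\mu$, invoke Lemma~\ref{lem:lengthOmegaGA} to get $\length a\leq\length{\omega^{g(a)}}\otimes(\omega+1)$, and use multiplicativity of $\mu>\omega$ (so $\omega+1<\mu$) to bound the right-hand side strictly below $\mu$, a contradiction. The only difference is that you justify $\omega+1<\mu$ slightly more carefully than the paper does.
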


\begin{proof}
	Assume the that $\length{\omega^{g(a)}}<\mu$. Then using Lemma \ref{lem:lengthOmegaGA}, $\mu\leq \length{\omega^{g(a)}}\otimes(\omega+1)$. Since $\mu$ is a multiplicative ordinal greater than $\omega$, we have $\omega+1<\mu$. $\mu$ is a multiplicative ordinal, hence $\length{\omega^{g(a)}}\otimes(\omega+1)<\mu$ and we reach a contradiction.
\end{proof}

\subsection{Gonshor's logarithm}

We already know that a logarithm exist over positive surreal numbers. Nevertheless we were very elliptical and we now get deeper into it.

\begin{definition}
	For a surreal number $a$ in canonical representation $a=\crotq{a'}{a''}$, we define
	$$
		\ln\omega^a=\crotq{\begin{array}{c}
				\enstq{\ln\omega^{a'}+n}{\begin{array}{c}n\in\Nbb\\ a'\sqsubset a\\ a'<a\end{array}}\\
				\enstq{\ln\omega^{a''}-\omega^{\frac{a''-a}n}}{\begin{array}{c}n\in\Nbb\\ a''\sqsubset a\\ a<a''\end{array}}
			\end{array}\!\!}{\!\!\begin{array}{c}
				\enstq{\ln\omega^{a''}-n}{\begin{array}{c}n\in\Nbb\\ a''\sqsubset a\\ a<a''\end{array}}\\
				\enstq{\ln\omega^{a'}+\omega^{\frac{a-a'}{n}}}{\begin{array}{c}n\in\Nbb\\ a'\sqsubset a\\ a'<a\end{array}}
			\end{array}}
	$$
\end{definition}
As often with this kind of definitions, the uniformity property holds.
\begin{lemma}[{\cite[Lemma 10.1]{gonshor1986introduction}}]
	The definition of $\ln\omega^a$ does not require $a$ in canonical representation.
\end{lemma}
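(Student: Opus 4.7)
The plan is to prove uniformity by transfinite induction on the length of $a$, combined with Gonshor's simplicity principle. Writing the canonical definition as $\ln\omega^a = \crotq{L}{R}$, and the candidate obtained from an arbitrary representation $a = \crotq{A}{B}$ as $\crotq{L'}{R'}$, the goal is to show $\crotq{L'}{R'} = \crotq{L}{R}$. Inductively, I may assume the statement for every strict prefix of $a$, so that all occurrences of $\ln\omega^{a'}$ for $a' \sqsubset a$ appearing in $L, R, L', R'$ are unambiguous.

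The core step is to establish four monotonicity bounds for $y := \ln\omega^a$: for every surreal $b$ with $b < a$ and every $n \in \Nbb$, one has $\ln\omega^b + n < y < \ln\omega^b + \omega^{(a-b)/n}$; symmetrically for $b > a$, one has $\ln\omega^b - \omega^{(b-a)/n} < y < \ln\omega^b - n$. When $b$ is a strict prefix of $a$ these are literally the defining conditions of $L$ and $R$. For a general surreal $b$, I would reduce to the prefix case by considering the longest common prefix $a_0$ of $a$ and $b$ in the simplicity tree; since $a_0 \sqsubset a$, the bounds for $a_0$ apply by the canonical definition, and I propagate them to $b$ via the inductive hypothesis together with the strict monotonicity of $\omega^{x}$ in $x$.

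With these bounds in hand, applying them with $b$ ranging over $A$ (resp. $B$) shows $L' < y < R'$. In the reverse direction, every element $\ln\omega^{a'}+n \in L$ with $a' \sqsubset a, a'<a$ is bounded above by $\ln\omega^c + n$ for any $c \in A$ with $a' \leq c$, which exists because $A < a$ and the canonical prefixes of $a$ are cofinal among surreals strictly below $a$ and simpler than $a$; the analogous statement, together with its dual, shows that $L$ and $L'$ are mutually cofinal, and $R$ and $R'$ mutually coinitial. Gonshor's simplicity theorem (\cite[Theorem 2.1]{gonshor1986introduction}) then forces $\crotq{L'}{R'} = \crotq{L}{R} = y$, establishing uniformity.

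The main obstacle is propagating the four monotonicity inequalities from strict prefixes of $a$ to arbitrary surreals $b$. Handling the terms of the form $\omega^{(a-b)/n}$ is the delicate part: since $a-b$ can be an arbitrarily complicated surreal, one must compare $\ln\omega^{a-b}$ with $\omega^{(a-b)/n}$, exploiting the fact that the logarithm of a positive monomial is negligible compared to any positive fractional power of that monomial. This requires a secondary induction, organised around the sign expansion of $a-b$, and a careful case split according to whether $b$ sits on the same branch as $a$ in the simplicity tree or diverges from it.
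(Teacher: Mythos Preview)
The paper does not prove this lemma; it is quoted from Gonshor with a bare citation. Your outline is Gonshor's strategy --- extend the four defining inequalities from prefixes of $a$ to arbitrary $b$, then deduce uniformity by cofinality --- and that shape is correct, but two steps are not as you describe.

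The cofinality step for the cross terms $\ln\omega^{a''}-\omega^{(a''-a)/n}\in L$ is not the ``analogous statement'' you claim. Given $d\in B$ with $a<d\le a''$, showing $\ln\omega^{a''}-\omega^{(a''-a)/n}\le \ln\omega^{d}-\omega^{(d-a)/n}\in L'$ requires the bound $\ln\omega^{a''}-\ln\omega^d<\omega^{(a''-d)/n}$ (an instance of the very inequalities you have just established) together with the elementary estimate $\omega^{(a''-a)/n}>\omega^{(a''-d)/n}+\omega^{(d-a)/n}$, which is equivalent to $\omega^{-p}+\omega^{-q}<1$ for $p,q>0$. That computation is the content of this step, not a symmetry. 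Conversely, you overcomplicate the extension of the bounds to general $b$: no secondary induction on the sign expansion of $a-b$ is needed, and in fact no outer induction on $|a|$ either. When the longest common prefix $a_0$ is a strict prefix of both $a$ and $b$ (so $b<a_0<a$ when $b<a$), the canonical definitions of $\ln\omega^a$ and of $\ln\omega^b$ already supply the four bounds for the pairs $(a,a_0)$ and $(b,a_0)$, and chaining them via the same estimate $\omega^{p+q}>\omega^p+\omega^q$ yields the bound for $(a,b)$ directly. The edge cases $a_0=b$ and $a_0=a$ --- the latter contradicting your blanket ``$a_0\sqsubset a$'' --- are read off from one of the two canonical definitions, and the inductive hypothesis on uniformity plays no role anywhere in this argument.
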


\begin{proposition}[{\cite[Theorem 10.8]{gonshor1986introduction}}]
	\label{prop:lnOmegaA}
	For all surreal number $a$, $\ln\omega^a$ is purely infinite.
\end{proposition}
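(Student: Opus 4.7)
The plan is to proceed by transfinite induction on the simplicity of $a$. The base case $a=0$ is immediate since $\ln\omega^0=\ln 1=0$ is purely infinite vacuously. For the inductive step, I would assume $\ln\omega^{a'}$ is purely infinite for every proper prefix $a'\sqsubset a$, and set $y=\ln\omega^a$, the simplest surreal lying strictly between the lower set $L$ and the upper set $R$ of its defining bracket.

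First I would observe that $y$ is infinite whenever $a\neq 0$: the empty sign sequence $0$ is a strict prefix of $a$, and either $0<a$ (giving $\ln\omega^0+n=n<y$ for all $n\in\Nbb$) or $0>a$ (giving $y<\ln\omega^0-n=-n$ for all $n$). Now suppose for contradiction that $y$ is not purely infinite. Writing $y=\Sum{i<\nu}{}r_i\omega^{b_i}$ in normal form, let $i_0\geq 1$ be the least index with $b_{i_0}\leq 0$, and split $y=x+z$ with $x=\Sum{i<i_0}{}r_i\omega^{b_i}$ purely infinite and $z$ appreciable. By Theorem \ref{thm:serieToSignExp}, the sign expansion of $x$ is a strict prefix of that of $y$, so $x\simpler y$.

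To contradict the defining simplicity of $y$, I would verify that $x$ itself still lies strictly between $L$ and $R$. The four required inequalities reduce to the same pattern: given a bound on $y$ valid for every parameter $n\in\Nbb$, strengthen the parameter to absorb the appreciable perturbation $z$. Lower bounds such as $y>\ln\omega^{a'}+n$ for all $n$ immediately give $x=y-z>\ln\omega^{a'}+m$ for all $m$, since $z$ is bounded in absolute value by a natural number. The subtle case is an upper bound of the form $y<\ln\omega^{a'}+\omega^{(a-a')/n}$: here I would use that $\omega^{(a-a')/n}-\omega^{(a-a')/m}$ is itself infinite whenever $n<m$ (since $a-a'>0$), so for any fixed $n$ one can pick $m$ large enough that this gap exceeds $|z|$, yielding $x<\ln\omega^{a'}+\omega^{(a-a')/n}$. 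The symmetric cases involving prefixes $a''$ with $a''>a$ are handled identically.

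I expect this last bookkeeping to be the main technical obstacle, since one must juggle the sign of $z$ against a family of infinite bounds whose magnitudes decrease with the parameter but stay infinite. Once these inequalities are settled, $x\simpler y$ together with $x\in (L,R)$ contradicts the simplicity of $y=\ln\omega^a$, forcing $y$ itself to be purely infinite and completing the induction.
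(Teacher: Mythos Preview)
Your argument is correct and is essentially Gonshor's own proof of Theorem~10.8, which the paper merely cites without reproducing. The induction on simplicity, the splitting $y=x+z$ into purely infinite plus appreciable parts, the observation that $x\sqsubset y$ via the sign-expansion theorem, and the parameter-shifting trick to absorb the bounded perturbation $z$ into the defining cut are exactly the ingredients Gonshor uses. One minor clarification: when you write ``pick $m$ large enough'', in fact \emph{any} $m>n$ works, since $(a-a')/n$ and $(a-a')/m$ lie in distinct Archimedean classes and hence $\omega^{(a-a')/n}-\omega^{(a-a')/m}\asymp\omega^{(a-a')/n}$ is already infinite; no asymptotic choice of $m$ is needed.
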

Purely infinite numbers are a special case in the definition of the exponential function. We can state the previous definition of $\ln$ is consistent with the one of $\exp$.

\begin{theorem}[{\cite[Theorem 10.9]{gonshor1986introduction}}]
	\label{thm:expLnOmegaA}
	For all surreal number $a$, 
	$$\exp\ln\omega^a=\omega^a$$
\end{theorem}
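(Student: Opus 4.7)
The plan is to prove the identity by transfinite induction on the simplicity of $a$, unpacking the bracket definitions of $\ln\omega^a$ and $\exp$ and comparing the result against the bracket definition of $\omega^a$.

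First, I would invoke Proposition \ref{prop:lnOmegaA} to record that $\ln\omega^a$ is purely infinite; this is the structural fact that guarantees the definition of $\exp$ makes sense when applied to it and that $\exp\ln\omega^a$ is necessarily of the form $\omega^b$ for some $b$ (Proposition \ref{prop:formeExpXPurelyInfiniteOmegaA}), so the content of the theorem is really the identification $b=a$. Next, fixing the canonical representation $a=\crotq{a'}{a''}$, I would write
$\ln\omega^a = \crotq{L}{R}$
according to the defining formula, where $L$ contains the terms $\ln\omega^{a'}+n$ and $\ln\omega^{a''}-\omega^{(a''-a)/n}$, and $R$ contains $\ln\omega^{a''}-n$ and $\ln\omega^{a'}+\omega^{(a-a')/n}$. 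By the uniformity stated in Lemma~10.1, I can feed this representation directly into the defining bracket of $\exp$.

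Applying the $\exp$-formula yields an explicit bracket whose entries are of the form $\exp(\ln\omega^{a'}+n)\cdot[\ln\omega^a-\ln\omega^{a'}-n]_{k}$, $\exp(\ln\omega^{a''}-\omega^{(a''-a)/n})\cdot[\ln\omega^a-\ln\omega^{a''}+\omega^{(a''-a)/n}]_{2k+1}$, and the two dual families on the right. Using the induction hypothesis, together with the homomorphism property $\exp(x+y)=\exp(x)\exp(y)$ (recalled via Theorem \ref{thm:expAppreciables} for the real/infinitesimal shifts), each such term factors as $\omega^{a'}$ (respectively $\omega^{a''}$) multiplied by a truncated exponential series applied to a suitable argument. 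I would then match the result with the bracket
$\omega^a = \crotq{0,\,n\omega^{a'}}{\omega^{a''}/2^{n}}$
by the usual cofinality argument for equality of brackets in $\No$.

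The main obstacle is the cofinality verification. On the left side I must show that the family $\exp(\ln\omega^{a'}+n)[\ln\omega^a-\ln\omega^{a'}-n]_{k}$ is cofinal with $\{n\omega^{a'}\}$ in the interval defining $\omega^a$: this requires showing that the product stays strictly below $\omega^{a}$ yet eventually exceeds any fixed $m\omega^{a'}$, which follows because $\ln\omega^a - \ln\omega^{a'}$ is purely infinite and positive, so its truncated exponential $[\cdot]_k$ grows unboundedly in $k$ while remaining below the full formal series, and because the multiplier $\omega^{a'}\exp(n)$ scales the $\omega^{a'}$-part correctly. Dually, the auxiliary left-terms involving $\omega^{(a''-a)/n}$ must be shown to stay below $\omega^a$ (so as not to contribute new cofinal data), which reduces to verifying that $\omega^{(a''-a)/n}$ is small enough to make $[\cdot]_{2k+1}$ positive and to keep the product strictly below $\omega^{a''}/2^m$ for every $m$. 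The right side is symmetric, using the coinitiality of $\{\omega^{a''}/2^n\}$ and the positivity conventions on odd-truncations. Once both sides are seen to have the same cofinal lower set and the same coinitial upper set, the defining property of the bracket forces $\exp\ln\omega^a=\omega^a$, and the induction closes.
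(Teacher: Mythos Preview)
The paper does not contain a proof of this statement: it is quoted from Gonshor's book as a background fact, with no argument given, so there is no ``paper's own proof'' to compare against. Your outline is in the spirit of Gonshor's original argument (induction on $a$, unpacking the bracket for $\ln\omega^a$, feeding it into the bracket for $\exp$, and matching by cofinality against the bracket for $\omega^a$), but two points in your sketch need tightening.

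First, the uniformity you invoke is the wrong one. Lemma~10.1 as recorded in this paper says that the definition of $\ln\omega^a$ does not depend on which representation of $a$ is used. What you actually need in order to plug the non-canonical representation $\crotq{L}{R}$ of $\ln\omega^a$ into the defining bracket of $\exp$ is uniformity of $\exp$: that the bracket formula for $\exp x$ yields the same value when $x$ is presented via \emph{any} representation. That is also true in Gonshor's development, but it is a different lemma.

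Second, your justification that ``the product stays strictly below $\omega^a$'' is circular as written. You appeal to the truncation staying ``below the full formal series'', i.e.\ $[y]_k<\exp(y)$; but converting $\omega^{a'}e^{n}\exp(\ln\omega^a-\ln\omega^{a'}-n)$ into something $\le\omega^a$ is exactly the identity you are proving. The non-circular route uses instead the \emph{right} options of $\ln\omega^a$: from $\ln\omega^a-\ln\omega^{a'}<\omega^{(a-a')/m}$ for every $m\in\Nbb$, one bounds $[\ln\omega^a-\ln\omega^{a'}-n]_k$ by a polynomial of degree $k$ in $\omega^{(a-a')/m}$, hence $\prec\omega^{a-a'}$ as soon as $m>k$; multiplying by $\omega^{a'}e^{n}$ then yields something $\prec\omega^a$. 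The dual bound on the right-hand side uses the left options of $\ln\omega^a$ symmetrically. With these two repairs your sketch becomes a genuine proof along Gonshor's lines.
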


\begin{theorem}[{\cite[Theorem 10.12]{gonshor1986introduction}}]
	\label{thm:lnOmegaOmegaA}
	For all surreal number $a$, 
	$$\ln\omega^{\omega^a}=\omega^{h(a)}$$
\end{theorem}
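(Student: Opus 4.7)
The plan is to reduce the identity to facts already available by applying $\exp$ to both sides and invoking the description of $\exp$ on purely infinite numbers.

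First I would check that $\omega^{h(a)}$ is purely infinite. Since $h$ is defined as the inverse of the function $g\colon\No^{>0}\to\No$, the value $h(a)$ lies in $\No^{>0}$ for every surreal $a$; equivalently one can read this off the defining bracket $h(b)=\crotq{0,h(b')}{h(b''),\omega^b/n}$, whose upper set always contains the strictly positive element $\omega^b/n$, so $h(b)>0$. (The case $b\le 0$ is even directly witnessed by Corollary \ref{cor:hmord}, giving $h(-a)=\omega^{-a-1}>0$.) Consequently the normal form of $\omega^{h(a)}$ is the one-term series $1\cdot\omega^{h(a)}$ with strictly positive exponent, hence it is purely infinite in the sense of Gonshor.

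Next I would apply Proposition \ref{prop:formeExpXPurelyInfiniteOmegaAg} to the purely infinite number $\omega^{h(a)}$. Specialising the statement to a single term $r_0=1$, $a_0=h(a)$, it yields
$$\exp\bigl(\omega^{h(a)}\bigr) \;=\; \omega^{1\cdot\omega^{g(h(a))}} \;=\; \omega^{\omega^{g(h(a))}} \;=\; \omega^{\omega^{a}},$$
where the last equality uses $g\circ h=\mathrm{id}$.

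Finally, Theorem \ref{thm:expLnOmegaA} gives $\exp\bigl(\ln\omega^{\omega^a}\bigr)=\omega^{\omega^a}$. Combining with the previous display and using that $\exp\colon\No\to\No^{>0}$ is a bijection (in particular injective), one concludes
$$\ln\omega^{\omega^a} \;=\; \omega^{h(a)},$$
as desired. The only real work is verifying that $h$ takes values in $\No^{>0}$, which is a direct consequence of its construction as the inverse of $g$; everything else is a mechanical invocation of Theorems \ref{thm:expLnOmegaA} and Proposition \ref{prop:formeExpXPurelyInfiniteOmegaAg}. An alternative, more laborious route would be a transfinite induction on the simplicity of $a$, expanding the bracket definitions of $\ln\omega^{(\cdot)}$ and of $h$ and matching left and right sets term by term, but the $\exp$-based approach above is much cleaner and avoids that bookkeeping.
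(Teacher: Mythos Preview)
Your argument is correct. The paper itself does not supply a proof: it cites Gonshor's Theorem~10.12 and adds only the remark that ``the above theorem is not actually stated like this in \cite{gonshor1986introduction} but this statement follows from the proof there.'' Your route---apply $\exp$ to $\omega^{h(a)}$, invoke Proposition~\ref{prop:formeExpXPurelyInfiniteOmegaAg} on the single purely infinite term to get $\omega^{\omega^{g(h(a))}}=\omega^{\omega^a}$, and conclude by injectivity of $\exp$ via Theorem~\ref{thm:expLnOmegaA}---is exactly the natural way to recover this formulation from the other facts the paper recalls.

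One small caveat worth noting: in Gonshor's original development, Theorem~10.12 (the one-monomial case $\exp(\omega^b)=\omega^{\omega^{g(b)}}$) comes \emph{before} Theorem~10.13 (the general formula you invoke as Proposition~\ref{prop:formeExpXPurelyInfiniteOmegaAg}), and the latter is established by reduction to the former. So if you were building the theory from scratch your argument would be circular---you are using the general case to deduce the special case. In the present paper, however, both results are simply quoted as established facts from Gonshor, so there is no circularity and your derivation is perfectly legitimate.
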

The above theorem is not actually stated like this in \cite{gonshor1986introduction} but this statement follows from the proof there.

As a consequence of Theorems \ref{thm:expLnOmegaA} and \ref{thm:lnOmegaOmegaA} and Propositions \ref{prop:lnOmegaA} and \ref{prop:formeExpXPurelyInfiniteOmegaAg}, we have 

\begin{corollary}
	For all surreal number $a=\aSurreal$, we have
	$$
		\ln\omega^a = \Sumlt i\nu r_i\omega^{h(a_i)}
	$$ 
\end{corollary}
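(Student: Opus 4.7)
The plan is to run Proposition \ref{prop:formeExpXPurelyInfiniteOmegaAg} in reverse on $\ln\omega^a$, then read off the answer via $h = g^{-1}$. The key point is that this corollary is essentially the combination of the three ingredients already listed just before its statement: $\exp\circ\ln = \mathrm{id}$ on $\omega^a$, the explicit form of $\exp$ on purely infinite arguments, and the fact that $\ln \omega^a$ is itself purely infinite.

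First, by Proposition \ref{prop:lnOmegaA}, $\ln\omega^a$ is purely infinite, so its normal form reads
$$\ln\omega^a = \sum_{j<\mu} s_j \omega^{b_j},$$
with $s_j \in \Rbb^*$ and $(b_j)_{j<\mu}$ a strictly decreasing family of strictly positive surreal numbers. Applying Proposition \ref{prop:formeExpXPurelyInfiniteOmegaAg} yields
$$\exp(\ln\omega^a) = \omega^{\sum_{j<\mu} s_j\, \omega^{g(b_j)}},$$
while Theorem \ref{thm:expLnOmegaA} tells us this is simply $\omega^a$. Injectivity of $x\mapsto\omega^x$ (an immediate consequence of $\omega^x\omega^y=\omega^{x+y}$ and $\omega^0 = 1$) then gives
$$a = \sum_{j<\mu} s_j\, \omega^{g(b_j)}.$$

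To conclude, I would argue that the right-hand side above is already the normal form of $a$. This requires checking that $g$ is strictly increasing on $\Nobf^{>0}$, so that the family $(g(b_j))_{j<\mu}$ inherits strict decrease from $(b_j)_{j<\mu}$; this follows by a routine transfinite induction on simplicity using the defining equation $g(x)=\crotq{c(x),g(x')}{g(x'')}$. Uniqueness of the normal form (Theorem \ref{thm:normalForm}) then forces $\mu=\nu$, $s_j=r_j$ and $g(b_j)=a_j$; applying $h$ gives $b_j=h(a_j)$, hence the announced identity. The main (mild) obstacle is this verification of monotonicity together with the fact that $g(b_j)\neq 0$ when $b_j>0$, both needed before uniqueness of the normal form can be invoked; they are immediate from the inductive definition and from $h$ being introduced precisely as the inverse of $g$.
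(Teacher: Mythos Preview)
Your proof is correct and follows essentially the same route as the paper, which states the corollary as a consequence of Theorems \ref{thm:expLnOmegaA} and \ref{thm:lnOmegaOmegaA} and Propositions \ref{prop:lnOmegaA} and \ref{prop:formeExpXPurelyInfiniteOmegaAg}. The only minor remark is that your verification that $g$ is strictly increasing, while correct, is not really needed as a separate step: Proposition \ref{prop:formeExpXPurelyInfiniteOmegaAg} already presents $\sum_{j<\mu} s_j\omega^{g(b_j)}$ as the exponent of a monomial, hence as a surreal number in normal form, so uniqueness applies directly.
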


Finally, since for appreciable numbers $\exp$ is defined by its usual serie, $\ln(1+x)$ is also defined by its usual serie when $x$ in infinitesimal. More precisely,

\begin{definition}
	\label{def:lnAppreciables}
	For $x$ an infinitesimal, 
	$$
		\ln(1+x) = \Sum{i=1}{\infty}\frac{(-1)^{i-1}x^i}{i}
	$$
\end{definition}
And thanks to Theorem \ref{thm:expAppreciables},

\begin{corollary}
	Let $a=\aSurreal$ a positive surreal number. Then
	$$
		\ln a = \ln\omega^{a_0} + \ln r_0 + \ln\pa{1+\Sumlt{1\leq i}\nu \frac{r_i}{r_0}\omega^{a_i-a_0}}
	$$
	where the last term is defined in Definition \ref{def:lnAppreciables}.
\end{corollary}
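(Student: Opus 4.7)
The goal is to reduce the logarithm of a general positive surreal number to three pieces that are already under control: a $\ln\omega^{a_0}$ piece handled by the Gonshor definition, a real $\ln r_0$ piece, and a logarithm of a surreal of the form $1+\delta$ with $\delta$ infinitesimal, handled by Definition~\ref{def:lnAppreciables}. The natural route is a multiplicative factorization followed by the homomorphism property of $\ln$.

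First I would observe that, because $a>0$ and the normal form lists its terms by strictly decreasing exponents $a_0>a_1>\dots$, the leading coefficient $r_0$ is necessarily positive (otherwise $r_0\omega^{a_0}$ would be negative and dominate the rest in absolute value, forcing $a<0$). Hence $1/r_0$ is a well-defined positive real, and each quotient $r_i/r_0$ is a real number. Since $a_i<a_0$ for $i\geq 1$, every exponent $a_i-a_0$ is strictly negative, so each monomial $\omega^{a_i-a_0}$ is infinitesimal; the Hahn-series element
\centre{$\displaystyle\delta \;=\; \Sumlt{1\leq i}\nu \frac{r_i}{r_0}\,\omega^{a_i-a_0}$}
thus has purely negative support and is therefore infinitesimal in the sense of Definition~\ref{def:lnAppreciables} (all its exponents $a_i-a_0$ being $<0$, the surreal $\delta$ is $\prec 1$).

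Next I would carry out the factorization
\centre{$a \;=\; \omega^{a_0}\cdot r_0\cdot\bigl(1+\delta\bigr)$.}
This is an identity of Hahn series that follows from Theorem~\ref{thm:normalFormOp}: expanding the right-hand side formally, the product $\omega^{a_0}\cdot r_0\cdot 1$ yields $r_0\omega^{a_0}$ and the product $\omega^{a_0}\cdot r_0\cdot(r_i/r_0)\omega^{a_i-a_0}$ yields $r_i\omega^{a_i}$, so one recovers exactly the normal form $\aSurreal$. Here one must check that the coefficient $r_0$ and the monomials $\omega^{a_0}$, $\omega^{a_i-a_0}$ obey the expected arithmetic identities, but those are already contained in the normal form theorem and the multiplicative property $\omega^a\omega^b=\omega^{a+b}$ recalled before Theorem~\ref{thm:normalForm}.

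Finally I would invoke the homomorphism property of $\ln$ (inverse of the group homomorphism $\exp:(\No,+)\to(\No^{>0},\cdot)$ recalled after Gonshor's definition of $\exp$): applying $\ln$ to the factorization gives
\centre{$\ln a \;=\; \ln\omega^{a_0} \,+\, \ln r_0 \,+\, \ln(1+\delta)$,}
and the last term matches Definition~\ref{def:lnAppreciables} verbatim since $\delta$ is infinitesimal. I expect no real obstacle here: the content is essentially bookkeeping, with the only thing to be careful about being that $\delta$ is genuinely infinitesimal (which follows at once from the strict decrease $a_i<a_0$ in the normal form) and that the three factors in the factorization are well-defined positive surreals so that the homomorphism identity applies.
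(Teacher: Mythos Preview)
Your proof is correct and matches the paper's approach: the paper states this corollary with no explicit proof, merely prefacing it by ``And thanks to Theorem~\ref{thm:expAppreciables}'', which is precisely the multiplicative decomposition $\exp(x+r+\epsilon)=\exp(x)\exp(r)\exp(\epsilon)$ you invert via the homomorphism property of $\ln$. Your write-up simply fills in the bookkeeping (positivity of $r_0$, infinitesimality of $\delta$, the Hahn-series factorization via Theorem~\ref{thm:normalFormOp}) that the paper leaves implicit.
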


\subsection{Stability of $\Nolambda$ by exponential and logarithm}

We first recall some result by van den Dries and Ehrlich.

\begin{lemma}[{\cite[Lemmas 5.2, 5.3 and 5.4]{DriesEhrlich01}}]
	\label{lem:lengthExpLog}
	For all surreal number $a\in\Nobf$,
	\begin{itemize}
		
		\item $\length{\exp a}\leq\omega^{\omega^{2\length a\oplus3}}$
		
		\item $\length{\ln\omega^a} \leq \omega^{4\omega\length{a}\length{a}}$
		
		\item $\length{\ln a}\leq\omega^{\omega^{3\length a\oplus3}}$
	\end{itemize}
\end{lemma}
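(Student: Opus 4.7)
All three bounds can be established by simultaneous transfinite induction on $\length a$, by exploiting the defining brackets of $\exp$ and $\ln\omega^{(\cdot)}$ together with the simplicity principle: if $\crotq{A}{B}$ is any representation of a surreal $y$, then the length of $y$ is bounded by the supremum of the lengths of elements of $A\cup B$ plus one, provided we can bound each option on either side. The strategy is to bound each option in terms of (inductively bounded) lengths of shorter-length auxiliary surreals, then take the sup.

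For $\length{\exp a}$, I apply the bracket
$$\exp a=\crotq{0,\exp(x')[x-x']_{n},\exp(x'')[x-x'']_{2n+1}}{\f{\exp(x')}{[x'-x]_{2n+1}},\f{\exp(x'')}{[x''-x]_{2n+1}}}.$$
Each option has the form $\exp(y)\cdot P$ or $\exp(y)/P$ for $y\simpler a$ and $P$ a polynomial in $x-y$ of bounded (finite) degree. The inductive hypothesis gives $\length{\exp y}\leq\omega^{\omega^{2\length y\oplus 3}}\leq\omega^{\omega^{2\length a\oplus 1}}$; the polynomial factors contribute lengths bounded by a power of $\length a$; and taking products/quotients of surreals of length $<\omega^\alpha$ (with $\alpha$ multiplicative) stays within $\omega^\alpha$. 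Summing via Proposition~\ref{prop:unionEnsBienOrd} and Lemma~\ref{lem:lengthOmegaA} and taking the sup over the options gives the announced double-exponential bound.

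For $\length{\ln\omega^a}$, I similarly expand the defining bracket
$$\ln\omega^a=\crotq{\ln\omega^{a'}+n,\,\ln\omega^{a''}-\omega^{(a''-a)/n}}{\ln\omega^{a''}-n,\,\ln\omega^{a'}+\omega^{(a-a')/n}},$$
and use the inductive bound $\length{\ln\omega^{a'}},\length{\ln\omega^{a''}}\leq\omega^{4\omega\length{a}\length{a}}$ for the shorter $a',a''$. The correction terms $n$ and $\omega^{(a''-a)/n}$, $\omega^{(a-a')/n}$ have length bounded polynomially in $\length a$ (via Lemma~\ref{lem:lengthOmegaA} applied to $(a-a')/n$, whose length is at most $\omega\length a$), so they do not blow up the overall bound. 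For $\length{\ln a}$, I decompose $a=r_0\omega^{a_0}(1+\epsilon)$ with $\epsilon$ infinitesimal and write $\ln a=\ln\omega^{a_0}+\ln r_0+\ln(1+\epsilon)$: item~2 controls the first term, $\length{\ln r_0}\leq\omega$, and $\ln(1+\epsilon)=\sum_{i\geq 1}(-1)^{i-1}\epsilon^i/i$ has support obtainable by finitely many applications of Proposition~\ref{prop:sommeEnsBienOrd} to $\supp\epsilon$, yielding a length controlled by $\omega^{\omega^{3\length a\oplus 3}}$.

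The main obstacle is the delicate bookkeeping of ordinal arithmetic, particularly to make the natural sums and products land exactly on the stated bounds rather than looser ones. The specific coefficients ($2\length a\oplus 3$ for $\exp$, $4\omega\length a\length a$ for $\ln\omega^a$, $3\length a\oplus 3$ for $\ln$) suggest that the induction must be carried out with care: one cannot, for instance, simply use Lemmas~\ref{lem:lengthH} and \ref{lem:lengthOmegaA} naively (which would give the worse bound $\omega^{\omega^{\length a+1}}$ for $\ln\omega^a$) but must track the sign expansion directly through the bracket, exploiting the fact that the additions/subtractions inside the bracket only add low-order terms to the sign expansions already produced inductively. Since the argument is standard and given in full detail in \cite{DriesEhrlich01}, I would ultimately defer to that reference rather than reproduce all the ordinal computations here.
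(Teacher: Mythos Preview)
The paper does not prove this lemma at all: it is stated as a direct citation of \cite[Lemmas 5.2, 5.3 and 5.4]{DriesEhrlich01} with no accompanying argument. Your proposal likewise defers to that reference, and the sketch you give (bounding options in the defining brackets of $\exp$ and $\ln\omega^{(\cdot)}$ inductively, then decomposing $\ln a$ via the normal form) is a faithful outline of how the proof in \cite{DriesEhrlich01} proceeds, so your treatment is consistent with --- and in fact more informative than --- the paper's.
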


\begin{corollary}[{\cite[Corollary 5.5]{DriesEhrlich01}}]
	\label{cor:NolambdaStableExpLn}
	For $\lambda$ an $\epsilon$-number, $\Nolt\lambda$ is stable under $\exp$ and $\ln$.
\end{corollary}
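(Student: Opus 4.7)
The plan is to deduce the corollary directly from Lemma \ref{lem:lengthExpLog}, using the closure properties of $\epsilon$-numbers under ordinal exponentiation. Fix $a \in \Nolt{\lambda}$ and set $\alpha = \length{a} < \lambda$. We must show $\length{\exp a} < \lambda$ and, when $a > 0$, $\length{\ln a} < \lambda$.

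The core observation is that since $\lambda$ is an $\epsilon$-number, $\omega^{\lambda} = \lambda$, so $\lambda$ is multiplicative (hence additive) and in particular closed under both the natural sum and the map $\beta \mapsto \omega^{\beta}$. I would first note that $2\alpha \oplus 3 < \lambda$, since $\alpha < \lambda$ and $\lambda$ is additively closed. Iterating the $\omega$-exponential step, $\omega^{2\alpha \oplus 3} < \omega^{\lambda} = \lambda$ and then $\omega^{\omega^{2\alpha \oplus 3}} < \omega^{\lambda} = \lambda$. Applying the first bound of Lemma \ref{lem:lengthExpLog}, this gives $\length{\exp a} < \lambda$, so $\exp a \in \Nolt{\lambda}$.

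For the logarithm, assume $a > 0$. The third bound of Lemma \ref{lem:lengthExpLog} gives $\length{\ln a} \leq \omega^{\omega^{3\alpha \oplus 3}}$, and exactly the same two applications of closure under $\omega$-exponentiation show $\omega^{\omega^{3\alpha \oplus 3}} < \lambda$. Thus $\ln a \in \Nolt{\lambda}$.

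There is essentially no obstacle here: all the content is packed in Lemma \ref{lem:lengthExpLog}, and the only point to check is that the iterated $\omega$-exponential of an ordinal less than $\lambda$ remains less than $\lambda$, which is simply the defining property $\omega^{\lambda} = \lambda$ of an $\epsilon$-number. The middle bound on $\length{\ln \omega^{a}}$ in the lemma is not even needed, since the bound on $\length{\ln a}$ already subsumes it for the present purpose.
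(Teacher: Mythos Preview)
Your proof is correct and is exactly the intended derivation: the paper does not spell out a proof but simply states the corollary immediately after Lemma~\ref{lem:lengthExpLog}, making clear that it follows from those length bounds together with the closure of an $\epsilon$-number under $\beta\mapsto\omega^{\beta}$ (and hence under natural sums and products), precisely as you argue.
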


We have 

\begin{theorem}[{\cite[Theorem 1.3]{bournez2022surreal}}]
	The following are equivalent:
	\label{thm:NoltStableExpLn} 
	\begin{itemize}
		\item $\Nolambda$ is a subfield of $\No$ stable by $\exp$, and $\ln$ 
		\item   $\Nolambda$ is a subfield of $\No$ 
		\item  $\lambda$ is some $\epsilon$-number.
	\end{itemize}
\end{theorem}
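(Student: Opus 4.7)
The plan is to establish the cycle of implications $(1)\Rightarrow(2)\Rightarrow(3)\Rightarrow(1)$. The first implication is immediate: the statement of~(1) contains as a subclause that $\Nolambda$ is a subfield. The equivalence $(2)\Leftrightarrow(3)$ is precisely the characterization of van den Dries and Ehrlich recalled in Section~\ref{sec:erhlichandco}, which asserts that $\Nolambda$ is a subfield of $\No$ if and only if $\omega^{\lambda}=\lambda$, i.e.\ $\lambda$ is an $\epsilon$-number. So the only genuinely new content is $(3)\Rightarrow(1)$.

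To establish $(3)\Rightarrow(1)$, suppose $\lambda$ is an $\epsilon$-number. The plan is to invoke the length bounds of Lemma~\ref{lem:lengthExpLog}, which say that for every surreal $a$ we have $\length{\exp a}\leq \omega^{\omega^{2\length{a}\oplus 3}}$ and, for $a>0$, $\length{\ln a}\leq \omega^{\omega^{3\length{a}\oplus 3}}$. Pick any $a\in\Nolambda$, so that $\length{a}<\lambda$. Since $\lambda$ is an $\epsilon$-number, it is in particular a multiplicative (even $\omega$-fixed) ordinal, so $\mu<\lambda$ implies $\omega^{\mu}<\lambda$; applying this twice to the ordinal $2\length{a}\oplus 3<\lambda$ (and similarly $3\length{a}\oplus 3<\lambda$) we obtain $\omega^{\omega^{2\length{a}\oplus 3}}<\lambda$ and $\omega^{\omega^{3\length{a}\oplus 3}}<\lambda$. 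Thus both $\exp a$ and $\ln a$ (when defined) land in $\Nolambda$, establishing stability. Combined with $(2)\Leftrightarrow(3)$, this also gives that $\Nolambda$ is a subfield, which is the remaining clause of~(1). Note that this is exactly the content of Corollary~\ref{cor:NolambdaStableExpLn}, so the argument reduces to citing that corollary together with the van den Dries--Ehrlich characterization.

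There is no substantive obstacle here: all the work has already been done in the length bounds of Lemma~\ref{lem:lengthExpLog} (established in \cite{DriesEhrlich01,van2001erratum}) and in the subfield characterization. The only small subtlety to flag is that closure under $\ln$ is meaningful only on positive elements, but $\ln$ is only defined on $\No^{>0}$ to begin with, so ``$\Nolambda$ stable under $\ln$'' is to be read as $\ln(\Nolambda\cap\No^{>0})\subseteq\Nolambda$, which is exactly what the length bound for $\ln$ delivers.
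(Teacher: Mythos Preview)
Your proposal is correct and matches the paper's approach: the paper does not give a standalone proof of this theorem but simply cites it from \cite{bournez2022surreal}, having just stated the two ingredients you use---the van den Dries--Ehrlich subfield characterization (Section~\ref{sec:erhlichandco}) and Corollary~\ref{cor:NolambdaStableExpLn} derived from the length bounds of Lemma~\ref{lem:lengthExpLog}. Your cycle $(1)\Rightarrow(2)\Leftrightarrow(3)\Rightarrow(1)$ is exactly the intended argument.
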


\subsection{A hierarchy of subfields of $\No$ stable by exponential and logarithm}
In this subsection we recall our previous work on a hierarchy of surreal subfields stable under exponential and logarithm.

We start by Theorem \ref{thm:SRFGammaUpStableExpLn} repeated here for readability:

\thmSRFGammaUpStableExpLn*

This result is actually a consequence of a more general proposition which is the following.
\begin{proposition}[{\cite[Proposition 5.1]{bournez2022surreal}}]
	\label{prop:UnionSRFStableExpLn}
	Let $\lambda$ be an $\epsilon$-number and $\suite{\Gamma_i}iI$ be a family of Abelian subgroups of $\Nobf$. Then
	$\RlGI$ is stable under $\exp$ and $\ln$ if and only if $$\Unionin iI\Gamma_i=\Unionin iI\SRF\lambda{g\pa{\pa{\Gamma_i}^*_+}}$$
\end{proposition}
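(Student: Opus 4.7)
The plan is to reduce stability of $\bigcup_i \SRF\lambda{\Gamma_i}$ under $\exp$ and $\ln$ to a statement purely about exponents, using Gonshor's explicit formulas. Any $x\in\SRF\lambda{\Gamma_j}$ decomposes as $x=x_\infty+r+\varepsilon$ (purely infinite, real, infinitesimal) and, by Proposition~\ref{prop:formeExpXPurelyInfiniteOmegaAg} together with Theorem~\ref{thm:expAppreciables}, $\exp(x)=\omega^y\cdot\exp(r)\cdot\exp(\varepsilon)$ where $y=\sum r_i\omega^{g(a_i)}$ if $x_\infty=\sum r_i\omega^{a_i}$. Dually, for $x>0$ with normal form $x=\sum r_i\omega^{a_i}$, one has $\ln x=\ln\omega^{a_0}+\ln r_0+\ln(1+\delta)$ and $\ln\omega^{a_0}=\sum s_i\omega^{h(\alpha_i)}$ when $a_0=\sum s_i\omega^{\alpha_i}$ is in normal form.

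The first substantive step is to absorb the appreciable factors. The real pieces $\exp(r)$, $\ln r_0$ lie in $\Rbb\subseteq\SRF\lambda{\Gamma_j}$, and the convergent series $\exp(\varepsilon)=\sum\varepsilon^n/n!$ and $\ln(1+\delta)=\sum(-1)^{n-1}\delta^n/n$ stay inside $\SRF\lambda{\Gamma_j}$: since $\lambda$ is an $\epsilon$-number, Proposition~\ref{prop:orderTypeMonoid} guarantees that the monoid generated by the support of $\varepsilon$ (resp.\ $\delta$) still has order type $<\lambda$. So stability under $\exp$ and $\ln$ reduces to the two ``core'' outputs $\omega^y$ and $\ln\omega^{a_0}$ landing in $\bigcup_i\SRF\lambda{\Gamma_i}$.

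For $(\Leftarrow)$, assume $\bigcup_i\Gamma_i=\bigcup_i\SRF\lambda{g((\Gamma_i)^*_+)}$. Given purely infinite $x_\infty\in\SRF\lambda{\Gamma_j}$, the associated $y=\sum r_i\omega^{g(a_i)}$ has support in $g((\Gamma_j)^*_+)$ of order type $<\lambda$, hence $y\in\SRF\lambda{g((\Gamma_j)^*_+)}\subseteq\bigcup_i\Gamma_i$; so $y\in\Gamma_k$ for some $k$ and $\omega^y\in\SRF\lambda{\Gamma_k}$. For $\ln$, given $a_0\in\Gamma_j$, the reverse inclusion of the hypothesis places $a_0$ inside some $\SRF\lambda{g((\Gamma_k)^*_+)}$, forcing each exponent $\alpha_i$ of $a_0$ to have the form $g(b_i)$ with $b_i\in(\Gamma_k)^*_+$; then $h(\alpha_i)=b_i\in\Gamma_k$ and $\ln\omega^{a_0}=\sum s_i\omega^{h(\alpha_i)}\in\SRF\lambda{\Gamma_k}$. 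For $(\Rightarrow)$ I would reverse both arguments. Starting from $y=\sum r_i\omega^{g(a_i)}\in\SRF\lambda{g((\Gamma_j)^*_+)}$, I form $x_\infty=\sum r_i\omega^{a_i}\in\SRF\lambda{\Gamma_j}$ and invoke stability to get $\omega^y=\exp(x_\infty)\in\SRF\lambda{\Gamma_k}$, which, since $\omega^y$ is a single monomial, forces $y\in\Gamma_k$. Starting from $a\in\Gamma_j$, I apply $\ln$ to $\omega^a\in\SRF\lambda{\Gamma_j}$: by Proposition~\ref{prop:lnOmegaA}, $\ln\omega^a$ is purely infinite, so the exponents $h(\alpha_i)$ of its normal form are positive; stability gives $h(\alpha_i)\in(\Gamma_k)^*_+$, hence $\alpha_i=g(h(\alpha_i))\in g((\Gamma_k)^*_+)$ and $a\in\SRF\lambda{g((\Gamma_k)^*_+)}$.

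The main obstacle I anticipate is the careful bookkeeping around the interpretation of $\SRF\lambda{g((\Gamma_i)^*_+)}$ when $g((\Gamma_i)^*_+)$ is not itself a group: one must reconcile ``series whose support is literally contained in $g((\Gamma_i)^*_+)$'' with the divisible group that the Hahn construction formally requires, and lean on the uniqueness of the normal form so that membership in $\SRF\lambda{g((\Gamma_k)^*_+)}$ really does pin each individual exponent $\alpha_i$ down as a single $g(b_i)$ rather than as a sum. A secondary subtlety, implicit throughout, is the order-type arithmetic: each support arising in the computation (images under $g$/$h$, monoids generated by infinitesimal supports, products of Hahn series) must be checked to remain of order type $<\lambda$, which is precisely where the $\epsilon$-number assumption is used via Proposition~\ref{prop:orderTypeMonoid}.
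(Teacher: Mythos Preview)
The present paper does not prove this proposition; it is quoted from \cite{bournez2022surreal} and only used as a tool, so there is no in-paper proof to compare against. Your approach---reduce via Gonshor's decomposition $x=x_\infty+r+\varepsilon$, absorb the appreciable factors using the $\epsilon$-number hypothesis together with Proposition~\ref{prop:orderTypeMonoid}, and then match exponents through $g$ and $h=g^{-1}$---is the natural one and almost certainly parallels the original argument. Your $(\Rightarrow)$ direction is clean: forming $x_\infty=\sum r_i\omega^{a_i}$ from $y=\sum r_i\omega^{g(a_i)}$ is legitimate because $g$ is strictly increasing (so the $a_i$ inherit the correct ordering), and the order-type of $\supp(a)$ is recovered from that of $\supp(\ln\omega^a)$ via the bijection~$h$.

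There is, however, one genuine loose end in your $(\Leftarrow)$ direction. For $x\in\SRF\lambda{\Gamma_j}$ you obtain $y\in\Gamma_k$ for some $k$, while the appreciable factor $\exp(r)\exp(\varepsilon)$ remains in $\SRF\lambda{\Gamma_j}$. But $\exp(x)$ is their \emph{product}, with support $\{\,y+s : s\in\supp(\exp(r+\varepsilon))\,\}$; for $\exp(x)$ to lie in a single $\SRF\lambda{\Gamma_m}$ you need all $y+s$ to live in one common $\Gamma_m$, and nothing you have written produces such an $m$ when $k\neq j$ and the family is not directed. The same mismatch appears symmetrically for $\ln$: you place $\ln\omega^{a_0}$ in $\SRF\lambda{\Gamma_k}$ while $\ln(1+\delta)$ stays in $\SRF\lambda{\Gamma_j}$, and their sum need not sit in any single $\SRF\lambda{\Gamma_m}$. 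This is harmless whenever $(\Gamma_i)_{i\in I}$ is directed---which is the case in every application in this paper, in particular for the sequence $\Gamma^{\uparrow\lambda}$---but the proposition as stated carries no such hypothesis. You should either make the directedness assumption explicit, or consult \cite{bournez2022surreal} to see how the index-matching is handled in full generality.
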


Note that a consequence of Proposition \ref{prop:UnionSRFStableExpLn} is also the following:

\begin{corollary}[{\cite[Corollary 5.2]{bournez2022surreal}}]
	Let $\lambda$ be an $\epsilon$-number and $\Gamma$ be an abelian subgroup of $\Nobf$. Then
	$\RlG$ is stable under $\exp$ and $\ln$ if and only if $\Gamma=\SRF\lambda{g\pa{\Gamma^*_+}}$.
\end{corollary}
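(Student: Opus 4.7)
The plan is to derive the statement as an immediate consequence of Proposition \ref{prop:UnionSRFStableExpLn} specialized to a singleton indexing family. Take $I=\{0\}$ and set $\Gamma_0 = \Gamma$. The notational convention $\SRF\lambda{(S_i)_{i\in I}} = \Unionin iI \SRF\lambda{S_i}$ then gives $\SRF\lambda{(\Gamma_i)_{i\in I}} = \SRF\lambda{\Gamma} = \RlG$, so the left-hand side of the biconditional in Proposition \ref{prop:UnionSRFStableExpLn} becomes exactly the statement ``$\RlG$ is stable under $\exp$ and $\ln$.''

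On the right-hand side, both unions over $I$ are trivial single-element unions, so $\Unionin iI\Gamma_i = \Gamma$ and $\Unionin iI\SRF\lambda{g\pa{\pa{\Gamma_i}^*_+}} = \SRF\lambda{g\pa{\Gamma^*_+}}$. The equality $\Unionin iI\Gamma_i = \Unionin iI\SRF\lambda{g\pa{\pa{\Gamma_i}^*_+}}$ therefore reduces to $\Gamma = \SRF\lambda{g\pa{\Gamma^*_+}}$, which is exactly the criterion stated in the corollary.

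Since the corollary is a direct specialization, there is no substantive obstacle to overcome: all the real work already lives in Proposition \ref{prop:UnionSRFStableExpLn}. The reason to isolate this statement as a separate corollary is practical: most applications concern stability of a single field $\RlG$ rather than of a union indexed by a family, and the fixed-point equation $\Gamma = \SRF\lambda{g\pa{\Gamma^*_+}}$ is the exact condition one verifies in the downstream arguments — for instance when establishing Theorem \ref{thm:SRFGammaUpStableExpLn} through the iterative $\uparrow\lambda$ construction of Definition \ref{def:uparrow}, which is engineered precisely so that the resulting group satisfies this equation in the limit.
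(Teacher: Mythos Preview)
Your proof is correct and matches the paper's approach exactly: the paper simply presents the corollary as ``a consequence of Proposition \ref{prop:UnionSRFStableExpLn}'', and your specialization to the singleton family $I=\{0\}$, $\Gamma_0=\Gamma$ is precisely how that consequence is obtained. One minor remark on your closing commentary: Theorem \ref{thm:SRFGammaUpStableExpLn} actually concerns the union $\SRF\lambda{\Gamma^{\uparrow\lambda}}$ over the whole family $\suitelt{\Gamma_\beta}\beta{\gamma_\lambda}$, so it relies on the general Proposition \ref{prop:UnionSRFStableExpLn} rather than on this single-group corollary; the paper notes that the corollary corresponds to the special situation where the union $\Unionin G{\Gamma^{\uparrow\lambda}}G$ collapses to $\Gamma$ itself.
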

This result is quite similar to Theorem \ref{thm:SRFGammaUpStableExpLn} but in the particular very particular case where $\Unionin G{\Gamma^{\uparrow\lambda}}G=\Gamma$. This apply for instance when $\Gamma=\{0\}$. In this case, we get $\RlG=\Rbb$. If $\lambda$ is a regular cardinal we get an other example considering $\SRF\lambda\Gamma=\Gamma=\Nolt\lambda$.

Theorem \ref{thm:SRFGammaUpStableExpLn} enables us to consider a lot of fields stable under exponential and logarithm and enabled us to prove that we can express $\Nolt\lambda$ as a strictly increasing hierarchy of fields stable under $\exp$ and $\ln$.

\thmNolambdaDecompCorpsStables*

%
%

\thmhierarchieUparrow*

\section{The class of log-atomic numbers, derivation and anti-derivation}
\label{sec:deriv}
\subsection{Log-atomic numbers}

We now introduce the concept of \textbf{log-atomic numbers}. Log-atomic numbers were first introduced by Schmeling in \cite[page 30]{schmeling2001corps} about transseries. Such number are basically number whose series of iterated logarithm have all length $1$.

\begin{definition}[Log-atomic]
	A positive surreal number $x\in\No^*_+$ is said \textbf{log-atomic} \tiff for all $n\in\Nbb$, there is a surreal number $a_n$ such that $\ln_nx=\omega^{a_n}$. We denote $\Lbb$ the class of log-atomic numbers.
\end{definition}

For instance, $\omega$ is a log-atomic number and we can check that for all $n\in\Nbb$, $\ln_n \omega = \omega^{\f1{\omega^n}}$. Log-atomic number are the number we cannot divide into simpler numbers when considering exponential and logarithm and are the fundamental blocs we end up with when writing $x=\aSurreal$ and then each $\omega^{a_i}$ as $\omega^{a_i}=\exp x_i$ with $x_i$ a purely infinite number and then doing the same thing with each of the $x_i$s. The use of the word ``simpler'' is not innocent. Indeed, log-atomic numbers are also the simplest elements for some equivalence relation introduced by Beraducci and Mantova \cite{Berarducci_2018}.

\begin{definition}[{\cite[Definition 5.2]{Berarducci_2018}}]
	\label{def:asympPrecL}
	Let $x,y$ be two positive infinite surreal numbers. We write
	\begin{itemize}
		\item $x\asymp^L y$\index{$\asymp^L$, Definition \ref{def:asympPrecL}} \tiff there are some natural numbers $n,k$ such that $$\exp_n\pa{\f1k\ln_n y}\leq x\leq \exp_n\pa{k\ln_ny}$$
		Equivalently, we ask that the is a natural number $n$ such that  $\ln_n x \asymp \ln_n y$. For such $n$ we notice that $\ln_{n+1}x\sim\ln_{n+1}y$.
		\item $x\prec^L y$\index{$\prec^L,\preceq^L$, Definition \ref{def:asympPrecL}} \tiff for all natural numbers $n$ and $k$, 
		$$x<\exp_n\pa{\f1k\ln_n y}$$
		Equivalently, we ask that for all $n\in\Nbb$, $\ln_n x\prec\ln_n y$.
		\item $a\preceq^L b$ \tiff there are some natural numbers $n$ and $k$, 
		$$x\leq\exp_n\pa{\f1k\ln_n y}$$
		Equivalently, we ask that for some $n\in\Nbb$, $\ln_n x\preceq\ln_n y$.
	\end{itemize}
\end{definition}

Log-atomic number are closely related to this equivalence relation since they representatives of each equivalence classes.

\begin{proposition}[{\cite[Propositions 5.6 and 5.8]{Berarducci_2018}}]
	\label{prop:lamdaMapLogAtomic}
	For all positive infinite $x$ there is unique log-atomic number $y\in\Lbb$ such that $y\sqsubseteq x$ and such that $y\asymp^L x$. In particular, if $x,y\in\Lbb$ with $x<y$ then $x\prec^L y$.
\end{proposition}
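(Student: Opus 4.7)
My plan is to split the statement into two parts: Claim A, the existence and uniqueness of a log-atomic $y \sqsubseteq x$ with $y \asymp^L x$; and Claim B, that $x, y \in \Lbb$ with $x < y$ implies $x \prec^L y$. I would first extract a rigidity lemma about log-atomic numbers and use it to dispatch Claim B, then exploit the convexity of $\asymp^L$-equivalence classes together with the standard fact that convex subclasses of $\No$ possess a unique $\sqsubseteq$-simplest element to produce the $y$ of Claim A. The main technical obstacle will be to show that this simplest element is actually log-atomic.

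The rigidity lemma I would isolate reads: if $u, v \in \Lbb$ satisfy $\ln_n u \asymp \ln_n v$ for some $n$, then $u = v$. Indeed $\ln_n u = \omega^{a_n}$ and $\ln_n v = \omega^{b_n}$ are monomials, and two monomials are Archimedean-equivalent iff their exponents agree (the map $c \mapsto \omega^c$ being a bijection between surreals and Archimedean classes of positive surreals); hence $a_n = b_n$, and applying $\exp_n$ gives $u = v$. Claim B follows quickly: for $x, y \in \Lbb$ with $x < y$, rigidity rules out $x \asymp^L y$ (which would force $x = y$), and $y \prec^L x$ is excluded since taking $n=0, k=1$ in its definition gives $y < x$. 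Combined with the trichotomy of $\prec^L, \asymp^L, \succ^L$ among positive infinite surreals, a routine consequence of the monotonicity of $\exp$ and of the trichotomy of $\prec, \asymp, \succ$, we obtain $x \prec^L y$.

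For Claim A, I would first check that the $\asymp^L$-class $C$ of $x$ is convex among the positive infinite surreals: if $x \asymp^L y$ and $x < z < y$, monotonicity of each $\ln_n$ yields $\ln_n x \leq \ln_n z \leq \ln_n y$, and picking $n$ with $\ln_n x \asymp \ln_n y$, convexity of Archimedean classes forces $\ln_n z \asymp \ln_n x$, so $z \in C$. Standard surreal-number theory then supplies a unique $\sqsubseteq$-simplest element $y$ in $C$; since $x \in C$, minimality gives $y \sqsubseteq x$, and of course $y \asymp^L x$. Uniqueness of a log-atomic representative follows from the rigidity lemma above applied to any two such.

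The remaining step, which is where the real work lies, is to verify that this simplest $y$ is log-atomic. The natural approach is by contradiction: if some $\ln_n y$ is not of the form $\omega^a$, write its normal form $\ln_n y = \sum_{i<\nu} r_i \omega^{a_i}$ and set $y' := \exp_n(\omega^{a_0})$, truncating to the leading monomial. Then $\ln_n y' = \omega^{a_0} \asymp \ln_n y$, so $y' \in C$; the key is to show $y' \sqsubset y$, contradicting the minimality of $y$. This step requires careful sign-expansion bookkeeping: one applies Theorem \ref{thm:serieToSignExp} to the normal form of $\ln_n y$ and then propagates through the $n$ successive exponentials using the description of $\exp$ on purely infinite numbers via the $g$-map given by Proposition \ref{prop:formeExpXPurelyInfiniteOmegaAg}. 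Once this prefix comparison is in place, $y$ must be log-atomic and the proposition is proved.
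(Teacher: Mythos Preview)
The paper does not prove this proposition; it is stated with a citation to Berarducci and Mantova, so there is no in-paper argument to compare against. I comment instead on the soundness of your plan.

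Your treatment of Claim B via the rigidity lemma is correct and complete. For Claim A, the convexity of the $\asymp^L$-class and the existence/uniqueness of a simplest element $y$ are standard and correctly argued. The gap lies in the final step: showing that $y' := \exp_n(\omega^{a_0}) \sqsubset y$. You propose to propagate $\omega^{a_0} \sqsubset \ln_n y$ through $n$ applications of $\exp$ via Proposition~\ref{prop:formeExpXPurelyInfiniteOmegaAg}. After one $\exp$ both sides become positive-infinite monomials $\omega^{d_{n-1}} \sqsubset \omega^{c_{n-1}}$; the next $\exp$ then requires $g(d_{n-1}) \sqsubset g(c_{n-1})$, and so on. But $g$ does \emph{not} preserve $\sqsubset$ in general: for instance $1/2 \sqsubset \omega^{-1}$ (sign sequences $+-$ and $+(-)^\omega$), yet by Proposition~\ref{prop:gMonomeInfinitesimal} one has $g(1/2)=1/2$ and $g(\omega^{-1})=0$, and $1/2 \not\sqsubset 0$. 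Thus Theorem~\ref{thm:serieToSignExp} and Proposition~\ref{prop:formeExpXPurelyInfiniteOmegaAg} alone are insufficient.

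One can check that $g(p)\sqsubset g(q)$ does hold whenever $p\sqsubset q$ and either $p<q$ or $p\asymp q$ (the point being that then $c(p)\le c(q)<g(q)$, so $g(q)$ lies strictly between the canonical options defining $g(p)$). The first iteration is safe because $\omega^{g(a_0)}\asymp c_{n-1}$. But after two iterations you may have $d_{n-2}>c_{n-2}$ with $d_{n-2}\not\asymp c_{n-2}$ --- this already happens when the leading coefficient $r_0$ of $\ln_n y$ is less than $1$ --- and then neither sufficient condition applies. Your approach may be salvageable with a sharper simplicity lemma for $g$ or a more careful choice of $y'$, but as written the ``sign-expansion bookkeeping'' conceals a genuine obstacle rather than mere tedium.
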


This proposition shows in particular that not even log-atomic are representative of the equivalence classes of $\asymp^L$, they also are the simplest element (\ie{} the shortest in terms of length) in their respective equivalence classes. This make them a canonic class of representatives.

As we can parametrize additive ordinal, multiplicative ordinal or even $\epsilon$-numbers (for which a generalization for surreal numbers exists in Gonshor's book \cite{gonshor1986introduction}), we can parametrized epsilon numbers by a an increasing function $\lambda_\cdot$. A first conjecture was to consider $\kappa$-numbers which are defined by Kuhlmann and Matusinski as follows:

\begin{definition}[{\cite[Definition 3.1]{Kuhlmann_2014}}]\label{def:kappaMap}
	Let $x$ be a surreal number and write it in canonical representation as $x=\crotq{x'}{x''}$. Then we define
	$$
	\kappa_x=\crotq{\Rbb,\exp_n \kappa_{x'}}{\ln_n \kappa_{x''}}
	$$
\end{definition}

Intuitively, $x<y$ \tiff every iterated exponential of $\kappa_x$ is less than $\kappa_y$ and we try to build them as simple as possible. As an example, it is quite easy to see that $\kappa_0=\omega$, $\kappa_{-1}=\omega^{\omega^{-\omega}}$ and $\kappa_1=\epsilon_0$. It was conjectured that $\Lbb$ consists in $\kappa$-number and there iterated exponentials and logarithms. As shown by Berarducci and Mantova, it turns out that it is not true. They then suggest a more general map which is the following:

\begin{definition}[{\cite[Definition 5.12]{Berarducci_2018}}]\label{def:lambdaMap}
	Let $x$ be a surreal number and write it in canonical representation $x=\crotq{x'}{x''}$. Then we define
	$$
	\lambda_x=\crotq{\Rbb,\exp_n\pa{k\ln_n \lambda_{x'}}
	}{\exp_n\pa{\f1k\ln_n\lambda_{x''}}}
	$$
	where $n,k\in\Nbb^*$.
\end{definition}

\begin{proposition}[{\cite[Proposition 5.13 and Corollary 5.15]{Berarducci_2018}}]
	The function $x\mapsto\lambda_x$ is well defined, increasing, satisfies the uniformity property\index{Unifromity property} and if $x<y$ then $\lambda_x\prec^L\lambda_y$.
\end{proposition}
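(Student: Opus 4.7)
The plan is to prove the four claims (well-definedness, uniformity, the $<$-to-$\prec^L$ implication, and in particular that $\lambda_x$ is a positive infinite surreal for every $x$) by a single \emph{simultaneous} transfinite induction on the simplicity of the pair $(x,y)$ (or equivalently on $\ell(x)\oplus\ell(y)$). The reason we cannot separate the statements is visible from the definition: well-definedness of $\lambda_x$ requires the left class to lie strictly below the right class, and this comparison is exactly the content of the $\prec^L$-monotonicity applied to pairs $x'<x''$ with $x',x''\sqsubset x$. So the induction must carry all the statements together.

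For the \textbf{well-definedness step}, fix $x$ in canonical representation $x=\crotq{x'}{x''}$. By induction each $\lambda_{x'}$, $\lambda_{x''}$ is a positive infinite surreal, so every iterated logarithm $\ln_n\lambda_{x''}$ is positive infinite and $\Rbb<\exp_n(\tfrac1k\ln_n\lambda_{x''})$ is automatic. For the serious inequality, pick $x'<x''$ simpler than $x$; since $x'<x''$ the induction hypothesis gives $\lambda_{x'}\prec^L\lambda_{x''}$, which by Definition~\ref{def:asympPrecL} unpacks to $\exp_n(k\ln_n\lambda_{x'})<\exp_m(\tfrac1{m'}\ln_m\lambda_{x''})$ for all $n,k,m,m'\in\Nbb^*$. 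Thus the left class is strictly below the right class and the bracket $\crotq{\cdot}{\cdot}$ produces a surreal number, which we easily check to be positive infinite (it dominates $\Rbb$).

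For the \textbf{monotonicity step}, assume $x<y$ and let $z$ be the simplest surreal in the closed interval $[x,y]$. Three cases arise. If $z=x$, then $x$ is simpler than $y$ with $x<y$, so $x$ occurs as an $x'$ in the canonical representation of $y$ and the definition of $\lambda_y$ yields $\exp_n(k\ln_n\lambda_x)<\lambda_y$ for every $n,k$, which is exactly $\lambda_x\prec^L\lambda_y$. The case $z=y$ is symmetric. If $z$ is strictly simpler than both with $x<z<y$, then $x$ is an $x'$ and $y$ an $x''$ in the canonical representation of $z$, whence
\[
\exp_n(k\ln_n\lambda_x)\;<\;\lambda_z\;<\;\exp_n(\tfrac1k\ln_n\lambda_y)\qquad\text{for all }n,k\in\Nbb^*,
\]
and the right inequality already gives $\lambda_x<\lambda_z<\exp_n(\tfrac1k\ln_n\lambda_y)$, i.e.\ $\lambda_x\prec^L\lambda_y$.

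The \textbf{uniformity step} is then routine and proceeds along standard Conway/Gonshor lines: given any representation $x=\crotq{A}{B}$, one shows that the classes $\{\exp_n(k\ln_n\lambda_a):a\in A,n,k\in\Nbb^*\}$ and $\{\exp_n(\tfrac1k\ln_n\lambda_b):b\in B,n,k\in\Nbb^*\}$ are respectively cofinal and coinitial with the corresponding canonical classes; since every $x'\sqsubset x$ with $x'<x$ is bounded above by some $a\in A$ with $a<x$, the monotonicity just proved (together with the fact that $u\mapsto\exp_n(k\ln_n u)$ is increasing for positive infinite $u$) transports cofinality from the ordering on surreals to the classes appearing in the bracket. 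The simplicity theorem then forces the two brackets to produce the same number. The main technical obstacle in the whole argument is the third case of the monotonicity step: it is there that we genuinely use the $\prec^L$-strength of the comparison rather than the bare inequality $\lambda_x<\lambda_y$, and it is there that the simultaneous induction is doing real work since the inequality on $\lambda_z$ has to feed back into a $\prec^L$-assertion about $\lambda_x$ and $\lambda_y$.
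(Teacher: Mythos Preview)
The paper does not supply its own proof of this proposition; it merely cites \cite[Proposition~5.13 and Corollary~5.15]{Berarducci_2018}. Your overall strategy---a simultaneous transfinite induction establishing well-definedness, positivity/infiniteness of $\lambda_x$, uniformity, and the implication $x<y\Rightarrow\lambda_x\prec^L\lambda_y$---is the correct one, and it is essentially the approach taken in the cited reference.

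There is, however, a concrete error in your third monotonicity case. If $z$ is the simplest element of $[x,y]$ and is distinct from both, then $z\sqsubset x$ and $z\sqsubset y$ (the simplest element of a convex class is a prefix of every member), not the other way around. So $x$ and $y$ are \emph{not} options in the canonical representation of $z$; rather, $z$ is an upper option of $x$ and a lower option of $y$. The displayed inequality you wrote therefore does not follow. The correct argument reads: from the definition of $\lambda_x$ with $z$ as an $x''$ one gets $\lambda_x<\exp_n(\tfrac1k\ln_n\lambda_z)$ for all $n,k$, i.e.\ $\lambda_x\prec^L\lambda_z$; from the definition of $\lambda_y$ with $z$ as a $y'$ one gets $\exp_n(k\ln_n\lambda_z)<\lambda_y$ for all $n,k$, i.e.\ $\lambda_z\prec^L\lambda_y$; transitivity of $\prec^L$ then yields $\lambda_x\prec^L\lambda_y$. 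With this fix (and a one-line check, which you elide, that $a\prec^L b$ indeed forces $\exp_n(k\ln_n a)<\exp_m(\tfrac1j\ln_m b)$ for \emph{mixed} indices $n,m$), your argument goes through.
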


\begin{proposition}[{\cite[Proposition 5.16]{Berarducci_2018}}]
	For every $x\in \No$ with $x > \Rbb$ there is a unique $y\in\No$ such
	that $x\asymp^L\lambda_y$ and $\lambda_y\sqsubseteq x$. In particular, $\lambda_y$ is the simplest number in its equivalence class for $\asymp^L$. As a consequence, $\lambda_\No=\Lbb$.
\end{proposition}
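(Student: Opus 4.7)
The plan is to establish uniqueness first, then existence, and finally deduce the simplicity assertion and the identification $\lambda_\No = \Lbb$.

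Uniqueness is immediate: if $\lambda_{y_1}, \lambda_{y_2} \sqsubseteq x$ and both $\lambda_{y_i} \asymp^L x$, with $y_1 \ne y_2$ and (WLOG) $y_1 < y_2$, the preceding proposition gives $\lambda_{y_1} \prec^L \lambda_{y_2}$, contradicting $\lambda_{y_1} \asymp^L \lambda_{y_2}$.

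For existence, let $z$ be the simplest element of $P := \{w \sqsubseteq x : w \asymp^L x\}$, which exists because $x \in P$ and $P$ is totally ordered by simplicity (all its elements are prefixes of $x$). I claim $z = \lambda_y$ for some $y$. By minimality of $z$, every $w \sqsubset z$ satisfies $w \not\asymp^L z$, hence $w \prec^L z$ if $w < z$ and $w \succ^L z$ if $w > z$. Assuming (by transfinite induction on the length of $z$) that every such $w > \Rbb$ is already of the form $\lambda_{y'}$ for some $y'$, I set
$$y := \crotq{\{y' : \lambda_{y'} \sqsubset z,\ \lambda_{y'} < z\}}{\{y'' : \lambda_{y''} \sqsubset z,\ \lambda_{y''} > z\}}$$
and compare with the bracket defining $\lambda_y$. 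All left options $\exp_n(k \ln_n \lambda_{y'})$ are $\asymp^L \lambda_{y'}$, hence $\prec^L z$, and the reals are also $\prec^L z$; symmetrically on the right. Thus $z$ lies strictly between the two option-sets, giving $\lambda_y \sqsubseteq z$ by the simplicity defining $\lambda_y$. The reverse inclusion $z \sqsubseteq \lambda_y$ will follow by verifying that the selected prefixes $\lambda_{y'}$ and $\lambda_{y''}$ approach $z$ cofinally/coinitially in the $\asymp^L$-hierarchy, so that no surreal number strictly simpler than $z$ can simultaneously dominate all left options and be dominated by all right options.

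The simplicity assertion follows from uniqueness: given any $x'$ with $x' \asymp^L \lambda_y$, existence applied to $x'$ produces $\lambda_{y^\star} \sqsubseteq x'$ with $\lambda_{y^\star} \asymp^L \lambda_y$, and uniqueness forces $y^\star = y$, so $\lambda_y \sqsubseteq x'$. For $\lambda_\No = \Lbb$, a transfinite induction on $y$ using Theorem \ref{thm:lnOmegaOmegaA} and Proposition \ref{prop:formeExpXPurelyInfiniteOmegaA} shows $\ln_n \lambda_y$ is a monomial for every $n$, so $\lambda_y \in \Lbb$; conversely, for $x \in \Lbb$, existence furnishes $\lambda_y \sqsubseteq x$ with $\lambda_y \asymp^L x$, and Proposition \ref{prop:lamdaMapLogAtomic}, which already gives a unique simplest log-atomic representative in each $\asymp^L$-class, forces $\lambda_y = x$.

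The main obstacle is the cofinality verification in the existence step: one must exclude the scenario $\lambda_y \sqsubset z$ in which $\lambda_y$ is a strictly simpler number that still lies between all the options produced from prefixes of $z$. Concretely, this amounts to showing that for every $u \prec^L z$ with $u < z$ there is a prefix $\lambda_{y'} \sqsubset z$ with $\lambda_{y'} < z$ and $u \preceq^L \lambda_{y'}$, and symmetrically for elements $\succ^L z$ above; this requires a fine analysis of how the signs sequence of $z$ traverses the $\asymp^L$-hierarchy, feeding back into the transfinite induction that builds $y$.
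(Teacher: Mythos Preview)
The paper does not prove this proposition at all: it is stated with a citation to \cite[Proposition 5.16]{Berarducci_2018} and no proof is given. There is therefore no ``paper's own proof'' to compare against; the result is imported from Berarducci--Mantova.

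As for your attempt on its own merits, there are two genuine gaps. First, your induction hypothesis in the existence step is misstated: you assume that every proper prefix $w \sqsubset z$ with $w > \Rbb$ and $w \not\asymp^L z$ is itself of the form $\lambda_{y'}$, but that is far stronger than what the proposition asserts (which is only that some $\lambda_{y'} \sqsubseteq w$). Arbitrary prefixes of $z$ need not be log-atomic. You should instead take, for each such $w$, the $y'$ furnished by the induction hypothesis applied to $w$, and use those as the options defining $y$. Second, you explicitly flag the cofinality issue and do not resolve it; without it you only get $\lambda_y \sqsubseteq z$, not equality, and the argument stalls. The actual proof in Berarducci--Mantova handles this by a careful analysis showing that the canonical options of $z$ are cofinal/coinitial with the $\exp_n(k\ln_n \lambda_{y'})$ options, so your instinct about where the difficulty lies is correct, but the work is not done here. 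Your argument for $\lambda_{\No} \subseteq \Lbb$ via Theorem~\ref{thm:lnOmegaOmegaA} is also only a gesture: those results concern $\ln \omega^{\omega^a}$, and you would need to first establish that each $\lambda_y$ is a monomial $\omega^{\omega^{a}}$ before invoking them.
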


Moreover, the $\lambda$ map behaves very nicely with exponential and logarithm.

\begin{proposition}[{\cite[Proposition 2.5]{aschenbrenner:hal-02350421}}]
	For all surreal number $x$, 
	$$
	\exp\lambda_x=\lambda_{x+1}\qqandqq\ln\lambda_x=\lambda_{x-1}
	$$
\end{proposition}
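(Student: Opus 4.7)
The plan is to prove $\exp\lambda_x = \lambda_{x+1}$ for every $x \in \No$ by transfinite induction on the length of the sign sequence of $x$; the companion identity $\ln\lambda_x = \lambda_{x-1}$ then follows by applying this to $x-1$ in place of $x$ and using $\ln = \exp^{-1}$ on positive surreals.

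First I would check that $\exp\lambda_x$ is log-atomic: $\lambda_x$ is a pure power of $\omega$, hence purely infinite, so Proposition~\ref{prop:formeExpXPurelyInfiniteOmegaA} gives $\exp\lambda_x = \omega^c$ for some $c \in \No$; and for $n \geq 1$, $\ln_n(\exp\lambda_x) = \ln_{n-1}\lambda_x$ is again a power of $\omega$ because $\lambda_x$ is log-atomic. By Proposition~\ref{prop:lamdaMapLogAtomic}, $y \mapsto \lambda_y$ is a bijection $\No \to \Lbb$, so there is a unique $y(x) \in \No$ with $\exp\lambda_x = \lambda_{y(x)}$, and the task reduces to showing $y(x) = x+1$.

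Now fix $x$ and suppose the result for every $z \sqsubset x$. Writing the canonical representation $x = \crotq{x'}{x''}$, Gonshor's addition formula gives the (non-canonical) representation $x+1 = \crotq{x,\, x'+1}{x''+1}$. Applying the induction hypothesis to $x'$ and $x''$ yields $\lambda_{x'+1} = \exp\lambda_{x'}$ and $\lambda_{x''+1} = \exp\lambda_{x''}$; combining these with uniformity of the $\lambda$-definition gives
\[
\lambda_{x+1} = \crotq{\Rbb,\; \exp_n(k \ln_n \lambda_x),\; \exp_{n+1}(k \ln_n \lambda_{x'})}{\exp_{n+1}(\tfrac{1}{k} \ln_n \lambda_{x''})},
\]
where $n$ and $k$ range over appropriate positive integers. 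In parallel, applying Gonshor's definition of $\exp$ to the (uniform) representation $\lambda_x = \crotq{\Rbb,\, \exp_n(k \ln_n \lambda_{x'})}{\exp_n(\tfrac{1}{k} \ln_n \lambda_{x''})}$ produces an explicit cut for $\exp\lambda_x$ whose left options are $\exp(L)[\lambda_x - L]_n$ (with $L \in \Rbb$ or $L = \exp_n(k \ln_n \lambda_{x'})$) and whose right options are $\exp(R)/[R-\lambda_x]_{2n+1}$ (with $R = \exp_n(\tfrac{1}{k}\ln_n \lambda_{x''})$).

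The main obstacle is to verify that these two cuts define the same surreal, i.e., that their left sets are mutually cofinal and their right sets mutually coinitial. Here the $\prec^L$-structure does the work: because $\lambda_{x'} \prec^L \lambda_x$, the polynomial correction $[\lambda_x - L]_n$ attached to $L = \exp_{n_0}(k_0 \ln_{n_0}\lambda_{x'})$ can be absorbed by slightly increasing $k_0$, so that $\exp(L)[\lambda_x - L]_n < \exp_{n_0+1}(k_0' \ln_{n_0} \lambda_{x'})$; conversely $\exp_{n_0+1}(k_0 \ln_{n_0} \lambda_{x'}) = \exp(L) < \exp(L)[\lambda_x - L]_n$ for any $n \geq 1$ since $[\lambda_x - L]_n > 1$. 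The purely real case $L \in \Rbb$ is absorbed by the options $\exp_n(k \ln_n \lambda_x)$, and the right-hand side is handled symmetrically via $\lambda_{x''} \succ^L \lambda_x$. Once these cofinality relations are established, the simplicity theorem forces $\exp\lambda_x = \lambda_{x+1}$, closing the induction.
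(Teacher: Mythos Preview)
The paper does not supply its own proof of this proposition: it is simply quoted from \cite[Proposition 2.5]{aschenbrenner:hal-02350421} with no argument given here. So there is nothing in the present paper to compare against.

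As for the soundness of your sketch on its own terms: the overall architecture --- induction on simplicity, identifying $\exp\lambda_x$ as log-atomic, then matching the cuts for $\exp\lambda_x$ and $\lambda_{x+1}$ by mutual cofinality --- is the standard route and is correct in outline. Two points deserve more care. First, Gonshor's definition of $\exp$ produces options on \emph{both} sides from \emph{both} the left and right options of the argument (there are also left options of the form $\exp(R)[\lambda_x - R]_{2n+1}$ and right options $\exp(L)/[L-\lambda_x]_{2n+1}$ whenever the bracketed quantity is positive); your description of the cut for $\exp\lambda_x$ omits these, and you should explain why they are dominated. Second, the absorption step ``the polynomial correction $[\lambda_x - L]_n$ can be absorbed by slightly increasing $k_0$'' is the heart of the matter and needs a genuine inequality: you must bound $[\lambda_x - L]_n$ by something of the form $\exp_{n_0}((k_0'-k_0)\ln_{n_0}\lambda_{x'})$, which requires using $\lambda_x \prec^L \lambda_{x''}$ (not merely $\lambda_{x'} \prec^L \lambda_x$) to control the size of $\lambda_x$ relative to the upper options, and an explicit comparison of $\lambda_x - L$ with $L$ itself. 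These gaps are fillable, but as written the cofinality verification is asserted rather than demonstrated.
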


\begin{lemma}[{\cite[Lemma 2.6]{aschenbrenner:hal-02350421}}]
	For all ordinal $\alpha$, $\lambda_{-\alpha}=\omega^{\omega^{-\alpha}}$.
\end{lemma}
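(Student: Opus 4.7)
The natural strategy is transfinite induction on $\alpha$. The base case $\alpha=0$ is immediate: the canonical representation $0=\crotq{\emptyset}{\emptyset}$ gives $\lambda_0=\crotq{\Rbb}{\emptyset}=\omega=\omega^{\omega^0}$.

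For a successor ordinal $\alpha=\beta+1$, I would avoid unfolding the bracket and instead apply the recurrence $\ln\lambda_x=\lambda_{x-1}$ stated just before the lemma. With $x=-\beta$, the induction hypothesis gives $\lambda_{-\alpha}=\ln\lambda_{-\beta}=\ln\omega^{\omega^{-\beta}}$; Theorem \ref{thm:lnOmegaOmegaA} then rewrites this as $\omega^{h(-\beta)}$, and Corollary \ref{cor:hmord} yields $h(-\beta)=\omega^{-\beta-1}$, so $\lambda_{-\alpha}=\omega^{\omega^{-\beta-1}}=\omega^{\omega^{-\alpha}}$.

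The real work lies in the limit case. There the canonical representation $-\alpha=\crotq{\emptyset}{\{-\beta:\beta<\alpha\}}$ and the definition of the $\lambda$-map combine to give
\[
\lambda_{-\alpha}=\left\{\Rbb\;\Big|\;\exp_n\!\bigl(\tfrac{1}{k}\ln_n\lambda_{-\beta}\bigr):\beta<\alpha,\ n,k\in\Nbb^*\right\}.
\]
Iterating Theorem \ref{thm:lnOmegaOmegaA} and Corollary \ref{cor:hmord} on the induction hypothesis produces $\ln_n\lambda_{-\beta}=\omega^{\omega^{-\beta-n}}$, so the upper bounds become $\exp_n\!\bigl(\tfrac{1}{k}\omega^{\omega^{-\beta-n}}\bigr)$. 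Verifying that $\omega^{\omega^{-\alpha}}$ lies strictly inside this bracket is routine: it is infinite because $\omega^{-\alpha}$ is positive infinitesimal, and each upper inequality reduces, via monotonicity of $\exp_n$, to $k\,\omega^{\omega^{-\alpha-n}}<\omega^{\omega^{-\beta-n}}$, which holds because these two monomials sit in distinct Archimedean classes with $\omega^{-\alpha-n}<\omega^{-\beta-n}$.

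The main obstacle is simplicity: showing $\omega^{\omega^{-\alpha}}$ is \emph{the simplest} element of the above interval. My plan is to observe that $\omega^{\omega^{-\alpha}}$ is log-atomic---its iterated logarithms $\omega^{\omega^{-\alpha-n}}$ are all $\omega$-monomials by the calculation above---and then invoke Proposition \ref{prop:lamdaMapLogAtomic}: each $\asymp^L$-class contains a unique log-atomic element, which is moreover the simplest in its class. Since $\lambda_{-\alpha}$ is itself log-atomic by construction of the $\lambda$-map, it is enough to show $\lambda_{-\alpha}\asymp^L\omega^{\omega^{-\alpha}}$; uniqueness of the log-atomic representative then forces equality. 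To establish this $\asymp^L$-equivalence, the cut bounds already give $\lambda_{-\alpha}\prec^L\omega^{\omega^{-\beta}}$ for every $\beta<\alpha$, placing $\lambda_{-\alpha}$ in some $\asymp^L$-class strictly below all of them; combining this with the simplicity of $\lambda_{-\alpha}$ (no element simpler than $\omega^{\omega^{-\alpha}}$ can occupy a class strictly lower, which a direct comparison of sign-sequence lengths via Theorem \ref{thm:serieToSignExp} rules out) pins $\lambda_{-\alpha}$ down to the $\asymp^L$-class of $\omega^{\omega^{-\alpha}}$, completing the induction.
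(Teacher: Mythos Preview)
The paper does not give a proof of this lemma; it is quoted from \cite{aschenbrenner:hal-02350421} without argument, so there is no in-paper proof to compare against and I assess your proposal on its own.

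Your base case and successor case are correct and efficient; invoking $\ln\lambda_x=\lambda_{x-1}$ together with Theorem~\ref{thm:lnOmegaOmegaA} and Corollary~\ref{cor:hmord} is exactly the right move for the successor step.

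The limit case has a genuine gap. You correctly verify that $\omega^{\omega^{-\alpha}}$ lies in the cut defining $\lambda_{-\alpha}$, which yields $\lambda_{-\alpha}\sqsubseteq\omega^{\omega^{-\alpha}}$, and you correctly note that both numbers are log-atomic. But the concluding step---establishing $\lambda_{-\alpha}\asymp^L\omega^{\omega^{-\alpha}}$---is not carried out. Your assertion that ``no element simpler than $\omega^{\omega^{-\alpha}}$ can occupy a class strictly lower'' would, even if granted, only exclude $\lambda_{-\alpha}\prec^L\omega^{\omega^{-\alpha}}$; it says nothing against $\lambda_{-\alpha}\succ^L\omega^{\omega^{-\alpha}}$, and the inequalities $\lambda_{-\alpha}\prec^L\omega^{\omega^{-\beta}}$ for $\beta<\alpha$ do not by themselves forbid a log-atomic strictly between $\omega^{\omega^{-\alpha}}$ and all the $\omega^{\omega^{-\beta}}$. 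The appeal to sign-sequence lengths is too vague to close this.

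A direct cofinality comparison of the two cuts is shorter and avoids the detour through $\asymp^L$. From $-\alpha=\crotq{}{-\beta:\beta<\alpha}$ and uniformity of the $\omega$-map one gets $\omega^{-\alpha}=\crotq{0}{\omega^{-\beta}}$ (the right class $\{2^{-m}\omega^{-\beta}\}$ is coinitial with $\{\omega^{-\beta}\}$ since $\alpha$ is limit), and hence
\[
\omega^{\omega^{-\alpha}}=\crotq{0,\,n}{2^{-m}\omega^{\omega^{-\gamma}}:\gamma<\alpha,\ m\in\Nbb}.
\]
The left classes $\{0,n:n\in\Nbb\}$ and $\Rbb$ are mutually cofinal. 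For the right classes: every $2^{-m}\omega^{\omega^{-\gamma}}$ dominates $\exp_1\!\bigl(\tfrac12\ln_1\lambda_{-\gamma}\bigr)=\omega^{\omega^{-\gamma}/2}$ since these lie in distinct Archimedean classes; conversely, because $\alpha$ is limit the induction hypothesis gives $\ln_n\lambda_{-\beta}=\omega^{\omega^{-\beta-n}}\succ\omega^{\omega^{-\beta-n-1}}=\ln_n\lambda_{-\beta-1}$, so
\[
\exp_n\!\bigl(\tfrac1k\ln_n\lambda_{-\beta}\bigr)>\exp_n\!\bigl(\ln_n\lambda_{-\beta-1}\bigr)=\omega^{\omega^{-(\beta+1)}},
\]
with $\beta+1<\alpha$. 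Thus the two right classes are mutually coinitial and the cuts coincide, finishing the limit step without any simplicity argument.
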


\begin{lemma}[{\cite[Aschenbrenner, van den Dries and  van der Hoeven, Corollary 2.9]{aschenbrenner:hal-02350421}}]
	\label{lem:kappaMoinsAlpha}
	For all ordinal number $\alpha$, \centre{$\kappa_{-\alpha}=\lambda_{-\omega\otimes\alpha}=\omega^{\omega^{-\omega\otimes\alpha}}$}
\end{lemma}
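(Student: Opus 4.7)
The second equality $\lambda_{-\omega\otimes\alpha}=\omega^{\omega^{-\omega\otimes\alpha}}$ is an immediate instance of the preceding lemma applied with the ordinal $\omega\otimes\alpha$ in place of $\alpha$, so the real content is the identity $\kappa_{-\alpha}=\lambda_{-\omega\otimes\alpha}$. I would prove this by transfinite induction on $\alpha$. The base case $\alpha=0$ is immediate: $\kappa_0=\omega$ (as noted below Definition~\ref{def:kappaMap}) and $\lambda_0=\omega^{\omega^0}=\omega$ by the preceding lemma.

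For the successor step, assume the identity holds for every $\beta\le\alpha$. Since $\omega\otimes(\alpha+1)=\omega\otimes\alpha+\omega$, and the canonical representations of the negative ordinals $-(\alpha+1)$ and $-(\omega\otimes\alpha+\omega)$ have empty left parts and right parts consisting of their proper prefixes $-\beta$ (resp.~$-\gamma$), Definitions~\ref{def:kappaMap} and~\ref{def:lambdaMap} give
\[\kappa_{-(\alpha+1)}=\bigl\{\,\Rbb\,\bigm|\,\ln_n\kappa_{-\beta}:\beta\le\alpha,\ n\in\Nbb\,\bigr\},\]
\[\lambda_{-(\omega\otimes\alpha+\omega)}=\bigl\{\,\Rbb\,\bigm|\,\exp_n(\tfrac1k\ln_n\lambda_{-\gamma}):\gamma<\omega\otimes\alpha+\omega,\ n,k\in\Nbb^*\,\bigr\}.\]
Using the induction hypothesis and the identity $\ln\lambda_x=\lambda_{x-1}$, the first right-option family reduces to $\{\lambda_{-\omega\otimes\beta-n}:\beta\le\alpha,\ n\in\Nbb\}$, which (because $\lambda$ is increasing and $\omega\otimes\beta+n<\omega\otimes(\beta+1)\le\omega\otimes\alpha$ for $\beta<\alpha$) is coinitial with $\{\lambda_{-\omega\otimes\alpha-n}:n\in\Nbb\}$. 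In the second bracket, taking $k=1$ and $\gamma=\omega\otimes\alpha+n$ recovers exactly this family; options with $k\ge 2$ remain in the $\asymp^L$-class of the $k=1$ option and are bounded below by $\lambda_{-\gamma-1}$, so they do not enlarge the coinitial family. Both brackets therefore cut $\Rbb$ from the same coinitial family and denote the same surreal.

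For the limit step at $\alpha$, continuity of ordinal multiplication gives $\omega\otimes\alpha=\sup_{\beta<\alpha}\omega\otimes\beta$, so the right prefixes of $-\omega\otimes\alpha$ are coinitial with $\{-\omega\otimes\beta:\beta<\alpha\}$. After the same reductions, the right options of $\lambda_{-\omega\otimes\alpha}$ are coinitial with $\{\lambda_{-\omega\otimes\beta}:\beta<\alpha\}$, which equals $\{\kappa_{-\beta}:\beta<\alpha\}$ by the induction hypothesis. On the $\kappa$-side, the right options $\ln_n\kappa_{-\beta}$ satisfy $\kappa_{-(\beta+1)}<\ln_n\kappa_{-\beta}<\kappa_{-\beta}$, and since $\alpha$ is a limit one has $\beta+1<\alpha$, so these options are interleaved with the $\kappa_{-\gamma}$ ($\gamma<\alpha$) and are coinitial with $\{\kappa_{-\beta}:\beta<\alpha\}$. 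Hence both brackets agree and $\kappa_{-\alpha}=\lambda_{-\omega\otimes\alpha}$.

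The main obstacle I anticipate is the coinitiality bookkeeping in the successor step: the $\lambda$-definition uses the more intricate right options $\exp_n(\tfrac{1}{k}\ln_n(\cdot))$ rather than the pure iterated logarithms of the $\kappa$-definition, and one must check carefully that the $k\ge 2$ options do not produce something strictly smaller than the coinitial family coming from $\ln_n\lambda_{-\omega\otimes\alpha}$. The reconciliation rests on the two identities $\exp\lambda_x=\lambda_{x+1}$, $\ln\lambda_x=\lambda_{x-1}$ together with the fact that $\lambda_y$ is the simplest representative of its $\asymp^L$-class.
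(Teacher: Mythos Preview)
The paper does not actually prove this lemma: it is quoted verbatim from \cite[Corollary~2.9]{aschenbrenner:hal-02350421} with no accompanying argument, so there is no ``paper's own proof'' to compare your proposal against.

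That said, your outline is a sound reconstruction. The reduction of the second equality to the preceding lemma is immediate, and the transfinite induction for $\kappa_{-\alpha}=\lambda_{-\omega\otimes\alpha}$ is the natural approach. Your handling of the one nontrivial point---that the extra right options $\exp_n(\tfrac1k\ln_n\lambda_{-\gamma})$ in the $\lambda$-bracket do not escape below the coinitial family $\{\lambda_{-\omega\otimes\alpha-n}:n\in\Nbb\}$---is correct: since $\tfrac1k\ln_n\lambda_{-\gamma}\asymp\ln_n\lambda_{-\gamma}$, each such option stays in the $\asymp^L$-class of $\lambda_{-\gamma}$ and hence is bounded below by $\lambda_{-\gamma-1}$, which already appears in the family. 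The limit step is handled cleanly via continuity of $\beta\mapsto\omega\otimes\beta$ and the interleaving $\kappa_{-(\beta+1)}<\ln_n\kappa_{-\beta}<\kappa_{-\beta}$. One minor remark: you silently use that the $\kappa$-definition is uniform (so that the canonical representation of $-(\alpha+1)$ suffices); the paper explicitly records uniformity for $\lambda$ but not for $\kappa$, though this causes no difficulty here since you are using the canonical representation anyway.
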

\subsection{Nested truncation rank}
\subsubsection{Definition}
Log-atomic number are the base case (up to minor changes) of a notion of rank over surreal numbers, the \textbf{nested truncation rank}. As expected, it is based on some well partial order. This one has been defined by Berarducci and Mantova as follows:

\begin{definition}[{\cite[Definition 4.3]{Berarducci_2018}}]
	\label{def:nestedn}
	For all natural number $n\in\Nbb$, we define the relation $\nestedneq$ as follows:
	\begin{itemize}
		\item Writing $y\nestedneq[0] x$ if any only if $y=\aSurrealPrefix$ and $x=\aSurreal$ with $\nu'\leq\nu$. We say that $y$ is a \textbf{truncation} of $x$.
		
		\item Let $x=\aSurreal$  Since $\omega^\Nobf=\exp(\No_\infty)$, we can write 
		$$
		x=\Sumlt i\nu r_i\exp(x_i)
		$$
		where $\exp(x_i)=\omega^{a_i}$. For a surreal number $y$, we say $y\nestedneq[n+1] x$ if there is $\nu'<\nu$ and $y'\nestedneq x_{\nu'}$ such that
		$$
		y=\Sumlt i{\nu'}r_i\exp(x_i) + \sign(r_{\nu'})\exp y'
		$$ 
		We say that $y$ is a \textbf{nested truncation} of $x$.
	\end{itemize}
	We also write $y\nestedeq x$ is there is some natural number $n$ such that $y\nestedneq x$. We also introduce the corresponding strict relations $\nestedn$ and $\nested$.
\end{definition}

\begin{definition}[Nested truncation rank {\cite[Definition 4.27]{Berarducci_2018}}]
	\label{def:nestedTruncRk}
	The \textbf{nested truncation rank} of $x\in\Nobf^*$ is defined by
	$$
	\NR(x) = \sup\enstq{\NR y + 1}{y\nested x}
	$$
	By convention, we also set $\NR(0)=0$.
\end{definition}

\subsubsection{Properties}
We know investigate some properties of the nested truncation rank. More precisely, we provide compatibility properties with the operations over surreal numbers and bounds on some particular nested truncation ranks. First of all, the nested truncation rank is unaffected by the exponential.

\begin{proposition}[{\cite[Proposition 4.28]{Berarducci_2018}}]
	\label{prop:NRStableExp}
	If $\gamma\in\Nobf_\infty$, then $$\NR(\pm\exp \gamma) = \NR(\gamma)$$
\end{proposition}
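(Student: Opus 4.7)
My strategy would be a well-founded induction on $\gamma$ with respect to the nested-truncation order $\nested$. Since $\gamma \in \Nobf_\infty$, Proposition~\ref{prop:formeExpXPurelyInfiniteOmegaA} gives $\exp\gamma = \omega^a$ for some $a$, so $x := \pm\exp\gamma$ is a monomial. In the exponential writing of its normal form, $x = r_0\exp(x_0)$ with $r_0 = \pm 1$ and $x_0 = \gamma$; in particular $\nu(x) = 1$. Unfolding Definition~\ref{def:nestedn} in this very constrained situation, the only strict level-$0$ truncation is $y = 0$ (arising from $\nu' = 0 < 1$), and any strict nested truncation at a higher level must take the form $y = \sign(r_0)\exp y' = \pm \exp y'$ with $y' \nestedeq \gamma$ and $y' \neq \gamma$. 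Noting that $\pm\exp y' \neq 0$ and that $\exp$ is injective so no collisions occur, I would conclude
\[
  \{y : y \nested x\} \;=\; \{0\} \;\cup\; \{\pm \exp y' \,:\, y' \nested \gamma\}.
\]

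Before I can apply the inductive hypothesis to each $y' \nested \gamma$ I must know that $y'$ is itself purely infinite, so that the hypothesis of the proposition is satisfied. I plan to prove this auxiliary fact by a secondary induction on the level $n$ of $\nestedneq$. The base case $n = 0$ is immediate, since a truncation leaves the exponents $a_i$ of the normal form unchanged. For the successor step, the only new feature is the replacement of a last term $r_\mu\omega^{a_\mu}$ by $\sign(r_\mu)\exp(z)$ where $z$ is a nested truncation of the purely infinite number $x_\mu$ satisfying $\exp(x_\mu) = \omega^{a_\mu}$ with $a_\mu > 0$. Combining the secondary inductive hypothesis with Proposition~\ref{prop:formeExpXPurelyInfiniteOmegaAg}, $\exp(z) = \omega^{a'}$ with $a' > 0$, so the modified term stays of the form $\pm\omega^{a'}$ with positive exponent and $y'$ remains purely infinite. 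This auxiliary lemma is the main technical obstacle: it is the only place where one must track how nested truncation interacts with the function $g$ controlling $\exp$ on purely infinite numbers.

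Granted the lemma, for each $y' \nested \gamma$ the primary inductive hypothesis yields $\NR(\pm\exp y') = \NR(y')$, and the characterization of strict nested truncations of $x$ gives
\[
  \NR(x) \;=\; \sup\bigl(\{\NR(0)+1\}\cup\{\NR(y')+1 \,:\, y' \nested \gamma\}\bigr) \;=\; \max\bigl(1,\NR(\gamma)\bigr).
\]
Assuming $\gamma \neq 0$, the strict truncation $0 \nested \gamma$ forces $\NR(\gamma) \geq 1$, so the maximum equals $\NR(\gamma)$ and we obtain $\NR(\pm\exp\gamma) = \NR(\gamma)$, closing the induction. The degenerate case $\gamma = 0$ (giving $\exp\gamma = 1$) is handled by direct computation and absorbed by the convention $\NR(0) = 0$.
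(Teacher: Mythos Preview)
The paper does not supply a proof of this proposition; it is quoted from \cite[Proposition~4.28]{Berarducci_2018} and used as a black box. So there is nothing in the paper to compare your argument against, and I can only assess whether it stands on its own.

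Your global plan is sound: since $\pm\exp\gamma$ is a single monomial with coefficient $\pm1$ and $x_0=\gamma$, unwinding Definition~\ref{def:nestedn} gives exactly
\[
\{\,y:y\nested\pm\exp\gamma\,\}=\{0\}\cup\{\,\pm\exp y':y'\nested\gamma\,\},
\]
and the final computation $\NR(\pm\exp\gamma)=\max(1,\NR(\gamma))=\NR(\gamma)$ is correct once you know $\NR(\pm\exp y')=\NR(y')$ for each $y'\nested\gamma$.

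The gap is in your auxiliary lemma. In the successor step you write $\exp(z)=\omega^{a'}$ with $a'>0$, citing Proposition~\ref{prop:formeExpXPurelyInfiniteOmegaAg}. That proposition only furnishes the monomial form; it says nothing about the sign of $a'$. In particular $z=0$ is always a legitimate level-$0$ truncation of $x_\mu$, and then $\exp z=1=\omega^0$. The resulting nested truncation
\[
y'=\sum_{i<\nu'}r_i\omega^{a_i}+\sign(r_{\nu'})\cdot 1
\]
carries a constant term and is \emph{not} purely infinite. Your secondary induction therefore fails at its very first nontrivial instance, and with it the primary induction: the hypothesis $y'\in\Nobf_\infty$ that you need in order to re-invoke the proposition is simply not available for every $y'\nested\gamma$. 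The difficulty compounds at deeper levels, where one picks up real factors like $e^{\pm1}$ and the deviation from $\Nobf_\infty$ becomes more elaborate.

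What actually makes the result go through is not that every $y'\nested\gamma$ remains purely infinite, but that the assignment $y'\mapsto\pm\exp y'$ sets up a $\nested$-compatible bijection between the down-set of $\gamma$ and the nonzero part of the down-set of $\pm\exp\gamma$. Because $\NR$ is the foundation rank for $\nested$, such an order-isomorphism transports ranks directly, with no need to control the shape of the intermediate $y'$. Proving that this bijection respects $\nested$ in both directions---equivalently, that the same structural description of nested truncations persists for $\pm\exp y'$ even when $y'\notin\Nobf_\infty$, provided $y'\nestedeq\gamma$---is the real technical content; this is where your argument needs to be redirected.
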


\begin{corollary}
	\label{cor:NRminus}
	For all $a\in\Nobf^*$, $\NR(a)=\NR\pa{-a}$
\end{corollary}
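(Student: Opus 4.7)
The plan is to construct an explicit sign-reversing bijection between the nested truncations of $a$ and those of $-a$, and then conclude by well-founded induction on the nested truncation relation (which is well-founded by the results of Berarducci and Mantova). The key observation is that if $x = \sum_{i<\nu} r_i \exp(x_i)$ with $x_i \in \Nobf_\infty$, where we use Proposition \ref{prop:formeExpXPurelyInfiniteOmegaA} to realize each $\omega^{a_i}$ as $\exp(x_i)$, then $-x = \sum_{i<\nu} (-r_i) \exp(x_i)$ has exactly the same purely infinite inner parts $x_i$; only the outer real coefficients are negated.

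First I would show, by induction on $n$, that $y \mapsto -y$ restricts to a bijection between $\{y : y \nestedneq[n] x\}$ and $\{y : y \nestedneq[n] -x\}$. For $n = 0$ this is immediate: truncations are obtained by cutting a normal form at some ordinal, and the normal forms of $x$ and $-x$ share the same sequence of exponents $a_i$. For the induction step, if $y = \sum_{i<\nu'} r_i \exp(x_i) + \sign(r_{\nu'}) \exp(y')$ with $y'$ a nested truncation of $x_{\nu'}$, then
\[
-y = \sum_{i<\nu'} (-r_i) \exp(x_i) + \sign(-r_{\nu'}) \exp(y'),
\]
which exhibits $-y$ as a nested truncation of $-x$ at the same level, with the same inner witness $y'$ and the same $x_{\nu'}$. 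The reverse direction is symmetric.

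Taking the union over $n$, this yields a bijection between $\{y : y \nested a\}$ and $\{y : y \nested -a\}$. By well-founded induction on $\nested$, I would then assume $\NR(y) = \NR(-y)$ for every $y \nested a$ and compute
\[
\NR(a) = \sup\{\NR(y)+1 : y \nested a\} = \sup\{\NR(-y)+1 : y \nested a\} = \NR(-a),
\]
where the second equality uses the induction hypothesis and the third reindexes along the bijection $y \mapsto -y$.

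The argument is essentially bookkeeping once the bijection is identified. The only point that deserves care is verifying that the sign structure of Definition \ref{def:nestedn} is globally invariant under the map $x \mapsto -x$: this is guaranteed by the identity $\sign(-r_{\nu'}) = -\sign(r_{\nu'})$, which ensures that the summand $\sign(r_{\nu'}) \exp(y')$ flips sign exactly in sync with the rest of the expansion. As a sanity check, Proposition \ref{prop:NRStableExp} already settles the special case where $a = \pm\exp\gamma$ is a single exponential with $\gamma$ purely infinite; the present corollary extends this to arbitrary $a$ via the compositional structure of nested truncation.
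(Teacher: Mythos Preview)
Your proof is correct and takes a different route from the paper's. The paper gives a two-line argument using Proposition~\ref{prop:NRStableExp} twice: assuming $a>0$, one writes $\NR(a)=\NR(\ln a)=\NR(-\exp\ln a)=\NR(-a)$. You instead unfold Definition~\ref{def:nestedn} directly, observe that negation acts only on the outer real coefficients while leaving the purely infinite inner data $x_i$ untouched, and conclude that $y\mapsto -y$ bijects the nested truncations of $a$ with those of $-a$ at every level $n$; well-founded induction on $\nested$ then finishes. Your argument is more self-contained: it does not invoke Proposition~\ref{prop:NRStableExp} except as a sanity check, and in particular it makes no appeal to the hypothesis that $\ln a$ be purely infinite. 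The paper's proof is slicker but leans on that proposition as a black box. One minor remark: what you call an induction on $n$ is really a direct verification for each $n$ separately, since the inner witness $y'$ and the inner exponent $x_{\nu'}$ are identical for $x$ and for $-x$; no recursive appeal to smaller $n$ is actually needed.
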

\begin{proof} Without loss of generality, we assume that $a>0$. Then
	\begin{calculs}
		& \NR\pa{a} &=& \NR\pa{\ln a} & (Proposition \ref{prop:NRStableExp})\\
		&&=& \NR(-\exp\ln a) &(Proposition \ref{prop:NRStableExp}) \\
		&&=& \NR\pa{-a}\\
	\end{calculs}
\end{proof}

\begin{corollary}
	\label{cor:NRinverse}
	For all $a\in\Nobf^*$, $\NR(a)=\NR\pa{\f1a}$
\end{corollary}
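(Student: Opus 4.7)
My plan is to mimic the very short argument used for Corollary~\ref{cor:NRminus}, running it one step further. Without loss of generality I may assume $a>0$: if the result is established for positive arguments, then since $1/(-a) = -(1/a)$, Corollary~\ref{cor:NRminus} yields $\NR(1/a) = \NR(-(1/a)) = \NR(1/(-a))$ and $\NR(a) = \NR(-a)$, so the general case reduces to the positive one.

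For $a>0$, I exploit the identities $1/a = \exp(-\ln a)$ and $\ln(1/a) = -\ln a$. The chain of equalities I would write is
\[
\NR(a) \;=\; \NR(\ln a) \;=\; \NR(-\ln a) \;=\; \NR(\exp(-\ln a)) \;=\; \NR(1/a).
\]
The first and third equalities come from Proposition~\ref{prop:NRStableExp} applied respectively to $a = \exp(\ln a)$ and to $1/a = \exp(-\ln a)$, and the middle equality is Corollary~\ref{cor:NRminus} applied to the surreal number $\ln a$.

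The delicate point — and the main thing one has to be comfortable with — is exactly the one already implicit in the proof of Corollary~\ref{cor:NRminus}: Proposition~\ref{prop:NRStableExp} as literally stated requires its argument to be purely infinite, whereas $\ln a$ in general has a nonzero real part and an infinitesimal part in its normal form. The same justification that makes Corollary~\ref{cor:NRminus} work in full generality — essentially, that the real and infinitesimal pieces of $\ln a$ do not contribute to the nested truncation rank when passing to $\exp(\ln a)=a$ — is precisely what is needed here, applied in parallel to $\ln a$ and to $-\ln a$. Beyond this, no new idea is required: the proof is a two-line extension of Corollary~\ref{cor:NRminus}.
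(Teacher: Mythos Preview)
Your proof is correct and essentially identical to the paper's: the paper runs the same chain
\[
\NR\!\left(\tfrac{1}{a}\right)=\NR\!\left(\ln\tfrac{1}{a}\right)=\NR(-\ln a)=\NR(\ln a)=\NR(a),
\]
invoking Proposition~\ref{prop:NRStableExp} at both ends and Corollary~\ref{cor:NRminus} in the middle, without even making the reduction to $a>0$ explicit. Your observation about the literal hypothesis $\gamma\in\Nobf_\infty$ in Proposition~\ref{prop:NRStableExp} is a fair remark, and indeed the paper already tacitly relies on the same extension in its proof of Corollary~\ref{cor:NRminus}.
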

\begin{proof}\ 
	\begin{calculs}
		& \NR\pa{\f1a} &=& \NR\pa{\ln\f1a} & (Proposition \ref{prop:NRStableExp})\\
		&&=& \NR(-\ln a)\\
		&&=& \NR\pa{\ln a} &(Corollary \ref{cor:NRminus})\\\\
		&&=& \NR(a) & (Proposition \ref{prop:NRStableExp})\\
	\end{calculs}
\end{proof}

\begin{lemma}\label{lem:NR0}
	For all $x\in\No$, $\NR(x)=0$ \tiff either $x\in\Rbb$ or $x=\pm\lambda^{\pm1}$ for some log-atomic number $\lambda$.
\end{lemma}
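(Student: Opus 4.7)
The plan is to reduce the two-sided statement to the positive case and then read off the normal form of $x$.

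Preliminary reduction. By Corollaries \ref{cor:NRminus} and \ref{cor:NRinverse}, both $\NR$ and the right-hand side are stable under $x \mapsto -x$ and $x \mapsto 1/x$, so it suffices to prove the equivalence for $x \geq 0$, with candidates $x \in \Rbb_{\geq 0}$ or $x = \lambda^{\pm 1}$ with $\lambda$ log-atomic.

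For $\NR(x) = 0 \Rightarrow$ RHS, I would inspect the normal form $x = \sum_{i<\nu} r_i \omega^{a_i}$ one layer at a time. First, if $\nu \geq 2$, the truncation $r_0 \omega^{a_0}$ is a strict $\nested[0]$-truncation, forcing $\NR(x) \geq 1$; hence $\nu \leq 1$. If $\nu = 0$ then $x = 0 \in \Rbb$. Otherwise $x = r \omega^a$. If $|r| \neq 1$, the choice $\nu' = 0$, $y' = x_0$ in Definition \ref{def:nestedn} gives $y = \sign(r) \omega^a \neq x$, again a strict $\nested[1]$-truncation, so $r = \pm 1$ and $x = \pm \omega^a$. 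If $a = 0$ then $x = \pm 1 \in \Rbb$. Otherwise $a \neq 0$; Proposition \ref{prop:NRStableExp} and Proposition \ref{prop:lnOmegaA} give $\NR(\ln \omega^a) = \NR(x) = 0$ with $\ln \omega^a$ purely infinite and nonzero. Running the same three-step reduction on $\ln \omega^a$ forces it to be a monomial $\pm \omega^b$; purely-infiniteness pins the sign to be $+$ when $a > 0$. Iterating, each $\ln_n \omega^a$ is $\omega^{a_n}$ with $a_n > 0$, so $\omega^a \in \Lbb$ and $x = \pm \lambda$. If instead $a < 0$, then $x = \pm (\omega^{-a})^{-1}$ with $-a > 0$, and Corollary \ref{cor:NRinverse} reduces to the positive-exponent case, giving $x = \pm \lambda^{-1}$.

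For RHS $\Rightarrow \NR(x) = 0$, I would verify each base case has no strict nested truncation. The case $x = 0$ is the convention. For $x = r \neq 0$ real, the normal form is single-term with exponent $0$, so no proper $\nested[0]$-truncation arises, and the $\nested[n+1]$-clause demands $y' \nestedeq x_0 = 0$, which itself admits no strict nested truncation, so none is produced. For $x = \lambda = \omega^a$ log-atomic, any strict nested truncation would via the $\nested[n+1]$-clause have the form $y = \exp y'$ with $y' \nested \ln \lambda = \omega^{a_1}$; but $\omega^{a_1}$ is itself log-atomic, and iterating gives an infinite $\nested$-descending chain through the $\omega^{a_n}$, contradicting the well-foundedness of $\nested$ (which underlies the very definition of $\NR$ as an ordinal rank). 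Hence $\NR(\lambda) = 0$. The case $x = \lambda^{-1}$ then follows from Corollary \ref{cor:NRinverse}.

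The main obstacle is the bookkeeping in Definition \ref{def:nestedn}: one must verify that the apparently permissive choice $y' = x_{\nu'}$ returns $y = x$ (and hence fails to be strict) exactly when $r_{\nu'} = \pm 1$, so that demanding $\NR(x) = 0$ forces both $\nu = 1$ and $|r_0| = 1$; and in the log-atomic direction, one must package the evident infinite recursion in the definition of log-atomic as a bona fide $\nested$-descending chain to invoke well-foundedness, rather than attempt an induction on $\NR$ (which would be vacuous since $\NR$ is constant along the iterated-logarithm tower by Proposition \ref{prop:NRStableExp}).
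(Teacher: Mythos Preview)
Your approach is essentially the paper's: both directions come down to showing that $x$ is a single term $\pm\omega^a$ and then iterating through $\ln\omega^a$. The preliminary reduction via Corollaries \ref{cor:NRminus} and \ref{cor:NRinverse} is cosmetic, and your forward direction matches the paper's argument.

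There is, however, a gap in your backward argument for log-atomic $\lambda$. You claim that from $y \nested \lambda$ one extracts $y' \nested \ln\lambda$, then $y'' \nested \ln_2\lambda$, and so on, producing an ``infinite $\nested$-descending chain through the $\omega^{a_n}$'' that contradicts well-foundedness of $\nested$. But the $\omega^{a_n} = \ln_n\lambda$ are not $\nested$-related to one another, nor are the successive $y^{(k)}$ to each other; what you have is only $y^{(k)} \nested \omega^{a_k}$ for each $k$ separately, which is not a chain in $\nested$ at all. The descent that actually yields a contradiction is in the \emph{natural-number index} of $\nestedn[n]$: a strict $y \nested \lambda$ means $y \nestedn[n] \lambda$ for some fixed $n\in\Nbb$; since $\lambda$ is a single monomial, $n \geq 1$, and one unwinding gives a strict $y' \nestedn[n-1] \ln\lambda$ with $\ln\lambda$ again log-atomic. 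After $n$ steps you reach a strict $\nestedn[0]$-truncation of a monomial, which does not exist. This is precisely the paper's argument, phrased there as: take $n$ minimal such that some $y \nestedn[n] \pm\mu^{\pm 1}$ for some $\mu \in \Lbb$, and derive a witness at level $n-1$. The well-order being exploited is $(\Nbb,<)$, not $(\Nobf^*,\nested)$.
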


\begin{proof}
	\begin{itemize}
		\item[\CS] Note that if $x\in\Rbb$ then there is no $y\in\Nobf$ such that $y\nested x$. Therefore $\NR(x)=0$. 
		Now assume that there is some $x=\pm\lambda^{\pm 1}$ with $\lambda\in\Lbb$ such that $\NR(x)\neq 0$. Therefore there is some $y\in\Nobf$ such that $y\nested x$. Let $n\in\Nbb$ minimal such that there is $y\in\Nobf$ and $\lambda\in\Lbb$ such that $y\nestedn \pm\lambda^{\pm 1}$.
		Note that since $\pm\lambda^{\pm1}$ is a term, $n>0$. Then $y=\pm\exp(\pm y')$ with $y'\nestedn[n-1]\ln\lambda\in\Lbb$. But this contradicts the minimality of $n$. hence, for all $\lambda\in\Lbb$, $\NR\pa{\pm\lambda^{\pm1}}=0$.
		
		\item[\CN] Assume $\NR(x)=0$ and $x$ is not a real number. If $x$ is not a term, then there is $y\nestedn[0] x$ and in particular $\NR(x)\geq 1$, what is impossible. Therefore there is some $r\in\Rbb^*$ and some $x_1\in\Jbb$ such that $x=r\exp(x_1)$. If $r\neq \pm1$ then $\sign(x)\exp(x')\nested x$ what is again impossible. Hence, $x=\pm\exp(x_1)$. Proposition \ref{prop:NRStableExp} ensures that $\NR(x_1)=0$. We then can apply the same work to $x_1$ so that there is some $x_2\in\Jbb$ such that $x_1=\pm\exp(x_2)$. By induction, we can always define $x_n=\pm\exp(x_{n+1})$ with $x_{n+1}\in\Jbb$. For $n\geq 1$ we have $x_n\in\Jbb$, therefore $x_{n+1}>0$. In particular
		$$
		\forall n\geq 2\qquad x_n=\exp(x_{n+1})
		$$
		So, for all $n\in\Nbb$,  $\ln_n x_2$ is a monomial, this means that $x_2\in\Lbb$. We also have 
		$$x=\pm\exp\pa{\pm\exp x_2}=\pm\pa{\exp_2(x_2)}^{\pm1}$$
		Since $\exp_2 x_2\in\Lbb$, we have the expected result.
	\end{itemize}
\end{proof}

\begin{lemma}
	\label{lem:NRAjoutNouveauTerme}
	Let $x=\aSurreal$ and $r\in\Rbb^*, a\in\Nobf$ such that for all $i<\nu$, $r\omega^a\prec\omega^{a_i}$. Then 
	\centre{$\NR(x+r\omega^a) = \NR(x)\oplus1\oplus\NR(\omega^a)\oplus\ind_{r\neq\pm1}$}
	where the $\oplus$ is the usual sum on ordinal numbers.
\end{lemma}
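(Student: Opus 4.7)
My plan is to prove the formula by strong transfinite induction on $\beta := \NR(\omega^a) = \NR(\ln\omega^a)$ (the equality being Proposition~\ref{prop:NRStableExp}). Writing $\alpha := \NR(x)$ and $\epsilon := \ind_{r\neq\pm1}$, I would first establish the case $r = \pm 1$ at level $\beta$ using the full statement for ranks strictly less than $\beta$, and then derive the general case at level $\beta$ by reducing to the $r = \pm 1$ case just proved. Since the hypothesis $r\omega^a \prec \omega^{a_i}$ forces the Conway normal form of $z := x + r\omega^a$ to be obtained from that of $x$ by appending $r\omega^a$ as a new last term, the structural analysis is clean.

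The key step will be to enumerate the strict nested truncations $y \nested z$ in $\Nobf^*$ by unfolding Definition~\ref{def:nestedn}. I claim that these split into two families. The first consists of all $y$ with $y \nestedeq x$, arising either from level-$0$ truncations of $z$ at indices $\nu' \leq \nu$ or from level-$(n{+}1)$ truncations at an index $i < \nu$; its contribution to $\NR(z)$ is $\sup\{\NR(y)+1 : y \nestedeq x\} = \alpha + 1$, attained at $y = x$. The second family consists of $y = x + \sign(r)\exp(y')$ for $y' \nestedeq \ln\omega^a$, obtained from the level-$(n{+}1)$ definition at $i = \nu$; the only exclusion $y \neq z$ removes exactly the case $y' = \ln\omega^a$, and only when $r = \pm 1$.

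For each $y' \nestedeq \ln\omega^a$, I would show that $y'$ is purely infinite by induction on the nested level (since $\ln\omega^a$ is purely infinite), so $\exp(y') = \omega^{a'}$ is a monomial with $\NR(\omega^{a'}) = \NR(y')$ by Proposition~\ref{prop:NRStableExp}. Using preservation of the $\asymp^L$-equivalence class under nested truncation (from the work of Berarducci--Mantova), I would argue that $\omega^{a'} \asymp^L \omega^a \prec \omega^{a_i}$ for all $i < \nu$, so that the hypothesis of the lemma is preserved when $r\omega^a$ is replaced by $\sign(r)\omega^{a'}$. For $y' \nested \ln\omega^a$ strict, $\NR(y') < \beta$, and the induction hypothesis with coefficient $\sign(r) = \pm 1$ gives
\[
\NR(x + \sign(r)\exp(y')) = \alpha \oplus 1 \oplus \NR(y').
\]
Taking the supremum over such $y'$ and using continuity of the ordinary ordinal sum in its right argument together with $\beta = \sup\{\NR(y')+1 : y' \nested \ln\omega^a\}$, the contribution of the second family restricted to strict $y'$ equals $\alpha \oplus 1 \oplus \beta$.

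Combining the two families, for $r = \pm 1$ I obtain $\NR(z) = \max(\alpha+1,\, \alpha \oplus 1 \oplus \beta) = \alpha \oplus 1 \oplus \beta$, matching the formula with $\epsilon = 0$. For $r \neq \pm 1$, the second family additionally contains $y = x + \sign(r)\omega^a$, whose rank equals $\alpha \oplus 1 \oplus \beta$ by the $r = \pm 1$ case at level $\beta$ just established; its successor $\alpha \oplus 1 \oplus \beta \oplus 1$ dominates every other contribution and yields the formula with $\epsilon = 1$. The main obstacle will be rigorously justifying the preservation $\exp(y') \prec \omega^{a_i}$ for all $y' \nestedeq \ln\omega^a$, which rests on the structural relation between nested truncation and $L$-equivalence; secondary care will be needed in the supremum computation when $\beta$ is a limit, so that the successor-versus-limit bookkeeping aligns with the claimed formula in the non-commutative $\oplus$.
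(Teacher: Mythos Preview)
Your approach—induction on $\beta=\NR(\omega^a)$, decomposing the strict nested truncations of $z$ into those $\nestedeq x$ and those of the shape $x+\sign(r)\exp(y')$ with $y'\nestedeq\ln\omega^a$, then taking suprema and pulling the sup through the right argument of $\oplus$—is exactly the paper's proof. The paper does not isolate the case $r=\pm1$ before the general one as you do, but the structure and the key computation are identical.

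You correctly identify the one nontrivial point that the paper glosses over: to invoke the induction hypothesis on $y=x+\sign(r)\exp(y')$ one needs $\exp(y')\prec\omega^{a_i}$ for every $i<\nu$, so that this expression really is the normal form of $y$. Your proposed fix via $\asymp^L$-preservation does not work, however. From $\exp(y')\asymp^L\omega^a$ and $\omega^a\prec\omega^{a_i}$ you cannot conclude $\exp(y')\prec\omega^{a_i}$, because $\omega^{a_i}$ may well sit in the same $\asymp^L$-class. Concretely, take $x=\omega^{a_0}=\exp(\omega^2-\omega^{1/2})$ and $\omega^a=\exp(\omega^2-\omega)$: then $\omega^a\prec\omega^{a_0}$, all three monomials are $\asymp^L$-equivalent, yet the level-$0$ truncation $y'=\omega^2$ of $\ln\omega^a=\omega^2-\omega$ gives $\exp(y')=\exp(\omega^2)\succ\omega^{a_0}$. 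In that situation the normal form of $x+\exp(y')$ is $\exp(\omega^2)+\omega^{a_0}$, with the two terms swapped, and the induction hypothesis (applied correctly) yields $\NR(\exp y')\oplus1\oplus\NR(x)$ rather than $\NR(x)\oplus1\oplus\NR(\exp y')$. Since $\oplus$ is the usual, non-commutative ordinal sum, this discrepancy matters as soon as infinite ranks are in play. So the obstacle you flag is real, but the $\asymp^L$ argument does not remove it; neither your proposal nor the paper's proof actually closes this gap.
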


\begin{proof}
	Let $y\nested x+r\omega^a$. Then $y\nestedeq x$ or $y=x+\sign(r)\exp(\delta)$ with $\delta\nested\ln\omega^a$ or, if $r\neq\pm 1$,  $y=x+\sign(r)\omega^a$.
	Let \centre{$A=\enstq{y}{y\nestedeq x}\qqandqq B=\enstq{x+\sign(r)\exp(\delta)}{\delta\nested \ln\omega^a}$}
	\lc{and}{$C=\begin{accolade}
			\emptyset & r=\pm 1 \\
			x+\sign(r)\omega^a & r\neq\pm 1
		\end{accolade}$}
	One can easily see that
	\centre{$\forall y\in A\quad \forall y'\in B\quad\forall y''\in C \qquad y\nested y'\wedge y\nested y'' \wedge y'\nested y''$}
	We now proceed by induction on $\NR(\omega^a)$.
	\begin{itemize}
		\item If $\NR(\omega^a)=0$, using Lemma \ref{lem:NR0}, either $\omega^a=\pm\lambda^{\pm 1}$ for some log-atomic number $\lambda$ or $a=0$. In both cases, there is no $\delta\nested\ln\omega^a$. 
		\begin{calculs}
			& \NR(x+r\omega^a) &=& \sup\enstq{\NR(y)+1}{y\in A\cup C}\\
			&&=& \sup\pa{\enstq{\underbrace{\NR(y)+1}_{\leq\NR(x)}}{y\nested x}\right.\\&&&\left.\qquad\vphantom{\underbrace{\NR(y)+1}_{\leq\NR(x)}}\cup\{\NR(x)+1\} \cup\enstq{\NR(y)+1}{y\in C}} \\
			&&=& \begin{accolade}
				\NR(x) + 1 & r=\pm 1 \\
				\NR(x+\sign(r)\omega^a) & r\neq \pm 1 
			\end{accolade}\\
			&&=& \begin{accolade}
				\NR(x) + 1 & r=\pm 1 \\
				\NR(x)+2 & r\neq \pm 1 
			\end{accolade}\\
			& \NR(x+r\omega^a) &=& \NR(x)+1+\NR(\omega^a) + \ind_{r\neq\pm 1}
		\end{calculs}
		\item For heredity now. Let $\delta\nested\ln\omega^a$. Since $\ln\omega^a$ is a purely infinite number, so is $\delta$. Then $\exp\delta$ is of the form $\omega^b$ for some surreal $b\in\Nobf$. Moreover
		\centre{$\NR(\omega^b) \underset{\text{Proposition }\ref{prop:NRStableExp}}{=} \NR(\delta) < \NR(\ln\omega^a) \underset{\text{Proposition }\ref{prop:NRStableExp}}{=} \NR(\omega^a)$}
		From the induction hypothesis, we have that for any $\delta\nested\ln\omega^a$
		\centre{$\NR(x+\sign(r)\exp(\delta)) = \NR(x)\oplus1\oplus\NR(\exp\delta)$}
		\begin{calculs}
			& \NR(x+r\omega^a) &=& \sup\enstq{\NR(y)+1}{y\in B\cup C}\\
			&&=& \sup\pa{\enstq{\underbrace{\NR(x+\sign(r)\exp\delta)+1}_{\leq\NR(x+\sign(r)\omega^a)}}{\delta\nested \ln\omega^a}\right. \\ &&&\left.\qquad\cup\enstq{\NR(y)+1}{y\in C} \vphantom{\underbrace{\NR(x+\sign(r)\exp\delta)+1}_{\leq\NR(x+\sign(r)\omega^a)}} } \\
			&&=& \sup\enstq{\NR(x)\oplus1\oplus\NR(\exp\delta)\oplus1}{\delta\nested\ln\omega^a}+\ind_{r\neq\pm1}\\
			&&=& \NR(x)\oplus1\oplus\sup\enstq{\NR(\exp\delta)+1}{\delta\nested\ln\omega^a}\oplus\ind_{r\neq\pm1}\\
			& \NR(x+r\omega^a) &=& \NR(x)\oplus1\oplus\NR(\omega^a)\oplus\ind_{r\neq\pm1}
		\end{calculs}
	\end{itemize}
\end{proof}

\begin{lemma}
	\label{lem:NRSommeLogAtomiques}
	Let $x=\aSurreal$ such that for all $i<\nu$, $r_i=\pm 1$ and $\omega^{a_i}=\lambda_i^{\pm1}$ for some $\lambda\in\Lbb$. Then
	\centre{$\NR(x) = \begin{accolade}
			\nu +1 & \nu<\omega\\ \nu & \nu\geq\omega
		\end{accolade}$}
\end{lemma}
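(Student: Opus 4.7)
The plan is to argue by transfinite induction on $\nu$, using Lemma \ref{lem:NRAjoutNouveauTerme} at successor stages and unfolding the definition of $\NR$ at limits. The base case $\nu=1$ gives $x=\pm\lambda_0^{\pm 1}$, so Lemma \ref{lem:NR0} yields $\NR(x)=0$ immediately.

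For the successor step $\nu=\mu+1$, I would set $x_\mu=\sum_{i<\mu}r_i\omega^{a_i}$ so that $x=x_\mu+r_\mu\omega^{a_\mu}$. Since the $a_i$ are strictly decreasing, $r_\mu\omega^{a_\mu}\prec\omega^{a_i}$ for every $i<\mu$, so Lemma \ref{lem:NRAjoutNouveauTerme} applies. The hypotheses $r_\mu=\pm 1$ and $\omega^{a_\mu}=\lambda_\mu^{\pm 1}$, combined with Lemma \ref{lem:NR0} which gives $\NR(\omega^{a_\mu})=0$, cause both correction terms in that lemma to vanish, leaving $\NR(x)=\NR(x_\mu)\oplus 1$. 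Plugging in the induction hypothesis on $\mu$ yields the successor case of the formula.

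For a limit ordinal $\nu$, I would compute $\NR(x)=\sup\{\NR(y)+1:y\nested x\}$ directly from Definition \ref{def:nestedn}. Any such $y$ is either a genuine truncation $x_{\nu'}$ with $\nu'<\nu$, or has the form $x_{\nu'}+\sign(r_{\nu'})\exp\delta$ with $\delta\nested \ln\omega^{a_{\nu'}}$. The main technical obstacle, and the crucial use of the log-atomic hypothesis, is to rule out this second family: since $\omega^{a_{\nu'}}=\lambda_{\nu'}^{\pm 1}$ with $\lambda_{\nu'}\in\Lbb$, its logarithm equals $\pm\omega^{b_{\nu'}}$ with $\omega^{b_{\nu'}}=\ln\lambda_{\nu'}\in\Lbb$; Lemma \ref{lem:NR0} then tells us that $\ln\omega^{a_{\nu'}}$ admits no strict nested truncation, so the second family is empty. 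One is thus reduced to $\NR(x)=\sup_{\nu'<\nu}(\NR(x_{\nu'})+1)$, which by the induction hypothesis equals $\nu$ as soon as $\nu\geq\omega$, because the stages $\nu'\geq\omega$ already contribute $\NR(x_{\nu'})=\nu'$ cofinally in $\nu$, while the finite stages only contribute a bounded tail.
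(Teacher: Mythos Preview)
Your approach matches the paper's: transfinite induction, with Lemma~\ref{lem:NRAjoutNouveauTerme} at successor stages and the definition of $\NR$ unfolded at limit stages. You supply an explicit justification (via Lemma~\ref{lem:NR0}) that the non-truncation family of nested truncations is empty at limit $\nu$, which the paper leaves implicit.

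Two minor remarks. First, your base case $\nu=1$ yields $\NR(x)=0$, not $\nu+1=2$: the finite-case formula in the statement is evidently a typo for $\nu-1$ (as confirmed by Lemma~\ref{lem:NR0} and by the equality clause of Lemma~\ref{lem:NRvsNu}), and your argument correctly proves that corrected version. Second, your limit-stage justification appeals to ``the stages $\nu'\geq\omega$'' being cofinal, which is vacuous when $\nu=\omega$; there one must instead observe that the finite stages already contribute $\NR(x_{\nu'})+1=\nu'$ cofinally in~$\omega$. The paper treats $\nu=\omega$ as a separate base case of the infinite induction for exactly this reason.
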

\begin{proof}
	If $\nu<\omega$, we just proceed by induction using Lemma \ref{lem:NRAjoutNouveauTerme}. Now we prove by induction the remaining. 
	\begin{itemize}
		\item If $\nu=\omega$. Then 
		\centre{$\NR(x)=\sup\enstq{\NR\pa{\aSurrealPrefix}+1}{\nu'<\nu}=\sup\enstq{\nu'+2}{\nu'<\omega}=\omega$}
		
		\item Assume for $\omega\leq\nu'<\nu$, $\NR\pa{\aSurrealPrefix} = \nu'$. If $\nu$ is a non-limit ordinal, then Lemma \ref{lem:NRAjoutNouveauTerme} concludes. Otherwise
		\centre{$\NR(x)=\sup\enstq{\NR\pa{\aSurrealPrefix}+1}{\nu'<\nu}=\sup\enstq{\nu'+^1}{\omega\leq\nu'<\nu}=\nu$}
	\end{itemize}
\end{proof}

\begin{lemma}
	\label{lem:NRvsNu}
	Let $x=\Sumlt i\nu r_i(x)\omega^{a_i(x)}\in\Nobf$. Then $\nu\leq\NR(x)+1$. The equality stands \tiff $x$ is a finite sum of numbers of the form $\pm y^{\pm1}$ with $y\in\Lbb$ and possibly one non-zero real number.
\end{lemma}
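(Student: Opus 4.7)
I will proceed by transfinite induction on $\nu$, first establishing the inequality and then extracting the equality case. The cases $\nu \leq 1$ are trivial since $\NR(x) \geq 0$. For a successor $\nu = \nu' + 1$ with $\nu' \geq 1$, the truncation $\aSurrealPrefix$ is a strict nested truncation of $x$, so $\NR(x) \geq \NR(\aSurrealPrefix) + 1$; combined with the inductive bound $\NR(\aSurrealPrefix) + 1 \geq \nu'$ this yields $\NR(x) + 1 \geq \nu$. For a limit $\nu$, applying the same bound at every $\nu' < \nu$ and passing to the supremum gives $\NR(x) \geq \sup_{\nu' < \nu} \nu' = \nu$, so the inequality is actually strict.

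Consequently $\nu$ must be finite whenever equality holds: at limit $\nu$ the inequality is strict by the observation above; and for an infinite successor $\nu = \nu' + 1$ the inductive bound at $\nu'$ still gives $\NR(x) \geq \nu' + 1 > \nu'$, so the inequality remains strict.

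For finite $\nu \geq 2$, decompose $x$ into its truncation at rank $\nu-1$ plus its last term $r_{\nu-1}\omega^{a_{\nu-1}}$ and apply Lemma \ref{lem:NRAjoutNouveauTerme}:
\[
    \NR(x) = \NR(\aSurrealPrefix) \oplus 1 \oplus \NR(\omega^{a_{\nu-1}}) \oplus \ind_{r_{\nu-1}\neq\pm 1}
\]
(with $\nu' = \nu-1$). Equality $\NR(x) = \nu - 1$ combined with the inductive bound $\NR(\aSurrealPrefix) \geq \nu - 2$ forces simultaneously $\NR(\aSurrealPrefix) = \nu - 2$ (so the inductive characterization applies to $\aSurrealPrefix$), $\NR(\omega^{a_{\nu-1}}) = 0$, and $r_{\nu-1} = \pm 1$. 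By Lemma \ref{lem:NR0}, $\NR(\omega^{a_{\nu-1}}) = 0$ forces $\omega^{a_{\nu-1}} \in \{1\} \cup \Lbb \cup \Lbb^{-1}$, so the last term is $\pm y^{\pm 1}$ for some $y \in \Lbb$, or $\pm 1$. At the base $\nu = 1$, Lemma \ref{lem:NR0} directly characterizes $r_0\omega^{a_0}$ as either $\pm y^{\pm 1}$ or an arbitrary non-zero real. Since the exponents of the normal form are strictly decreasing, at most one $a_i$ equals $0$, hence at most one summand is real. The converse direction is a direct induction: rebuild $x$ term by term from the leading one, applying Lemma \ref{lem:NRAjoutNouveauTerme} at each step, which increments $\NR$ by exactly $1$ under the stated conditions.

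The delicate point is the asymmetric role of the leading term: Lemma \ref{lem:NRAjoutNouveauTerme} does not apply when the predecessor is $0$, so the leading coefficient $r_0$ can be any non-zero real when $\omega^{a_0} = 1$, whereas for every subsequent term the coefficient $r_i = \pm 1$ is mandatory. Matching this behaviour to the informal ``possibly one non-zero real number'' in the statement is the careful bookkeeping, and it accommodates the real-valued summand precisely when it sits as the leading term of the normal form.
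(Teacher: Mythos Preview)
Your inequality argument is essentially the paper's (both are transfinite inductions via truncations; the paper simply packages the successor and limit cases into a single supremum over all nonzero $\nestedn[0]$-predecessors). For the equality characterisation, however, you take a genuinely different and cleaner route: you peel off the last term and invoke Lemma~\ref{lem:NRAjoutNouveauTerme} to obtain the exact formula
\[
\NR(x)=\NR\bigl(\textstyle\sum_{i<\nu-1}r_i\omega^{a_i}\bigr)\oplus 1\oplus\NR(\omega^{a_{\nu-1}})\oplus\ind_{r_{\nu-1}\neq\pm1},
\]
from which the constraints on each term drop out immediately by induction. The paper instead argues by contradiction: it takes a minimal index $i$ with $r_i\omega^{a_i}\notin\pm\Lbb^{\pm1}\cup\Rbb^*$ and shows this forces $\NR(x)\geq\nu(x)$ through a somewhat involved case analysis. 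Your approach has two advantages: it is constructive, and it yields the converse direction directly (which the paper does not spell out). It also makes the asymmetry in your last paragraph explicit. One small wording correction there: ``precisely when it sits as the leading term'' slightly overstates the constraint --- a non-leading real term equal to $\pm1$ is also fine (e.g.\ $\omega+1$ has equality), so the sharp statement is that a real term with coefficient $\neq\pm1$ must be leading. This is in fact a subtlety that the lemma's informal phrase ``possibly one non-zero real number'' does not capture precisely (e.g.\ $\omega+2$ meets that description but has $\NR=2$, $\nu=2$), and your use of Lemma~\ref{lem:NRAjoutNouveauTerme} exposes it clearly.
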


\begin{proof}
	Using induction on $\nu$ it is trivial. For $0$, $\nu=0=\NR(0)$. Now assume $\nu\neq 0$. Then, by definition
	\centre{$\NR(x)+1\geq\sup\enstq{\NR(y)+1}{y\nestedn[0]x\quad y\neq 0}+1\underset{\text{induction hypothesis}}{\geq}
		\sup\enstq{\nu(y)}{y\nestedn[0]x\quad y\neq0} +1 \geq \nu(x)$}
	
	Now assume $\nu(x)=\NR(x)+1$ and write $x=\Sumlt{i}{\nu(x)}r_i\omega^{a_i}$. We use induction on $\Nobf^*$ with the well partial order $\nestedn[0]$.
	
	\begin{itemize}
		\item If $x$ is a monomial, $\nu(x)=1$ and $\NR(x)=0$. That is $x=\pm y^{\pm1}$ for some $y\in\Lbb$ or $x\in\Rbb$ (using Lemma \ref{lem:NR0}).
		
		\item If $x$ is not a monomial.  Assume $r_i\omega^{a_i}\notin\pm\Lbb^{\pm 1}\cup\Rbb^*$ with $i$ minimal for that property. Let $x'=\Sumlt ji r_j\omega^{a_j}$. 
		\begin{itemize}
			\item If $i=0$ then $\NR(r_0\omega^{a_0})\geq 1$. A simple induction shows that $\NR\pa{\Sumlt i{\nu'}r_i\omega^{a_i}}\geq \nu'$ for all $\nu'\leq\nu$. What is a contradiction.
			\item Otherwise $x'\neq 0$ and $x'\nestedn[0]x$. If $\NR(x')+1\neq i$ then $\NR(x')\geq i$ and 
			\centre{$\NR(x)\geq\NR(x') \oplus (\nu\ominus i)\geq \nu$}
			where $\nu\ominus i$ is the ordinal such that $i\oplus(\nu\ominus i)=\nu$.
			what is a contraction. Then by induction hypothesis, $i=\NR(x')+1$ is finite. Now consider
			$y\nested x'+r_i\omega^{a_i}$. Then $y\nestedneq[0] x'$ ($y\nestedn x'$ with $n\geq 1$ is impossible since $x'$ has only terms in $\pm\Lbb^{\pm 1}\cup\Rbb$) or $y=x'+\sign(r_i)\exp(\delta)$ with $\delta\nestedeq \ln(\omega^{a_i})$. Since $r_i\omega^{a_i}\notin\pm\Lbb^{\pm 1}\cup\Rbb$, there is such a $y$ of the later form such that $y\neq x'+r_i\omega^{a_i}$. From Lemma \ref{lem:NRAjoutNouveauTerme}, we have $\NR(y)\geq\NR(x')+1$. Then $\NR(x'+r_i\omega^{a_i})\geq\NR(y)+1\geq\NR(x')+2$. By induction we then can show that 
			\centre{$\NR(x)\geq \NR(x'+r_i\omega^{a_i})\oplus(\nu-(i+1))\geq\NR(x')\oplus2\oplus(\nu\ominus(i\oplus 1)) = i\oplus1+(\nu\ominus(i\oplus1))=\nu$}
			and we get a contradiction.
		\end{itemize}
		Then, every term of $x$ is in $\pm\Lbb^{\pm1}\cup\Rbb$ and by definition only one can be a non-zero real number. It remains to show that there are finitely many terms, what follows from Lemma \ref{lem:NRSommeLogAtomiques}.
	\end{itemize}
\end{proof}

\begin{remark}
	\label{rk:NRvsLength}
	For all $x\in\Nobf$, $\NR(x)\leq \length x$
\end{remark}

\begin{proof}
	Assume the converse and take $x$ with minimal length that contradicts the property then there is $y\nested x$ such that $\NR(y)\geq \length x$. Since $\length x > \length y$, then $y$ reaches contradiction with the minimality of $x$.
\end{proof}

\begin{proposition}[{\cite[Berarducci and Mantova, Proposition 4.29]{Berarducci_2018}}]
	\label{prop:NRComparaisonCoefReel}
	For all $a\in\Nobf^*$, for all $r\in\Rbb\setminus\{\pm1\}$, we have $\NR(r\omega^a)=\NR(\omega^a)+1$.
\end{proposition}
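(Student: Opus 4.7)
The plan is to analyze directly the set of strict nested predecessors of the single-term surreal $r\omega^a$ and compare it to that of $\omega^a$. Since $r\omega^a$ has $\nu=1$ (only one term in its normal form), Definition \ref{def:nestedn} leaves very few possibilities: a level-$0$ nested predecessor must be a proper truncation, hence only $0$; and a level-$(n+1)$ nested predecessor must take the form $y = \sign(r)\exp(y')$ with $y' \nestedeq \ln\omega^a$, because $\nu' < 1$ forces $\nu' = 0$, killing the sum part. So every strict nested predecessor of $r\omega^a$ is either $0$ or of the form $\sign(r)\exp(y')$ with $y' \nestedeq \ln\omega^a$, and conversely every such element is actually a predecessor.

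The decisive observation is that when $r \neq \pm 1$, even the choice $y' = \ln\omega^a$ produces a genuine strict predecessor, namely $\sign(r)\omega^a$, which is distinct from $r\omega^a$. This is precisely where the case $r \in \{\pm 1\}$ differs: for $r = \pm 1$ the same choice would give back $x$ itself and be excluded. Using Corollary \ref{cor:NRminus} and Proposition \ref{prop:NRStableExp}, I get $\NR(\sign(r)\omega^a) = \NR(\omega^a)$, which immediately yields the inequality $\NR(r\omega^a) \geq \NR(\omega^a) + 1$.

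For the reverse inequality, I will take the supremum over all admissible predecessors. For $y' \nested \ln\omega^a$ strictly one has $\NR(y') < \NR(\ln\omega^a) = \NR(\omega^a)$, so $\NR(\sign(r)\exp(y')) + 1 = \NR(y') + 1 \leq \NR(\omega^a)$; and for $y' = \ln\omega^a$ we get exactly $\NR(\omega^a) + 1$; the contribution from $y = 0$ is just $1$. Taking the sup gives $\NR(r\omega^a) \leq \NR(\omega^a) + 1$, and combining the two bounds closes the argument. The only mild subtlety is making sure that $\ln\omega^a$ itself qualifies under $\nestedeq$ (which is immediate since $\nestedeq$ is reflexive) and that $\sign(r)\omega^a \neq r\omega^a$ (forced by $r \neq \pm 1$); both are trivial once the structural decomposition above is laid out.
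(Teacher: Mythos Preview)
The paper does not supply its own proof of this proposition; it is simply quoted from Berarducci--Mantova \cite{Berarducci_2018}. Your argument is correct and follows the natural route: because $r\omega^a$ has a single term, Definition~\ref{def:nestedn} forces every strict $\nested$-predecessor to be either $0$ or of the form $\sign(r)\exp(y')$ with $y'\nestedeq\ln\omega^a$, and the hypothesis $r\neq\pm1$ guarantees that even $y'=\ln\omega^a$ produces a genuine strict predecessor $\sign(r)\,\omega^a\neq r\omega^a$. The supremum computation then goes through exactly as you describe. In spirit this is the degenerate case $x=0$ of the method the paper uses to establish the more general Lemma~\ref{lem:NRAjoutNouveauTerme}.

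One small point worth making explicit: when you invoke Proposition~\ref{prop:NRStableExp} to get $\NR(\sign(r)\exp(y'))=\NR(y')$, you are tacitly using that $y'\in\Nobf_\infty$. This holds because $\ln\omega^a\in\Nobf_\infty$ (Proposition~\ref{prop:lnOmegaA}, using $a\neq0$) and nested truncation preserves pure infinity---a fact the paper itself invokes without comment in the proof of Lemma~\ref{lem:NRAjoutNouveauTerme}.
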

\begin{proposition}[{\cite[Berarducci and Mantova, Proposition 4.30]{Berarducci_2018}}]
	\label{prop:NRComparaisonTerme}
	Let $x=\aSurreal\in\Nobf^*$. Then 
	\begin{itemize}
		\item $\forall i<\nu\qquad \NR(r_i\omega^{a_i})\leq\NR(x)$
		\item $\forall i<\nu\qquad i+1<\nu\Rightarrow \NR(r_i\omega^{a_i})<\NR(x)$
	\end{itemize}
\end{proposition}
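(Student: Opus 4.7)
The plan is to exploit the additivity formula from Lemma~\ref{lem:NRAjoutNouveauTerme} applied to the partial truncations of $x$. For each $i<\nu$, set
\centre{$x_{\leq i} = \Sumlt{j}{i+1} r_j\omega^{a_j} \qqandqq x_{<i} = \Sumlt{j}{i} r_j\omega^{a_j},$}
so that $x_{\leq i} = x_{<i}+r_i\omega^{a_i}$. The key observation is that $x_{\leq i}\nestedneq[0] x$, with the strict relation $x_{\leq i}\nestedn[0] x$ holding precisely when $i+1<\nu$. By Definition~\ref{def:nestedTruncRk}, this immediately yields $\NR(x_{\leq i})\leq\NR(x)$, with strict inequality as soon as $i+1<\nu$. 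So it remains only to show the bound $\NR(r_i\omega^{a_i})\leq\NR(x_{\leq i})$.

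For $i=0$ this is already an equality, since $x_{\leq 0}=r_0\omega^{a_0}$. For $i>0$, the exponents $a_j$ of the normal form are strictly decreasing, so $r_i\omega^{a_i}\prec\omega^{a_j}$ for all $j<i$, and Lemma~\ref{lem:NRAjoutNouveauTerme} applies to the non-zero $x_{<i}$ to yield
\centre{$\NR(x_{\leq i}) = \NR(x_{<i})\oplus 1\oplus\NR(\omega^{a_i})\oplus\ind_{r_i\neq\pm 1}.$}
Using Proposition~\ref{prop:NRComparaisonCoefReel} when $r_i\neq\pm 1$ and Corollary~\ref{cor:NRminus} when $r_i=\pm 1$, one identifies $\NR(\omega^{a_i})\oplus\ind_{r_i\neq\pm 1}$ with $\NR(r_i\omega^{a_i})$, hence
\centre{$\NR(x_{\leq i}) = \NR(x_{<i})\oplus 1\oplus\NR(r_i\omega^{a_i}) > \NR(r_i\omega^{a_i}),$}
and so $\NR(r_i\omega^{a_i})\leq\NR(x_{\leq i})$ in every case.

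Combining the two inequalities gives $\NR(r_i\omega^{a_i})\leq\NR(x_{\leq i})\leq\NR(x)$, which is the first bullet; and the strict step $\NR(x_{\leq i})<\NR(x)$ obtained when $i+1<\nu$ propagates to $\NR(r_i\omega^{a_i})<\NR(x)$, which is the second bullet. I expect no genuine obstacle here: the whole argument is driven by Lemma~\ref{lem:NRAjoutNouveauTerme}, and the only subtle point is the verification of its hypothesis, which is immediate from the strictly decreasing nature of the exponents in the normal form.
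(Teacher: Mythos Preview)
The paper does not supply its own proof of this proposition; it simply cites \cite[Proposition~4.30]{Berarducci_2018}. Your argument, built on Lemma~\ref{lem:NRAjoutNouveauTerme}, is sound in outline and gives a self-contained derivation within the present paper's framework.

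Two small points deserve care. First, Proposition~\ref{prop:NRComparaisonCoefReel} is stated only for $a\in\Nobf^*$, so your identification $\NR(\omega^{a_i})\oplus\ind_{r_i\neq\pm1}=\NR(r_i\omega^{a_i})$ does not literally apply when $a_i=0$; in that case $r_i\omega^{a_i}\in\Rbb$ has rank $0$, while the left side may be $1$. The desired inequality $\NR(r_i\omega^{a_i})\leq\NR(x_{\leq i})$ still holds trivially, so nothing is lost. Second, your displayed strict inequality $\NR(x_{<i})\oplus1\oplus\NR(r_i\omega^{a_i})>\NR(r_i\omega^{a_i})$ is not guaranteed for ordinal $\oplus$: if $\NR(x_{<i})=0$ and $\NR(r_i\omega^{a_i})$ is a limit ordinal, then $0\oplus1\oplus\beta=\beta$. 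Only the weak inequality $\geq$ is available in general, but that is all you need for the first bullet; the strictness in the second bullet comes entirely from the truncation step $x_{\leq i}\nestedn[0]x$, which you already have. With these two cosmetic fixes the proof goes through.
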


We can also say something about the nested truncation rank of a sum of surreal number.

\begin{lemma}
	\label{lem:NRsum}
	For $a,b\in\Nobf$, $\NR(a+b)\leq\NR(a)+\NR(b)+1$ (natural sum of ordinal, which correspond to the surreal sum).
\end{lemma}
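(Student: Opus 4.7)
My plan is to induct on $\NR(a)\oplus\NR(b)$ (natural/Hessenberg sum), which coincides with surreal sum on ordinals. The inequality is immediate when $a=0$ or $b=0$, and the base case $\NR(a)=\NR(b)=0$ will be handled by a finite case analysis: by Lemma~\ref{lem:NR0}, each of $a,b$ is zero, a real number, or of the form $\pm\lambda^{\pm1}$ with $\lambda\in\Lbb$, so $a+b$ has a normal form of uniformly bounded size and Lemma~\ref{lem:NRAjoutNouveauTerme} (together with Proposition~\ref{prop:NRStableExp}) will yield the bound directly.

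For the inductive step, I would fix $y\nested a+b$ and show $\NR(y)\le\NR(a)+\NR(b)$, from which $\NR(a+b)\le\NR(a)+\NR(b)+1$ follows by taking the defining supremum. Writing $a=\sum_{i<\nu_a}r_i\omega^{a_i}$ and $b=\sum_{j<\nu_b}s_j\omega^{b_j}$, the normal form of $a+b$ is the order-merge of those of $a$ and $b$, possibly with cancellation at common exponents. By Definition~\ref{def:nestedn}, $y$ is either a proper truncation of $a+b$ or is obtained from a truncation by replacing its last retained term with $\sign(t_\mu)\exp(y')$ for some $y'\nestedeq x_\mu$, where $\exp(x_\mu)=\omega^{c_\mu}$.

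In the truncation case, $y$ decomposes as $a'+b'$ where $a'$ is a truncation of $a$ and $b'$ a truncation of $b$; since $y\ne a+b$, at least one of these truncations is proper, and Proposition~\ref{prop:NRComparaisonTerme} then gives $\NR(a')\oplus\NR(b')<\NR(a)\oplus\NR(b)$, so the induction hypothesis applies to $a'+b'=y$. In the exponential-modification case, if the modified exponent $c_\mu$ is contributed only by $a$ (the symmetric situation for $b$ is identical), I would absorb the modification into $a$, setting $a'=a_{<\alpha}+\sign(r_\alpha)\exp(y')$ and $b'=b_{<\beta}$, so that $y=a'+b'$ with $a'\nestedeq a$ and $b'\nestedeq b$, and at least one of the two strictly. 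When $c_\mu$ is shared between $a$ and $b$, so that $t_\mu=r_\alpha+s_\beta\ne0$, the crucial observation is that $\sign(t_\mu)$ must equal either $\sign(r_\alpha)$ or $\sign(s_\beta)$: in the first case I absorb the modification into $a$ as above and keep $b'=b_{<\beta}$, in the second I do the symmetric assignment.

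The main obstacle will be verifying, in this shared-exponent subcase, that the chosen $a'$ (or $b'$) is genuinely a nested truncation of $a$ (resp.\ $b$) in the sense of Definition~\ref{def:nestedn}; this rests on the identity $x_\alpha^a=x_\beta^b=x_\mu$, so that $y'\nestedeq x_\mu$ is simultaneously a valid exponent-modifier from either side. Once that bookkeeping is checked, the induction hypothesis combines with Propositions~\ref{prop:NRStableExp} and~\ref{prop:NRComparaisonTerme} in each sub-case to give $\NR(y)\le\NR(a')\oplus\NR(b')+1\le\NR(a)\oplus\NR(b)$, closing the induction.
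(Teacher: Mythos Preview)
Your proposal is correct and follows essentially the same approach as the paper's proof: induct on the pair of ranks, take any $y\nested a+b$, split $y$ as $a'+b'$ along the cut (absorbing the $\sign(t_\mu)\exp(y')$ modification into whichever side carries a term of matching sign at the shared exponent), observe that at least one of $a'\nestedeq a$, $b'\nestedeq b$ is strict, and apply the induction hypothesis. The only cosmetic differences are that the paper inducts on the lexicographic order of $(\NR(a),\NR(b))$ rather than on the natural sum (both work), and that where you cite Proposition~\ref{prop:NRComparaisonTerme} for the strict drop in rank under a proper truncation, the fact you actually need is immediate from the definition of~$\NR$ (that proposition concerns individual terms, not truncations).
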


\begin{proof}
	We prove it by induction on the couple $(\NR(a),\NR(b))$.
	\begin{itemize}
		\item If $\NR(a)=\NR(b)=0$ then, by Lemma \ref{lem:NR0} both $a,b$ are in $\pm\Lbb^{\pm1}\cup\Rbb$. If $a\in\Rbb$ or $b\in\Rbb$ then $\NR(a+b)\leq 1$ by Lemmas \ref{lem:NRAjoutNouveauTerme} and \ref{lem:NR0}. Otherwise, either $a=\pm b$ and then $\NR(a+b)=0$ or $a\neq\pm b$ and Lemma \ref{lem:NRvsNu} ensure that $\NR(a+b)=1$.
		
		\item Assume the property for all $x,y$ such that 
		$$(\NR(x),\NR(y))<_{lex}(\NR(a),\NR(b))$$
		Then, consider $y\nested a+b$. Write $a+b=\aSurreal$.
		\begin{itemize}
			\item If $y=\aSurrealPrefix$ with $\nu'<\nu$. Let $z_a$ be the series constituted of the terms of $a$ which asolute value is infintely larger than $\omega^{a_\nu'}$. We define the same way $z_b$. Then $y=z_a+z_b$. We have $(\NR(z_a),\NR(z_b))<_{lex}(\NR(a),\NR(b))$ since there is term with order of magnitude $\omega^{a_{\nu'}}$ in either $a$ or $b$. Then, applying induction hypothesis, 
			\centre{$\NR(y)\leq\NR(z_a)+\NR(z_b)+1$}
			
			Since we have at least one of the following inequalities $z_a\nestedn[0]a$ or $z_b\nestedn[0]b$, then $\NR(z_a)+1\leq \NR(a)$ or $\NR(z_b)+1\leq \NR(b)$. In all cases
			\centre{$\NR(y)+1\leq\NR(a)+\NR(b)+1$}
			
			\item If $y=\aSurrealPrefix+\sign(r_{\nu'})\exp(y')$ with $\nu'<\nu$ and $y'\nestedeq \ln\omega^{a_{\nu'}}$ (and $y\nested\ln\omega^{a_{\nu'}}$ if $r_{\nu'}=\pm1$). Let $z_a$ be the series constituted of the terms of $a$ which absolute value is infinitely larger than $\omega^{a_\nu'}$. We define the same way $z_b$. Then $y=z_a+z_b+\sign(r_{\nu'})\omega^{a_{\nu'}}$. 
			Since there is term with order of magnitude $\omega^{a_{\nu'}}$ with the same sign as $r_{\nu'}$ in either $a$ or $b$. Without loss of generality, assume it is $a$. Then $z_a+\sign(r_{\nu'})\exp y'\nestedeq a$. 
			We have 
			$$(\NR(z_a+\sign(r_{\nu'})\exp y'),\NR(z_b))<_{lex}(\NR(a),\NR(b))$$
			otherwise $y=a+b$ what is not the case. Then, applying induction hypothesis, 
			\centre{$\NR(y)\leq\NR(z_a+\sign(r_{\nu'})\exp y')+\NR(z_b)+1$}
			Since we have at least one of the following inequalities $z_a+\sign(r_{\nu'})\exp y'\nested a$ or $z_b\nestedn[0]b$, then we have either 
			\centre{$\NR(z_a+\sign(r_{\nu'})\exp y')+1\leq \NR(a)$}
			\lc{or}{$\NR(z_b)+1\leq \NR(b)$}
			In all cases
			\centre{$\NR(y)+1\leq\NR(a)+\NR(b)+1$}
		\end{itemize}
		Then, for any $y\nested a+b$, $\NR(y)+1\leq\NR(a)+\NR(b)+1$. This proves that
		\centre{$\NR(a+b)\leq\NR(a)+\NR(b)+1$}
	\end{itemize}
\end{proof}

\begin{corollary}
	\label{cor:NRprod}
	For all $a,b\in\Nobf$, $\NR(ab)\leq\NR(a)+\NR(b)+1$.
\end{corollary}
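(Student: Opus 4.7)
My plan is to reduce the product case to the sum case (Lemma \ref{lem:NRsum}) by passing through the logarithm, using the fundamental identity $|ab|=\exp(\ln|a|+\ln|b|)$ together with Proposition \ref{prop:NRStableExp}. Since $\NR(-x)=\NR(x)$ by Corollary \ref{cor:NRminus}, I would first reduce to the case $a,b>0$; the cases $a=0$ or $b=0$ are trivial.

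The difficulty is that Proposition \ref{prop:NRStableExp} only guarantees $\NR(\exp\gamma)=\NR(\gamma)$ when $\gamma$ is purely infinite, whereas $\ln a+\ln b$ is generally not. So my next step is to split $\ln a = P_a + A_a$ and $\ln b = P_b + A_b$, where $P_a,P_b$ are the purely infinite parts (corresponding respectively to $\ln\omega^{a_0}$ and $\ln\omega^{b_0}$ for the leading monomials) and $A_a,A_b$ are appreciable. Then
\[
  ab \;=\; \exp(P_a+P_b)\cdot\exp(A_a+A_b)\;=\;\omega^{c}\cdot (R+\eta),
\]
where $P_a+P_b$ is purely infinite (hence $\omega^{c}:=\exp(P_a+P_b)$ is well-defined with $\NR(\omega^c)=\NR(P_a+P_b)$ by Proposition \ref{prop:NRStableExp}), $R\in\Rbb^*_+$ and $\eta$ is infinitesimal (or zero). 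Distributing gives $ab = R\omega^c + \omega^c\eta$.

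From here, the key inequalities are:
\begin{itemize}
\item $\NR(P_a)=\NR(\omega^{a_0})\leq \NR(a)$ via Proposition \ref{prop:NRStableExp} applied to $P_a$ (which is $\ln\omega^{a_0}$, purely infinite) and Proposition \ref{prop:NRComparaisonTerme} (since $r_0\omega^{a_0}$ is a term of $a$, and the leading coefficient can be peeled off using Proposition \ref{prop:NRComparaisonCoefReel}); similarly $\NR(P_b)\leq \NR(b)$.
\item $\NR(\omega^c)=\NR(P_a+P_b)\leq \NR(P_a)+\NR(P_b)+1\leq \NR(a)+\NR(b)+1$ by Lemma \ref{lem:NRsum}.
\end{itemize}

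The final step is to show that the tail $\omega^c\eta$ does not push the nested truncation rank above $\NR(a)+\NR(b)+1$. I would argue this by induction on the complexity of the infinitesimal part, noting that the infinitesimal tails of $\ln a$ and $\ln b$ correspond (via the series for $\ln(1+\varepsilon)$) to the non-leading structure of $a$ and $b$, which sits strictly below $\NR(a)$ and $\NR(b)$ by Proposition \ref{prop:NRComparaisonTerme}. The main obstacle is precisely this bookkeeping for the infinitesimal part: showing that the $\NR$ of the whole series $R\omega^c+\omega^c\eta$ remains bounded by $\NR(a)+\NR(b)+1$, which will ultimately follow by combining Lemma \ref{lem:NRsum} with the induction hypothesis applied to the infinitesimal residues of $a$ and $b$.
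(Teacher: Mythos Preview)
The paper's proof is a direct four-line chain: after reducing to $a,b>0$,
\[
\NR(ab) \;=\; \NR(\ln(ab)) \;=\; \NR(\ln a + \ln b) \;\leq\; \NR(\ln a) + \NR(\ln b) + 1 \;=\; \NR(a)+\NR(b)+1,
\]
citing Proposition~\ref{prop:NRStableExp} for the two outer equalities and Lemma~\ref{lem:NRsum} for the middle inequality.

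You are right that Proposition~\ref{prop:NRStableExp}, as stated in this paper, requires a purely infinite argument. But notice that the paper already applies it in exactly this broader form in the proofs of Corollaries~\ref{cor:NRminus} and~\ref{cor:NRinverse}, writing $\NR(a)=\NR(\ln a)$ for arbitrary positive $a$. So the intended reading is that $\NR$ is preserved by $\ln$ on all of $\No^{>0}$; whether that is literally Berarducci--Mantova's Proposition~4.28 or a mild extension of it, the authors treat it as such throughout, and with that reading the proof is the one-liner above.

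Your decomposition into purely infinite and appreciable parts is a reasonable attempt to be more scrupulous about the hypothesis, but it does not close the argument: you leave the infinitesimal tail $\omega^c\eta$ unresolved and yourself call the required bookkeeping ``the main obstacle.'' That part is genuinely nontrivial --- the series for $\ln(1+\varepsilon)$ mixes arbitrary powers of the tails of $a$ and $b$, and there is no obvious single ``complexity'' parameter on which to induct that controls $\NR$ of the result with only the lemmas at hand. If you want a self-contained rigorous proof respecting the stated hypothesis of Proposition~\ref{prop:NRStableExp}, the cleaner fix is to first establish $\NR(x)=\NR(\ln x)$ for every $x>0$ as a separate lemma (or locate and cite the precise statement in \cite{Berarducci_2018}), and then run the four-line argument. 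Your detour through $\omega^c(R+\eta)$ is both harder and, as presented, incomplete.
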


\begin{proof}\ 
	\begin{calculs}
		We have & \NR(ab) &=& \NR(\ln\pa{ab}) & (Proposition \ref{prop:NRStableExp})\\
		&&=&\NR(\ln a + \ln b)\\
		&&\leq&\NR(\ln a) + \NR(\ln b)+1 & (Lemma \ref{lem:NRsum}) \\
		&&\leq&\NR(a)+\NR(b)+1& (Proposition \ref{prop:NRStableExp})\\
	\end{calculs}
\end{proof}

\subsubsection{Paths}
Surreal numbers can be seen as trees. More precisely, it is possible to associate to each surreal number a tree (with an ordinal numbers of node at each layers) whose leaves are labeled by log-atomic numbers or $0$. This gives us some information about the structure of the surreal number. With this notion of tree we can look at the \textbf{paths} from the root (labeled by the surreal number itself) to the leaves that are not labeled by $0$ (actually there is at most one such a leaf). More precisely, the tree associated to a surreal number $x$ is built as follows:
\begin{itemize}
	\item Base case: if $x\in\Lbb$ or $x=0$ just create a node labeled by $x$ and stop the construction.
	
	\item Otherwise: 
		\begin{enumerate}
			\item Put a node at the root and label is t $x$. Write $x$ under the form $x=\Sumlt i\nu r_i\exp(x_i)$ where $r_i\in\Rbb^*$, $\nu$ is an ordinal and $x_i\in\Nobf_\infty$ form a decreasing sequence.
			
			\item For all $i<\nu$ create built the tree for $x_i$ and link its root to $x$ by an edge labeled by $r_i$.
		\end{enumerate}
\end{itemize}

With the a notion, it is possible to have a geometric interpretation of the well partial order $\nested$.

\centre{\includegraphics[scale=.6]{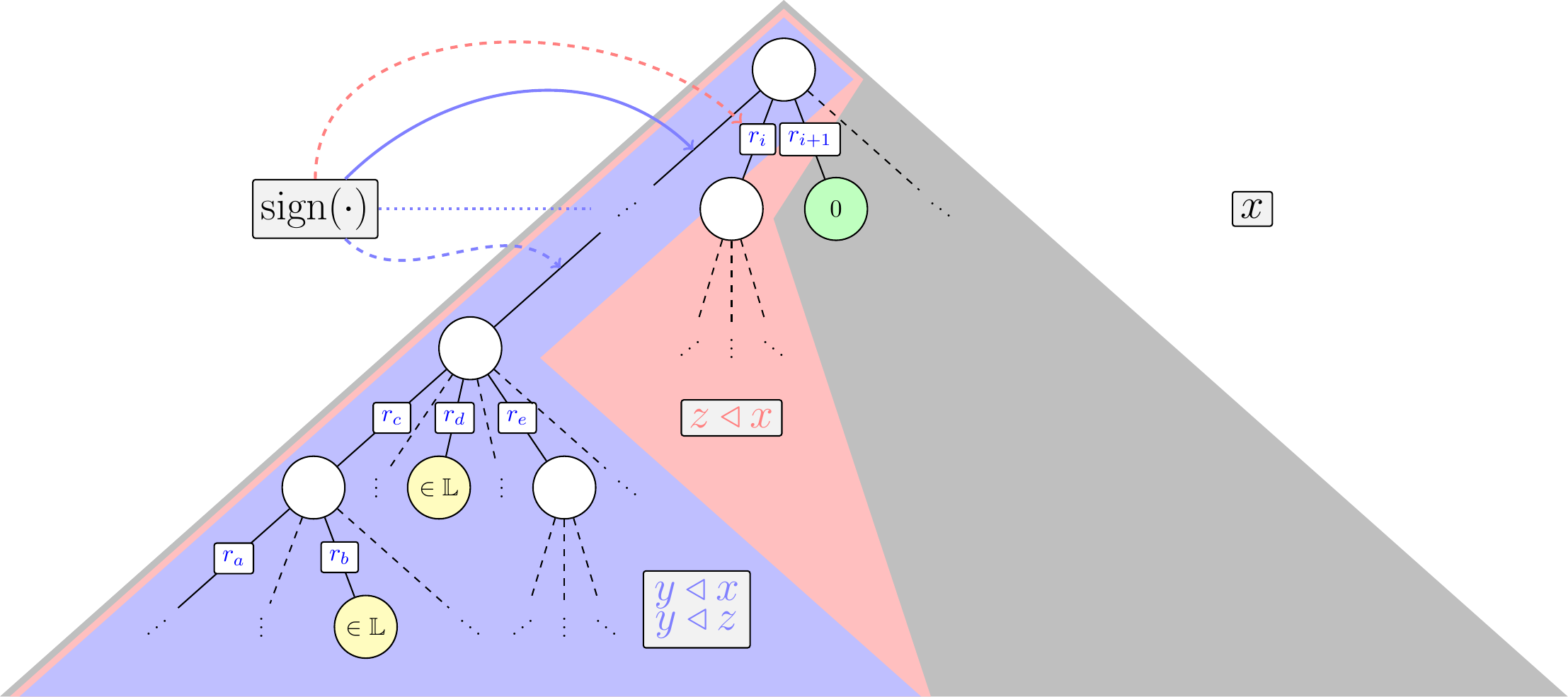}}

The dotted arrows from ``$\sign$'' are to be understood by the fact that we can apply the sign function or not to this arrow. The plain one means that we must apply it. Thanks to this figure we can understand $y\nested x$ by the fact that the tree representation of $y$ is a left-part of the tree representation of $x$.

\begin{remark}
	The reason why we stop the construction on log-atomic numbers is because if we proceed the construction, we would get an infinite path where each node as exactly one child and where every edge is labeled by $1$.
\end{remark}

This notion of tree comes with a notion of path\index{Path} inside the tree.

\begin{definition}\label{def:path}
	Let $x$ be a surreal number. A path $P$ of $x$ is sequence $P:\Nbb\to\Nobf$ such that
	\begin{itemize}
		\item $P(0)$ is a term of $x$
		\item For all $i\in\Nbb$, $P(i+1)$ is an infinite term of $\ln |P(i)|$
	\end{itemize}
	
	We denote $\Pcal(x)$ the set of all paths of $x$.
	
	We also denote $\ell(x)$ to be the purely infinite part of $\ln|x|$. Then $P(i+1)$ is an infinite term of $\ell(P(i))$.
\end{definition}

\begin{definition} The \textbf{dominant path} of $x$ is the path such that
	\begin{itemize}
		\item $P(0)$ is the leading term of $x$
		\item $P(i+1)$ is the leading term of $\ln|P(i)|$.
	\end{itemize}
\end{definition}

In a more graphical point of view, the dominant path of $x$ is the left most path in the tree of $x$ that does not end on the lead $0$. This reduce to the left most path if $x\not\asymp 1$.

\begin{proposition}
	\label{prop:cheminMajorationNu}
	Let $x\in\Nobf$ and $P\in\Pcal(x)$. Then for any $n\in\Nbb$, the length of the serie of $\ell(P(n))$, $\nu(\ell(P(n)))$ satisfies
	\centre{$\nu(\ell(P(n)))\leq \NR(x)+1$}
\end{proposition}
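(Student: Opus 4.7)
The plan is to reduce the bound on $\nu(\ell(P(n)))$ to a bound on $\NR(\ell(P(n)))$ via Lemma \ref{lem:NRvsNu}, and then show that traveling down the path can only decrease (or leave invariant) the nested truncation rank with respect to $\NR(x)$.

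First I would establish the key monotonicity fact: for every $n \in \Nbb$, $\NR(P(n)) \le \NR(x)$, proved by induction on $n$. For the base case $n=0$, the number $P(0)$ is a term of $x$ by Definition \ref{def:path}, so Proposition \ref{prop:NRComparaisonTerme} gives $\NR(P(0)) \le \NR(x)$ immediately. For the inductive step, $P(n)$ is an infinite term of $\ln|P(n-1)|$, i.e.\ a term of $\ell(P(n-1))$, and Proposition \ref{prop:NRComparaisonTerme} again yields $\NR(P(n)) \le \NR(\ell(P(n-1)))$.

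Second, I would argue that $\NR(\ell(P(n-1))) \le \NR(P(n-1))$. Write $P(n-1) = r\exp(\ell(P(n-1)))$ with $r \in \Rbb^*$ and $\ell(P(n-1))$ purely infinite. By Proposition \ref{prop:NRStableExp}, $\NR(\pm \exp(\ell(P(n-1)))) = \NR(\ell(P(n-1)))$; if $r = \pm 1$ this identifies $\NR(P(n-1))$ with $\NR(\ell(P(n-1)))$, and if $r \neq \pm 1$, Proposition \ref{prop:NRComparaisonCoefReel} makes $\NR(P(n-1))$ strictly larger. In both cases the desired inequality holds. Chaining with the induction hypothesis $\NR(P(n-1)) \le \NR(x)$ completes Step 1.

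Finally, I would conclude by applying Lemma \ref{lem:NRvsNu} to the normal form of $\ell(P(n))$:
\[
\nu(\ell(P(n))) \;\le\; \NR(\ell(P(n))) + 1 \;\le\; \NR(P(n)) + 1 \;\le\; \NR(x) + 1,
\]
where the middle inequality is the Step 1 argument applied one last time. I do not expect any serious obstacle: all ingredients are already available, and the only delicate point is correctly handling the real coefficient $r$ in the decomposition $P(n) = r\exp(\ell(P(n)))$ when invoking Proposition \ref{prop:NRComparaisonCoefReel} versus Proposition \ref{prop:NRStableExp}.
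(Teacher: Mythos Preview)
Your proposal is correct and follows essentially the same argument as the paper: both track the nested truncation rank down the path using Proposition~\ref{prop:NRComparaisonTerme} and the combination of Propositions~\ref{prop:NRStableExp} and~\ref{prop:NRComparaisonCoefReel}, then invoke Lemma~\ref{lem:NRvsNu} at the end. The only cosmetic difference is that the paper phrases the induction in terms of the auxiliary sequence $x_{n+1}=\ell(P(n))$ (with $x_0=x$) rather than in terms of $\NR(P(n))$, but the chain of inequalities is identical.
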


\begin{proof}
	For any $x\in\Nobf$ we write $x=\Sumlt i{\nu(x)}r_i(x)\omega^{a_i(x)}$ in Gonshor's normal form. Now fix $x\in\Nobf$.
	Let $P\in\Pcal(x)$. We set $x_0=x$, and $\alpha_0<\nu(x)$ such $P(0)=r_{\alpha_0}(x)\omega^{a_{\alpha_0}(x_0)}$ and for any natural number $n$,
	\centre{$x_{n+1}=\ln\omega^{a_{\alpha_n}(x_n)} = \ell(P(n))$}
	\lc{and}{$P(n+1)=r_{\alpha_{n+1}}\omega^{a_{\alpha_{n+1}}(x_{n+1})}$}
	Using Proposition \ref{prop:NRStableExp}, we get
	\centre{$\NR(x_{n+1})=\NR\pa{\omega^{a_{\alpha_n}(x_n)}}$}
	By definition $x_{n+1}$ is purely infinite. Then $a_{\alpha_{n+1}}(x_{n+1})>0$ for all natural number $n$. Since $P$ is path, $P(0)\notin\Rbb$ (otherwise $P(1)$ is not defined) and then $a_{\alpha_0}(x_0)\neq 0$. We then can apply Proposition \ref{prop:NRComparaisonCoefReel} and get for all natural number $n$
	\centre{$\NR(x_{n+1})\leq \NR\pa{r_{\alpha_n}(x_n)\omega^{a_{\alpha_n}(x_n)}}$}
	\lc{Now using Proposition \ref{prop:NRComparaisonTerme},}{$\NR(x_{n+1})\leq\NR(x_n)$}
	Then for any natural number $n$ we have $\NR(x_n)\leq \NR(x_0)=\NR(x)$. Applying Lemma \ref{lem:NRvsNu}, we get
	\centre{$\forall n\in\Nbb\qquad \nu(x_n)\leq\NR(x_n)+1\leq\NR(x)+1$}
\end{proof}

\begin{remark}
	Actually, we often have $\nu(\ell(P(n)))\leq\NR(x)$. Indeed, using the notations of the proof and assuming that~$\nu(x_{n+1})=\NR(x)+1$, we have
	\centre{$\NR(x)+1=\nu(x_{n+1})\underset{\text{Proposition \ref{prop:cheminMajorationNu}}}{\leq}\NR(x_{n+1})+1 \leq\cdots\leq \NR(x)+1$}
	Then, all the inequalities are equalities and from Proposition \ref{prop:cheminMajorationNu} we get that $x_{n+1}$ is a finite sum of terms of the form $\pm\Lbb^{\pm1}$, in particular $\nu(x_{n+1})<\omega$ and $\NR(x)$ is finite.
\end{remark}

\subsection{Derivative of a surreal number}

\begin{definition}[Summable family]
	\label{def:summableFam}\index{Summable family!Definition \ref{def:summableFam}}
	Let $\famille{x_i}iI$ be a family of surreal numbers. For $i\in I$ write 
	$$
	x_i=\Sumin a\Nobf r_{i,a}\omega^a
	$$The family $\famille{x_i}iI$ is \textbf{summable} \tiff
	\begin{enumerate}[label=(\roman*)]
		\item $\Unionin iI\supp x_i$ is a reverse well ordered set.
		\item For all $a\in\Unionin iI\supp x_i$, $\enstq{i\in I}{a\in\supp x_i}$ is a finite set.
	\end{enumerate}
	In this case, its sum is defined as $\Sumin iI x_i=\Sumin a\Nobf s_a\omega^a$ where for all $a\in\Nobf$,
	$$
	s_a=\Sum{i\in I\tq a\in\supp x_i}{} r_{i,a}
	$$
	which is a finite sum.
\end{definition}

\begin{definition}[{\cite[Berarducci and Mantova, Definition 6.1]{Berarducci_2018}}]
	A derivation $D$ over a totally ordered exponential (class)-field $\Kbb\supseteq\Rbb$ is a function $D:\Kbb\to\Kbb$ such that
	\begin{enumerate}[label={\textbf{D\arabic*.}}]
		\item\label{ax:D1} \lcr{It satisfies}{$\forall x,y\in\Kbb\qquad D(xy)=xD(y)+D(x)y$}{(Liebniz Rule)}
		
		\item\label{ax:D2} \lcr{If $\famille{x_i}iI$ is summable,}{$D\pa{\Sum{i\in I}{}x_i} = \Sum{i\in I}{}D(x_i)$}{(Strong additivity)\index{Strong additivity}}
		
		\item\label{ax:D3} $\forall x\in\Kbb\qquad D(\exp x)=D(x)\exp x$ 
		
		\item\label{ax:D4} $\ker D = \Rbb$
		
		\item $\forall x>\Nbb\qquad D(x)>0$
	\end{enumerate}
\end{definition}

\begin{remark}
	We can replace Axiom \ref{ax:D2} by
	\begin{enumerate}[label={\textbf{D\arabic*'.}}]
		\setcounter{enumi}{1}
		\item\label{ax:D2p} If $\famille{x_i}iI$ is summable and $\famille{r_i}iI$ is a family of real numbers,
		\lcr{}{$D\pa{\Sum{i\in I}{}r_i x_i} = \Sum{i\in I}{}r_iD(x_i)$}{(Strong lineraity)\index{Strong lineraity}}
	\end{enumerate}
	Indeed, we have
	\lc{}{$
		\ref{ax:D2p}\implies\ref{ax:D2}\qqandqq \ref{ax:D1}\wedge \ref{ax:D2}\wedge \ref{ax:D4}\implies\ref{ax:D2p}$}
\end{remark}

Berarducci and Mantova \cite{Berarducci_2018} provided a general way to define derivation over the class-field $\Nobf$. We recall quickly some of their results.

\begin{proposition}[{\cite[Berarducci and Mantova, Proposition 6.4]{Berarducci_2018}}]
	\label{prop:compareDerivative}
	We have the following properties for a derivation $D$:
	\begin{itemize}
		\item $\forall x,y\in\Kbb\qquad 1\not\asymp x\succ y\Rightarrow D(x)\succ D(y)$
		\item $\forall x,y\in\Kbb\qquad 1\not\asymp x\sim y\Rightarrow D(x)\sim D(y)$
		\item $\forall x,y\in\Kbb\qquad 1\not\asymp x\asymp y\Rightarrow D(x)\asymp D(y)$
	\end{itemize}
\end{proposition}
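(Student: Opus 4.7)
The plan is to establish all three assertions from the monotonicity provided by axiom D5, using the involution $z\mapsto 1/z$ to unify the ``$x$ infinite'' and ``$x$ infinitesimal'' branches of the hypothesis $x\not\asymp 1$. After the preliminary reductions — $D(1)=D(-1)=0$ by Leibniz gives $D(-x)=-D(x)$, so one may assume $x,y\ge 0$; and the case $y=0$ is immediate from D4, since $x\not\asymp 1$ with $x\ne 0$ forces $x\notin\Rbb$, hence $D(x)\ne 0=D(y)$ — the two identities $D(1/x)=-D(x)/x^2$ (from Leibniz on $x\cdot(1/x)=1$) and $D(x)=xD(\ln x)$ (from D3 on $x=\exp(\ln x)$) are the only working tools needed. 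Note in particular that D5 combined with inversion yields $D(x)>0$ for $x$ infinite positive and $D(x)<0$ for $x$ infinitesimal positive.

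The core of the argument is a uniform monotonicity lemma: for positive $x\not\asymp 1$, every $y\ge 0$ with $y\prec x$ satisfies $D(y)\prec D(x)$, and every $y>0$ with $y\asymp x$ satisfies $D(y)\asymp D(x)$. When $x$ is infinite positive, for each $k\in\Nbb$ the element $x-ky$ is $\sim x$ and hence still infinite positive, so D5 gives $D(x-ky)>0$, i.e.\ $kD(y)<D(x)$; re-running the same argument with $-y$ in place of $y$ produces $k|D(y)|<|D(x)|$ for every $k$, proving $D(y)\prec D(x)$. The $\asymp$-case is handled identically using $Mx-y$ and $M'y-x$ for integers $M,M'$ bounding the appreciable ratios $x/y$ and $y/x$: both auxiliary differences remain infinite positive by an appreciable-scaling argument, so D5 yields $|D(y)|<M|D(x)|$ and $|D(x)|<M'|D(y)|$. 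When $x$ is infinitesimal positive the auxiliary elements $x-ky$, $Mx-y$, $M'y-x$ are now themselves infinitesimal positive; one applies D5 to their multiplicative inverses and uses $D(1/u)=-D(u)/u^2$ to convert back, after which the same inequalities on $|D(\cdot)|$ fall out.

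The $\sim$-case finally follows immediately from the $\prec$-half of the monotonicity lemma: if $y\sim x$ then $y-x\prec x$, so $D(y)-D(x)=D(y-x)\prec D(x)$, i.e.\ $D(y)\sim D(x)$. The main subtlety I would anticipate, and the one point deserving careful verification, is that each auxiliary element ($x\pm ky$, $Mx-y$, $M'y-x$) actually stays in the correct Archimedean class — infinite positive when $x$ is, and infinitesimal positive when $x$ is — so that D5 is in fact applicable; this reduces to the preservation of Archimedean class under $\sim$ and $\asymp$, after which the whole proposition becomes a direct chain of appeals to the axioms of a derivation on a totally ordered exponential field.
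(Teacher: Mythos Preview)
The paper does not supply its own proof of this proposition: it is quoted verbatim as \cite[Proposition~6.4]{Berarducci_2018} and used as a black box, so there is nothing in the present paper to compare against. Your argument is an independent, self-contained verification from the derivation axioms, and it is essentially correct.

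One minor point worth tightening: in the $\asymp$-branch you assert that $Mx-y$ remains infinite positive ``by an appreciable-scaling argument'', but with the bare Archimedean bound $y<Mx$ the difference $Mx-y$ can be finite (e.g.\ $x=\omega$, $y=2\omega-1$, $M=2$). The fix is exactly what your closing paragraph anticipates: replace $M$ by any $N>M$, so that $Nx-y>(N-M)x\ge x>\Nbb$, and similarly for $M'y-x$. With that adjustment the chain of D5-applications goes through cleanly in both the infinite and the infinitesimal cases, and the $\sim$-clause then follows from the $\prec$-clause as you describe.
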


If $\Kbb\subseteq\Nobf$ is stable under $\exp$ and $\ln$, we can get a nice property satisfied by a general derivation.

\begin{proposition}[{\cite[Berarducci and Mantova, Proposition 6.5]{Berarducci_2018}}]
	Let $\Kbb\subseteq\Nobf$ be a field of surreal number stable by $\exp$ and $\ln$. Let $D$ be a derivation over $\Kbb$. For all $x,y>\Nbb$ such that $x-y>\Nbb$, 
	$$
	\ln D(x) - \ln D(y) \prec x-y \preceq \max(x,y)
	$$
\end{proposition}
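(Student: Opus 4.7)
The second inequality $x-y \preceq \max(x,y)$ is immediate: since $x-y > \Nbb > 0$ forces $x > y > 0$, we have $0 < x-y \leq x = \max(x,y)$. For the main inequality, my plan is to work with the auxiliary function $L(z) := \ln D(z)$, which is well-defined on positive infinite $z \in \Kbb$ by Axiom D5. Taking logarithms in Axiom \ref{ax:D3} yields the functional equation $L(u) = L(\ln u) + \ln u$ for positive $u \in \Kbb$; and strong additivity \ref{ax:D2} together with $D(u-v) > 0$ when $u-v > \Nbb$ gives monotonicity $L(u) > L(v)$ on such pairs.

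I would then split according to the Archimedean comparison of $x$ and $y$. The case $x \asymp y$ is handled directly by Proposition \ref{prop:compareDerivative}: if $x \sim y$, then $D(x) \sim D(y)$, so $D(x)/D(y) - 1$ is infinitesimal, hence $L(x) - L(y) = \ln(D(x)/D(y))$ is infinitesimal; if $x \asymp y$ without $\sim$, then $D(x) \asymp D(y)$ so $D(x)/D(y)$ is appreciable and $L(x) - L(y)$ is bounded by a real. In either subcase, $L(x) - L(y)$ is appreciable while $x - y$ is positive infinite, giving $L(x) - L(y) \prec x - y$.

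The real difficulty lies in the case $x \succ y$, where $x - y \sim x$ and Proposition \ref{prop:compareDerivative} only produces $D(x) \succ D(y)$ without any usable upper bound. Here I would iterate the functional equation to obtain, for every $n \in \Nbb$,
\[L(x) - L(y) \;=\; \bigl(L(\ln_n x) - L(\ln_n y)\bigr) \;+\; \sum_{k=1}^{n}(\ln_k x - \ln_k y),\]
and bound the finite sum by noticing that each $\ln_k x - \ln_k y \leq \ln_k x \prec \ln_{k-1} x \prec \cdots \prec x$, so the sum is $\prec x \sim x - y$. If at some finite $n$ the pair $(\ln_n x, \ln_n y)$ falls into the $\asymp$ regime, the remainder is controlled by the previous paragraph and we are done. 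The genuinely hard subcase, and the main obstacle I foresee, is $y \prec^L x$: the iteration then never reaches the $\asymp$ regime since $\ln_n x \succ \ln_n y$ for every $n \in \Nbb$. To close this subcase I would combine the monotonicity estimate $L(u) - L(v) < \ln u$ (valid once $v \geq \ln u$, which is triggered by the alternative iteration scheme $L(x) - L(y) = (L(\ln_n x) - L(y)) + \sum_{k=1}^{n}\ln_k x$) with a transfinite induction on an intrinsic rank of $(x,y)$; the nested truncation rank of Definition \ref{def:nestedTruncRk} is a natural candidate to organise the recursion.
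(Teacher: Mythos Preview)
The paper does not prove this statement; it is quoted from Berarducci--Mantova without argument. So there is no in-paper proof to compare against, and I assess your proposal on its own merits.

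Your treatment of the case $x\asymp^L y$ is correct and complete: the direct appeal to Proposition~\ref{prop:compareDerivative} when $x\asymp y$, and the finite iteration of the functional equation $L(u)=L(\ln u)+\ln u$ until $\ln_n x\asymp\ln_n y$, both go through as written. The one-sided iteration together with the monotonicity bound $L(u)-L(v)<\ln u$ (valid when $v-\ln u>\Nbb$) also disposes of the sub-case of $y\prec^L x$ in which $y\geq\ln_N x$ for some $N$: taking the least such $N$ one gets $L(x)-L(y)<\sum_{k=1}^{N}\ln_k x\prec x\sim x-y$, essentially as you sketch.

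The genuine gap is the remaining sub-case $y<\ln_n x$ for every $n$ (equivalently $\exp_n y<x$ for every $n$). Here your trigger never fires: the one-sided iteration only yields the \emph{lower} bound $L(x)-L(y)>\sum_{j<M}\exp_j y$ for all $M$, and no upper bound. This sub-case is not pathological; $x=\kappa_1$, $y=\omega$ already lands in it. Your fallback, a transfinite induction on the nested truncation rank, is not a proof and looks unlikely to become one: $\NR$ is invariant under $\exp$ and $\ln$ on purely infinite elements (Proposition~\ref{prop:NRStableExp}), so the passage $(x,y)\mapsto(\ln x,y)$ or $(\ln x,\ln y)$ does not decrease it, and in any case $\NR$ is a combinatorial invariant of the normal-form structure of surreals with no a priori relation to an \emph{arbitrary} derivation $D$ satisfying D1--D5. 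As written, the proposal does not establish $\ln D(x)-\ln D(y)\prec x-y$ when $y\prec^K x$.
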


To define the derivation, Berarducci and Mantova started by defining it on log-atomic numbers and then extending it on all surreal numbers. More precisely, a derivation on log-atomic number must satisfy the following:

\begin{definition}[{\cite[Berarducci and Mantova, Definition 9.1]{Berarducci_2018}} Prederivation]
	Let $\Kbb$ be a field of surreal numbers stale under $\exp$ and $\ln$ and  such that for all $x\in\Kbb$, for all path $P\in\Pcal(x)$, for all $k\in\Nbb$, if $P(k)\in\Lbb$, then $P(k)\in\Kbb$. A \textbf{prederivation} over $\Kbb$ is a function $D_\Lbb:\Lbb\cap\Kbb\to\Kbb$ such that
	\begin{itemize}
		\item[\textbf{D3.}] $\forall \lambda\in\Lbb\cap\Kbb\qquad 
		D_\Lbb\exp\lambda = (D_\Lbb\lambda)\exp\lambda$
		\item[\textbf{PD1.}] For all $\lambda\in\Lbb\cap\Kbb$, $D_\Lbb\lambda$ is a positive term.
		
		\item[\textbf{PD2.}] $\forall\lambda,\mu\in\Lbb\cap\Kbb\qquad 
		\ln D_\Lbb\lambda - \ln D_\Lbb\ln\mu \prec \max(\lambda,\mu)$
	\end{itemize}
\end{definition}

They key notion to define the derivation from the the prederivation is the notion of \textbf{path derivative}. This notion look at all the paths of the surreal number to say how it contributes to the derivative of the surreal number.

\begin{definition}[{\cite[Berarducci and Mantova, Definition 6.13]{Berarducci_2018}} Path derivative]
	\label{def:pathDeriv}\index{Path derivative!Definition  \ref{def:pathDeriv}}
	Let $P$ be a path. We define the \textbf{path derivative} $\partial P\in\Rbb\omega^\Nobf$ by
	\centre{$\partial P = \begin{accolade}
			P(0)\cdots P(k-1)D_\Lbb P(k) & P(k)\in\Lbb \\ 0 & \forall k\in\Nbb\quad P(k)\notin\Lbb
		\end{accolade} $}
	We denote $\Pcal_\Lbb(x)=\enstq{P\in\Pcal(x)}{\partial P\neq 0}$, which is the set of paths that indeed reach log-atomic numbers at some point.
\end{definition}

One can notice that for any $P\in\Pcal_\Lbb(x)$, $\partial P=r\omega^a$ for some $r\in\Rbb^*$ and $a\in\Nobf$. Indeed, every $P(k)$ is a term and $D_{\Lbb}P(k)$, when $P(k)\in\Lbb$ is an exponential of a purely infinite number, hence, it is a monomial.
For $P\in\Pcal_\Lbb(x)$ there is a minimum $k_P\in\Nbb$ such that $P(k_P)\in\Lbb$. Then $P$ is entirely determined by $P(0),\dots,P(k_P)$. 
We then define $\alpha_0(P),\dots,\alpha_{k_P}(P)$ as follows :
\begin{itemize}
	\item Writing $x=\Sumlt i{\nu(x)} r_i(x)\omega^{a_i(x)}$, then define $\alpha_0(P)<\nu(x)$ such that $P(0)=r_{\alpha_0(P)}(x)\omega^{a_{\alpha_0(P)}}$.
	
	\item For $0\leq i<k$, write $P(i)=r\omega^a$. Then $P(i+1)$ is a term of $\ln\omega^{a}$. Write $\ln\omega^a=\Sumlt i{\nu(a)}r_i(a)\omega^{h(a_i(a))}$. Then set $\alpha_{i+1}(P)$ such that $P(i+1)=r_{\alpha_{i+1}(P)}(a)\omega^{h(a_{\alpha_{i+1}(P)}(a))}$
\end{itemize}

Using Proposition \ref{prop:cheminMajorationNu}, we get that $\suite{\alpha_i(P)}i{\intn0{k_P}}$ is a finite sequence over ordinal less than $\NR(x)+1$. In particular, we can give $\Pcal_\Lbb(x)$ a lexicographic order inherited from the one over finite sequences. 

\begin{definition}
	We define the order $<_{lex}$ on paths by
	\centre{$P<_{lex}Q\Longleftrightarrow 
		(\alpha_0(P),\dots,\alpha_{k_P}(P))<_{lex} (\alpha_0(Q),\dots,\alpha_{k_Q}(Q))$}
\end{definition}

This order will be useful later when we will try to understand better what is going on to get some bounds about the derivatives. For now, the path-derivative being defined, we can recall a theorem by Berarducci and Mantova which explains how to build a general derivation from a prederivation.

\begin{lemma}[{\cite[Berarducci and Mantova, Corollary 6.17]{Berarducci_2018}}]
	\label{lem:partialPQprec}
	Let $P,Q\in\Pcal(x)$ such that $\partial P,\partial Q\neq0$. If there is $i\in\Nbb$ such that
	\begin{enumerate}
		\item $\forall j\leq i\qquad P(i)\preceq Q(i)$
		\item $P(i+1)$ is not a term of $\ell(Q(i))$, 
	\end{enumerate}
	\lc{then}{$\partial P\prec\partial Q$}
\end{lemma}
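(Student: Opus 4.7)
The plan is to reduce the statement to the case where $P(i) \prec Q(i)$ strictly, and then to compare the $\omega$-degrees of the two path derivatives, which are both nonzero real multiples of monomials.

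First, I would observe that each factor appearing in $\partial P = P(0) \cdots P(k_P - 1) D_\Lbb P(k_P)$ is a term of the form $r\omega^c$, so $\partial P$ itself is a nonzero real multiple of a single monomial $\omega^{e_P}$ with $e_P = \sum_{j < k_P} \deg P(j) + \deg D_\Lbb P(k_P)$, where I write $\deg(r\omega^c) := c$; the same holds for $\partial Q$ with some exponent $e_Q$. Consequently $\partial P \prec \partial Q$ is equivalent to $e_P < e_Q$ strictly in $\No$. For the reduction, suppose $P(i) \asymp Q(i)$ and write $P(i) = r_P \omega^a$, $Q(i) = r_Q \omega^a$ with the same Archimedean exponent $a$. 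Then $\ln|P(i)| = \ln|r_P| + \ln\omega^a$ and likewise for $Q(i)$; the real summands $\ln|r_P|, \ln|r_Q|$ contribute nothing to the purely infinite part, so $\ell(P(i)) = \ln\omega^a = \ell(Q(i))$ (using Proposition \ref{prop:lnOmegaA}). But $P(i+1)$ is a term of $\ell(P(i))$ by definition of a path, so it would also be a term of $\ell(Q(i))$, contradicting assumption 2. Hence $P(i) \prec Q(i)$ strictly, that is $\deg P(i) < \deg Q(i)$ strictly in $\No$.

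Second, I would decompose
\[
e_Q - e_P \;=\; \sum_{j=0}^{i}\bigl(\deg Q(j) - \deg P(j)\bigr) \;+\; T,
\]
where $T$ gathers the tail contributions from indices $j > i$ (when applicable) and the terminal $D_\Lbb$ factors. By assumption 1 together with the reduction above, the displayed finite sum is at least $\deg Q(i) - \deg P(i) > 0$. The heart of the proof is showing that $T$ cannot offset this strictly positive gap. Here one exploits two ingredients. First, for $j > i$, $P(j+1)$ is a term of $\ell(P(j)) = \ln\omega^{\deg P(j)}$, whose terms have exponents of the form $h(c)$ for $c$ an exponent appearing in the normal form of $\deg P(j)$ (by the corollary of Theorem \ref{thm:lnOmegaOmegaA}); iterating, the tail exponents on both the $P$-side and the $Q$-side decay rapidly, so their contribution to $T$ is dominated in terms of $\ln|P(i)|$ and $\ln|Q(i)|$ respectively. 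Second, axiom PD2, namely $\ln D_\Lbb\lambda - \ln D_\Lbb\mu \prec \max(\lambda,\mu)$, controls the discrepancy between the two $D_\Lbb$ terms in the same spirit.

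The main obstacle is making this tail estimate fully rigorous, handling in particular the case distinction $k_P \leq i$ (so $P$ has already entered its log-atomic phase, with $P(i+1) = \ln P(i)$ a single log-atomic term) versus $k_P > i$, and similarly for $k_Q$. This parallels the comparison results already encoded in Proposition \ref{prop:compareDerivative} and the bound of Proposition 6.5 of Berarducci--Mantova, applied term by term along the two paths. Once the tail bound is in place, summing the coordinatewise contributions yields $e_P < e_Q$ strictly, whence $\partial P \prec \partial Q$.
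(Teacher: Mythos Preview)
The paper does not prove this lemma: it is quoted verbatim as \cite[Corollary~6.17]{Berarducci_2018} and used as a black box, with no proof given here. So there is no ``paper's own proof'' to compare against.

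Evaluating your attempt on its own merits: the reduction in your first paragraph is correct and is exactly the right first move. If $P(i)\asymp Q(i)$ then $\ell(P(i))=\ell(Q(i))$, forcing $P(i+1)$ to be a term of $\ell(Q(i))$, a contradiction; hence $P(i)\prec Q(i)$ strictly.

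The remainder, however, is a sketch rather than a proof. You yourself flag that ``the main obstacle is making this tail estimate fully rigorous,'' and then do not carry it out. The vague appeals to ``tail exponents decay rapidly'' and to PD2 are in the right spirit, but the actual inequality you need is that the total tail contribution $T$ satisfies $|T|\prec \deg Q(i)-\deg P(i)$ (in the Archimedean sense on $\No$), and this requires a genuine inductive argument along the path together with a careful use of the prederivation axioms. In Berarducci--Mantova the corollary is obtained not by a direct exponent comparison as you attempt, but via their Lemma~6.16 and the surrounding machinery on dominant terms of $\partial$ restricted to initial segments; that lemma packages precisely the tail control you are missing. As written, your argument has a gap at the crucial step.
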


\begin{lemma}[{\cite[Berarducci and Mantova, Lemma 6.18]{Berarducci_2018}}]
	\label{lem:NRpath}
	Given $P\in\Pcal(x)$ a path of $x$ we have for all $i$ $\NR(P(i+1))\leq\NR(P(i))$ with equality if and only if $P(i)$ is the last term of $\ell(P(i))$. We also have $\NR(P(0)\leq\NR(x)$ with equality if and only if $P(0)$ is the last term of $x$.)
\end{lemma}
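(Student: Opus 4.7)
The plan is to combine three structural properties of the nested truncation rank established earlier in the section: Proposition \ref{prop:NRStableExp} (invariance under exponentiation of a purely infinite number), Proposition \ref{prop:NRComparaisonCoefReel} (the effect of a non-unit real coefficient), and Proposition \ref{prop:NRComparaisonTerme} (the comparison between the rank of a term and the rank of the whole series). The second assertion of the lemma concerning $P(0)$ is essentially immediate: by Definition \ref{def:path}, $P(0)$ is a term of $x$, so both clauses of Proposition \ref{prop:NRComparaisonTerme} apply directly to the normal form of $x$, giving $\NR(P(0))\leq\NR(x)$ together with strict inequality whenever $P(0)$ is not the last term of $x$.

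For the main inequality $\NR(P(i+1))\leq\NR(P(i))$, I would first rewrite $P(i)$ as a single term $r\omega^a$ with $r\in\Rbb^*$. Since $\ln\omega^a$ is purely infinite by Proposition \ref{prop:lnOmegaA} while $\ln|r|\in\Rbb$ is merely real, the purely infinite part of $\ln|P(i)|=\ln|r|+\ln\omega^a$ is exactly $\ln\omega^a$; hence $\ell(P(i))=\ln\omega^a$ and $P(i+1)$ is a term in the normal form of $\ln\omega^a$. Proposition \ref{prop:NRComparaisonTerme} then yields $\NR(P(i+1))\leq\NR(\ln\omega^a)$. Because $\omega^a=\exp(\ln\omega^a)$ and $\ln\omega^a$ is purely infinite, Proposition \ref{prop:NRStableExp} gives $\NR(\ln\omega^a)=\NR(\omega^a)$. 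Finally, Proposition \ref{prop:NRComparaisonCoefReel} provides $\NR(\omega^a)\leq\NR(r\omega^a)=\NR(P(i))$, with equality if and only if $r=\pm 1$. Chaining the three steps yields the required inequality.

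For the equality characterisation, I would argue that $\NR(P(i+1))=\NR(P(i))$ forces each of the three intermediate inequalities to be tight. Tightness at the last step forces $r=\pm 1$, that is $P(i)=\pm\omega^a$. Tightness at the first step, by contrapositive of the strict-inequality clause of Proposition \ref{prop:NRComparaisonTerme}, forces $P(i+1)$ to be the last term of the normal form of $\ell(P(i))$. For the converse direction I would unwind Lemma \ref{lem:NRAjoutNouveauTerme} applied to $\ell(P(i))$ to verify that, under these conditions, the three intermediate $\NR$-values all coincide.

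The delicate point I expect to encounter lies precisely in the converse of the equality statement: Lemma \ref{lem:NRAjoutNouveauTerme} shows that as soon as $\ell(P(i))$ has at least two terms, even its last term already has strictly smaller rank than $\ell(P(i))$ itself, so in practice the equality regime collapses to the case where $\ell(P(i))$ reduces to a single term and $P(i)=\pm\omega^a$. Making this implicit hypothesis explicit, and reconciling it with the terse ``last term'' formulation in the statement, is where the argument requires the most care; once this is done, the remainder is a direct composition of the three propositions recalled above.
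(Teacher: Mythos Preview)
The paper does not supply its own proof of this lemma: it is quoted from \cite[Lemma~6.18]{Berarducci_2018} without argument, so there is no proof in this paper against which to compare yours.

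Your approach is nonetheless the natural one and matches how the result is obtained in the cited source. The chain
\[
\NR(P(i+1))\;\leq\;\NR(\ell(P(i)))\;=\;\NR(\omega^a)\;\leq\;\NR(r\omega^a)\;=\;\NR(P(i)),
\]
via Propositions~\ref{prop:NRComparaisonTerme}, \ref{prop:NRStableExp} and \ref{prop:NRComparaisonCoefReel} respectively, is exactly the intended argument, and the $P(0)$ clause is, as you say, a direct instance of Proposition~\ref{prop:NRComparaisonTerme}. Your caution about the converse direction of the equality clause is well placed: Proposition~\ref{prop:NRComparaisonTerme} only gives that equality \emph{forces} the term to be last, not the reverse, and Lemma~\ref{lem:NRAjoutNouveauTerme} indeed shows the reverse implication fails once the series has more than one term. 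The only use of this lemma in the present paper (in the proof of Proposition~\ref{prop:majorationNRPartial}) invokes nothing beyond the bare inequality $\NR(P(i))\leq\NR(x)$, so the full biconditional is stronger than what is actually needed here. Note also the apparent typo in the printed statement: the equality condition should read ``$P(i+1)$ is the last term of $\ell(P(i))$'' rather than ``$P(i)$'', which is how you have (correctly) read it.
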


\begin{theorem}[{\cite[Berarducci and Mantova, Proposition 6.20, Theorem 6.32]{Berarducci_2018}}]\label{thm:prederivToDeriv}
	Let $D_\Lbb$ be a prederivation over a surreal field $\Kbb$ stable under $\exp$ and $\ln$. Then $D_\Lbb$ extends to a derivation $\partial:\Kbb\to\Nobf$ such that 
	$$
	\forall x\in\Kbb\qquad  \partial x = \Sumin P{\Pcal(x)}\partial P
	$$
	In particular, $\famille{\partial P}P{\Pcal(x)}$ is summable (see Definition \ref{def:summableFam}).
\end{theorem}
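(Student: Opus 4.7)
The plan is to define $\partial : \Kbb \to \Nobf$ by the formula $\partial x := \sum_{P \in \Pcal(x)} \partial P$ (where the nonzero contributions come from $\Pcal_\Lbb(x)$) and verify in turn that (a) the family $\{\partial P : P \in \Pcal_\Lbb(x)\}$ is summable in the sense of Definition \ref{def:summableFam}, (b) the resulting $\partial$ extends $D_\Lbb$ (which is immediate on a log-atomic $\lambda$, since $\Pcal_\Lbb(\lambda) = \{\lambda, \ln\lambda, \ln_2\lambda, \dots\}$ collapses to a single path returning $D_\Lbb\lambda$), and (c) $\partial$ satisfies axioms \ref{ax:D1}--\ref{ax:D4} and positivity on $x > \Nbb$. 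Recall each $\partial P$ is already a monomial $r\omega^a$, so summability reduces to showing that $\bigcup_P \supp(\partial P)$ is reverse well-ordered and that only finitely many paths share a given magnitude.

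For summability I would lexicographically order $\Pcal_\Lbb(x)$ via the finite sequences $(\alpha_0(P), \dots, \alpha_{k_P}(P))$, which by Proposition \ref{prop:cheminMajorationNu} take values in ordinals less than $\NR(x)+1$; Theorem \ref{thm:borneTypeOrdreSuitesFinies} then bounds the resulting order type. Lemma \ref{lem:partialPQprec} is the key monotonicity tool: if $P <_{\mathrm{lex}} Q$ diverge at coordinate $i$ with $P(i+1)$ not a term of $\ell(Q(i))$, then $\partial P \prec \partial Q$. Combining this with well-foundedness of the lex order forces the magnitudes of the $\partial P$ to form a reverse well-ordered subset of $\Nobf$. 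The finite-fiber property follows from the observation that two paths with $\partial P \asymp \partial Q$ must share the same log-atomic endpoint $P(k_P)$ and can differ only in bounded combinatorial ways in the earlier coordinates, so the fiber of any monomial is finite.

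With summability secured, axioms \ref{ax:D2} and \ref{ax:D3} are largely bookkeeping. Strong linearity \ref{ax:D2} follows because $\Pcal_\Lbb(\sum_i x_i)$ decomposes along the paths of each summand, with only the real coefficient at $P(0)$ requiring adjustment. For \ref{ax:D3}, when $x$ is purely infinite, every path of $\exp x = \omega^a$ starts with $P(0) = \exp x$ and then matches a path of $x$ from $P(1)$ onward; the factor $\exp x$ appearing in $\partial P$ is precisely what yields $\partial(\exp x) = (\partial x)\exp x$ after summation. Axiom \ref{ax:D4} follows from Lemma \ref{lem:partialPQprec}: for $x \notin \Rbb$ the dominant path has strictly maximal magnitude among all paths, so its contribution cannot cancel and $\partial x \neq 0$. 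Positivity on $x > \Nbb$ is analogous since the dominant $\partial P$ is a positive monomial by axiom \textbf{PD1}.

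The real obstacle is Leibniz's rule \ref{ax:D1}. The difficulty is that in the normal form of $xy$ each monomial typically arises as a sum over products of monomials of $x$ and of $y$, and we need to trace how a path of $xy$ starting on such a cross-term $r_i s_j \omega^{a_i + b_j}$ decomposes into an $x$-side contribution and a $y$-side contribution. The algebraic key is the identity $\ln(r_i s_j \omega^{a_i + b_j}) = \ln(r_i s_j) + a_i + b_j$, which at $P(1)$ bifurcates each path of $xy$ into paths arising from $x$ and paths arising from $y$; iterating this split through each $P(k)$ and summing produces the required identity $\partial(xy) = (\partial x)y + x(\partial y)$ as a term-by-term matching. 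The technical burden is checking that this matching is an honest bijection after grouping by strong linearity, that the summability on the two sides is compatible with the summability on $xy$, and that there is no double-counting when several cross-terms collide on the same exponent; this is where the bulk of the Berarducci-Mantova computation lives.
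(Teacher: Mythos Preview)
The paper does not prove this statement: it is quoted verbatim as a result of Berarducci and Mantova (\cite[Proposition 6.20, Theorem 6.32]{Berarducci_2018}) and used as a black box in the rest of the article. There is therefore no proof in the paper to compare your proposal against.

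Your outline is a reasonable high-level summary of the strategy in the cited reference, and you correctly identify the Leibniz rule as the substantive difficulty. One slip worth flagging: you write $\ln(r_i s_j \omega^{a_i+b_j}) = \ln(r_i s_j) + a_i + b_j$, but the right-hand side should be $\ln|r_i s_j| + \ln\omega^{a_i} + \ln\omega^{b_j}$; it is the decomposition of $\ell(P(0)) = \ln\omega^{a_i} + \ln\omega^{b_j}$ into terms coming from $\ln\omega^{a_i}$ and from $\ln\omega^{b_j}$ (not $a_i$ and $b_j$ themselves) that drives the path bifurcation underlying Leibniz. Also, your finite-fiber claim (``two paths with $\partial P \asymp \partial Q$ must share the same log-atomic endpoint'') is not automatic and is where axiom \textbf{PD2} is actually used; as written your sketch does not invoke \textbf{PD2} anywhere.
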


The study would not be complete without an example. Berarducci and Mantova provided such a derivation and even more: it is the simplest in some sense.

\begin{definition}[{\cite[Berarducci and Mantova, Definition 6.7]{Berarducci_2018}}]
	\label{def:partialLbb}
	We define $\partial_\Lbb:\Lbb\to\Nobf$ by
	$$	
	\forall\lambda\in\Lbb\qquad \partial_\Lbb\lambda = \exp\pa{-\Sum{\alpha\in\Ord|\kappa_{-\alpha}\succeq^K\lambda}{}\ \Sum{n=1}{\pinf}\ln_n\kappa_\alpha + \Sum{n=1}\pinf \ln_n\lambda}
	$$
\end{definition}

For example, we have:
	\begin{align*}
		\partial_\Lbb \omega &= 1 &\partial_\Lbb \exp\omega &= \exp\omega\\
		\partial_\Lbb \ln\omega &= \exp(-\ln\omega) = \f1\omega 
		&\partial_\Lbb\ln_n\omega &= \f1{\Prod{k=0}{n-1}\ln_k\omega}\\
		\partial_\Lbb\kappa_1=\partial_\Lbb\epsilon_0 &= \exp\pa{\Sum{n=1}\pinf\ln_n\kappa_1} &\partial_\Lbb\kappa_{-1} &= \exp\pa{-\Sum{n=1}\pinf \ln_n\omega}
	\end{align*}
In fact, $\kappa_1$ is intuitively $\exp_\omega\omega$. Therefore it is also quite intuitive that $\partial_\Lbb\kappa_1 = \kappa_1\ln(\kappa_1)\ln\ln(\kappa_1)\cdots$. The same happens for $\kappa_{-1}$ which is intuitively $\ln_\omega\omega$. We indeed have $\partial_\Lbb\kappa_{-1} = \f1{\omega\ln(\omega)\ln\ln(\omega)\cdots}$.

\begin{proposition}[{\cite[Berarducci and Mantova, Propositions 6.9 and 6.10]{Berarducci_2018}}]
	$\partial_\Lbb$ is a prederivation. 
\end{proposition}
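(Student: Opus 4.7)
The plan is to verify directly the three conditions defining a prederivation---axioms \textbf{D3}, \textbf{PD1}, and \textbf{PD2}---using the explicit formula for $\partial_\Lbb$ from Definition \ref{def:partialLbb}. Most of the work is unpacking the nested sums in the exponent; the only subtle estimate is the one required for \textbf{PD2}.

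I would first address well-definedness and \textbf{PD1}. The expression inside the outer $\exp$ is a transfinite sum indexed by ordinals $\alpha$ with $\kappa_{-\alpha}\succeq^K\lambda$ and by $n\geq 1$; each summand $\ln_n\kappa_\alpha$ is a monomial $\omega^{a_{n,\alpha}}$ with $a_{n,\alpha}>0$, so provided the family is summable the total is purely infinite. Summability reduces to showing that distinct $\kappa_{-\alpha}$ lie in distinct $\asymp^L$-classes (so their iterated logarithms contribute disjoint dominant monomials) and that for each fixed $\alpha$ the tail $\sum_n \ln_n\kappa_\alpha$ is a reverse well-ordered series---both facts flowing from standard properties of the $\kappa$-map together with Lemma \ref{lem:kappaMoinsAlpha}. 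Once summability is in hand, Proposition \ref{prop:formeExpXPurelyInfiniteOmegaA} yields $\partial_\Lbb\lambda=\omega^a$ for some $a\in\Nobf$, so $\partial_\Lbb\lambda$ is a positive monomial, hence a positive term. This is \textbf{PD1}.

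For \textbf{D3}, observe that $\lambda\in\Lbb$ implies $\exp\lambda\in\Lbb$ and that $\ln_n(\exp\lambda)=\ln_{n-1}\lambda$ for $n\geq 1$ (with the convention $\ln_0\lambda=\lambda$), so the telescoping identity
\[
\sum_{n=1}^{\infty}\ln_n(\exp\lambda) - \sum_{n=1}^{\infty}\ln_n\lambda = \lambda
\]
holds. The crucial observation is that $\lambda\asymp^L\exp\lambda$ (applying $\exp$ shifts the index under the $\lambda$-map by $1$ without leaving the $\asymp^L$-class; cf.\ Proposition \ref{prop:lamdaMapLogAtomic} and the identity $\exp\lambda_x=\lambda_{x+1}$), so the outer index sets $\{\alpha:\kappa_{-\alpha}\succeq^K\lambda\}$ and $\{\alpha:\kappa_{-\alpha}\succeq^K\exp\lambda\}$ coincide. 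Consequently the arguments of the outer $\exp$ in $\partial_\Lbb(\exp\lambda)$ and $\partial_\Lbb\lambda$ differ exactly by $\lambda$, which after exponentiating gives $\partial_\Lbb(\exp\lambda)=(\partial_\Lbb\lambda)\exp\lambda$.

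For \textbf{PD2}, given $\lambda,\mu\in\Lbb$ I would expand
\[
\ln\partial_\Lbb\lambda - \ln\partial_\Lbb(\ln\mu) = -\sum_{\alpha\in T}\sum_{n=1}^{\infty}\ln_n\kappa_\alpha + \sum_{n=1}^{\infty}\bigl(\ln_n\lambda - \ln_{n+1}\mu\bigr),
\]
where $T$ is the symmetric difference of $\{\alpha:\kappa_{-\alpha}\succeq^K\lambda\}$ and $\{\alpha:\kappa_{-\alpha}\succeq^K\ln\mu\}$, and I used $\ln_n(\ln\mu)=\ln_{n+1}\mu$. Every $\alpha\in T$ satisfies $\kappa_{-\alpha}\preceq^K\max(\lambda,\mu)$, so each block $\sum_{n\geq 1}\ln_n\kappa_\alpha$ is $\prec\max(\lambda,\mu)$; the tail difference is dominated by $\ln\max(\lambda,\mu)\prec\max(\lambda,\mu)$ via standard comparison of iterated logarithms. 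Assembling these estimates yields \textbf{PD2}. The main obstacle is the transfinite bookkeeping here: one must show that the entire transfinite sum over $T$---not merely each individual block---remains $\prec\max(\lambda,\mu)$, which relies on the fact that the $\kappa_\alpha$-tails for $\alpha\in T$ are themselves well-separated in $\asymp^L$-classes, preventing any accumulation up to size $\max(\lambda,\mu)$.
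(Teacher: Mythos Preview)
The paper does not supply its own proof of this proposition; it is quoted from \cite{Berarducci_2018} (their Propositions~6.9 and~6.10), so there is nothing in the present paper to compare your argument against. Your outline is the natural direct verification and is largely sound, but one step in your \textbf{D3} argument is incorrect as written.

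You claim $\lambda\asymp^L\exp\lambda$. This is false: by Definition~\ref{def:asympPrecL} that would require $\ln_n\lambda\asymp\ln_n(\exp\lambda)=\ln_{n-1}\lambda$ for some $n$, and since $\lambda\in\Lbb$ is infinitely large, $\ln_{n-1}\lambda$ and $\ln_n\lambda$ are distinct monomials with $\ln_n\lambda\prec\ln_{n-1}\lambda$; in fact distinct log-atomic numbers always lie in distinct $\asymp^L$-classes (Proposition~\ref{prop:lamdaMapLogAtomic}). What you actually need, and what is true, is the coarser statement that $\lambda$ and $\exp\lambda$ sit in the same $\asymp^K$-class. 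Concretely, $\kappa_{-\alpha}\succeq^K\mu$ unpacks as ``there exists $n$ with $\ln_n\mu\le\kappa_{-\alpha}$'', and since $\ln_n(\exp\lambda)=\ln_{n-1}\lambda$ the two index sets $\{\alpha:\kappa_{-\alpha}\succeq^K\lambda\}$ and $\{\alpha:\kappa_{-\alpha}\succeq^K\exp\lambda\}$ coincide by a shift of~$n$. With this correction your telescoping argument for \textbf{D3} goes through unchanged.

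A second, cosmetic point: in your \textbf{PD2} computation the sum over the symmetric difference $T$ should carry signs (indices in one set but not the other contribute with opposite signs), not a uniform minus. This does not affect the estimate, since your bound only uses that the leading term is $\pm\ln\kappa_{-\alpha_0}$ for the least $\alpha_0\in T$, and $\kappa_{-\alpha_0}\prec^K\max(\lambda,\mu)$ forces $\ln\kappa_{-\alpha_0}\prec\max(\lambda,\mu)$.
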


The previous proposition ensures that the associated function $\partial$ defined by Theorem \ref{thm:prederivToDeriv} is indeed a derivation over surreal numbers. It turns out that it the simplest for the order $\sqsubseteq$.

We now explain what is meant when saying that $\partial$ is the simplest derivation. In fact, we mean that $\partial_\Lbb$ is the simplest prederivation with respect to the order $\sqsubseteq$.

\begin{theorem}[{Berarducci and Mantova\cite[Theorem 9.6]{Berarducci_2018}}]
	Let $D_\lambda$ be a prederivation. Let $\lambda\in\Lbb$, minimal (in $\Lbb$) for $\sqsubseteq$ such that $D_\Lbb\lambda \neq \partial_\Lbb\lambda$. Then $\partial_\Lbb\lambda\sqsubset D_\Lbb\lambda$. 
\end{theorem}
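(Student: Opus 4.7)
The plan is to show that $\partial_\Lbb\lambda$ is the simplest surreal number any prederivation can assume at $\lambda$, given axioms D3, PD1, PD2 together with the inductive hypothesis that $D_\Lbb\mu = \partial_\Lbb\mu$ for every log-atomic $\mu \sqsubset \lambda$. By PD1 both $D_\Lbb\lambda$ and $\partial_\Lbb\lambda$ are positive monomials $r\omega^{a}$, and the sign expansion of a monomial is determined by $(r,a)$ via Theorem \ref{thm:serieToSignExp}; the comparison therefore reduces to comparing $\ln D_\Lbb\lambda$ and $\ln\partial_\Lbb\lambda$ in $\No$.

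First I would expand $\ln\partial_\Lbb\lambda$ explicitly using Proposition \ref{prop:formeExpXPurelyInfiniteOmegaAg} and Corollary \ref{cor:hmord}, obtaining a Gonshor normal-form description. The series $\sum_{n\geq 1}\ln_n\lambda$ contributes a specific purely-infinite tail, while the correction $-\sum_{\alpha : \kappa_{-\alpha}\succeq^K\lambda}\sum_{n\geq 1}\ln_n\kappa_\alpha$ cancels exactly the part of that tail corresponding to thresholds $\kappa_{-\alpha}$ dominating $\lambda$ in the $\asymp^L$-order. This gives a transparent term-by-term picture of $\ln\partial_\Lbb\lambda$.

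Second, I would use the minimality hypothesis and PD2 to constrain $\ln D_\Lbb\lambda$. For every log-atomic $\mu \sqsubset \lambda$, PD2 gives $\ln D_\Lbb\lambda - \ln\partial_\Lbb\mu \prec \max(\lambda,\mu)$, using the induction hypothesis to replace $\ln D_\Lbb\mu$. Axiom D3 propagates $D_\Lbb$ along the $\exp$-orbit of each such $\mu$, so PD2 applied at every $\nu \in \exp^{\mathbb{Z}}\mu$ produces further two-sided bounds on $\ln D_\Lbb\lambda$. Collecting all of these gives a cut $\crotq{A_\lambda}{B_\lambda}$ in $\No$ within which $\ln D_\Lbb\lambda$ must lie; since $\partial_\Lbb$ is itself a prederivation, $\ln\partial_\Lbb\lambda$ lies in this cut too.

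Third, one identifies $\ln\partial_\Lbb\lambda$ as the simplest element of the cut $\crotq{A_\lambda}{B_\lambda}$. This is the main obstacle: it requires matching term by term the Gonshor normal form of $\ln\partial_\Lbb\lambda$ against the coefficients forced by the cut, using the threshold $\kappa_{-\alpha}\succeq^K\lambda$ to separate the ``forced'' monomials from the ``free'' ones, and showing that any strictly simpler surreal would violate at least one PD2 bound. Once this is done, the simplicity principle gives $\ln\partial_\Lbb\lambda \sqsubseteq \ln D_\Lbb\lambda$, and strictness follows from the assumption $D_\Lbb\lambda \neq \partial_\Lbb\lambda$; exponentiating yields $\partial_\Lbb\lambda \sqsubset D_\Lbb\lambda$. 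A subsidiary delicate point is the boundary case $\lambda = \kappa_{-\alpha}$, where the indexing set of the correction sum in the definition of $\partial_\Lbb\lambda$ transitions.
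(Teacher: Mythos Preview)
This theorem is not proved in the present paper; it is quoted from \cite[Theorem 9.6]{Berarducci_2018} without argument, so there is no in-paper proof to compare against. Your outline does follow the spirit of Berarducci and Mantova's original argument: assemble, from D3 and PD2 applied at all log-atomic $\mu\sqsubset\lambda$ (where by minimality $D_\Lbb\mu=\partial_\Lbb\mu$), a two-sided cut that any prederivation value at $\lambda$ must occupy, and then verify that $\partial_\Lbb\lambda$ is the $\sqsubseteq$-minimum of that cut.

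There is, however, a genuine gap in your reduction. You pass from $D_\Lbb\lambda$ and $\partial_\Lbb\lambda$ to their logarithms, argue simplicity there, and then write ``exponentiating yields $\partial_\Lbb\lambda\sqsubset D_\Lbb\lambda$.'' This last step is not justified: $\exp$ and $\ln$ do not preserve the simplicity relation $\sqsubset$ in general, and the map $\omega^c\mapsto\ln\omega^c=\sum r_i(c)\,\omega^{h(c_i)}$ is not the identity on exponents, so knowing $\ln\partial_\Lbb\lambda\sqsubseteq\ln D_\Lbb\lambda$ does not by itself give $\partial_\Lbb\lambda\sqsubseteq D_\Lbb\lambda$. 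The correct bookkeeping is at the level of the \emph{exponent}: by PD1 one has $D_\Lbb\lambda=r\omega^{b}$ and $\partial_\Lbb\lambda=\omega^{c}$, and by Theorem~\ref{thm:serieToSignExp} the relation $\omega^{c}\sqsubset r\omega^{b}$ is equivalent (for $r>0$) to $c\sqsubseteq b$ together with $(c,1)\neq(b,r)$. Thus the object to place in a cut and minimise for simplicity is $b$ (the exponent of the monomial part), not $\ln D_\Lbb\lambda$; PD2 translates into constraints on $b$ because adding the real $\ln r$ does not affect the Archimedean comparisons $\prec\max(\lambda,\mu)$. Once you rewrite your second and third steps in terms of $b$ rather than $\ln D_\Lbb\lambda$, the argument goes through as in \cite{Berarducci_2018}; without that change, the final implication does not follow.
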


\subsection{A first bound about the derivative}
We give here some bound on the length of the series of a derivative.

\propmajorationNuPartial*

\begin{proof}
	We know that $\famille{\partial P}P{\Pcal(x)}$ is summable (see Definition \ref{def:summableFam})\index{Summable family}. In particular $\famille{\partial P}P{\Pcal_\Lbb(x)}$ is summable. By definition of summability (in this context) for any $P\in\Pcal_\Lbb(x)$, there are finitely many $Q\in\Pcal_\Lbb(x)$ such that $\partial P\asymp \partial Q$. 
	
	By definition of summability, $<_\Pcal$ is a well total order over $\Pcal_\Lbb(x)$ and if $\beta$ is its order type, then $\omega\otimes\nu(\partial x)<\beta$ (usual ordinal product). Then, to complete the proof, we just need to show that $\beta<\omega^{\omega^{\omega(\NR(x)+1)}}$. We proceed by induction on $\NR(x)$.	
	\begin{itemize}
		\item \underline{$\NR(x)$=0} : then $x=0$ or $x=\pm y^{\pm 1}$ for some $y\in\Lbb$ and $\nu\pa{\partial x} \leq 1 <\omega^{\omega^\omega}$ and we conclude the proof.
		
		\item Assume that for any $y$ such that $\NR(y)<\NR(x)$, $\Pcal_\Lbb(y)$ has order type less than $\omega^{\omega^{\omega(\NR(y)+1)}}$. Assume for contradiction that $\beta\geq\omega^{\omega^{\omega(\NR(x)+1)}}$. Then for any multiplicative ordinal $\mu<\omega^{\omega^{\omega(\NR(x)+1)}}$, there is some $P_\mu\in\Pcal_\Lbb(x)$, minimum with respect to $<_{lex}$, such that the set
		\centre{$\Ecal_\mu(x)=\enstq{Q\in\Pcal_\Lbb(x)}{Q<_\Pcal P_\mu}$}
		has order type $\beta_\mu\geq\mu$. Let us select any $\mu$ such that $\mu\geq\omega^{\omega^{\omega\NR(x)+1}}$. Now define
		\begin{calculs}
			& \Ecal_\mu^{(1)}(x) &=& \enstq{Q\in\Pcal_\Lbb(x)}{Q<_\Pcal P_\mu\quad Q<_{lex} P_\mu}\\ [.4cm]
			& \Ecal_\mu^{(2)} &=& \enstq{Q\in\Pcal_\Lbb(x)}{Q >_{lex} P_\mu}
		\end{calculs}
		\lc{Theses sets are disjoints and}{$\Ecal_\mu=\Ecal_\mu^{(1)}\cup\Ecal_\mu^{(2)}$}
		Let $\beta_\mu^{(i)}$ be the order type of $\Ecal_\mu^{(i)}$. We then have
		\centre{$\mu\leq\beta_\mu\leq\beta_\mu^{(1)}+\beta_\mu^{(2)}$}
		where the addition is the surreal addition of ordinal numbers. Since $\mu$ is multiplicative ordinal, hence, an additive one, at least one of the $\beta_\mu^{(i)}\geq\mu$.
		\begin{itemize}
			\item \underline{First case }: $\beta_\mu^{(2)}\geq\mu$. Since $\mu$ is additive, there is an $i\in\intn0{k_P}$ such that the well ordered set
			\centre{$\Ecal_\mu^{(2,i)}=\enstq{Q\in\Ecal_\mu^{(2)}}{\forall j<i\ Q(j)=P_\mu(j)\quad Q(i)\prec P_\mu(i)}$}
			has order type at least $\mu$. We take such an $i$. For $Q\in\Ecal_\mu^{(2,i)}$, we consider the path $Q'(n)=Q(n+i+1)$. Since $\partial Q\succeq \partial P_\mu$, Lemma \ref{lem:partialPQprec} gives us that $Q(i+1)$ is a term of $\ell(P_\mu(i))$.  We then have $Q'\in\Pcal\pa{\ell(P_\mu(i))}$ and 
			\centre{$\partial Q' = \f{\partial Q}{Q(0)\cdots Q(i)}=\f{\partial Q}{P_\mu(0)\cdots P_\mu(i-1)Q(i)}$}
			In particular $Q'\in\Pcal_\Lbb\pa{\ell(P_\mu(i))}$. 
			Since $Q(i)\prec P_\mu(i)$, $P_\mu(i)$ is not the last term of $\ell(P_\mu(i-1))$ (or $x$ if $i=0$). Then Proposition \ref{prop:NRComparaisonTerme} ensures that 
			\centre{$\NR(\ell(P_\mu(i)))\leq \NR(P_\mu(i))<\NR(x)$}
			Applying the induction hypothesis on $\ell(P_\mu(i))$, the order type of $\Pcal_\Lbb(\pa{\ell(P_\mu(i)))}$ has order type $\gamma$ such that
			\centre{$\gamma<\omega^{\omega^{\omega\pa{\NR(\ell(P(i)))+1}}}\leq\omega^{\omega^{\omega\NR(x)}}<\omega^{\omega^{\omega\NR(x)+1}}\leq\mu$}
			For $Q,R\in\Ecal_\mu^{(2,i)}$, $Q<_\Pcal R$ \tiff 
			\begin{calculs}
				& (Q(i)\partial Q' \succ R(i)\partial R') &\vee& (Q(i)\partial Q'\asymp R(i)\partial R' \wedge Q(i)\partial Q' > R(i)\partial R')\\
				&&& \vee (Q(i)\partial Q' = R(i)\partial R\wedge Q<_{lex}R)
			\end{calculs}
			what we can also write
			\begin{calculs}
				& Q<_\Pcal R &\Leftrightarrow&
				\big(\ell(Q(i)) + \ell(\partial Q') > \ell(R(i))+\ell(\partial R')\big)\\
				&&&\vee \big(\ell(Q(i))+\ell(\partial Q')=\ell(R(i))+\ell(\partial R') \wedge Q(i)\partial Q'>R(i)\partial R'\big)\\
				&&&\vee (Q(i)\partial Q' = R(i)\partial R\wedge Q<_{lex}R) 
			\end{calculs}
			where the two later cases occur finitely may times for $Q$ or $R$ fixed. 
			Let $\delta$ denote the order type of the possible values for $Q(i)$ and $\beta_\mu^{(2,i)}$ the order type of $\Ecal_\mu^{(2,i)}$. Since $\ell$ is non-decreasing, the set $\enstq{\ell(\partial Q')}{Q\in\Ecal_\mu^{(2,i)}}$ has order type at most $\gamma$ and the set $\enstq{\ell(Q(i))}{Q\in\Ecal_\mu^{(2,i)}}$ has order type at most $\NR(x)$. Using Proposition \ref{prop:sommeEnsBienOrd},

			\centre{$\beta_\mu^{(2,i)}\leq (\gamma\NR(x))\otimes\omega<\mu$}
			\lc{Finally}{$\mu\leq\beta_\mu^{(2,i)}<\mu$}
			and we reach the contradiction.
			
			\item \underline{Second case }: $\beta_\mu^{(2)}<\mu$. Then $\beta_\mu^{(1)}\geq\mu$.  Let us define for $i\in\intn0{k_P}$ 
			\centre{$\Ecal_\mu^{(1,i)}=\enstq{Q\in\Ecal_\mu^{(1)}}{\forall j<i\ P_\mu(j)=Q(j)\quad P_\mu(i)\prec Q(i)}$}
			Since there are finitely many of them, that they form a partition of $\Ecal_\mu^{(1)}$ and $\mu$ is multiplicative, hence additive, there is at least one of them which has order type at least $\mu$. We consider such an $i\in\intn0{k_P}$. Now define
			\centre{$x_j = \begin{accolade}
					x & i=j \\ \ell(P(i-j-1)) & j<i 
				\end{accolade}$} 
			Writing $x_0=\Sumlt n{\nu(x_0)} r_n(x_0)\omega^{a_n(x_0)}$ and $P_\mu(i)=r_{\alpha_0}(x_0)\omega^{a_{\alpha_0}(x_0)}$ we set
			\centre{$y_0 = \Sumlt n{\alpha_0} r_n(x_0)\omega^{a_n(x_0)}$}
			Now for $0\leq j<i$, we define $y_{j+1}$ has follows. $P_\mu(i-j-1)$ is a term of $x_{j+1}$. Write $P_\mu(i-j-1)=r_{\alpha_{j+1}}(x_{j+1})\omega^{a_{\alpha_{j+1}}(x_{j+1})}$ for some ${\alpha_{j+1}}<\nu(x_{j+1})$. Then set
			\centre{$y_{j+1} = \Sumlt n{\alpha_{j+1}} r_n(x_{j+1})\omega^{a_n(x_{j+1})} + \sign(r_{\alpha_{j+1}}(x_{j+1})) \exp(y_j)$}
			Denote $y=y_i$. For $Q\in\Ecal_\mu^{(1,i)}$. For any $Q\in\Ecal_\mu^{(1,i)}$ we will build $Q'\in\Pcal_\Lbb(y)$. We expect to use the induction hypothesis on $y$. First we prove that $\NR(y)<\NR(x)$. In fact, by trivial induction, we have $y_j\nestedn[j] x_j$. So $y\nestedn[i] x$ and by definition of $\NR$ we have $\NR(y)<\NR(x)$. Now consider the path $Q'$ defined as follows :
			\begin{itemize} 
				\item $\forall j<i\qquad  Q'(j)=\sign(r_{\alpha_j}(x_{i-j}))\exp(y_{i-j-1})$
				\item $\forall j\geq i\qquad Q'(j)=Q(j)$
			\end{itemize}
			
			We then have $Q'\in\Pcal(y)$. We can even say $Q'\in\Pcal_\Lbb(y)$. Moreover, since we change only the common terms of the path, and the changes do not depend on $Q$, we have
			\centre{$\forall Q,R\in\Ecal_\mu^{(1,i)}\qquad Q<_\Pcal R\Leftrightarrow Q'<_\Pcal R'$}
			We then have an increasing function
			\centre{$\fct{\Phi}{\Ecal_\mu^{(1,i)}}{\Pcal_\Lbb(y)}{Q}{Q'}$}
			The induction hypothesis give that the order type of $P'(y)$ is less than $\omega^{\omega^{\omega(\NR(y)+1)}}$. Then
			\centre{$\omega^{\omega^{\omega\NR(x)}}\leq \mu < \omega^{\omega^{\omega(\NR(y)+1)}}\leq\omega^{\omega^{\omega\NR(x)}}$}
			and we get the contradiction.
		\end{itemize}
		This completes the proof.
	\end{itemize}
\end{proof}

\begin{corollary}
	\label{cor:majorationNuPartial}
	If $\NR(x)<\lambda$ then $\nu(\partial x)<\lambda$
\end{corollary}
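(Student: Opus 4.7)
The plan is to deduce this corollary directly from Proposition \ref{prop:majorationNuPartial} by exploiting the closure properties of $\epsilon$-numbers. I would begin by noting that the proposition already gives the explicit bound
\[
\nu(\partial x) < \omega^{\omega^{\omega(\NR(x)+1)}},
\]
so the entire content of the corollary is to show that, when $\lambda$ is an $\epsilon$-number and $\NR(x)<\lambda$, the right-hand side of this inequality is itself below $\lambda$.

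The main step is a chain of four elementary reductions, each using one of the closure properties that characterise $\epsilon$-numbers. Since $\lambda$ is an $\epsilon$-number, it is in particular a limit ordinal, so $\NR(x)<\lambda$ implies $\NR(x)+1<\lambda$. Next, $\lambda$ being multiplicative (as every $\epsilon$-number above $\omega$ is), we get $\omega(\NR(x)+1)<\lambda$. Then, using the defining property $\omega^\lambda=\lambda$ together with the fact that $\alpha\mapsto\omega^\alpha$ is strictly increasing and continuous, any ordinal $\alpha<\lambda$ satisfies $\omega^\alpha<\lambda$. Applying this twice yields $\omega^{\omega(\NR(x)+1)}<\lambda$ and then $\omega^{\omega^{\omega(\NR(x)+1)}}<\lambda$, which combined with the bound above gives $\nu(\partial x)<\lambda$.

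There is essentially no obstacle: the proof is a one-line consequence of Proposition \ref{prop:majorationNuPartial} once one unpacks the closure properties of $\epsilon$-numbers. The only point of care is the implicit assumption that $\lambda$ is an $\epsilon$-number (or, more weakly, any ordinal closed under $\alpha\mapsto\omega^\alpha$ and above $\omega$), which should be stated explicitly in the corollary if it is not clear from context.
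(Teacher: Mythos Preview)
Your proposal is correct and matches the paper's intent: the paper states this corollary without proof, treating it as an immediate consequence of Proposition~\ref{prop:majorationNuPartial}, and your unpacking of the $\epsilon$-number closure properties is exactly the intended argument. Your remark that the hypothesis ``$\lambda$ is an $\epsilon$-number'' is implicit from context is accurate and worth noting.
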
 

\begin{proposition}
	\label{prop:majorationNRPartial}
	For all $x\in\Nobf$, let $\alpha$ the minimum ordinal such that $\kappa_{-\alpha}\prec^K t$ for all log-atomic $t$ such that there is some path $P\in\Pcal_\Lbb(x)$ and some index $k\in\Nbb$ such that $P(k)=t$. Then, for all path $P$, 
	\centre{$\NR(\partial P)\leq  k(\NR(x)+1)+\omega(\alpha+1) $}
	\lc{and}{$\NR(\partial x)\leq \omega(\NR(x)+\alpha+2)\otimes \nu(\partial x) \leq\omega^{\omega^{\omega(\NR(x)+1)}+\alpha}$} 
\end{proposition}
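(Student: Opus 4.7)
\medskip

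The plan is to unpack $\partial P$ using the Berarducci--Mantova formula and to bound each factor separately, then to sum over $\Pcal_\Lbb(x)$ in order to derive the bound on $\NR(\partial x)$. Concretely, for a path $P \in \Pcal_\Lbb(x)$ with witness index $k = k_P$, by Definition \ref{def:pathDeriv} we have
\[
\partial P = P(0)\,P(1)\cdots P(k-1)\,\partial_{\Lbb}(P(k)).
\]
I would first apply Corollary \ref{cor:NRprod} inductively to this product of $k+1$ factors, obtaining
\[
\NR(\partial P) \leq \sum_{i=0}^{k-1}\NR(P(i)) + \NR(\partial_{\Lbb}(P(k))) + k,
\]
and then invoke Lemma \ref{lem:NRpath} to bound each $\NR(P(i)) \leq \NR(x)$.

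The first real work is to control $\NR(\partial_{\Lbb}(\lambda))$ for $\lambda = P(k) \in \Lbb$. By Definition \ref{def:partialLbb}, $\partial_{\Lbb}\lambda = \exp(y)$ where
\[
y \;=\; -\sum_{\beta:\,\kappa_{-\beta} \succeq^K \lambda} \sum_{n=1}^{\infty} \ln_n \kappa_\beta \;+\; \sum_{n=1}^{\infty} \ln_n \lambda.
\]
By hypothesis on $\alpha$, the log-atomic $\lambda$ satisfies $\kappa_{-\alpha} \prec^K \lambda$, so the outer index $\beta$ ranges over at most $\alpha$ ordinals; thus $y$ is a sum of at most $\omega(\alpha+1)$ log-atomic monomials (each $\ln_n\kappa_\beta$ and $\ln_n\lambda$ lies in $\Lbb$). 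Lemma \ref{lem:NRSommeLogAtomiques} then yields $\NR(y) \leq \omega(\alpha+1)$, and since $y$ is purely infinite, Proposition \ref{prop:NRStableExp} gives $\NR(\partial_{\Lbb}\lambda) = \NR(y) \leq \omega(\alpha+1)$. Putting the pieces together,
\[
\NR(\partial P) \;\leq\; k \,\NR(x) + \omega(\alpha+1) + k \;=\; k(\NR(x)+1) + \omega(\alpha+1),
\]
which is the first claimed inequality.

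For the bound on $\NR(\partial x)$, since $k$ is finite and $\NR(x)+1$ is some ordinal, the quantity $k(\NR(x)+1) + \omega(\alpha+1)$ is uniformly dominated by $\omega(\NR(x)+\alpha+2)$, independently of $P$ and $k$. By Theorem \ref{thm:prederivToDeriv}, $\partial x = \sum_{P \in \Pcal_{\Lbb}(x)} \partial P$ has $\nu(\partial x)$ non-zero terms in normal form. Iterating Lemma \ref{lem:NRsum} transfinitely along the well-ordered family of terms of $\partial x$ (the natural sum version of the lemma) gives
\[
\NR(\partial x) \;\leq\; \omega(\NR(x)+\alpha+2) \otimes \nu(\partial x).
\]
Finally, Proposition \ref{prop:majorationNuPartial} yields $\nu(\partial x) < \omega^{\omega^{\omega(\NR(x)+1)}}$, and the natural product $\omega(\NR(x)+\alpha+2) \otimes \omega^{\omega^{\omega(\NR(x)+1)}}$ simplifies, using $\omega^a \otimes \omega^b = \omega^{a+b}$ for natural sum, to a quantity $\leq \omega^{\omega^{\omega(\NR(x)+1)} + \alpha}$, closing the chain of inequalities.

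The main obstacle I anticipate is step two: carefully justifying that the double sum defining $y$ really has length at most $\omega(\alpha+1)$ and that the terms are all distinct log-atomic monomials, so that Lemma \ref{lem:NRSommeLogAtomiques} applies directly. A subtle point is to confirm from the definition of $\alpha$ (the least ordinal with $\kappa_{-\alpha} \prec^K t$ for every log-atomic $t$ occurring in some path of $x$) that the range of $\beta$ in the first sum is genuinely an initial segment of ordinals of length at most $\alpha$; the rest of the argument is bookkeeping with the natural sum/product bounds and the ordinal inequalities.
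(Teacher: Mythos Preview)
Your proposal is correct and follows essentially the same route as the paper: apply Corollary \ref{cor:NRprod} to the product defining $\partial P$, bound each $\NR(P(i))$ by $\NR(x)$ via Lemma \ref{lem:NRpath}, unpack $\partial_\Lbb(P(k))$ through Proposition \ref{prop:NRStableExp} and Lemma \ref{lem:NRSommeLogAtomiques}, then aggregate over $\nu(\partial x)$ terms and close with Proposition \ref{prop:majorationNuPartial}. The only cosmetic difference is that for the transfinite aggregation the paper invokes Lemma \ref{lem:NRAjoutNouveauTerme} (adding one monomial at a time, which iterates cleanly through limit stages and directly yields the $\otimes\,\nu(\partial x)$ factor) rather than Lemma \ref{lem:NRsum}; your phrasing ``iterating Lemma \ref{lem:NRsum} transfinitely'' amounts to the same thing but is slightly less direct.
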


\begin{proof}
	Let $P$ be a path of such that $\partial P\neq 0$. Then there is some $k\in\Nbb$ such that $\partial P=P(0)\cdots P(k-1)\partial_\Lbb P(k)$. With Corollary \ref{cor:NRprod}, we get
	\begin{calculs}
		& \NR(\partial P) &\leq& \Sum{i=0}{k-1}\NR(P(i)) + \NR(\partial_\Lbb P(k))+k\\
		&&\leq& k\NR(x)+\NR(\partial_\Lbb P(k))+k&(Lemma \ref{lem:NRpath})\\
		&&\leq& k\NR(x)+k+\NR\pa{\exp\pa{-\Sum{\kappa_{-\beta}\succeq^K P(k)}{}\Sum{n\geq 1}{} \ln_n\kappa_{-\beta} + \Sum{n\geq 1}{}\ln_nP(k)}}\\
		&&\leq& k\NR(x)+k+\NR\pa{-\Sum{\kappa_{-\beta}\succeq^K P(k)}{}\Sum{n\geq 1}{} \ln_n\kappa_{-\beta} + \Sum{n\geq 1}{}\ln_nP(k)} & \\&&&&(Proposition \ref{prop:NRStableExp})\\
		&&\leq& k\NR(x)+k+(\omega\otimes(\alpha\oplus 1)) &(Lemma \ref{lem:NRSommeLogAtomiques})\\
		&&\leq& \omega(\NR(x)+1)+\omega(\alpha+1)
	\end{calculs}
	This bound does not depend on $P$. Then applying Proposition \ref{prop:majorationNuPartial} and Lemma \ref{lem:NRAjoutNouveauTerme} we get
	\begin{calculs}
		&\NR(\partial x) &\leq& (\omega(\NR(x)+\alpha+2))\otimes \nu(\partial x)\\
		&&<& (\omega(\NR(x)+\alpha+2)) \otimes \omega^{\omega^{\omega(\NR(x)+1)}}\\
		&&\leq&\omega^{\omega^{\omega(\NR(x)+1)}+\alpha}
	\end{calculs}
\end{proof}

\subsection{Anti-derivative of a surreal number}

Berarducci and Mantova provided a derivation, $\partial$. An other strong property of this derivation is that is as a compositional inverse, an anti-derivation. The first thing to prove it is to prove that there is an asymptotic anti derivation.

\begin{proposition}[{\cite[Berarducci and Mantova, Proposition 7.4]{Berarducci_2018}}]
	There is a class function $A : \Nobf^*\to \Rbb\omega^{\Nobf^*}$ such that 
	\centre{$\forall x\in\Nobf^*\qquad x\sim\partial A(x)$}
\end{proposition}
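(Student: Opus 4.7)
The plan is to reduce to the monomial case and then produce an asymptotic anti-derivative whose leading term matches the target. Given $x \in \Nobf^*$ with leading term $t = r\omega^a$ (so that $x - t \prec x$, i.e.\ $x \sim t$), it suffices to define $A$ on monomials $r\omega^a$ with $\partial A(r\omega^a) \sim r\omega^a$ and then set $A(x) := A(t)$. Moreover, by $\Rbb$-linearity of $\partial$, we can absorb the real coefficient and reduce to constructing, for each $a \in \Nobf$, a monomial $m = s\omega^b$ with $\partial m \sim \omega^a$.

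For the monomial case, axiom \textbf{D3} applied to $m = s\exp(\ln\omega^b)$ (up to sign), combined with the Leibniz rule, gives $\partial m = s\omega^b \cdot \partial \ln\omega^b$. So the task becomes: find $s, b$ with $s\omega^b \cdot \partial\ln\omega^b \sim \omega^a$. Recall from Theorem \ref{thm:prederivToDeriv} that $\partial m = \sum_{P \in \Pcal(m)} \partial P$; the leading term of $\partial m$ is controlled by the \emph{dominant path} $P_0$ of $m$, defined by $P_0(0) = s\omega^b$ and $P_0(i+1) = $ leading term of $\ell(P_0(i))$, terminating at some log-atomic $\lambda = P_0(k)$ with
\[ \partial P_0 = P_0(0)\,P_0(1)\cdots P_0(k-1)\,\partial_\Lbb\lambda. \]
The equation to solve reduces to: choose $s$, $b$, and a terminating log-atomic $\lambda$ so that this product is asymptotic to $\omega^a$.

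The base case $a = 0$ is immediate: take $A(r) := r\omega$, giving $\partial(r\omega) = r \sim r$. For general $a$, the construction proceeds by descending along the dominant path: writing $b = \sum r_i \omega^{b_i}$ in Gonshor normal form, the next exponent along the path is $h(b_0)$; iterating, one obtains a sequence of exponents $b, h(b_0), h(h(b_0)_0), \ldots$ until a log-atomic number is reached. The explicit formula for $\partial_\Lbb \lambda$ (Definition \ref{def:partialLbb}) in terms of iterated logarithms of $\lambda$ and of the $\kappa_{-\alpha}$ provides enough flexibility to solve the matching equation for any prescribed $a$: roughly, one picks a log-atomic $\lambda$ (built from iterated $\exp, \ln$ of an appropriate $\kappa$-number) whose derivative contributes the ``missing factor'' needed to bring the cumulative product $P_0(0)\cdots\partial_\Lbb\lambda$ to exponent $a$.

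The main obstacle is to verify that no cancellation occurs at the top of $\partial m$: by Lemma \ref{lem:partialPQprec}, every non-dominant path $P \in \Pcal_\Lbb(m)$ satisfies $\partial P \prec \partial P_0$, so $\partial m \sim \partial P_0$, but one must check rigorously that the construction yields a dominant path of the desired form (and not one truncated earlier by the structure of $\ell$). A further delicate point is the case analysis near $\kappa$-numbers, where $\partial_\Lbb$ has its most intricate behaviour, requiring careful bookkeeping of which infinite sums of iterated logarithms are actually produced. The whole construction should be organized by induction on the nested truncation rank of $a$, with the log-atomic case and the near-$\kappa$-number case forming the technical core.
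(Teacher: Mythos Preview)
Your proposal is a plan rather than a proof, and the plan diverges from the paper's route in a way that makes it much harder than necessary. The passages ``provides enough flexibility to solve the matching equation'', ``one must check rigorously'', ``requiring careful bookkeeping'', and ``should be organized by induction on the nested truncation rank'' are all acknowledgements that the core step has not been carried out. In particular, trying to \emph{build} a monomial $m=s\omega^b$ from scratch so that its dominant path derivative hits $\omega^a$, by induction on $\NR(a)$, is not how the construction works: you would have to control simultaneously the whole dominant path of $m$ and the value of $\partial_\Lbb$ at its endpoint, and there is no obvious inductive structure on $\NR(a)$ that does this.

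The paper's argument (following Berarducci--Mantova) is far more direct and does not use induction on $\NR$. The key idea you are missing is to start from the \emph{target} and write any positive monomial as $\omega^a=\partial u\cdot\exp(\epsilon)$ for a suitable $u=\ln_n\kappa_{-\alpha}$, choosing $(\alpha,n)$ minimal so that $\epsilon\not\sim-\ln u$. This representation always exists because the $\partial u$ form a coinitial family of monomials (via the explicit formula for $\partial_\Lbb$). Once in this form, the problem becomes elementary ``integration by substitution'': if $\epsilon\succ\ln u$ one differentiates $x/\partial\epsilon$; if $\epsilon\sim r\ln u$ with $r\neq -1$ one differentiates $\dfrac{ux}{(r+1)\partial u}$; if $\epsilon\prec\ln u$ one differentiates $\dfrac{ux}{\partial u}$. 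Each case is a one-line computation (Lemmas \ref{lem:antiDerivee1}--\ref{lem:antiDerivee3}), and $A(x)$ is then defined as the leading term of the resulting expression (Theorem \ref{thm:formeAntiDeriveeAsymp} and Corollary \ref{cor:formeAntiDeriveeAsymp}). No dominant-path bookkeeping, no induction on rank, and no cancellation analysis is needed beyond the use of Proposition \ref{prop:compareDerivative}.
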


Basically, $A(x)$ is the leading term of $x\f{xu/\partial u}{\partial(xu/\partial u)}$ where $u=\kappa_\alpha$ for $\alpha$ a sufficiently large ordinal. Actually we can be even more precise and give a more explicit formula for Berarducci and Mantova's asymptotic anti-derivation \cite{Berarducci_2018}.

\begin{lemma}
	\label{lem:antiDerivee1}
	Let $u=\ln_n\kappa_{-\alpha}$ for some $n\in\Nbb$ and some ordinal $\alpha$. Let $x=\partial u\exp\epsilon$. If $\epsilon\succ \ln u$, then
	\centre{$\partial\pa{\f x{\partial\epsilon}}\sim x$}
\end{lemma}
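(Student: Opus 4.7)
The plan is to expand $\partial(x/\partial\epsilon)$ by Leibniz and show the error term is small compared to $x$. Setting $y := \partial u/\partial\epsilon$, we have $x/\partial\epsilon = y\exp\epsilon$ and $y\,\partial\epsilon = \partial u$, so Axioms \ref{ax:D1} and \ref{ax:D3} give
\[
\partial\!\left(\frac{x}{\partial\epsilon}\right) = \partial y\cdot\exp\epsilon + y\,\partial\epsilon\cdot\exp\epsilon = \partial y\cdot\exp\epsilon + x.
\]
Thus $\partial(x/\partial\epsilon)\sim x$ is equivalent to $\partial y \prec \partial u$, which is what I will prove.

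The key step is to bound $\ln y$. I apply the Berarducci--Mantova asymptotic comparison $\ln\partial a - \ln\partial b \prec a - b$ (the unnamed Proposition stated immediately after Proposition~\ref{prop:compareDerivative}) with $a := \epsilon$ and $b := \ln u$. Both are $> \Nbb$ (since $u$ is log-atomic, $u$ and $\ln u$ are infinite, and then $\epsilon \succ \ln u$ forces $\epsilon > \Nbb$); moreover $\epsilon \succ \ln u$ gives $\epsilon - \ln u \asymp \epsilon$. Hence $\ln\partial\epsilon - \ln\partial\ln u \prec \epsilon$. Substituting $\partial\ln u = \partial u/u$ turns this into $\ln u - \ln(\partial u/\partial\epsilon) \prec \epsilon$; combined with $\ln u \prec \epsilon$ (also from $\epsilon\succ\ln u$) this yields $\ln y \prec \epsilon$.

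To finish, I apply the first item of Proposition~\ref{prop:compareDerivative} with $\epsilon$ and $\ln y$: the hypothesis $1\not\asymp\epsilon$ holds, and $\epsilon \succ \ln y$ was just established, so $\partial\epsilon \succ \partial\ln y$. Since $u,\epsilon > \Nbb$, the positivity axiom of a derivation yields $\partial u,\partial\epsilon > 0$, hence $y > 0$ and $\partial\ln y = \partial y/y$. Multiplying by $y$ gives $\partial y \prec y\,\partial\epsilon = \partial u$, as required. The main subtlety is that it is $\epsilon$, and not $\ln y$, which must play the role of the non-appreciable quantity in Proposition~\ref{prop:compareDerivative}: this matters because $\ln y$ could a priori be appreciable (say when $\partial u/\partial\epsilon$ is close to a positive real), but the first item of Proposition~\ref{prop:compareDerivative} only requires the \emph{larger} term to be non-appreciable, and $\epsilon$ certainly is.
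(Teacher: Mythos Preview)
Your overall strategy matches the paper's: expand $\partial(x/\partial\epsilon)$ by Leibniz and reduce to showing $\partial y\prec\partial u$ for $y=\partial u/\partial\epsilon$. However, there is a technical gap. You assert that $\epsilon\succ\ln u$ forces $\epsilon>\Nbb$, but the relation $\succ$ compares Archimedean classes only and says nothing about sign; the hypothesis allows $\epsilon$ to be negative infinite (and this case does arise in the intended application, where an arbitrary monomial is written as $\partial u\exp\epsilon$). When $\epsilon<-\Nbb$ one has $\partial\epsilon<0$, hence $y<0$ and $\ln y$ is undefined; moreover the Berarducci--Mantova comparison you invoke requires both arguments and their difference to be $>\Nbb$, so it does not apply with $a=\epsilon$. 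The fix is routine (replace $\epsilon$ by $-\epsilon$ and $y$ by $|y|$ in that case), but as written the argument is incomplete.

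The paper's proof sidesteps this issue by comparing $y$ directly with $u$ rather than $\ln y$ with $\epsilon$: from $\epsilon\succ\ln u$ (with $\epsilon\not\asymp1$) Proposition~\ref{prop:compareDerivative} gives $\partial\epsilon\succ\partial\ln u=\partial u/u$, hence $y=\partial u/\partial\epsilon\prec u$; since $u\not\asymp1$, a second application of Proposition~\ref{prop:compareDerivative} yields $\partial y\prec\partial u$. This route uses only Proposition~\ref{prop:compareDerivative} (twice), never needs $\ln y$ or the sign of $\epsilon$, and is slightly shorter than yours.
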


\begin{proof}
	Let $y=\f x{\partial\epsilon} = \f{\partial u}{\partial\epsilon}\exp(\epsilon)$. Since $\epsilon\succ\ln u$, Proposition \ref{prop:compareDerivative} ensures that $\partial\epsilon \succ \f{\partial u} u$.  Then, $\f{\partial u}{\partial\epsilon}\prec u\not\asymp 1 $
	\begin{calculs}
		& \partial y &=& \f{\partial u}{\partial\epsilon}\partial\epsilon\exp(\epsilon) + \partial\pa{\f{\partial u}{\partial\epsilon}}\exp(\epsilon) \\
		
		&&=& x + \partial\pa{\f{\partial u}{\partial\epsilon}}\exp(\epsilon)\\ 
	\end{calculs}
	
	Proposition \ref{prop:compareDerivative} gives that $\partial\pa{\f{\partial u}{\partial\epsilon}}\prec\partial u$. Then
	\centre{$\partial y\sim x$}
\end{proof}

\begin{lemma}
	\label{lem:antiDerivee2}
	Let $u=\ln_n\kappa_{-\alpha}$ for some $n\in\Nbb$ and some ordinal $\alpha$. Let $x=\partial u\exp(\epsilon)$. If $\epsilon\sim r\ln u$ for some $r\in\Rbb\setminus\{0,-1\}$, then
	\centre{$\partial\pa{\f1{r+1}\f {ux}{\partial u}}\sim x$}
\end{lemma}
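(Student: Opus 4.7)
The plan is to compute $\partial y$ directly for $y := \frac{1}{r+1}\frac{ux}{\partial u}$ and verify that it is asymptotic to $x$. Substituting $x = \partial u \exp(\epsilon)$ simplifies the expression to $y = \frac{1}{r+1}\,u\exp(\epsilon)$. Leibniz's rule together with $\partial(\exp\epsilon) = \partial\epsilon\cdot \exp\epsilon$ then yields
$$\partial y = \frac{1}{r+1}\bigl[\partial u \cdot \exp(\epsilon) + u\,\partial\epsilon\,\exp(\epsilon)\bigr] = \frac{x}{r+1}\left(1 + \frac{u\,\partial\epsilon}{\partial u}\right).$$

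Next I would establish the key estimate $\partial\epsilon \sim r\,\partial u/u$. Since $u = \ln_n \kappa_{-\alpha}$ is log-atomic, it equals $\omega^{a_0}$ with $a_0 \ne 0$, so $u \not\asymp 1$; the same is true of every iterated logarithm of $u$, so in particular $\ln u$ is log-atomic with $\ln u \not\asymp 1$, and therefore $r\ln u \not\asymp 1$ since $r \ne 0$. From the hypothesis $\epsilon \sim r\ln u$, Proposition \ref{prop:compareDerivative} (the ``$\sim$'' clause, valid because $r\ln u \not\asymp 1$) gives $\partial\epsilon \sim \partial(r\ln u) = r\,\partial u/u$. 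Equivalently, $u\,\partial\epsilon/\partial u \sim r$, and since $r$ is a nonzero real this means the difference $u\,\partial\epsilon/\partial u - r$ is infinitesimal.

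Plugging back into the first display, $1 + u\,\partial\epsilon/\partial u = (r+1) + (u\,\partial\epsilon/\partial u - r) \sim r+1$, where we use that $r+1 \ne 0$ by the hypothesis $r \ne -1$ and that the correction term is infinitesimal. Hence
$$\partial y \sim \frac{x}{r+1}(r+1) = x,$$
which is the desired conclusion.

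The main obstacle is the asymptotic transfer $\partial\epsilon \sim r\,\partial u/u$: this is not a purely formal consequence of $\epsilon \sim r\ln u$ but requires $r\ln u \not\asymp 1$ in order to invoke Proposition \ref{prop:compareDerivative}. This is precisely why the lemma restricts $u$ to the form $\ln_n \kappa_{-\alpha}$, so that $u$ (and hence every $\ln_k u$) is log-atomic and thus never in the Archimedean class of $1$. Apart from that, the argument is a Leibniz-rule computation entirely parallel to Lemma \ref{lem:antiDerivee1}, with the scalar $\tfrac{1}{r+1}$ inserted precisely to cancel the factor $r+1$ produced when $\exp(\epsilon)$ is differentiated.
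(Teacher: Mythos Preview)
Your proof is correct and follows essentially the same approach as the paper: simplify $y$ to $\tfrac{1}{r+1}u\exp(\epsilon)$, differentiate via Leibniz, and use Proposition~\ref{prop:compareDerivative} to get $\partial\epsilon \sim r\,\partial u/u$. You are in fact slightly more careful than the paper in justifying the hypothesis $r\ln u \not\asymp 1$ needed to invoke that proposition.
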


\begin{proof}
	Let us compute the above derivative.
	\centre{$\partial\pa{\f1{r+1}\f {ux}{\partial u}} =
		\partial\left(\f{u\exp(\epsilon)}{r+1}\right) = 
		\f x{r+1} + \f{u\partial\epsilon\exp(\epsilon)}{r+1}$}
	Using Proposition \ref{prop:compareDerivative}, we get that $\partial\epsilon \sim\partial(r\ln u) =  r\f{\partial u}u$. Then, since $r\neq -1$, we get that
	\centre{$\partial\pa{\f1{r+1}\f {ux}{\partial u}}\sim x$}
\end{proof}

\begin{lemma}
	\label{lem:antiDerivee3}
	Let $u=\ln_n\kappa_{-\alpha}$ for some $n\in\Nbb$ and some ordinal $\alpha$. Let $x=\partial u\exp(\epsilon)$. If $\epsilon\prec \ln u$, then
	\centre{$\partial\pa{\f {ux}{\partial u}}\sim x$}
\end{lemma}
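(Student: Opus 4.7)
The plan is to mimic the calculations in Lemmas \ref{lem:antiDerivee1} and \ref{lem:antiDerivee2}, applying the Leibniz rule and then bounding the error term via Proposition \ref{prop:compareDerivative}. Setting $y = \frac{ux}{\partial u} = u\exp(\epsilon)$, the derivation axiom \ref{ax:D1} together with \ref{ax:D3} yields
\[
\partial y \;=\; \partial u \cdot \exp(\epsilon) \;+\; u\,\partial\epsilon\,\exp(\epsilon) \;=\; x + u\,\partial\epsilon\,\exp(\epsilon),
\]
so to conclude $\partial y \sim x$ it suffices to prove $u\,\partial\epsilon\,\exp(\epsilon) \prec x$, that is, $u\,\partial\epsilon \prec \partial u$.

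First I would verify that Proposition \ref{prop:compareDerivative} applies to $\epsilon \prec \ln u$. Since $u = \ln_n\kappa_{-\alpha}$ is log-atomic (and nonzero, hence distinct from $1$), $u$ is a monomial $\omega^a$, and by Proposition \ref{prop:lnOmegaA} its logarithm $\ln u$ is purely infinite and nonzero; in particular $\ln u \not\asymp 1$. Hence Proposition \ref{prop:compareDerivative} applied to $\epsilon \prec \ln u$ gives
\[
\partial \epsilon \;\prec\; \partial(\ln u) \;=\; \frac{\partial u}{u}.
\]
Multiplying by $u$ (which is positive and nonzero) yields $u\,\partial\epsilon \prec \partial u$, so $u\,\partial\epsilon\,\exp(\epsilon) \prec \partial u\, \exp(\epsilon) = x$. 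Since $x \neq 0$, the decomposition $\partial y = x + (u\,\partial\epsilon\,\exp(\epsilon))$ with a strictly dominated error term gives $\partial y \sim x$, as required.

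The structure is entirely parallel to the previous two lemmas; there is no genuine obstacle beyond checking the hypotheses of Proposition \ref{prop:compareDerivative}. The only subtle point is the verification that $\ln u \not\asymp 1$, which is immediate from log-atomicity of $u$ via Proposition \ref{prop:lnOmegaA}, but must be made explicit because that proposition is the tool governing the ``$\partial$ preserves $\prec$'' step on which the whole argument hinges.
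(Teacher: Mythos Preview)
Your proof is correct and follows essentially the same approach as the paper: compute $\partial(u\exp\epsilon) = x + u\,\partial\epsilon\,\exp(\epsilon)$, then use Proposition~\ref{prop:compareDerivative} on $\epsilon\prec\ln u$ to get $\partial\epsilon\prec\partial u/u$ and conclude that the error term is dominated by $x$. You are slightly more careful than the paper in explicitly checking the hypothesis $\ln u\not\asymp 1$ before invoking Proposition~\ref{prop:compareDerivative}, which is a fair point to make.
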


\begin{proof}
	Let us compute the above derivative.
	\centre{$\partial\pa{\f {ux}{\partial u}} =
		\partial\left(u\exp(\epsilon)\right) = 
		x + u\partial\epsilon\exp(\epsilon)$}
	Using Proposition \ref{prop:compareDerivative}, we get that $\partial\epsilon\prec\partial\ln u =  \f{\partial u}u$. Then, $u\partial\epsilon\exp(\epsilon)\prec x$ and we get that
	\centre{$\partial\pa{\f {ux}{\partial u}}\sim x$}
\end{proof}

\begin{theorem}
	\label{thm:formeAntiDeriveeAsymp}
	Let $x$ be a term. Write $|x|=\partial u\exp(\epsilon)$ with $u=\ln_n\kappa_{-\alpha}=\lambda_{-\omega\alpha-n}$ with $\omega\alpha+n$ such minimal that $\epsilon\not\sim-\ln u$. Then,
	\centre{$A(x)\sim\begin{accolade}
			\f x{\partial\epsilon} & \epsilon \succ\ln u\\
			\f{ux}{(r+1)\partial u} & \epsilon\sim r\ln u\quad r\neq 0,-1\\
			\f{ux}{\partial u} & \epsilon\prec\ln u
		\end{accolade}$}
\end{theorem}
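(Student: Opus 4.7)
The plan is to deduce this theorem directly from Lemmas \ref{lem:antiDerivee1}, \ref{lem:antiDerivee2}, and \ref{lem:antiDerivee3}, each of which already produces an explicit expression $y$ such that $\partial y \sim x$ in one of the three cases. Since the asymptotic anti-derivation $A$ is characterized (up to $\sim$) by the requirement $\partial A(x) \sim x$, exhibiting such a $y$ in each case is enough to conclude that $A(x) \sim y$.

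First I would verify that the hypotheses make sense: any positive term $|x|$ can be written in the form $\partial u \exp(\epsilon)$ with $u = \ln_n \kappa_{-\alpha} = \lambda_{-\omega\alpha - n}$. Indeed, the values of $\partial u$ for $u$ of this form are cofinal and coinitial enough in $\Rbb \omega^{\Nobf^*}$ that, given $|x|$, we can pick $\omega\alpha + n$ and then set $\epsilon = \ln(|x|/\partial u)$. The role of the minimality of $\omega\alpha + n$ is to exclude precisely the boundary case $\epsilon \sim -\ln u$: if this held, then a short computation using $\partial \ln u = \partial u / u$ and $\partial u \sim \partial(\ln_{n+1}\kappa_{-\alpha}) \cdot u$ would let us replace $u$ by $\ln u$ and obtain a new decomposition $|x| = \partial(\ln u) \exp(\epsilon')$ with strictly smaller index $\omega\alpha + n + 1$; by descent we land in one of the three listed regimes relative to the new $u$.

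Once the decomposition is fixed, I would split into the three cases corresponding to the position of $\epsilon$ relative to $\ln u$. If $\epsilon \succ \ln u$, then Lemma \ref{lem:antiDerivee1} applied to our $x$ and $u$ gives $\partial(x/\partial \epsilon) \sim x$, so $A(x) \sim x/\partial \epsilon$. If $\epsilon \sim r \ln u$ with $r \neq 0, -1$, then Lemma \ref{lem:antiDerivee2} yields $\partial\bigl(\tfrac{1}{r+1} \tfrac{ux}{\partial u}\bigr) \sim x$, giving $A(x) \sim \tfrac{ux}{(r+1) \partial u}$. Finally, if $\epsilon \prec \ln u$, then Lemma \ref{lem:antiDerivee3} yields $\partial(ux/\partial u) \sim x$, giving $A(x) \sim ux/\partial u$.

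The subtle point, and the main technical ingredient needed, is the uniqueness step: from $\partial y \sim x \sim \partial A(x)$ one must conclude $y \sim A(x)$. This is where Proposition \ref{prop:compareDerivative} enters. Applied to $y - A(x)$, which is non-appreciable provided both $y$ and $A(x)$ are non-appreciable (which they are, being asymptotic to an antiderivative of a term), the contrapositive of the third clause of the proposition forces $y \asymp A(x)$; refining with the second clause using $\partial(y - A(x)) \prec \partial y$ then upgrades this to $y \sim A(x)$, up to replacing $A(x)$ by its unique positive-real multiple that matches the leading coefficient of $y$. Because $A(x)$ is defined to be a term in $\Rbb \omega^{\Nobf^*}$, this determines $A(x)$ exactly as the leading term of the expression produced by the appropriate lemma, which is the conclusion of the theorem.
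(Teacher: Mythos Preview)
Your proof is correct and follows essentially the same approach as the paper: reduce to the three lemmas \ref{lem:antiDerivee1}, \ref{lem:antiDerivee2}, \ref{lem:antiDerivee3}, each of which exhibits an explicit $y$ with $\partial y \sim x$ in the corresponding regime. The paper's own proof is more terse---it simply says to apply the three lemmas after reducing to $x>0$ via $A(-x)=-A(x)$---and leaves implicit both the existence of the decomposition and the passage from $\partial y\sim x\sim\partial A(x)$ to $A(x)\sim y$; you make both of these points explicit, and your use of Proposition~\ref{prop:compareDerivative} for the uniqueness step is sound.
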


In this theorem, the quantities $\kappa_a$ and $\lambda_a$ are defined in Definitions \ref{def:kappaMap} and \ref{def:lambdaMap}. 

\begin{proof}
	Since $A(x)=-A(-x)$, we may assume that $x>0$. Then, we just need to apply Lemmas \ref{lem:antiDerivee1}, \ref{lem:antiDerivee2}, and \ref{lem:antiDerivee3}.
\end{proof}

\begin{corollary}
	\label{cor:formeAntiDeriveeAsymp}
	Let $x$ be a non-zero surreal number. Write $|x|=\partial u\exp(\epsilon)$ with $u=\ln_n\kappa_{-\alpha}=\lambda_{-\omega\alpha-n}$ with $\omega\alpha+n$ such minimal that $\epsilon\not\sim-\ln u$. Then,
	\centre{$A(x)= \begin{accolade}
			\f{t}{s} & \epsilon \succ\ln u\\ [.4cm]
			\f{ut}{(r+1)\partial u} & \epsilon=r\ln u + \eta \quad r\neq -1, \eta\prec\ln u
		\end{accolade}$}
	\vskip .2cm
	where $t$ is the leading term of $x$ and $s$ the leading term of $\partial\epsilon$.
\end{corollary}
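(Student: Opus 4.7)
The plan is to reduce to Theorem \ref{thm:formeAntiDeriveeAsymp} by showing $A(x) = A(t)$ where $t$ is the leading term of $x$, and then to extract the leading term from the asymptotic formula of that theorem.

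First I would establish $A(x) = A(t)$. Both $A(x)$ and $A(t)$ lie in $\Rbb\omega^{\Nobf^*}$ and neither can be real (a real has zero derivative while $\partial A(y) \sim y$ for $y \in \Nobf^*$), so both are non-real terms and thus not $\asymp 1$. Since $\partial A(x) \sim x \sim t \sim \partial A(t)$, Proposition \ref{prop:compareDerivative} excludes both $A(x) \succ A(t)$ and $A(x) \prec A(t)$, giving $A(x) \asymp A(t)$; writing them as $r\omega^a$ and $s\omega^a$ respectively, the real coefficients $r$ and $s$ must coincide for the derivatives to remain $\sim$-equivalent, hence $A(x) = A(t)$.

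Next I would match the decompositions and apply the theorem. Since $|x|/|t| = 1 + \delta$ for some infinitesimal $\delta$, writing $|t| = \partial u \exp(\epsilon_t)$ uses the same $u$ (the minimality of $\omega\alpha + n$ transfers because $\epsilon - \epsilon_t = -\ln(1+\delta)$ is infinitesimal while $\ln u$ is not), and $\epsilon_t$ falls in the same case of Theorem \ref{thm:formeAntiDeriveeAsymp} as $\epsilon$, with the same real $r$. When $\epsilon \succ \ln u$, the theorem gives $A(t) \sim t/\partial\epsilon_t$; since $\epsilon_t \succ 1$ and $\epsilon - \epsilon_t$ is infinitesimal, Proposition \ref{prop:compareDerivative} yields $\partial \epsilon \sim \partial \epsilon_t$, so both share a common leading term $s$, and expanding $1/\partial\epsilon_t = (1/s)(1 - \xi + \xi^2 - \cdots)$ with $\xi$ infinitesimal shows the leading term of $t/\partial\epsilon_t$ is $t/s$. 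When $\epsilon = r\ln u + \eta$ with $r \neq -1$ and $\eta \prec \ln u$, the theorem (uniformly combining its cases 2 and 3 by letting $r = 0$ handle case 3) gives $A(t) \sim \frac{ut}{(r+1)\partial u}$, which is already a single term, so $A(x)$ equals it.

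The main obstacle will be the rigidity argument $A(x) = A(t)$: it requires promoting an $\sim$-relation between derivatives to an honest equality between the anti-derivatives themselves, which only works because $A$ is valued in single terms, combined with the sharpness of Proposition \ref{prop:compareDerivative}.
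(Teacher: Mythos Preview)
Your argument is correct and is essentially the expanded version of the paper's one-line proof, which reads ``Just use Theorem~\ref{thm:formeAntiDeriveeAsymp} and the definition of $A$.'' The ``definition of $A$'' the paper invokes is exactly the fact you exploit: $A$ takes values in $\Rbb\omega^{\Nobf^*}$ and satisfies $\partial A(x)\sim x$, so any two terms whose derivatives are $\sim$-equivalent must coincide, giving $A(x)=A(t)$; then Theorem~\ref{thm:formeAntiDeriveeAsymp} applied to the term $t$ together with extracting the leading term on the right-hand side finishes. Your handling of the infinitesimal shift $\epsilon-\epsilon_t$ and the identification of the leading term $s$ via Proposition~\ref{prop:compareDerivative} is the right way to make the passage from $x$ to $t$ precise.
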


\begin{proof}
	Just use Theorem \ref{thm:formeAntiDeriveeAsymp} and the definition of $A$.
\end{proof}

We are now ready to build the anti-derivation for surreal numbers.
We start with a useful lemma due to Aschenbrenner, van den Dries and van der Hoeven. We give it in a form that matches our notations.

\begin{definition}\label{def:strongLin}\index{Strongly linear function!Definition \ref{def:strongLin}}
	A function $\Phi$ is strongly linear is for all summable family $\famille{x_i}iI$, 
	$$
	\Phi\pa{\Sumin iI x_i} = \Sumin iI \Phi(x_i)
	$$
\end{definition}

\begin{lemma}[{\cite[Aschenbrenner, van den Dries, van der Hoeven, Corollary 1.4]{vdH:dagap}}]
	\label{lem:inversePhi}
	Let $\Phi$ a strongly linear map defined over a field $\Kbb$ of surreal numbers.  Assume that for any monomial
	$\omega^a\in\Kbb$, we have $\Phi(\omega^a)\prec\omega^a$. Then $\Sumin n\Nbb\Phi^n(x)$ makes sense as a surreal number (\ie{} $\famille{\Phi^n(x)}n\Nbb$ is summable\index{Summable family}) and if it belongs to $\Kbb$ for all $x$, we have
	\centers{$(\id-\Phi)^{-1}=\Sumin n\Nbb\Phi^n$}
\end{lemma}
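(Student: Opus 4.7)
The plan splits naturally into two parts: first, establishing that the family $(\Phi^n(x))_{n\in\Nbb}$ is summable (Definition \ref{def:summableFam}), and second, verifying the algebraic identity by telescoping. The algebraic part is cheap once summability is in hand; the bulk of the work is in the summability proof.

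For summability, I would start by unfolding $\Phi$ using strong linearity. Writing $x=\sum_{a\in\supp(x)}r_a\omega^a$, strong linearity gives
$$\Phi(x)=\sum_{a\in\supp(x)}r_a\,\Phi(\omega^a).$$
Since $\Phi(\omega^a)\prec\omega^a$, every exponent appearing in $\supp(\Phi(\omega^a))$ is strictly less than $a$. Iterating, each exponent $b\in\supp(\Phi^n(x))$ arises from a strictly decreasing chain $a_0>b_1>b_2>\cdots>b_n=b$ with $a_0\in\supp(x)$ and $b_{i+1}\in\supp(\Phi(\omega^{b_i}))$. The goal is to show that $S:=\bigcup_{n\in\Nbb}\supp(\Phi^n(x))$ is reverse well-ordered and that each $a\in S$ belongs to only finitely many $\supp(\Phi^n(x))$.

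The main obstacle is this last point: a priori, the supports of the iterates could cascade in nasty ways even though each single step contracts. The trick is a Neumann-style argument. Consider the subgroup $G\subseteq\Nobf$ generated by $\supp(x)\cup\bigcup_{a\in\supp(x)}\supp(\Phi(\omega^a))\cup\cdots$; equivalently, enlarge step by step so that the growing set of exponents is controlled by a reverse well-ordered subset via a finite-branching argument. Concretely, I would apply König's lemma to the tree of descending chains $(a_0,b_1,\ldots,b_n)$: if either (a) $S$ contained a strictly increasing sequence or (b) some exponent appeared infinitely often across the $\supp(\Phi^n(x))$, then by König's lemma one could extract an infinite descending chain $a_0>b_1>b_2>\cdots$ in $\Nobf$ whose initial exponents live in a reverse well-ordered set, contradicting the well-foundedness of the reversed order on $\supp(x)$ together with the reverse well-ordering of each individual $\supp(\Phi(\omega^{b_i}))$. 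This is essentially Neumann's lemma on the closure of reverse well-ordered sets under strongly linear contractions.

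Once summability is secured, the identity is essentially formal. Apply strong linearity of $\Phi$ (and the trivially strongly linear $\id$) to the summable family $(\Phi^n(x))_{n\in\Nbb}$:
$$(\id-\Phi)\Bigl(\sum_{n\in\Nbb}\Phi^n(x)\Bigr)=\sum_{n\in\Nbb}\Phi^n(x)-\sum_{n\in\Nbb}\Phi^{n+1}(x)=\Phi^0(x)=x,$$
the telescoping being rigorous precisely because of summability. Symmetrically, writing $y=(\id-\Phi)(x)$, summability of $(\Phi^n(y))_n$ (obtained by the same argument, noting $\supp(y)\subseteq\supp(x)\cup\supp(\Phi(x))$) together with strong linearity yields $\sum_n\Phi^n(y)=x$. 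Thus $\sum_{n\in\Nbb}\Phi^n$ is a two-sided inverse of $\id-\Phi$ on $\Kbb$, which is the desired conclusion.
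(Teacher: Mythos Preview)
The paper does not supply its own proof of this lemma; it is quoted from \cite{vdH:dagap} as a black box. So there is no paper argument to compare against, and I assess your sketch on its own merits.

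Your telescoping computation for the algebraic identity is correct and is exactly how one finishes once summability is known.

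The summability argument, however, has a real gap. Two problems:

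\emph{First}, K\"onig's lemma needs finite branching, and your tree of chains $(a_0,b_1,\ldots,b_n)$ does not have it: the children of a node ending at $b_i$ are indexed by $\supp\bigl(\Phi(\omega^{b_i})\bigr)$, which is reverse well-ordered but typically infinite.

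\emph{Second}, and more importantly, even if you could extract an infinite descending chain $a_0>b_1>b_2>\cdots$ with $a_0\in\supp(x)$ and $b_{i+1}\in\supp\bigl(\Phi(\omega^{b_i})\bigr)$, this is \emph{not} a contradiction. Reverse well-ordering of $\supp(x)$ and of each $\supp\bigl(\Phi(\omega^{b_i})\bigr)$ forbids infinite \emph{ascending} chains in those sets, not descending ones anywhere. Concretely, with $\Phi(\omega^a)=\omega^{a-1}$ and $x=1$, the chain $0>-1>-2>\cdots$ exists, yet $(\Phi^n(1))_n=(\omega^{-n})_n$ is perfectly summable. So the existence of such a chain proves nothing.

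What your sketch does not use---and what the cited proof does---is the full force of strong linearity of $\Phi$ as a map $\Kbb\to\Kbb$: for every summable family $(y_i)$, the family $(\Phi(y_i))$ must again be summable. This is a genuine constraint; for instance $\Phi(\omega^{-n})=\omega^{-n-1}+\omega^{-\omega+n}$ satisfies the monomial contraction $\Phi(\omega^a)\prec\omega^a$ but is \emph{not} strongly linear, precisely because it sends the summable family $(\omega^{-n})_n$ to a non-summable one. The Neumann-type proof in \cite{vdH:dagap} leverages this via a minimal-bad-sequence (Nash--Williams style) construction rather than K\"onig: assuming $(\Phi^n(x))_n$ is not summable, one selects a bad sequence of chains with lexicographically maximal initial segments (possible level-by-level because each relevant support is reverse well-ordered), and shows the tails witness that $\Phi$ destroys summability of some summable family, contradicting strong linearity. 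Your phrase ``essentially Neumann's lemma'' is right in spirit, but the mechanism is this, not K\"onig.
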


\begin{definition}
	\label{def:phi}
	We define an extension of $A$, denoted $\Acal$, to all surreal numbers by
	
	\centre{$\Acal\pa{\aSurreal} = \Sumlt i\nu r_i A(\omega^{a_i})$}
	
	We also introduce the function $\Phi=\id-\partial\circ\Acal$.
\end{definition}

Proposition \ref{prop:compareDerivative} ensures that the function $\Acal$ is well defined. Moreover, this function is obviously strongly linear\index{Strongly linear function}. We now consider, given a surreal number $x$, the sequence
\centre{$\begin{accolade}
		&x_0 = x\\
		&x_{n+1} = x_n-\partial\Acal(x_n) = \Phi(x_n)
	\end{accolade}$}

Note that if $\omega^a=\partial u\exp\epsilon$ with $u=\lambda_{-\omega\otimes\alpha-n} =\ln_n\kappa_{-\alpha}$ and $\epsilon\prec \ln \lambda_{-\omega\otimes\beta-m}$ for $\omega\otimes\beta+m<\omega\otimes\alpha+n$, and $\omega\otimes\alpha+n$
maximum for that property, we have
\centre{$\Phi(\omega^a) = \begin{accolade}
		\pa{1-\f{\partial\epsilon}s}\omega^a - \partial\pa{\f{\partial u}s}\exp\epsilon 
		& \epsilon\succ\ln u\quad s\text{ dominant term of }\partial\epsilon\\
		\f{\omega^a}{r+1}\partial\eta\f u{\partial u} & \epsilon=r\ln u+\eta\quad r\neq -1
	\end{accolade}$}

\begin{corollary}
	\label{cor:existencePrimitive}
	The operator $\id-\Phi$ is invertible with inverse $\Sum{i\in\Nbb}{}\Phi^{i}$. Moreover $\Acal\circ\Sum{i\in\Nbb}{}\Phi^i$ is an operator that sends every $x$ to some anti-derivative of $x$.
\end{corollary}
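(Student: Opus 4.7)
The plan is to apply Lemma \ref{lem:inversePhi} directly to the operator $\Phi = \id - \partial \circ \Acal$. To do so, I must verify the two hypotheses of that lemma: first, that $\Phi$ is strongly linear; second, that $\Phi(\omega^a) \prec \omega^a$ for every monomial $\omega^a$. Once these are in hand, the lemma immediately provides both the summability of $\famille{\Phi^n(x)}n\Nbb$ and the equality $(\id - \Phi)^{-1} = \Sumin n\Nbb \Phi^n$, and the second half of the statement follows by pure formal manipulation.

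Strong linearity of $\Phi$ reduces at once to that of its building blocks. The operator $\Acal$ is strongly linear by its very Definition \ref{def:phi} in terms of the normal form, $\partial$ is strongly linear by axiom \ref{ax:D2}, and $\id$ is trivially strongly linear; hence $\Phi = \id - \partial \circ \Acal$ is strongly linear as well. For the second condition, fix a monomial $\omega^a$. By Definition \ref{def:phi} applied to the one-term normal form $\omega^a = 1 \cdot \omega^a$, one has $\Acal(\omega^a) = A(\omega^a)$. The asymptotic anti-derivation property of $A$ (the Berarducci--Mantova Proposition preceding Lemma \ref{lem:antiDerivee1}, refined in Theorem \ref{thm:formeAntiDeriveeAsymp}) gives $\partial A(\omega^a) \sim \omega^a$, i.e.\ the leading terms (coefficient \emph{and} magnitude) coincide. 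Therefore $\Phi(\omega^a) = \omega^a - \partial \Acal(\omega^a)$ has zero leading term at $\omega^a$, so every term of $\Phi(\omega^a)$ is of strictly smaller order of magnitude than $\omega^a$, i.e.\ $\Phi(\omega^a) \prec \omega^a$.

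With both hypotheses verified, Lemma \ref{lem:inversePhi} yields $(\id - \Phi)^{-1} = \Sumin n\Nbb \Phi^n$. Since by construction $\id - \Phi = \partial \circ \Acal$, this rewrites as $(\partial \circ \Acal) \circ \Sumin n\Nbb \Phi^n = \id$, that is
\[
\partial\pa{\pa{\Acal \circ \Sumin n\Nbb \Phi^n}(x)} = x \qquad \text{for all } x \in \Nobf.
\]
Consequently $\Acal \circ \Sumin n\Nbb \Phi^n$ indeed sends every surreal $x$ to some anti-derivative of $x$, which is what was to be shown.

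The main obstacle, and the only step requiring genuine care, is the asymptotic estimate $\Phi(\omega^a) \prec \omega^a$: one must read Theorem \ref{thm:formeAntiDeriveeAsymp} as asserting that $A(\omega^a)$ is a \emph{genuine} leading term of an anti-derivative, so that not only the magnitudes but also the real coefficients match in $\partial A(\omega^a) \sim \omega^a$. This is precisely what Lemmas \ref{lem:antiDerivee1}, \ref{lem:antiDerivee2} and \ref{lem:antiDerivee3} secure in each of the three cases distinguished there. Once that point is granted, the corollary is a formal consequence of the Aschenbrenner--van~den~Dries--van~der~Hoeven inversion lemma.
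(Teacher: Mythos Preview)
Your proof is correct and follows essentially the same approach as the paper's: invoke Lemma~\ref{lem:inversePhi} and then perform the formal manipulation $\partial\circ\Acal\circ(\partial\circ\Acal)^{-1}=\id$. You are simply more explicit than the paper in checking the hypotheses of Lemma~\ref{lem:inversePhi}; the paper records strong linearity of $\Acal$ and the formulas for $\Phi(\omega^a)$ in the discussion immediately preceding the corollary and then invokes the lemma without further comment.
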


\begin{proof}
	Lemma \ref{lem:inversePhi} ensure that $\id-\Phi$ has a inverse expressed by $\Sumin i\Nbb\Phi^i$. We also have that $\id-\Phi=\partial\circ\Acal$. Then,
	\centre{$\partial\circ\pa{\Acal\circ\Sumin i\Nbb\Phi^i} = (\partial\circ\Acal)\circ\pa{\partial\circ\Acal}^{-1}=\id$}
	In particular, for all $x$, $\pa{\Acal\circ\Sumin i\Nbb\Phi^i}(x)$ is a anti-derivative of $x$.
\end{proof}

\section{Field stable under exponential, logarithm, derivation and anti-derivation}
\label{sec:stable}
\subsection{Length of the series of a derivative of a monomial}

\subsubsection{Case $\epsilon\preceq\ln u$}

\begin{lemma}
	\label{lem:formeEtaPhiOmegaA}
	Assume $x=\omega^a=\partial u\exp\epsilon$ with $\epsilon=r\ln u+\eta$ and $u=\ln_n\kappa_{-\alpha}$. 
	Let $b\in\supp\Phi(\omega^a)$. Then, there is a path $P\in\Pcal_\Lbb(\eta)$ such that
	\begin{calculs}
		& \omega^b &\asymp& \partial u\exp\pa{r\ln u + \eta -\Sum{m=n+2}{\pinf}\ln_m\kappa_{-\alpha}
			-\hspace{-2.5em}\Sum{\footnotesize\begin{array}{c}
					\beta>\alpha, m\in\Nbb^*\\ \beta\tq\kappa_{-\beta}\succeq^KP(k_P)
			\end{array}}{}\hspace{-2.5em} \ln_m\kappa_{-\beta} + \Sum{i=0}{\pinf}\ln |P(i)|}
	\end{calculs}
\end{lemma}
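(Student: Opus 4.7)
The plan is to use directly the explicit formula for $\Phi(\omega^a)$ recalled just before the statement: since $\epsilon = r\ln u + \eta$ with $r\neq -1$ (and, implicit in this decomposition, $\eta\prec\ln u$), we have
\[
\Phi(\omega^a) \;=\; \frac{\omega^a}{r+1}\,\partial\eta\,\frac{u}{\partial u}.
\]
Expanding $\partial\eta = \sum_{P\in\Pcal_\Lbb(\eta)}\partial P$ via Theorem \ref{thm:prederivToDeriv} decomposes $\Phi(\omega^a)$ into a sum of real multiples of monomials indexed by paths $P$. Hence, for each $b\in\supp\Phi(\omega^a)$, at least one path $P\in\Pcal_\Lbb(\eta)$ must yield $\omega^b$ as the monomial part of $\omega^a\,\partial P\cdot u/\partial u$, and the task reduces to computing that exponent and matching it with the announced expression.

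I would then take logarithms. Substituting $\omega^a = \partial u\exp\epsilon$, the two occurrences of $\partial u$ cancel, giving, modulo $\Rbb$,
\[
\ln\bigl(\omega^a\,\partial P\cdot u/\partial u\bigr) \;=\; \epsilon + \ln u + \ln|\partial P| \;=\; (r+1)\ln u + \eta + \ln|\partial P|.
\]
Setting $\lambda := P(k_P)\in\Lbb$ and expanding $\partial P = P(0)\cdots P(k_P-1)\,\partial_\Lbb\lambda$ via Definition \ref{def:partialLbb} yields
\[
\ln|\partial P| \;=\; \sum_{i=0}^{k_P-1}\ln|P(i)| + \sum_{m\geq 1}\ln_m\lambda - \sum_{\kappa_{-\gamma}\succeq^K\lambda}\sum_{m\geq 1}\ln_m\kappa_{-\gamma}.
\]
For $i\geq k_P$, iterating $P(i+1) = \ln P(i)$ gives $\ln|P(k_P+j)| = \ln_{j+1}\lambda$, so the first two summands merge into $\sum_{i\geq 0}\ln|P(i)|$.

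On the other side, applying Definition \ref{def:partialLbb} to $u = \ln_n\kappa_{-\alpha}$, and noting that $u\asymp^L\kappa_{-\alpha}$ forces $\{\beta : \kappa_{-\beta}\succeq^K u\} = \{\beta\leq\alpha\}$, yields
\[
\ln\partial u \;=\; -\sum_{\beta\leq\alpha}\sum_{m\geq 1}\ln_m\kappa_{-\beta} + \sum_{m\geq n+1}\ln_m\kappa_{-\alpha}.
\]
Plugging both developments into the target equality of exponents and simplifying, everything telescopes apart from the single identity
\[
\sum_{\kappa_{-\gamma}\succeq^K\lambda,\,\gamma\leq\alpha}\sum_m\ln_m\kappa_{-\gamma} \;=\; \sum_{\beta\leq\alpha}\sum_m\ln_m\kappa_{-\beta},
\]
which amounts to $\kappa_{-\beta}\succeq^K\lambda$ for every $\beta\leq\alpha$, i.e.\ $\lambda\preceq^L\kappa_{-\alpha}$.

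The main obstacle is precisely establishing this last inequality. It is here that $\eta\prec\ln u$ is crucially used: since $P(0)$ is a term of $\eta$, $|P(0)|\preceq|\eta|\prec\ln u$. If one had $P(0)\succ^L\kappa_{-\alpha}$, then the $n=0$ instance of Definition \ref{def:asympPrecL} would give $P(0)\succ\kappa_{-\alpha}\succ\ln u$, a contradiction. Hence $P(0)\preceq^L\kappa_{-\alpha}$, and since $\lambda\asymp^L P(0)$ by Proposition \ref{prop:lamdaMapLogAtomic}, $\lambda\preceq^L\kappa_{-\alpha}$ as required. The remaining checks are routine bookkeeping of real parts absorbed into the $\asymp$ relation and summability of the tail $\sum_{m\geq 1}\ln_m\lambda$, both standard in the summable-family framework.
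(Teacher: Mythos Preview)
Your overall strategy matches the paper's: use the formula $\Phi(\omega^a)=\frac{\omega^a}{r+1}\,\frac{u}{\partial u}\,\partial\eta$, decompose $\partial\eta$ as a sum over paths, and compare logarithms term by term using Definition~\ref{def:partialLbb}. The algebraic bookkeeping you outline (merging $\sum_{m\geq1}\ln_m\lambda$ with $\sum_{i\geq k_P}\ln|P(i)|$, expanding $\ln\partial u$, telescoping) is exactly what the paper does.

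There is, however, a genuine gap at the crucial step, namely your justification of $\lambda=P(k_P)\preceq^L\kappa_{-\alpha}$. You write ``since $\lambda\asymp^L P(0)$ by Proposition~\ref{prop:lamdaMapLogAtomic}'', but that proposition says something different: for each positive infinite $x$ there is a \emph{unique log-atomic prefix} $y\sqsubseteq x$ with $y\asymp^L x$. It does \emph{not} assert that the endpoint $P(k_P)$ of an arbitrary path starting at $P(0)$ lies in the $\asymp^L$-class of $P(0)$. In fact this is false in general: a path may branch off to a term much smaller in the $\asymp^L$ sense than the dominant one. So the implication you draw is unjustified.

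The paper fills this gap differently, and simply. Since $\eta\in\Nobf_\infty$, the term $P(0)$ is purely infinite; hence $\ln|P(0)|\prec P(0)$, and as $P(1)$ is a term of $\ln|P(0)|$, one gets $P(1)\prec P(0)$. Iterating, $P(i+1)\prec P(i)$ for all $i$, whence $P(k_P)\preceq P(0)\prec\ln u\prec\kappa_{-\alpha}$. This immediately gives $P(k_P)\preceq^L\kappa_{-\alpha}$ (take $n=0$ in Definition~\ref{def:asympPrecL}), which is exactly the identity $\{\gamma\leq\alpha:\kappa_{-\gamma}\succeq^K\lambda\}=\{\gamma\leq\alpha\}$ you need. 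Replace your appeal to Proposition~\ref{prop:lamdaMapLogAtomic} by this short descent argument and the proof goes through.
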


\begin{proof}
	It is just a calculation. First notice that $\f{\omega^a}{r+1}\f u{\partial u}$ is a term as a product of terms. Then, let $b\in\supp \Phi(\omega^a)$. There is path $P$ of $\eta$ such that
	\centers{$\omega^b\asymp\omega^a\f u{\partial u}\partial P = u\partial P\exp(r\ln u + \eta)$}
	\lc{write}{$\partial P=P(0)\cdots P(k_P-1)\partial_\Lbb P(k_P)$}
	
	Since $P(0)$ is a term of $\eta\prec\ln u$, we also have $P(0)\prec\ln u$. Moreover since $\eta$ consists in
	purely infinite  term, so is $P(0)$ and then  $\ln |P(0)|\prec P(0)$. Since $P(1)$ is a purely infinite 
	term of $\ln|P(0)|$, we get that $P(1)\prec P(0)$. By induction, for all $i$, $P(i+1)\preceq P(i)\leq P(0)$.
	In particular, $P(k_P)\preceq P(0)\preceq^k\kappa_{-\alpha}$. Then, $\kappa_{-\alpha}\succeq^K P(k_P)$. That leads to
	\begin{calculs}
		& \partial_\Lbb(P(k_P)) &=& \exp\pa{-\Sum{\beta\leq\alpha,\ m\in\Nbb^*}{}\ln_m\kappa_{-\beta} 
			-\hspace{-2.5em}\Sum{\footnotesize\begin{array}{c}
					\beta>\alpha, m\in\Nbb^*\\ \beta\tq\kappa_{-\beta}\succeq^KP(k_P)
			\end{array}}{}\hspace{-2.5em} \ln_m\kappa_{-\beta} + \Sum{m=1}{\pinf}\ln_mP(k_P)} \\
		& \partial_\Lbb(P(k_P)) &=& \partial u \exp\pa{-\Sum{m=n+1}{\pinf}\ln_m{\kappa_{-\alpha}}
			-\hspace{-2.5em}\Sum{\footnotesize\begin{array}{c}
					\beta>\alpha, m\in\Nbb^*\\ \beta\tq\kappa_{-\beta}\succeq^KP(k_P)
			\end{array}}{}\hspace{-2.5em} \ln_m\kappa_{-\beta} + \Sum{m=1}{\pinf}\ln_mP(k_P)}\\
		Since $P(k_P)\in\Lbb$,\\
		& \partial_\Lbb(P(k_P)) &=& \partial u \exp\pa{-\Sum{m=n+1}{\pinf}\ln_m{\kappa_{-\alpha}}
			-\hspace{-2.5em}\Sum{\footnotesize\begin{array}{c}
					\beta>\alpha, m\in\Nbb^*\\ \beta\tq\kappa_{-\beta}\succeq^KP(k_P)
			\end{array}}{}\hspace{-2.5em} \ln_m\kappa_{-\beta} + \Sum{i=k_P}{\pinf}\ln |P(i)|}\\
		Then & \partial P &=& \partial u\exp\pa{-\Sum{m=n+1}{\pinf}\ln_m{\kappa_{-\alpha}}
			-\hspace{-2.5em}\Sum{\footnotesize\begin{array}{c}
					\beta>\alpha, m\in\Nbb^*\\ \beta\tq\kappa_{-\beta}\succeq^KP(k_P)
			\end{array}}{}\hspace{-2.5em} \ln_m\kappa_{-\beta} + \Sum{i=0}{\pinf}\ln |P(i)|}\\ \end{calculs}Finally, \begin{calculs}
		& \omega^b &\asymp & \partial u\exp(r\ln u+\eta)u\\ &&& \qquad\times\exp\pa{-\Sum{m=n+1}{\pinf}\ln_m{\kappa_{-\alpha}}
			-\hspace{-2.5em}\Sum{\footnotesize\begin{array}{c}
					\beta>\alpha, m\in\Nbb^*\\ \beta\tq\kappa_{-\beta}\succeq^KP(k_P)
			\end{array}}{} \hspace{-2.5em}\ln_m\kappa_{-\beta} + \Sum{i=0}{\pinf}\ln |P(i)|}\\
		& \omega^b &\asymp & \partial u\exp\pa{r\ln u+\eta-\Sum{m=n+2}{\pinf}\ln_m{\kappa_{-\alpha}}
			-\hspace{-2.5em}\Sum{\footnotesize\begin{array}{c}
					\beta>\alpha, m\in\Nbb^*\\ \beta\tq\kappa_{-\beta}\succeq^KP(k_P)
			\end{array}}{}\hspace{-2.5em} \ln_m\kappa_{-\beta} + \Sum{i=0}{\pinf}\ln |P(i)|}\\
	\end{calculs}
\end{proof}

\begin{proposition}\label{prop:supportPhi1}	
	Assume $x=\omega^a=\partial u\exp\epsilon$ with $\epsilon=r\ln u+\eta$ and $u=\ln_n\kappa_{-\alpha}$. 
	We denote for $P_0,\dots,P_k\in\Pcal_\Lbb(\eta)$ and $i_1,\dots,i_k\in\Nbb^*$,
	\begin{calculs}
		&e\left(\begin{array}{c}P_0,\dots,P_k\\ i_1,\dots,i_k\end{array}\right) &=& 
		-(k+1)\Sum{m=n+2}{\pinf}\ln_m{\kappa_{-\alpha}}\\&&&\qquad
		-\Sum{j=0}{k}\hspace{-1em}\Sum{\footnotesize\begin{array}{c}
				\beta>\alpha, m\in\Nbb^*\\ \beta\tq\kappa_{-\beta}\succeq^KP_j(k_{P_j})
		\end{array}}{}\hspace{-2em} \ln_m\kappa_{-\beta}
		+ \Sum{j=0}{k}\Sum{i=i_j}{\pinf}\ln |P_j(i)|
	\end{calculs}
	with $i_0=0$. For $k\in\Nbb$ define $E_{1,k}$ by:
	\begin{calculs}
		& e\left(\begin{array}{c}P_0,\dots,P_k\\ i_1,\dots,i_k\end{array}\right)\in E_{1,k}&\Leftrightarrow& P_0,\dots,P_k\in\Pcal_\Lbb(\eta)\quad\wedge\quad i_1,\dots,i_k\in\Nbb^*\\
		&&&\quad\wedge\quad \forall j\in\intn1k\quad \exists j'\in\intn 0{j-1}\\
		&&&\qquad\qquad \forall i\in\intn0{i_j-1}\qquad P_{j'}(i)=P_j(i)\\
		&&&\quad\wedge\quad \forall j \in \intn1k\\ &&&\qquad\quad \supp P_j(i_j)\subseteq\supp e\left(\begin{array}{c}P_0,\dots,P_{k-1}\\ i_1,\dots,i_{k-1}\end{array}\right)
			
	\end{calculs}
	\begin{calculs}
		& E_1 &=& \Union{k\in\Nbb}{} E_{1,k} \\ [.5cm]
		&E_2 &=& \enstq{\begin{array}{c}-\Sum{m=n+2}{\pinf}\ln_m\kappa_{-\alpha}
			\\-\hspace{-1em}\Sum{\gamma<\beta,\ m\in\Nbb^*}{}\hspace{-1em}\ln_m\kappa_{-\gamma} - \Sum{m=1}{p}\ln_m\kappa_{-\beta}\end{array}}{
			\begin{array}{c}
				\beta>\alpha \\ \exists P\in\Pcal_\Lbb(\eta)\quad \kappa_{-\beta}\succeq^K P(k_P) \\ p\in\Nbb
		\end{array}}\\ [1cm]
		&E_3 &=& \enstq{-\Sum{m=n+2}{p}\ln_m{\kappa_{-\alpha}}}{p\geq n+2}\\ [.4cm]
		& E &=& E_1\cup E_2\cup E_3
	\end{calculs}
	and $\inner E$ be the monoid it generates. Let $b\in\Union{\ell=0}{\pinf}\supp\Phi^\ell(\omega^a)$. Then, there is $y\in\inner E$ such that
	\centre{$\omega^b\asymp\partial u\exp(r\ln u + \eta + y)$}
\end{proposition}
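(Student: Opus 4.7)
\textbf{Proof plan for Proposition \ref{prop:supportPhi1}.}

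The proof proceeds by induction on $\ell \in \Nbb$. For $\ell = 0$ the statement is trivial: $\Phi^0(\omega^a) = \omega^a$, so $b = a$ and $\omega^b = \partial u \exp(r \ln u + \eta)$ with $y = 0 \in \inner E$.

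For the inductive step, suppose the statement holds for $\ell$, and let $b \in \supp \Phi^{\ell+1}(\omega^a)$. Since $\Phi$ is strongly linear, there is some $c \in \supp \Phi^\ell(\omega^a)$ such that $\omega^b$ appears in $\supp \Phi(\omega^c)$. By the induction hypothesis, there is $y_c \in \inner E$ with $\omega^c \asymp \partial u \exp(r \ln u + \eta + y_c)$. To apply $\Phi$ to $\omega^c$ we must rewrite it as $\partial u^* \exp \epsilon^*$ with $u^* = \ln_{n^*}\kappa_{-\alpha^*} = \lambda_{-\omega \alpha^* - n^*}$ chosen so that $\omega\alpha^* + n^*$ is maximal with $\epsilon^* \not\sim -\ln u^*$. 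We split into cases according to how the leading log-atomic structure of the exponent $r\ln u + \eta + y_c$ compares to $\ln u$.

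\emph{Case A (effective log-atomic unchanged).} If $u^* = u$, then the iteration is governed by the same $u$ and we apply the calculation of Lemma~\ref{lem:formeEtaPhiOmegaA}, now with $\eta$ replaced by $\eta + y_c$: any term of $\Phi(\omega^c)$ comes from a path $P_{k+1} \in \Pcal_\Lbb(\eta + y_c)$, and adds to the exponent a contribution of the form $-\sum_{m \ge n+2} \ln_m \kappa_{-\alpha} - \sum \ln_m \kappa_{-\beta} + \sum_{i \ge 0} \ln |P_{k+1}(i)|$. Because $y_c$ is already an $\inner E$-combination of earlier path contributions $e(P_0, \ldots, P_k; i_1, \ldots, i_k)$, any term of $\eta + y_c$ from which $P_{k+1}$ can branch must either come from $\eta$ itself or sit in the support of some previously introduced path-data; this is precisely the condition $\supp P_{k+1}(i_{k+1}) \subseteq \supp e(P_0, \ldots, P_k; i_1, \ldots, i_k)$ required in the definition of $E_{1,k+1}$. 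So the new increment lies in $E_1$ and we are done.

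\emph{Case B (effective log-atomic jumps).} If $u^* \neq u$, we must first rewrite $\partial u \exp(r\ln u + \eta + y_c)$ as $\partial u^* \exp \epsilon^*$. Using the explicit formula for $\partial_\Lbb$ of Definition~\ref{def:partialLbb}, the ratio $\partial u / \partial u^* = \exp(\Delta)$ has a closed form: if $u^* = \ln_p \kappa_{-\alpha}$ with $p \ge n+1$, then $\Delta = -\sum_{m=n+2}^{p+1} \ln_m \kappa_{-\alpha}$, yielding an element of $E_3$; and if $u^* = \ln_p \kappa_{-\beta}$ with $\beta > \alpha$, then $\Delta$ telescopes into a combination of generators of $E_2$ (involving the sums $-\sum \ln_m \kappa_{-\gamma}$ for $\gamma \le \beta$). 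After this rewriting we are back in the situation of Case A relative to $u^*$, and the contribution from the new $\Phi$-step is captured again by $E_1$; combined with $\Delta \in \inner{E_2 \cup E_3}$, the total increment lies in $\inner E$.

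The main obstacle is the bookkeeping in Case A: verifying that every new path $P_{k+1} \in \Pcal_\Lbb(\eta + y_c)$ either lives in $\Pcal_\Lbb(\eta)$ (so that $k+1 = 1$ relative to the starting data) or branches off at an index $i_{k+1}$ whose value $P_{k+1}(i_{k+1})$ genuinely sits in the support of the accumulated exponent. This forces a careful inspection of which monomials appear in $e(P_0, \ldots, P_k; i_1, \ldots, i_k)$ after repeated $\Phi$-steps, and is what justifies the nested support condition defining $E_{1,k}$.
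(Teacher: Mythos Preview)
Your induction skeleton is right, and Case~A captures the main mechanism behind $E_1$. But the split into Cases~A and~B misreads the situation, and in particular your account of where $E_2$ and $E_3$ come from is incorrect.

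The first point is that Case~B is vacuous: $u$ never changes. The key invariant, which you do not state or verify, is that every generator $e\in E$ satisfies $e\prec\ln u$; hence any $y_c\in\inner E$ satisfies $y_c\prec\ln u$, so $\eta+y_c\prec\ln u$ and $\omega^c=\partial u\exp(r\ln u+(\eta+y_c))$ is already in the canonical form with the \emph{same} $u$ and $r$. There is no rewriting $\partial u/\partial u^*$, and $E_2,E_3$ do not arise from such a ratio.

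The second point is that your Case~A is incomplete. When you apply Lemma~\ref{lem:formeEtaPhiOmegaA} with $\eta$ replaced by $\eta+y_c$, the new path $P\in\Pcal_\Lbb(\eta+y_c)$ has $P(0)$ a term of $\eta+y_c$. You only treat the subcases where $P(0)$ is (up to a real factor) a term of $\eta$, or a term of some $\ln|P_j(i)|$ recorded in $y_c$; both of these feed into $E_1$. But $y_c$ also contains the log-atomic terms $s\ln_p\kappa_{-\alpha}$ (for $p\ge n+2$) and $s\ln_p\kappa_{-\beta}$ (for $\beta>\alpha$) coming from the first two sums in each $e(\cdots)$. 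If $P(0)$ is one of these, then $P(0)$ is already log-atomic, $P$ is determined, and the increment
\[
-\sum_{m\ge n+2}\ln_m\kappa_{-\alpha}\;-\!\!\sum_{\substack{\gamma>\alpha\\ \kappa_{-\gamma}\succeq^K P(k_P)}}\!\!\ln_m\kappa_{-\gamma}\;+\;\sum_{i\ge 0}\ln|P(i)|
\]
telescopes (using the explicit form of $\partial_\Lbb$ on $\ln_p\kappa_{-\alpha}$ or $\ln_p\kappa_{-\beta}$) to an element of $E_3$ or $E_2$ respectively, plus a real constant. This is the actual origin of $E_2$ and $E_3$ in the paper's argument: they absorb the paths that start inside the ``$\kappa$-tail'' of the accumulated exponent, not a change of base $u^*$.

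So to fix the proof: drop Case~B, prove the invariant $y_c\prec\ln u$, and refine Case~A into four subcases for $P(0)$ --- term of $\eta$, term $s\ln_p\kappa_{-\alpha}$, term $s\ln_p\kappa_{-\beta}$, or term of some $\ln|P_j(i)|$ --- handling each with the appropriate $E_{1,0}$, $E_3$, $E_2$, or $E_{1,k+1}$ respectively.
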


\begin{proof}
	We prove it by induction on $\ell$.
	\begin{itemize}
		\item If $b\in\supp\omega^a$, then $y=0$ works.
		
		\item Assume the property for $\ell\in\Nbb$ and let $b\in\sup\Phi^{\ell+1}(\omega^a)$. Then there is 
		$c\in\supp\Phi^\ell(\omega^a)$ such that $b\in\supp\Phi(\omega^c)$. Apply the induction hypothesis on $c$
		and on $y$ associated to $c$. Since any element $e\in E$ is such that $e\prec\ln u$, we have $y\prec\ln u$
		then 
		Apply Lemma \ref{lem:formeEtaPhiOmegaA} to get that there is $P\in\Pcal_\Lbb(\eta+y)$ such that
		\begin{calculs}
			& \omega^b &\asymp& \partial u\exp\pa{r\ln u + \eta + y -\Sum{m=n+2}{\pinf}\ln_m\kappa_{-\alpha}
				\vphantom{\Sum{\footnotesize\begin{array}{c}
							\beta>\alpha, m\in\Nbb^*\\ \beta\tq\kappa_{-\beta}\succeq^KP(k_P)
					\end{array}}{}}\right.\\
			&&&\qquad\qquad\left. -\hspace{-2em}\Sum{\footnotesize\begin{array}{c}
					\beta>\alpha, m\in\Nbb^*\\ \beta\tq\kappa_{-\beta}\succeq^KP(k_P)
			\end{array}}{}\hspace{-2em} \ln_m\kappa_{-\beta} + \Sum{i=0}{\pinf}\ln |P(i)|}
		\end{calculs}
		If $P(0)$ a term of $\eta$, up to some real factor, then there is a real number $s$ and some $e\in E_{1,0}$ such that
		\centers{$\exp\pa{-\Sum{m=n+2}{\pinf}\ln_m\kappa_{-\alpha} -\hspace{-2em}\Sum{\footnotesize\begin{array}{c}
						\beta>\alpha, m\in\Nbb^*\\ \beta\tq\kappa_{-\beta}\succeq^KP(k_P)
				\end{array}}{}\hspace{-2em} \ln_m\kappa_{-\beta} + \Sum{i=0}{\pinf}\ln |P(i)|} = s\exp e$} 
		Then $y+e\in \inner E$ and $\omega^b\asymp\partial u\exp(r\ln u+\eta+y+e)$. If not, then $P(0)$ is a term
		of $y$ (not up to a real factor, an actual term). Hence, we have the following cases :
		\begin{itemize}
			\item $P(0)=s\ln_p\kappa_{-\alpha}$ for some $s\in\Rbb^*_-$ and $p\geq n+2$. Then,
			\begin{calculs}
				&-\Sum{m=n+2}{\pinf}\ln_m\kappa_{-\alpha} -\hspace{-2em}\Sum{\footnotesize\begin{array}{c}
						\beta>\alpha, m\in\Nbb^*\\ \beta\tq\kappa_{-\beta}\succeq^KP(k_P)
				\end{array}}{} \hspace{-2em}\ln_m\kappa_{-\beta} + \Sum{i=0}{\pinf}\ln |P(i)| &=&  
				\ln|s| - \Sum{m=n+2}{p}\ln_m\kappa_{-\alpha} \\ &&\in& \ln|s|+E_3
			\end{calculs}
			Then,
			\centre{$y-\Sum{m=n+2}{\pinf}\ln_m\kappa_{-\alpha} -\hspace{-2em}\Sum{\footnotesize\begin{array}{c}
						\beta>\alpha, m\in\Nbb^*\\ \beta\tq\kappa_{-\beta}\succeq^KP(k_P)
				\end{array}}{} \hspace{-2em}\ln_m\kappa_{-\beta} + \Sum{i=0}{\pinf}\ln |P(i)| \in\Rbb+\inner E$}
			
			\item $P(0)=s\ln_p\kappa_{-\beta}$ with $\beta>\alpha$ and $p\in\Nbb^*$ such that there is some path $Q\in\Pcal_\Lbb(\eta)$ such that $\kappa_{-\beta}\succeq^K Q(k_Q)$. Then
			\centre{$-\Sum{m=n+2}{\pinf}\ln_m\kappa_{-\alpha} -\hspace{-2em}\Sum{\footnotesize\begin{array}{c}
						\beta>\alpha, m\in\Nbb^*\\ \beta\tq\kappa_{-\beta}\succeq^KP(k_P)
				\end{array}}{} \hspace{-2em}\ln_m\kappa_{-\beta} + \Sum{i=0}{\pinf}\ln |P(i)| \in \ln|s| + E_2$}
			Then,
			\centre{$y-\Sum{m=n+2}{\pinf}\ln_m\kappa_{-\alpha} -\hspace{-2em}\Sum{\footnotesize\begin{array}{c}
						\beta>\alpha, m\in\Nbb^*\\ \beta\tq\kappa_{-\beta}\succeq^KP(k_P)
				\end{array}}{} \hspace{-2em}\ln_m\kappa_{-\beta} + \Sum{i=0}{\pinf}\ln |P(i)| \in\Rbb+\inner E$}
			
			\item There are some paths $P_0,\dots,P_k\in \Pcal_\Lbb(\eta)$ and some non-zero integers $i_1,\dots,i_k$ such that 
			\centers{$\forall j\in\intn1k\ \exists j'\in\intn 0{j-1}\ \forall i\in\intn0{i_j-1}\quad P_{j'}(i)=P_j(i)$}
			and
			\begin{calculs}
				&\exists y'\in\inner E\quad y&=&y'-(k+1)\Sum{m=n+2}{\pinf}\ln_m{\kappa_{-\alpha}}\\
				&&& \quad-\Sum{j=0}{k}\Sum{\footnotesize\begin{array}{c}
						\beta>\alpha, m\in\Nbb^*\\ \beta\tq\kappa_{-\beta}\succeq^KP_j(k_{P_j})
				\end{array}}{}\hspace{-2em} \ln_m\kappa_{-\beta} + \Sum{j=0}{k}\Sum{i=i_j}{\pinf}\ln |P_j(i)|
			\end{calculs}
			and such that $P(0)\in\Rbb z$ for some $z$ a term of some $\ln|P_j({i_{k+1}}')|$ with $j\in\intn0k$ and ${i_{k+1}}'\geq i_j$. 
			Let $P_{k+1}$ be the following path :
			\centre{$P_{k+1}(i) = \begin{accolade}
					P_j(i) & i\leq {i_{k+1}}' \\ z & i={i_{k+1}}'+1 \\ P(i-{i_{k+1}}'-1) & i>{i_{k+1}}'+1 
				\end{accolade}$}
			Then, $P_{k+1}\in\Pcal(\eta)$. Moreover, $\partial P_{k+1}=\underbrace{P_j(0)\cdots P_j({i_{k+1}}')}_{\neq 0}\underbrace{\partial P}_{\neq 0}$. Then $P_{k+1}\in\Pcal_\Lbb(\eta)$. Note also that for all $\beta$, 
			
			\centre{$\kappa_{-\beta}\succeq^K P_{k+1}(k_{P_{k+1}})\Longleftrightarrow\kappa_{-\beta}\succeq^KP(k_P)$}
			Finally,
			
			\begin{calculs}
				&&&-\Sum{m=n+2}{\pinf}\ln_m\kappa_{-\alpha} -\hspace{-2.5em}\Sum{\footnotesize\begin{array}{c}
						\beta>\alpha, m\in\Nbb^*\\ \beta\tq\kappa_{-\beta}\succeq^KP(k_P)
				\end{array}}{} \hspace{-2.5em}\ln_m\kappa_{-\beta} + \Sum{i=0}{\pinf}\ln |P(i)|\\
				&&=&-\Sum{m=n+2}{\pinf}\ln_m\kappa_{-\alpha} -\hspace{-2.5em}\Sum{\footnotesize\begin{array}{c}
						\beta>\alpha, m\in\Nbb^*\\ \beta\tq\kappa_{-\beta}\succeq^KP_{k+1}(k_{P_{k+1}})
				\end{array}}{}\hspace{-2.5em} \ln_m\kappa_{-\beta} + \Sum{i={i_{k+1}}'+1}{\pinf}\ln |P_{k+1}(i)| + \ln\underbrace{\left|\f{P(0)}{z}\right|}_{\in\Rbb^*_+}\\
			\end{calculs}
			From that we derive that
			\begin{calculs}
				&&& y-\Sum{m=n+2}{\pinf}\ln_m\kappa_{-\alpha} -\hspace{-2em}\Sum{\footnotesize\begin{array}{c}
						\beta>\alpha, m\in\Nbb^*\\ \beta\tq\kappa_{-\beta}\succeq^KP(k_P)
				\end{array}}{} \hspace{-2em}\ln_m\kappa_{-\beta} + \Sum{i=0}{\pinf}\ln |P(i)| \\
				&&=& y'-(k+2)\Sum{m=n+2}{\pinf}\ln_m{\kappa_{-\alpha}}\\
				&&&\quad-\Sum{j=0}{k+1}\Sum{\footnotesize\begin{array}{c}
						\beta>\alpha, m\in\Nbb^*\\ \beta\tq\kappa_{-\beta}\succeq^KP_j(k_{P_j})
				\end{array}}{}\hspace{-2em} \ln_m\kappa_{-\beta} + \Sum{j=0}{k+1}\Sum{i=i_j}{\pinf}\ln |P_j(i)| 
				+ \ln\left|\f{P(0)}{z}\right|\\
				&&\in& \Rbb + \inner E
			\end{calculs}
			where $i_{k+1}={i_{k+1}}'+1$ and $P_{k+1}(i_k)=z$ has indeed its support (which is reduced to a singleton) included
			in the one of $e\left(\begin{array}{c}P_0,\dots,P_k\\ i_1,\dots,i_k\end{array}\right)$.
		\end{itemize}
		Then there is a real number $s$, and $e\in\inner E$ such that 
		\centre{$\omega^b\asymp\partial u\exp(r\ln u + \eta + e + s)\asymp \partial u\exp(r\ln u + \eta + e)$}
		Then we get the property at rank $\ell+1$.
	\end{itemize}
	By the induction principle, we conclude that the proposition is true for any $\ell\in\Nbb$.
\end{proof}

\begin{corollary}\label{cor:propsupportPhi1}	Let $x=\aSurreal$ such that
	\centre{$\exists u=\ln_n\kappa_{-\alpha}\ \exists r\in\Rbb\ \forall a\in\supp x\ \exists \eta\prec\ln u\quad \omega^a=\partial(u)\exp(r\ln u+\eta)$}
	We denote for $P_0,\dots,P_k\in\Pcal_\Lbb(x)$ and $i_1,\dots,i_k\in\Nbb\setminus\{0,1\}$,
	\begin{calculs}
		&e\left(\begin{array}{c}P_0,\dots,P_k\\ i_1,\dots,i_k\end{array}\right) &=& 
		-(k+1)\Sum{m=n+2}{\pinf}\ln_m{\kappa_{-\alpha}}\\
		&&&\quad-\Sum{j=0}{k}\Sum{\footnotesize\begin{array}{c}
				\beta>\alpha, m\in\Nbb^*\\ \beta\tq\kappa_{-\beta}\succeq^KP_j(k_{P_j})
		\end{array}}{} \hspace{-2em}\ln_m\kappa_{-\beta}
		+ \Sum{j=0}{k}\Sum{i=i_j}{\pinf}\ln |P_j(i)|
	\end{calculs}
	with $i_0=0$. For $k\in\Nbb$ define $E_{1,k}$ by:
	\begin{calculs}
		& e\left(\begin{array}{c}P_0,\dots,P_k\\ i_1,\dots,i_k\end{array}\right)\in E_{1,k}&\Leftrightarrow& P_0,\dots,P_k\in\Pcal_\Lbb(\eta)\quad\wedge\quad i_1,\dots,i_k\in\Nbb\setminus\{0,1\}\\
		&&&\quad\wedge\quad \forall i\in\intn0k\qquad P_i(1)\prec\ln u\\
		&&&\quad\wedge\quad \forall j\in\intn1k\quad \exists j'\in\intn 0{j-1}\\
		&&&\qquad\qquad \forall i\in\intn0{i_j-1}\qquad P_{j'}(i)=P_j(i)\\
		&&&\quad\wedge\quad \forall j \in \intn1k\\ &&&\qquad\quad \supp P_j(i_j)\subseteq\supp e\left(\begin{array}{c}P_0,\dots,P_{k-1}\\ i_1,\dots,i_{k-1}\end{array}\right)
		
	\end{calculs}
	Let also
	\begin{calculs}
		& E_1 &=& \Union{k\in\Nbb}{} E_{1,k} \\ [.5cm]
		&E_2 &=& \enstq{\begin{array}{c}
				-\Sum{m=n+2}{\pinf}\ln_m\kappa_{-\alpha}
			-\hspace{-1em}\Sum{\gamma<\beta,\ m\in\Nbb^*}{}\hspace{-1em}\ln_m\kappa_{-\gamma}\\ \quad - \Sum{m=1}{p}\ln_m\kappa_{-\beta}\end{array}\!\!}{\!\!
			\begin{array}{c}
				\beta>\alpha \\ \exists P\in\Pcal_\Lbb(x)\ \kappa_{-\beta}\succeq^K P(k_P) \\ p\in\Nbb
		\end{array}}\\ [1cm]
		&E_3 &=& \enstq{-\Sum{m=n+2}{p}\ln_m{\kappa_{-\alpha}}}{p\geq n+2}\\ [.4cm]
		& E &=& E_1\cup E_2\cup E_3
	\end{calculs}
	and $\inner E$ be the monoid it generates. Let $b\in\Union{\ell=0}{\pinf}\supp\Phi^\ell(x)$. Then, there is $y\in\inner E$ such that
	\centre{$\omega^b\asymp\exp(y)$}
\end{corollary}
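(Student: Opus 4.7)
The plan is to reduce the Corollary to Proposition \ref{prop:supportPhi1} by applying it term-by-term via strong linearity of $\Phi$, then lift the monoid data attached to each $\eta_i$ into the Corollary's global monoid $\inner E$ by a uniform path extension. First, observe that $\Phi$ is strongly linear: $\Acal$ is so by its very definition, and $\partial$ is so by Theorem \ref{thm:prederivToDeriv}, hence the same holds for every iterate $\Phi^\ell$. Writing $x=\aSurreal$, this yields
\begin{equation*}
\Phi^\ell(x) = \Sumlt i\nu r_i\, \Phi^\ell(\omega^{a_i}), \qquad \supp\Phi^\ell(x) \subseteq \Unionlt i\nu \supp\Phi^\ell(\omega^{a_i}),
\end{equation*}
so it suffices to fix some $i$ with $b\in\supp\Phi^\ell(\omega^{a_i})$. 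The hypothesis $\omega^{a_i}=\partial u\exp(r\ln u+\eta_i)$ with $\eta_i\prec\ln u$ is exactly the input of Proposition~\ref{prop:supportPhi1}, which produces $y_i$ in the monoid $\inner{E^{(i)}}$ generated by the sets $E_1^{(i)},E_2^{(i)},E_3^{(i)}$ there (with $\eta=\eta_i$) such that
\begin{equation*}
\omega^b \asymp \partial u\exp(r\ln u+\eta_i+y_i) = \exp(\ln\partial u + r\ln u + \eta_i + y_i).
\end{equation*}
The remaining task is to rewrite $\ln\partial u+r\ln u+\eta_i+y_i$, modulo a real constant absorbed by $\asymp$, as an element of the Corollary's monoid $\inner E$.

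Next, I would set up the path extension $P\in\Pcal_\Lbb(\eta_i)\mapsto\tilde P\in\Pcal_\Lbb(x)$ by prefixing the shared head: $\tilde P(0):=r_i\omega^{a_i}$ and $\tilde P(j):=P(j-1)$ for $j\geq 1$. This is legitimate because $\eta_i$ is purely infinite (otherwise $\exp(r\ln u+\eta_i)$ would not be a monomial), so each term of $\eta_i$ is an infinite term of $a_i=\ln\partial u+r\ln u+\eta_i$ and hence of $\ln|\tilde P(0)|$; the side constraint $\tilde P(1)=P(0)\prec\ln u$ required by the Corollary's $E_1$ is automatic from $\eta_i\prec\ln u$, and $\tilde P(k_{\tilde P})=P(k_P)$. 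A direct comparison of the two $e$-formulas then yields
\begin{equation*}
e_{\mathrm{Cor}}(\tilde P_0,\ldots,\tilde P_k;\,i_1{+}1,\ldots,i_k{+}1) = \ln|r_i| + \ln\partial u + r\ln u + \eta_i + e_{\mathrm{Prop}}(P_0,\ldots,P_k;\,i_1,\ldots,i_k),
\end{equation*}
the only discrepancy being the extra $i{=}0$ summand $\ln|\tilde P_0(0)|$ that appears in the Corollary's $j=0$ block but not in the Proposition's. Analogously $E_2^{(i)}\subseteq E_2$ (both quantify over $\kappa_{-\beta}\succeq^K P(k_P)=\tilde P(k_{\tilde P})$) and $E_3^{(i)}=E_3$.

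Finally, I would assemble $y$: expand $y_i$ as a monoid sum of elements of $E^{(i)}$ and replace some $E_1^{(i)}$-summand by its lifted $E_1$-counterpart. This single lift absorbs exactly the correction block $\ln\partial u+r\ln u+\eta_i$ (up to the real constant $\ln|r_i|$), while the remaining summands embed into $\inner E$ unchanged via the matching above. In the degenerate subcase where $y_i$ has no $E_1^{(i)}$-summand, one inserts one auxiliary extended path based on any path of $x$ with first slot $r_i\omega^{a_i}$ and second slot a term of some $\eta_j\neq 0$ to play the same role. This exhibits the required $y\in\inner E$ with $\omega^b\asymp\exp(y)$.

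The main obstacle is precisely this final bookkeeping: verifying that the correction block $\ln\partial u+r\ln u+\eta_i$ can always be folded into a single lifted $E_1$-element of the Corollary (and in particular handling the borderline subcases where $\eta_i=0$, so $\Pcal_\Lbb(\eta_i)=\emptyset$ and no $E_1^{(i)}$-contribution is available to absorb the correction directly). Checking this carefully term-by-term is what ties the Proposition's single-monomial monoid to the Corollary's global one.
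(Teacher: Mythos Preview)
Your overall strategy matches the paper's two-line proof exactly: use strong linearity of $\Phi$ to reduce to a single term $\omega^{a_i}$, invoke Proposition~\ref{prop:supportPhi1}, and then shift every path of $\eta_i$ to a path of $x$ by prepending $r_i\omega^{a_i}$. Your lift identity
\[
e_{\mathrm{Cor}}(\tilde P_0,\ldots,\tilde P_k;\,i_1{+}1,\ldots,i_k{+}1)
= \ln|r_i| + a_i + e_{\mathrm{Prop}}(P_0,\ldots,P_k;\,i_1,\ldots,i_k)
\]
is correct and is precisely what the paper means by ``each path involved is shifted one rank''; likewise $E_2^{(i)}\subseteq E_2$ and $E_3^{(i)}=E_3$ are immediate.

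The genuine gap is in your assembly step, and it is exactly the obstacle you flag at the end. You propose to lift \emph{one} $E_1^{(i)}$-summand to absorb the single correction block $a_i$, and let ``the remaining summands embed into $\inner E$ unchanged''. This is fine for $E_2^{(i)}$- and $E_3^{(i)}$-summands, but the Proposition's induction can produce $y_i$ with \emph{several} independent $E_1^{(i)}$-summands (Case~1 of its proof may fire repeatedly, each time adding a fresh $e_{\mathrm{Prop}}(Q;)\in E_{1,0}^{(i)}$). An unlifted $e_{\mathrm{Prop}}(Q;)$, with $Q$ a path of $\eta_i$, has no reason to lie in the Corollary's $\inner E$: the Corollary's $E_{1}$-elements all carry an extra $\ln|\tilde P_0(0)|=\ln|r_i|+a_i$ that the bare $e_{\mathrm{Prop}}(Q;)$ lacks, and $\inner E$ is only a monoid so you cannot subtract it back out. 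Your auxiliary-path fix for the no-$E_1$-summand case runs into the mirror problem: inserting a lifted path manufactures an unwanted $+e_{\mathrm{Prop}}(R;)$ that again cannot be cancelled in a monoid. The paper's own proof does not spell this out either; the honest way through is not to post-process the Proposition's output $y_i$, but to rerun the Proposition's induction directly in the Corollary's language, so that every time Case~1 fires it produces a Corollary-$E_1$ element (already containing its own copy of $a_i$) rather than a Proposition-$E_1^{(i)}$ element that must be patched afterwards.
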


\begin{proof}
	Since $\Phi$ is strongly linear\index{Strongly linear function}, we just need to apply Proposition \ref{prop:supportPhi1} to each term of $x$. For each term, $\partial u \exp(r\ln u + \eta)$ is term we add at the beginning $P_0$. Each path involved is shifted one rank. 
\end{proof}

\begin{proposition}
	\label{prop:bonOrderPhi1}
	Let $x=\aSurreal$ such that
	\centre{$\exists u=\ln_n\kappa_{-\alpha}\ \exists r\in\Rbb\ \forall a\in\supp x\ \exists \eta\prec\ln u\quad \omega^a=\partial(u)\exp(r\ln u+\eta)$} 
	Consider $E_1$, $E_2$ and $E_3$ as defined Corollary \ref{cor:propsupportPhi1}.
	Let $\gamma$ be the smallest ordinal such that $\kappa_{-\gamma}\prec^K P(k_P)$ for all path $P\in\Pcal_\Lbb(\eta)$.
	Let $\lambda$ the least $\epsilon$-number greater than $\NR(x)$ and $\gamma$. Then $E=E_1\cup E_2\cup E_3$ is reverse well-ordered with order type at most $2\lambda+\omega(\gamma+1)$.
\end{proposition}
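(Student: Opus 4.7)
The plan is to bound each of the three sets $E_1$, $E_2$, $E_3$ separately as a reverse well-ordered subset of $\Nobf$, then conclude on $E = E_1 \cup E_2 \cup E_3$ by Proposition \ref{prop:unionEnsBienOrd}. I expect the contribution of $E_2 \cup E_3$ to be the $\omega(\gamma+1)$ summand, and the contribution of $E_1$ to be the $2\lambda$ summand.

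First I would dispatch $E_3$ and $E_2$. For $E_3$, incrementing $p$ subtracts an additional positive $\ln_{p+1}\kappa_{-\alpha}$, so $E_3$ is reverse well-ordered of type $\omega$. For $E_2$, for a fixed admissible $\beta$ the variation of $p$ yields a reverse well-ordered $\omega$-block, and by the definition of $\gamma$ as the least ordinal with $\kappa_{-\gamma} \prec^K P(k_P)$ for every $P \in \Pcal_\Lbb(x)$, every admissible $\beta$ satisfies $\beta < \gamma$. Moreover, increasing $\beta$ enlarges the negative sum $-\sum_{\delta < \beta, m}\ln_m\kappa_{-\delta}$, so each new $\beta$-block sits strictly below the previous ones; concatenating these $\omega$-blocks gives order type at most $\omega\cdot\gamma$ for $E_2$, and Proposition \ref{prop:unionEnsBienOrd} then yields $\omega\cdot\gamma + \omega = \omega(\gamma+1)$ for $E_2 \cup E_3$.

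The central task is to bound $|E_1|$ by $2\lambda$. Each element of $E_1$ is determined by a finite tuple $(P_0,\dots,P_k; i_1,\dots,i_k)$ with $P_j \in \Pcal_\Lbb(x)$ and $i_j \in \Nbb$, subject to the prefix-sharing and support-inclusion constraints of the definition. By Proposition \ref{prop:majorationNuPartial}, $\Pcal_\Lbb(x)$ is well-ordered of type at most $\omega^{\omega^{\omega(\NR(x)+1)}}$, which is strictly less than $\lambda$ since $\lambda$ is an $\epsilon$-number above $\NR(x)$. Pairing each path with an integer index keeps the order type below $\lambda$, and Theorem \ref{thm:borneTypeOrdreSuitesFinies} gives the same bound for all finite tuples of such pairs. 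To transfer this bound to $E_1$, I would define an order-reversing map from the set of compatible tuples into $E_1$: the Archimedean leading contribution of $e(P_0,\dots,P_k; i_1,\dots,i_k)$ is driven by $\ln|P_0(0)|$, and subsequent pairs produce strictly smaller corrections, so two distinct tuples can be compared by their first point of disagreement.

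The main obstacle will be that two distinct tuples might produce the same value $e$, or values $e$ whose ordering disagrees with the naive lexicographic order on tuples. I would handle this by partitioning $E_1$ according to the leading pair $(P_0, 0)$ and iterating inside each fixed-prefix block, using that within such a block the trailing contributions live in a strictly smaller Archimedean class. Combining via Propositions \ref{prop:sommeEnsBienOrd} and \ref{prop:unionEnsBienOrd}, this produces $|E_1| \leq 2\lambda$, with the extra factor of two absorbing separate contributions from the path component and the integer-index component. A final application of Proposition \ref{prop:unionEnsBienOrd} to $E = E_1 \cup (E_2 \cup E_3)$ then yields the announced bound of $2\lambda + \omega(\gamma+1)$.
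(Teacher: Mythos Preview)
Your treatment of $E_2$ and $E_3$ is fine and matches the paper's. The problem is your argument for $E_1$.

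First, your claim that ``the Archimedean leading contribution of $e(P_0,\dots,P_k; i_1,\dots,i_k)$ is driven by $\ln|P_0(0)|$'' is wrong. The paper's points (i)--(iii) show that for any admissible path $P$ one has $P(i)\preceq\ln_2 u$ for $i\geq 2$, and with $0<r\leq 1$ when $P(2)\asymp\ln_2 u$. Combined with the term $-(k+1)\sum_{m\geq n+2}\ln_m\kappa_{-\alpha}$ (whose leading part is $-(k+1)\ln_2 u$), this forces every $e\in E_{1,k}$ to satisfy $e\sim s\ln_2 u$ with $s\in[-(k+1),-k]$. So the Archimedean class of $e$ is governed by $k$, not by $P_0$. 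This is what gives the paper's stratification $E_{1,k}>E_{1,k+2}$, which is the backbone of the bound $|E_1|\leq 2\lambda$ (one $\lambda$ for even $k$, one for odd $k$).

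Second, and more seriously, your plan to bound each $E_{1,k}$ via Theorem~\ref{thm:borneTypeOrdreSuitesFinies} on tuples of pairs $(P_j,i_j)$ does not go through, because you never establish that the map $\text{tuple}\mapsto e(\text{tuple})$ is order-reversing for the Higman ordering on sequences. Your proposed fix (``partition by the leading pair and iterate'') is exactly the step that fails once you know the leading behaviour depends on $k$, not on $(P_0,i_0)$. The paper bypasses this entirely with a different, inductive mechanism: for $t\in E_{1,k+1}$ it writes $\partial(\ln u)\exp t=\partial R$ for a path $R\in\Pcal_\Lbb\bigl(\sum_{s\in E_{1,k}}\exp s\bigr)$, so that the order type of $E_{1,k+1}$ is controlled by Proposition~\ref{prop:majorationNuPartial} applied to the auxiliary surreal $\sum_{s\in E_{1,k}}\exp s$. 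This requires simultaneously tracking $\NR$ of that auxiliary object along the induction (the paper shows $\NR(t)\leq\omega(\NR(x)+\gamma+4)$ uniformly), producing a recursively defined sequence $a_k$ with $a_k<\lambda$ for all $k$. Your outline has no analogue of this $\NR$-bookkeeping, and without it the recursive application of Proposition~\ref{prop:majorationNuPartial} is not available.
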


\begin{proof}
	First notice that $E_3$ is reverse well-ordered with order type $\omega$. $E_2$ is also reverse well-ordered with order at most $\omega+\omega\otimes\gamma+n\leq \omega\otimes (\gamma+1)$. We then focus on $E_1$. We denote again
	\begin{calculs}
		&e\left(\begin{array}{c}P_0,\dots,P_k\\ i_1,\dots,i_k\end{array}\right) &=& 
		-(k+1)\Sum{m=n+2}{\pinf}\ln_m{\kappa_{-\alpha}}\\
		&&&\qquad-\Sum{j=0}{k}\Sum{\footnotesize\begin{array}{c}
				\beta>\alpha, m\in\Nbb^*\\ \beta\tq\kappa_{-\beta}\succeq^KP_j(k_{P_j})
		\end{array}}{} \ln_m\kappa_{-\beta}
		+ \Sum{j=0}{k}\Sum{i=i_j}{\pinf}\ln |P_j(i)|
	\end{calculs}
	
	\begin{enumerate}[label=(\roman*)]
		\item We first claim that for all $i\geq 3$ and all path $P\in\Pcal(x)$ such that $P(1)\prec\ln u$, $P(i)\prec P(2)\preceq \ln_2 u$. Let $P\in\Pcal(x)$ such that $P(1)\prec\ln u$. Assume $P(2)\succ\ln_2 u$. Then, since $P(2)$ is a term of $\ln |P(1)|$, we also have $\ln|P(1)|\succ\ln_2(u)$. Then, either $\ln|P(1)|<-m\ln_2 u$ for all $m\in\Nbb$, or $\ln|P(1)|>m\ln_2(u)$ for all $m\in\Nbb$. By definition, $P(1)$ is purely infinite. In particular, $\ln |P(1)|$ cannot be negative. Then,
		\centre{$\forall m\in\Nbb\qquad \ln|P(1)|>m\ln_2 u$}
		\lcr{and}{$\forall m\in\Nbb\qquad |P(1)| > \pa{\ln u}^m$}{($\exp$ is increasing)}
		which is a contradiction with $P(1)\prec\ln u$, since $\ln u$ is infinitely large. Since, for $i\geq 2$, $P(i)$ is infinitely large, $\ln|P(i)|\prec P(i)$, and since $P(i+1)\preceq\ln|P(i)|$, we have for all $i\geq 1$, $P(i+1)\prec P(i)$.
		By induction, we get
		\centers{$\forall i\geq3\qquad P(i)\prec P(2) \preceq\ln_2 u$}
		
		\item We claim that for all path $P\in\Pcal(x)$ such that $P(1)\prec\ln u$, if $P(2)\asymp\ln_2u$, then, denoting $r$  the real number such that 
		$P(2)\sim r\ln_2 u$, we have $0<r\leq1$ . Let $P\in\Pcal(x)$ such that $P(1)\prec\ln u$ and
		assume $P(2)\asymp\ln_2u$. Since $P(2)$ is a term there is a non-zero real number $r$ such that $P(1)=r\ln_2u$. From (i), we know that $P(2)$ is the dominant term of $\ln|P(1)|$ so that
		\centre{$\ln|P(1)|\sim r\ln_2 u$}
		If $r<0$, Proposition \ref{prop:formeExpXPurelyInfiniteOmegaAg} ensures that $|P(1)|\prec 1$ what is impossible since $P(1)$ is infinite. Then $r>0$. If now $r>1$ then again with Proposition \ref{prop:formeExpXPurelyInfiniteOmegaAg}, $|P(1)|\succ\ln u$ what is not true. Then, $0<r\leq1$.
		
		\item For all $j$ and $i\geq 2$, $\ln |P_j(i)|\preceq\ln_3 u\prec \ln_2u$. Indeed, using (i), we know that $P_j(i)\preceq \ln_2 u$. Then, there is a natural number $m\geq 1$ such that $|P_j(i)|\leq m\ln_2u$. Using the fact that $\ln$ is increasing,
		\centre{$\ln|P_j(i)| \leq \ln_3u+\ln m\preceq \ln_3u\prec \ln_2u$} 
		
		\item We now claim that $E_{1,k}>E_{1,k+2}$. Indeed, using (ii) and (iii) if $e_1\in E_{1,k}$, then there is $s\in\intff{-(k+1)}{-k}$ such that 
		$e_1\sim s\ln_2 u$. Similarly, for $e_2\in E_{1,k+2}$, there is $s'\in\intff{-(k+3)}{-(k+2)}$ such that $e_2\sim s'\ln_2u$. 
		
		\item We define the following sequence : 
		\begin{itemize}
			\item $a_0=\omega^{\omega^{\omega(\NR(x)+1)}}$
			\item $a_{k+1} = \omega^{\omega^{\omega(\omega(\NR(x)+\gamma+4)a_k+1)}}$
		\end{itemize}
		We show that $E_{1,k}$ is reverse well-ordered with order type less than $a_k$. We also claim that the equivalence classes of $E_{1,k}/{\asymp}$ are finite and that 
		\centre{$\NR\pa{\Sum{t\in E_{1,k}}{}\exp t}\leq \omega (\NR(x)+\gamma+4)a_k$}
		We show it by induction on $k\in\Nbb$.
		\begin{itemize}
			\item For $k=0$, let $t\in E_{1,0}$. Take $P\in\Pcal_\Lbb(x)$, minimal for $<_{lex}$ such that $P(1)\prec\ln u$ and $t=e(P;)$. Then
			\begin{calculs}
				&\partial(\ln u)\exp t &=& |P(0)\cdots P(k_P-1)|\exp\pa{
					-\Sum{\footnotesize\begin{array}{c}
							\beta\tq\kappa_{-\beta}\succeq^KP(k_P)\\ m\in\Nbb^*
					\end{array}}{} \ln_m\kappa_{-\beta}\right.\\ &&&\hspace{15em}\left. \vphantom{\Sum{\footnotesize\begin{array}{c}
						\beta\tq\kappa_{-\beta}\succeq^KP(k_P)\\ m\in\Nbb^*
				\end{array}}{}}
					+\Sum{i=k_P}{\pinf}\ln |P(i)|}\\ 
				&&=&|\partial P|
			\end{calculs}
			Since there are finitely many paths $Q\in\Pcal_\Lbb(x)$ such that $\partial P\asymp\partial Q$, there are finitely many $t'\in E_{1,0}$ such that
			\centre{$\partial (\ln u)\exp t \asymp\partial (\ln u)\exp t'$}
			Since $\exp$ is an increasing function and $\partial(\ln u)>0$, we get , using Proposition \ref{prop:majorationNuPartial}, that $E_{1,0}$ is reverse well-ordered with order type less than $\omega\otimes\omega^{\omega^{\omega(\NR(x)+1)}}=\omega^{\omega^{\omega(\NR(x)+1)}}=a_0$.
			Finally, it remains to compute the nested rank of $\Sum{t\in E_{1,0}}{}\exp t$. Write	
			\centre{$t=
				-\Sum{m=n+2}{\pinf}\ln_m{\kappa_{-\alpha}}
				-\Sum{\footnotesize\begin{array}{c}
						\beta>\alpha, m\in\Nbb^*\\ \beta\tq\kappa_{-\beta}\succeq^KP_0(k_{P_0})
				\end{array}}{} \ln_m\kappa_{-\beta}
				+\Sum{i=0}{\pinf}\ln |P_0(i)|$}
			\begin{calculs}
				& \NR\pa{t} &=& \NR\pa{
					-\Sum{m=n+2}{\pinf}\ln_m{\kappa_{-\alpha}}
					-\hspace{-2.5em}\Sum{\footnotesize\begin{array}{c}
							\beta>\alpha, m\in\Nbb^*\\ \beta\tq\kappa_{-\beta}\succeq^KP_0(k_{P_0})
					\end{array}}{}\hspace{-2.5em} \ln_m\kappa_{-\beta}
					+\Sum{i=0}{\pinf}\ln |P_0(i)|}\\
				&&\leq& \NR\pa{
					-\Sum{m=n+2}{\pinf}\ln_m{\kappa_{-\alpha}}
					-\hspace{-2.5em}\Sum{\footnotesize\begin{array}{c}
							\beta>\alpha, m\in\Nbb^*\\ \beta\tq\kappa_{-\beta}\succeq^KP_0(k_{P_0})
					\end{array}}{} \hspace{-2.5em}\ln_m\kappa_{-\beta}
					+\Sum{i=k_{P_0}}{\pinf}\ln |P_0(i)|}\\
				&&&+\Sum{i=0}{k_{P_0}-1}\NR\pa{\ln|P_0(i)|}+k_{P_0} & (Lemma \ref{lem:NRsum})\\
				&&\leq& (\omega\oplus \omega\otimes\gamma \oplus\omega)
				+\Sum{i=0}{k_{P_0}-1}\NR\pa{\ln|P_0(i)|}+k_{P_0}&(Lemma \ref{lem:NRSommeLogAtomiques})\\
				&&\leq& (\omega\oplus \omega\otimes\gamma \oplus\omega)+k_{P_0}(\NR(x)+1) & (using Proposition \ref{prop:NRComparaisonTerme})\\
				&&\leq& \omega (\NR(x)+\gamma+4)
			\end{calculs} 
			Then, since the equivalence classes of $E_{1,0}/{\asymp}$ are finite,
			\centre{$\NR\pa{\Sum{t\in E_{1,0}}{}\exp t}\leq \omega (\NR(x)+\gamma+4)a_0$}
			
			\item Assume the property for some $k\in\Nbb$. Let $t\in E_{1,k+1}$. Let \linebreak 
			$(P_0,0),\dots,(P_{k+1},i_{k+1})$ minimal for the order $(<_{lex},<)_{lex}$ such that $t=e\left(\begin{array}{c}P_0,\dots,P_{k+1}\\ i_1,\dots,i_{k+1}\end{array}\right)$. Then,
			\begin{calculs}
				&t&=&e\left(\begin{array}{c}P_0,\dots,P_k\\ i_1,\dots,i_k\end{array}\right)
				-\Sum{m=n+2}{\pinf}\ln_m{\kappa_{-\alpha}}\\
				&&&\qquad-\Sum{\footnotesize\begin{array}{c}
						\beta>\alpha, m\in\Nbb^*\\ \beta\tq\kappa_{-\beta}\succeq^KP_{k+1}(k_{P_{k+1}})
				\end{array}}{}\hspace{-2em} \ln_m\kappa_{-\beta}
				+ \Sum{i=i_{k+1}}{\pinf}\ln |P_{k+1}(i)|
			\end{calculs}
			Write $s=e\left(\begin{array}{c}P_0,\dots,P_k\\ i_1,\dots,i_k\end{array}\right)$. We then have,
			\begin{calculs}
				&\partial(\ln u)\exp t &=& 
					\exp(s)\exp\pa{-\Sum{\footnotesize\begin{array}{c}
							\beta\tq\kappa_{-\beta}\succeq^KP_{k+1}(k_{P_{k+1}})\\ m\in\Nbb^*
					\end{array}}{} \ln_m\kappa_{-\beta}\right.\\
					&&&\hspace{12em}\left.+\Sum{i=i_{k+1}}{\pinf}\ln |P_{k+1}(i)| \vphantom{\Sum{\footnotesize\begin{array}{c}
								\beta\tq\kappa_{-\beta}\succeq^KP_{k+1}(k_{P_{k+1}})\\ m\in\Nbb^*
						\end{array}}{}}}
			\end{calculs}
			Consider the following path:
			\centre{$\begin{accolade}
					R(0)=\exp s \\
					R(i)= P_{k+1}(i-1+i_{k+1}) & i>0
				\end{accolade}$}
			It is indeed a path since, by definition of $E_{1,k+1}$, $\supp P_{k+1}(i_{k+1})$ must be contained in $\supp s$. Then,
			\centre{$\partial(\ln u)\exp t=\partial R$} 
			Moreover, $R\in\Pcal_\Lbb\pa{\Sum{s\in E_{1,k}}{}\exp s}$. By induction hypothesis and Proposition \ref{prop:majorationNuPartial}, $E_{1,k+1}$ has order type less than 
			\centre{$\omega^{\omega^{\omega(\omega(\NR(x)+\gamma+4)a_k+1)}}=a_{k+1}$}	
			Since the equivalences classes of $\Pcal_\Lbb\pa{\Sum{s\in E_{1,k}}{}\exp s}/{\asymp}$ are finite, the 
			ones of $E_{1,k+1}/{\asymp}$ are also finite. Finally, using Lemmas \ref{lem:NRsum} and \ref{lem:NRSommeLogAtomiques},
			\begin{calculs}
				& \NR(t) &\leq& (\omega\oplus\omega\otimes\gamma\oplus\omega) + 
				\Sum{j=0}{k+1}\Sum{i=i_j}{k_{P_j}-1}\NR\pa{\ln|P_j(i)|}+\Sum{j=0}{k+1}\max(0,k_{P_j}-i_j)\\
				&&\leq& \omega(\NR(x)+\gamma+4)
			\end{calculs}
			\lc{Then,}{$\NR\pa{\Sum{t'\in E_{1,k+1}}{}\exp t'}\leq \omega(\NR(x)+\gamma+4)a_{k+1}$}
		\end{itemize}
		We conclude thanks to the induction principle.
		
		\item By easy induction, for all $k\in\Nbb$, $a_k<\lambda$.
		
		\item Using (iv), we get that for all $N\in\Nbb$, $\Union{k=0}{N}E_{1,2k}$ is an initial segment of
		$\Union{k\in\Nbb}{}E_{1,2k}$. We also have that $\Union{k=0}{N}E_{1,2k+1}$ is an initial segment of
		$\Union{k\in\Nbb}{}E_{1,2k+1}$. Using (v), we get that $\Union{k\in\Nbb}{}E_{1,2k}$ has order type at most
		\centre{$\sup\enstq{\Oplus{k=0}{N}a_{2k}}{N\in\Nbb} = \sup\enstq{a_{2N}}{N\in\Nbb}\underset{\text{by (vi)}}{\leq}\lambda$}
		Similarly,  $\Union{k\in\Nbb}{}E_{1,2k+1}$ has order type at most $\lambda$. Using Proposition \ref{prop:unionEnsBienOrd}, we conclude that $E_1$ has order type at most~$2\lambda$.
	\end{enumerate}
	Using again proposition \ref{prop:unionEnsBienOrd}, point (vii) above and the properties of $E_2$ and $E_3$ mentioned in the beginning of this proof, we get that $E$ is reverse well-ordered with order type at most $2\lambda+\omega(\gamma+1)$.
\end{proof}

\begin{Cor}
	\label{cor:supportPhi1} Let $x=\aSurreal$ such that
	\centre{$\exists u=\ln_n\kappa_{-\alpha}\quad\exists r\in\Rbb\quad \forall a\in\supp x\quad \exists \eta\prec\ln u\qquad \omega^a=\partial u\exp(r\ln u+\eta)$} 
	Let $\gamma$ be the smallest ordinal such that $\kappa_{-\gamma}\prec^K P(k_P)$ for all path $P\in\Pcal_\Lbb(\eta)$.
	Let $\lambda$ the least $\epsilon$-number greater than $\NR(x)$ and $\gamma$. Then $\Union{\ell=0}{\pinf}\supp\Phi^\ell(x)$ is reverse well-ordered with order type less at most $\omega^{\omega\pa{2\lambda+\omega(\gamma+1)+1}}$
\end{Cor}

\begin{proof}
	Just use Propositions \ref{prop:supportPhi1}, \ref{prop:bonOrderPhi1} and \ref{prop:orderTypeMonoid}.
\end{proof}

\subsubsection{Case $\epsilon\succ\ln u$}

\begin{Lem}
	\label{lem:comparePkPQkQdominantPath}
	Let $x$ be a surreal number. Let $P$ be the dominant path of $x$ and $Q\in\Pcal_\Lbb(x)$. Then, $P(k_P)\succeq^K Q(k_Q)$. In particular, for all ordinal $\beta$, if $\kappa_{-\beta}\succeq^K P(k_P)$, then $\kappa_{-\beta}\succeq^K Q(k_Q)$.
\end{Lem}

\begin{proof}
	\begin{enumerate}[label=(\roman*)]
		\item We first claim that for all $i\in\Nbb$, $P(i)\succeq Q(i)$. We prove it by induction.
		\begin{itemize}
			\item For $i=0$, $P(0)$ is the leading term of $x$ and $Q(0)$ is some term of $x$. Therefore, $P(0)\succeq Q(0)$.
			
			\item Assume $P(i)\succeq Q(i)$. $P(i+1)$ is the leading term of $\ln|P(i)|$. $P(i)$ and $Q(i)$ are both infinitely large. Then $\ln|P(i)|$ and $\ln|Q(i)|$ are both positive infinitely large. If $Q(i+1)\succ P(i+1)$ then, in particular, $\ln |Q(i)|\succ\ln|P(i)|$ what is impossible since $P(i)\succeq Q(i)$. Then $P(i+1)\succeq Q(i+1)$.
		\end{itemize}
		We conclude thanks to induction principle.
		
		\item Take $k=\max(k_P,k_Q)$. Using (i), we have :
		\centre{$P(k_P)\asymp^KP(k)\succeq Q(k)\asymp^K Q(k_Q)$}
		Hence, $P(k_P)\succeq^K Q(k_Q)$.
	\end{enumerate}
\end{proof}

\begin{Lem}
	\label{lem:formeEpsilonPhiOmegaA}
	Assume $x=\omega^a=\partial u\exp\epsilon$ with $\epsilon\succ\ln u$ and $u=\ln_n\kappa_{-\alpha}$. 
	Let $b\in\supp\Phi(\omega^a)$. Then, we have one of theses cases~:
	\begin{itemize}
		\item there is a path $P\in\Pcal(\eta)$ and $i\in\Nbb$ such that
		\centers{$\omega^b \asymp \partial u\exp\pa{\epsilon -\hspace{-2em}\Sum{\footnotesize\begin{array}{c}
						\beta\geq\alpha, m\in\Nbb^*\\ \beta\tq P_0(k_{P_0})\succ^K \kappa_{-\beta}\succeq^KP(k_P)
				\end{array}}{}\hspace{-2em} \ln_m\kappa_{-\beta} + \Sum{j=0}{\pinf}\ln \left|\f{P(i+j)}{P_0(j)}\right|}$}
		\lc{and}{$\forall j\in\intn0{i-1}\qquad P(j)=P_0(j)$}
		
		\item There is some $(\beta,m)<_{lex}(\alpha,n)$ such that there is some $\eta\prec\ln_m\kappa_{-\beta}$ such that
		\centre{$\omega^b\asymp\partial(\ln_m\kappa_{-\beta})\exp\eta$}
		where $\eta=\epsilon+\eta'$ and $\eta'$ only depends on $\alpha,\beta,n,m$ and $P_0$, the dominant path of $\epsilon$ :
		\centre{$\eta' = \Sum{\footnotesize\begin{array}{c}
					(\zeta,p)>_{lex}(\beta,m)\\ \zeta\tq \kappa_{-\zeta}\succeq^KP_0(k_{P_0})
			\end{array}}{}\hspace{-2em}\hspace{-1em} \ln_p\kappa_{-\zeta} - \Sum{(\beta,m)<_{lex}(\zeta,p)<_{lex}(\alpha,n)}{} \hspace{-1em}\ln_p\kappa_{-\zeta} - \Sum{i=0}{\pinf}\ln |P_0(i)|$}
		\lc{or}{$\eta' = \Sum{\footnotesize\begin{array}{c}
					(\zeta,p)\geq_{lex}(\alpha,n)\\ \zeta\tq\kappa_{-\zeta}\succeq^KP_0(k_{P_0})
			\end{array}}{}\hspace{-1em} \ln_p\kappa_{-\zeta}-\Sum{i=0}{\pinf}\ln|P_0(i)|$}
	\end{itemize}
\end{Lem}

\begin{proof}
	\lc{We have}{$\Phi(\omega^a)=\pa{1-\f{\partial\epsilon}s}\omega^a - \partial\pa{\f{\partial u}s}\exp\epsilon $}
	Let $b\in\supp\Phi(\omega^a)$. Then either 
	\centre{$b\in\supp\pa{\pa{1-\f{\partial\epsilon}s}\omega^a}$}
	\lc{or}{$b\in\supp\pa{\partial\pa{\f{\partial u}s}\exp\epsilon}$}
	\begin{itemize}
		\item \underline{First case} : $b\in\supp\pa{\pa{1-\f{\partial\epsilon}s}\omega^a}$. Then there is a path $P$, which is not the dominant path, such that
		\centre{$\omega^b\asymp \f{\partial P}{s}\omega^a
			\asymp \f{\exp\pa{
					-\Sum{\footnotesize\begin{array}{c}
							\beta\geq\alpha, m\in\Nbb^*\\ \beta\tq\kappa_{-\beta}\succeq^KP(k_P)
					\end{array}}{} \ln_m\kappa_{-\beta} + \Sum{i=0}{\pinf}\ln |P(i)|
			}}{
				\exp\pa{
					-\Sum{\footnotesize\begin{array}{c}
							\beta\geq\alpha, m\in\Nbb^*\\ \beta\tq\kappa_{-\beta}\succeq^KP_0(k_{P_0})
					\end{array}}{} \ln_m\kappa_{-\beta} + \Sum{i=0}{\pinf}\ln |P_0(i)|
			}}\omega^a$}
		where $P_0$ is the dominant path of $\epsilon$. Using Lemma \ref{lem:comparePkPQkQdominantPath}, we get
		\centre{$\omega^b\asymp \omega^a\exp\pa{-\Sum{\footnotesize\begin{array}{c}
						\beta\geq\alpha, m\in\Nbb^*\\ \beta\tq P_0(k_{P_0})\succ^K \kappa_{-\beta}\succeq^KP(k_P)
				\end{array}}{} \ln_m\kappa_{-\beta} + \Sum{i=0}{\pinf}\ln \left|\f{P(i)}{P_0(i)}\right|}$}
		
		\item\underline{Second case} : $b\in\supp\pa{\partial\pa{\f{\partial u}s}\exp\epsilon}$. First notice that $\partial\partial u=S_u\partial u$ where 
		\centre{$S_u = -\Sum{\beta<\alpha\  m\in\Nbb^*}{}\exp\pa{-\Sum{\zeta<\beta\ p\in\Nbb^*}{}\ln_p\kappa_{-\zeta} - \Sum{p=1}{m-1}\ln_p\kappa_{-\beta}} - \Sum{m=1}{n-1}\exp\pa{-\Sum{\beta<\alpha\ p\in\Nbb^*}{}\ln_p\kappa_{-\beta} - \Sum{p=1}{m-1}\ln_p\kappa_{-\alpha}}$}
		Hence, if $b\in\supp\pa{\f{\partial\partial u}{s}\exp\epsilon}$, there is some $(\beta,m)<_{lex}(\alpha,n)$ such that
		\centre{$\omega^b\asymp \omega^a\f{\exp\pa{-\Sum{\zeta<\beta\ p\in\Nbb^*}{}\ln_p\kappa_{-\zeta} - \Sum{p=1}{m-1}\ln_p\kappa_{-\beta}}}{
				\exp\pa{-\Sum{\footnotesize\begin{array}{c}
							p\in\Nbb^*\\ \zeta\tq \kappa_{-\zeta}\succeq^KP_0(k_{P_0})
					\end{array}}{} \ln_p\kappa_{-\zeta} + \Sum{i=0}{\pinf}\ln |P_0(i)|}}$}
		\lc{Therefor,}{$\omega^b\asymp\omega^a\exp\pa{\Sum{\footnotesize\begin{array}{c}
						(\zeta,p)\geq_{lex}(\beta,m)\\ \zeta\tq \kappa_{-\zeta}\succeq^KP_0(k_{P_0})
				\end{array}}{} \ln_p\kappa_{-\zeta} - \Sum{i=0}{\pinf}\ln |P_0(i)|}$}
		
		\lc{Notice that}{$\Sum{\footnotesize\begin{array}{c}
					(\zeta,p)\geq_{lex}(\beta,m)\\ \zeta\tq \kappa_{-\zeta}\succeq^KP_0(k_{P_0})
			\end{array}}{} \ln_p\kappa_{-\zeta} \sim \ln_m\kappa_{-\beta}\succ P_0(0)$}
		and then
		\begin{calculs}
			&\omega^a &\asymp& \partial(\ln_m\kappa_{-\beta})\exp\pa{
				\epsilon + \Sum{\footnotesize\begin{array}{c}
						(\zeta,p)>_{lex}(\beta,m)\\ \zeta\tq \kappa_{-\zeta}\succeq^KP_0(k_{P_0})
				\end{array}}{} \ln_p\kappa_{-\zeta}\right.\\&&&\left. \qquad\vphantom{\Sum{\footnotesize\begin{array}{c}
					(\zeta,p)>_{lex}(\beta,m)\\ \zeta\tq \kappa_{-\zeta}\succeq^KP_0(k_{P_0})
			\end{array}}{}}
				- \Sum{(\beta,m)<_{lex}(\zeta,p)<_{lex}(\alpha,n)}{} \ln_p\kappa_{-\zeta} - \Sum{i=0}{\pinf}\ln |P_0(i)|
			}
		\end{calculs}
		
		\lc{Since}{$\epsilon-\Sum{i=0}{\pinf}\ln|P(i)| \sim\epsilon\prec\ln_m\kappa_{-\beta}$}
		\lc{and}{$\Sum{\footnotesize\begin{array}{c}
					(\zeta,p)>_{lex}(\beta,m)\\ \zeta\tq \kappa_{-\zeta}\succeq^KP_0(k_{P_0})
			\end{array}}{} \ln_p\kappa_{-\zeta} - \Sum{(\beta,m)<_{lex}(\zeta,p)<_{lex}(\alpha,n)}{} \ln_p\kappa_{-\zeta}\prec\ln_m\kappa_{-\beta}$}
		Moreover,
		\centre{$\NR\pa{\ln\partial(\ln_m\kappa_{-\beta}) + \epsilon -\Sum{(\beta,m)<_{lex}(\zeta,p)<_{lex}(\alpha,n)}{} \ln_p\kappa_{-\zeta}}\leq \NR(x)$}
		and using Proposition \ref{prop:majorationNRPartial},
		\begin{calculs}
			&\NR\pa{\Sum{\footnotesize\begin{array}{c}
						(\zeta,p)>_{lex}(\beta,m)\\ \zeta\tq \kappa_{-\zeta}\succeq^KP_0(k_{P_0})
				\end{array}}{} \hspace{-2em}\ln_p\kappa_{-\zeta}- \Sum{i=0}{\pinf}\ln |P_0(i)|}&\leq& \NR(\partial P_0)\\
			&&\leq& k_{P_0}(\NR(x)+1)+\omega(\gamma+1)
		\end{calculs}
		We then conclude that there is some $\eta\prec\ln_m\kappa_{-\beta}$ such that
		\centre{$\omega^b = \partial(\ln_m\kappa_{-\beta})\exp\eta$}
		and by Corollary \ref{cor:NRprod},
		\centre{$\NR(\omega^b)\leq(k_{P_0}+1)(\NR(x)+1) + \omega(\gamma+1)$}
		
		Now assume $b\in\supp\pa{\f{\partial s}{s^2}\omega^a}$. 
		Notice that
		\begin{calculs}
			& \partial s &=& s\pa{	-\Sum{\footnotesize\begin{array}{c}
						m\in\Nbb^*\\ \beta\tq\kappa_{-\beta}\succeq^KP_0(k_{P_0})
				\end{array}}{} \partial\ln_m\kappa_{-\beta} + \Sum{i=0}{\pinf}\partial\ln |P_0(i)|}\\
			&&=& s\pa{-\Sum{\footnotesize\begin{array}{c}
						m\in\Nbb^*\\ \beta\tq\kappa_{-\beta}\succeq^KP_0(k_{P_0})
				\end{array}}{}\exp\pa{-\Sum{\zeta<\beta\ p\in\Nbb^*}{}\ln_p\kappa_{-\zeta} - \Sum{p=1}{m-1}\ln_p\kappa_{-\beta}} \right.\\ &&&\left. \vphantom{\Sum{\footnotesize\begin{array}{c}
					m\in\Nbb^*\\ \beta\tq\kappa_{-\beta}\succeq^KP_0(k_{P_0})
			\end{array}}{}}
				\qquad+\Sum{i=0}{\pinf}\partial\ln |P_0(i)|}
		\end{calculs}
		We then have the following sub-cases :
		\begin{itemize}
			\item There is some $m\in\Nbb^*$ and some ordinal $\beta$ such that $\kappa_{-\beta}\succeq^K P_0(k_{P_0})$ such that
			\begin{calculs}
				& \omega^b &\asymp& \f{\exp\pa{-\Sum{\zeta<\beta\ 				
							p\in\Nbb^*}{}\ln_p\kappa_{-\zeta} - \Sum{p=1}{m-1}\ln_p\kappa_{-\beta}}}{\exp\pa{-\Sum{\footnotesize\begin{array}{c}
								p\in\Nbb^*\\ \zeta\tq\kappa_{-\zeta}\succeq^KP_0(k_{P_0})
						\end{array}}{} \ln_p\kappa_{-\zeta}+\Sum{i=0}{\pinf}\ln|P_0(i)|}}\omega^a\\
				&&\asymp& \partial(\ln_m\kappa_{-\beta})\exp\pa{
					\epsilon +\Sum{\footnotesize\begin{array}{c}
							(\zeta,p)\geq_{lex}(\alpha,n)\\ \zeta\tq\kappa_{-\zeta}\succeq^KP_0(k_{P_0})
					\end{array}}{} \ln_p\kappa_{-\zeta}-\Sum{i=0}{\pinf}\ln|P_0(i)|}
			\end{calculs}
			with
			\centre{$\epsilon - \Sum{\footnotesize\begin{array}{c}
						\zeta\geq\alpha\ p\in\Nbb^*\\ \zeta\tq\kappa_{-\zeta}\succeq^KP_0(k_{P_0})
				\end{array}}{} \ln_p\kappa_{-\zeta}-\Sum{i=0}{\pinf}\ln|P_0(i)|\sim\epsilon\prec\ln_m\kappa_{-\beta}$}
			
			We then conclude that there is some $\eta\prec\ln_m\kappa_{-\beta}$ such that
			\centre{$\omega^b = \partial(\ln_m\kappa_{-\beta})\exp\eta$}
			and by Corollary \ref{cor:NRprod},
			\centre{$\NR(\omega^b)\leq(k_{P_0}+1)(\NR(x)+1) + \omega(\gamma+1)$}
			
			\item There is some path $P\in\Pcal_\Lbb(\epsilon)$ and some $i\geq 1$ such that for all $j<i$, $P(j)=P_0(j)$ and
			\centre{$\omega^b\asymp \f{\exp\pa{-\Sum{\footnotesize\begin{array}{c}
								p\in\Nbb^*\\ \zeta\tq\kappa_{-\zeta}\succeq^KP(k_P)
						\end{array}}{} \ln_p\kappa_{-\zeta}+\Sum{j=i}{\pinf}\ln|P(j)|}}{\exp\pa{-\Sum{\footnotesize\begin{array}{c}
								p\in\Nbb^*\\ \zeta\tq\kappa_{-\zeta}\succeq^KP_0(k_{P_0})
						\end{array}}{} \ln_p\kappa_{-\zeta}+\Sum{j=0}{\pinf}\ln|P_0(j)|}}\omega^a$}
			As in the first case, we get
			\centre{$\omega^b\asymp \omega^a\exp\pa{-\hspace{-1em}\Sum{\footnotesize\begin{array}{c}
							\beta\geq\alpha, m\in\Nbb^*\\ \beta\tq P_0(k_{P_0})\succ^K \kappa_{-\beta}\succeq^KP(k_P)
					\end{array}}{} \hspace{-3em}\ln_m\kappa_{-\beta} + \Sum{j=0}{\pinf}\ln \left|\f{P(i+j)}{P_0(j)}\right|}$}
		\end{itemize}
	\end{itemize}
\end{proof}

\begin{Prop} \label{prop:supportPhi2}
	Assume $x=\omega^a=\partial u\exp\epsilon$ with $\epsilon\succ\ln u$ and $u=\ln_n\kappa_{-\alpha}$. 
	Let $P_0$ be the dominant path of $\epsilon$.
	We denote for $P_1,\dots,P_{k+k'}\in\Pcal_\Lbb(\epsilon)$, $i_1,\dots,i_{k+k'}\in\Nbb^*$ and $(\beta,m)\leq_{lex}(\alpha,n)$,
	\begin{calculs}
		&e^{(\beta,m)}\left(\begin{matrix}
			P_1,\dots,P_k\\
			P_{k+1},\dots, P_{k+k'}\\
			i_1,\dots,i_{k+k'}
		\end{matrix}\right) &=&
		-k\Sum{i=0}{\infty}\ln|P_0(i)| + \Sum{j=1}{k}\Sum{i=i_j}{\infty}\ln|P_j(i)|\\ &&&
		-\Sum{j=1}{k}\Sum{\footnotesize\begin{array}{c}
				\gamma\geq\alpha, \ell\in\Nbb^*\\ \gamma\tq P_0(k_{P_0})\succ^K \kappa_{-\gamma}\succeq^KP_j(k_{P_j})
		\end{array}}{}\hspace{-2em} \ln_\ell\kappa_{-\gamma} \\ &&&
		-k'\Sum{\ell=m+2}{\pinf}\ln_\ell{\kappa_{-\beta}} + \Sum{j=k+1}{k+k'}\Sum{i=i_j}{\pinf}\ln |P_j(i)|\\ &&&
		-\Sum{j=k+1}{k+k'}\Sum{\footnotesize\begin{array}{c}
				\gamma>\beta, \ell\in\Nbb^*\\ \gamma\tq\kappa_{-\gamma}\succeq^KP_j(k_{P_j})
		\end{array}}{} \ln_\ell\kappa_{-\gamma}
		
	\end{calculs}
	We now define $E_{1,k,k'}^{(\beta,m)}$ as follows:
	\begin{calculs}
		&&& e^{(\beta,m)}\left(\begin{matrix}
			P_1,\dots,P_k\\
			P_{k+1},\dots, P_{k+k'}\\
			i_1,\dots,i_{k+k'}
		\end{matrix}\right)\in E_{1,k,k'}^{(\beta,m)}\\&&\Leftrightarrow&
			P_1,\dots,P_k\in\Pcal_\Lbb(\epsilon)\setminus\{P_0\}\\
		&&&\wedge\quad P_{k+1},\dots,P_{k+k'}\in\Pcal_\Lbb(\epsilon)\\
		&&&\wedge\quad i_1,\dots,i_k\in\Nbb\\
		&&&\wedge\quad i_{k+1},\dots,i_{k+k'}\in\Nbb^*\\
		&&&\wedge\quad 	\forall j\in\intn{1}{k+k'}\ \exists j'\in\intn 0{j-1}\\ &&&\qquad\qquad \forall i\in\intn0{i_j-1}\quad P_{j'}(i)=P_j(i)\\
		&&&\wedge\quad \forall j \in \intn{k+1}{k+k'}\\
			&&&\qquad\qquad\supp P_j(i_j)\subseteq\supp e^{(\beta,m)}\left(\begin{matrix}
			P_1,\dots,P_k\\ P_{k+1},\dots,P_j\\ i_1,\dots, i_j
		\end{matrix}\right)
	\end{calculs}
	\centre{$E_1^{(\beta,m)} = \begin{accolade}
			\Union{k\in\Nbb,\ k'\in\Nbb^*}{} E_{1,k,k'}^{(\beta,m)} & (\beta,m)\neq (\alpha,n)\\
			\Union{k\in\Nbb}{} E_{1,k,0}^{(\beta,m)} & (\beta,m)=(\alpha,n) 
		\end{accolade}$}
	Define sets $E_2^{(\beta,m)}$ as follows:
	\begin{itemize}
		\item If $(\beta,m)\neq (\alpha,n)$, then
		\centre{$
				-\Sum{\ell=m+2}{\pinf}\ln_\ell\kappa_{-\beta}
				-\hspace{-1em}\Sum{\gamma'<\gamma,\ \ell\in\Nbb^*}{}\hspace{-1em}\ln_\ell\kappa_{-\gamma'} - \Sum{\ell=1}{p}\ln_\ell\kappa_{-\gamma} \in  E_2^{(\beta,m)} $}
		\tiff $\gamma>\beta$, $p\in\Nbb$ and there is some $P\in\Pcal_\Lbb(\epsilon)$ such that $\kappa_{-\gamma}\succeq^K P(k_P)$
		\item If $(\beta,m)=(\alpha,n)$, then
		\centre{$-\Sum{j=0}{\infty}\ln|P_0(j)| - \Sum{\gamma>\zeta>\alpha,\ \ell\in\Nbb^*}{}\ln_\ell\kappa_{-\zeta} - \Sum{\ell=1}{p}\ln_\ell\kappa_{-\gamma} \in  E_2^{(\beta,m)}$}
		\tiff $\gamma>\alpha$, $p\in\Nbb$ and there is some $P\in\Pcal_\Lbb(\epsilon)$ such that $\kappa_{-\gamma}\succeq^K P(k_P)$.
	\end{itemize}
	Let also
	\begin{calculs}
		&E_3^{(\beta,m)} &=& \begin{accolade}
			\enstq{-\Sum{\ell=m+2}{p}\ln_\ell{\kappa_{-\beta}}}{p\geq m+2} & (\beta,m)\neq(\alpha,n)\\
			\emptyset & (\beta,m)=(\alpha,n)
		\end{accolade} \\ [.4cm]
		& E^{(\beta,m)} &=& E_1^{(\beta,m)}\cup E_2^{(\beta,m)}\cup E_3^{(\beta,m)}
	\end{calculs}
	and $\inner {E^{(\beta,m)}}$ be the monoid it generates. Finally, let $H^{(\beta,m)}$ defined by cases as follows:
	\centre{$\begin{accolade}
			\left\{\Sum{\footnotesize\begin{array}{c}
					(\zeta,p)>_{lex}(\beta,m)\\ \zeta\tq \kappa_{-\zeta}\succeq^KP_0(k_{P_0})
			\end{array}}{} \ln_p\kappa_{-\zeta}\hspace{8em}\right.\\ - \Sum{(\beta,m)<_{lex}(\zeta,p)<_{lex}(\alpha,n)}{} \hspace{-2em}\ln_p\kappa_{-\zeta} - \Sum{i=0}{\pinf}\ln |P_0(i)|,\\
			\left.\Sum{\footnotesize\begin{array}{c}
					(\zeta,p)\geq_{lex}(\alpha,n)\\ \zeta\tq\kappa_{-\zeta}\succeq^KP_0(k_{P_0})
			\end{array}}{} \hspace{-2em}\ln_p\kappa_{-\zeta}-\Sum{i=0}{\pinf}\ln|P_0(i)|\right\} & (\beta,m)\neq (\alpha,n) \\
			\{0\} & (\beta,m)=(\alpha,n)
		\end{accolade}$}
	Let $b\in\Union{q=0}{\pinf}\supp\Phi^q(\omega^a)$. Then, there are $\eta\in H^{(\beta,m)}$ and $y\in \inner {E^{(\beta,m)}}$ such that
	\centre{$\omega^b\asymp\partial (\ln_m\kappa_{-\beta})\exp(\epsilon + \eta + y)$}
\end{Prop}

\begin{proof}
	We prove it by induction on $q$.
	\begin{itemize}
		\item If $b\in\supp\Phi^0(\omega^a)$, then $b=a$ and $y=0$ with $(\beta,m)=(\alpha,n)$ and $\eta=0$ works.
		
		\item Assume the property for some $q\in\Nbb$. Let $b\in\supp\Phi^{q+1}(\omega^b)$. Then there is 
		$c\in\supp\Phi^q(\omega^a)$ such that $b\in\supp\Phi(\omega^c)$. Apply the induction hypothesis on $c$. Take $(\beta,m)$, $\eta\in H^{(\beta,m)}$ and $y\in\inner{E^{(\beta,m)}}$ such that
		\centre{$\omega^c\asymp\partial(\ln_m\beta)\exp(\epsilon+\eta+y)$}
		\begin{itemize}
			\item If $(\beta,m)<_{lex}(\alpha,n)$, then $y,\epsilon\prec\ln_{n+1}\kappa_{-\beta}$. Hence, using Lemma \ref{lem:formeEtaPhiOmegaA}, we get that there is $P\in\Pcal_\Lbb(\epsilon+\eta+y)$ such that
			\begin{calculs}
				&\omega^b &\asymp& \partial(\ln_m\kappa_{-\beta})\exp\pa{\epsilon+\eta+y
					-\Sum{\ell=m+2}{\pinf}\ln_\ell\kappa_{-\beta} \vphantom{\Sum{\footnotesize\begin{array}{c}
								\gamma>\beta, \ell\in\Nbb^*\\ \gamma\tq\kappa_{-\gamma}\succeq^KP(k_P)
						\end{array}}{}}\right. \\ &&& \left.\qquad
					-\hspace{-2em}\Sum{\footnotesize\begin{array}{c}
							\gamma>\beta, \ell\in\Nbb^*\\ \gamma\tq\kappa_{-\gamma}\succeq^KP(k_P)
					\end{array}}{}\hspace{-2em} \ln_\ell\kappa_{-\gamma} + \Sum{i=0}{\pinf}\ln |P(i)|}
			\end{calculs}
			If $P(0)$ a term of $\epsilon$, up to some real factor, then there is a real number $s$ and some $e\in E_{1,0,1}^{(\beta,m)}$ such that
			\begin{calculs}
				 & s\exp e &=& \exp\pa{-\Sum{\ell=m+2}{\pinf}\ln_\ell\kappa_{-\beta}
				 	-\hspace{-2em}\Sum{\footnotesize\begin{array}{c}
				 			\gamma>\beta, \ell\in\Nbb^*\\ \gamma\tq\kappa_{-\gamma}\succeq^KP(k_P)
				 	\end{array}}{}\hspace{-2em} \ln_\ell\kappa_{-\gamma} + \Sum{i=0}{\pinf}\ln |P(i)|}
			\end{calculs}
			Then $y+e\in \inner {E^{(\beta,m)}}$ and $\omega^b\asymp\partial(\ln_m\kappa_{-\beta})\exp(\epsilon+y+e)$. If not, then $P(0)$ is a term
			of $\eta+y$. Hence, we have the following cases:
			\begin{itemize}
				\item $P(0)=s\ln_p\kappa_{-\beta}$ for some $s\in\Rbb^*_-$ and $p\geq m+2$. Then,
				\begin{calculs}
					&&&-\Sum{\ell=m+2}{\pinf}\ln_\ell\kappa_{-\beta}
					-\Sum{\footnotesize\begin{array}{c}
							\gamma>\beta, \ell\in\Nbb^*\\ \gamma\tq\kappa_{-\gamma}\succeq^KP(k_P)
					\end{array}}{} \ln_\ell\kappa_{-\gamma} + \Sum{i=0}{\pinf}\ln |P(i)| \\
					&&=&  \ln|s| - \Sum{\ell=m+2}{p}\ln_\ell\kappa_{-\beta} \in \ln|s|+E_3^{(\beta,m)}
				\end{calculs}
				Then,
				\centre{$y-\Sum{\ell=m+2}{\pinf}\ln_\ell\kappa_{-\beta}
					-\hspace{-2em}\Sum{\footnotesize\begin{array}{c}
							\gamma>\beta, \ell\in\Nbb^*\\ \gamma\tq\kappa_{-\gamma}\succeq^KP(k_P)
					\end{array}}{}\hspace{-2em} \ln_\ell\kappa_{-\gamma} + \Sum{i=0}{\pinf}\ln |P(i)| \in\Rbb+\inner {E^{(\beta,m)}}$}
				
				\item $P(0)=s\ln_p\kappa_{-\gamma}$ with $\gamma>\beta$ and $p\in\Nbb^*$ such that there is some path $Q\in\Pcal_\Lbb(\epsilon)$ such that $\kappa_{-\beta}\succeq^K Q(k_Q)$. Then
				\centre{$-\Sum{\ell=m+2}{\pinf}\ln_\ell\kappa_{-\beta}
					-\hspace{-2em}\Sum{\footnotesize\begin{array}{c}
							\gamma>\beta, \ell\in\Nbb^*\\ \gamma\tq\kappa_{-\gamma}\succeq^KP(k_P)
					\end{array}}{} \hspace{-2em}\ln_\ell\kappa_{-\gamma} + \Sum{i=0}{\pinf}\ln |P(i)| \in \ln|s| + E_2^{(\beta,m)}$}
				Then,
				\centre{$y-\Sum{\ell=m+2}{\pinf}\ln_\ell\kappa_{-\beta}
					-\Sum{\footnotesize\begin{array}{c}
							\gamma>\beta, \ell\in\Nbb^*\\ \gamma\tq\kappa_{-\gamma}\succeq^KP(k_P)
					\end{array}}{} \ln_\ell\kappa_{-\gamma} + \Sum{i=0}{\pinf}\ln |P(i)| \in\Rbb+\inner {E^{(\beta,m)}}$}
				
				\item There are some paths $P_1,\dots,P_{k+k'}\in \Pcal_\Lbb(\epsilon)$ and some integers $i_1,\dots,i_{k+k'}$ such that 
				\centers{$e^{(\beta,m)}\left(\begin{matrix}
						P_1,\dots,P_k \\ P_{k+1},\dots,P_{k+k'}\\ i_1,\dots,i_{k+k'}
					\end{matrix}\right)\in E_{1,k,k'}^{(\beta,m)}$}
				\lc{and}{$\exists y'\in\inner E\qquad y=y'+e^{(\beta,m)}\left(\begin{matrix}
						P_1,\dots,P_k \\ P_{k+1},\dots,P_{k+k'}\\ i_1,\dots,i_{k+k'}
					\end{matrix}\right)$}
				and finally such that $P(0)\in\Rbb z$ for some $z$ a term of some $\ln|P_j({i_{k+k'+1}}')|$ with $j\in\intn0{k+k'}$ and ${i_{k+k'+1}}'\geq i_j$. 
				Let $P_{k+k'+1}$ be the following path :
				\centre{$P_{k+k'+1}(i) = \begin{accolade}
						P_j(i) & i\leq {i_{k+1}}' \\ z & i={i_{k+1}}'+1 \\ P(i-{i_{k+1}}'-1) & i>{i_{k+1}}'+1 
					\end{accolade}$}
				Then, $P_{k+k'+1}\in\Pcal(\epsilon)$. Moreover, 
				$$\partial P_{k+k'+1}=\underbrace{P_j(0)\cdots P_j({i_{k+1}}')}_{\neq 0}\underbrace{\partial P}_{\neq 0}$$
				Then $P_{k+k'+1}\in\Pcal_\Lbb(\epsilon)$. Note also that for all $\beta$, 
				
				\centre{$\kappa_{-\beta}\succeq^K P_{k+k'+1}(k_{P_{k+k'+1}})\Longleftrightarrow\kappa_{-\beta}\succeq^KP(k_{P})$}
				Finally,
				
				\begin{calculs}
					&&&-\Sum{\ell=m+2}{\pinf}\ln_\ell\kappa_{-\beta}
					-\Sum{\footnotesize\begin{array}{c}
							\gamma>\beta, \ell\in\Nbb^*\\ \gamma\tq\kappa_{-\gamma}\succeq^KP(k_P)
					\end{array}}{} \ln_\ell\kappa_{-\gamma} + \Sum{i=0}{\pinf}\ln |P(i)|\\
					&&=&-\Sum{\ell=m+2}{\pinf}\ln_\ell\kappa_{-\beta}	-\Sum{\footnotesize\begin{array}{c}
							\gamma>\beta, \ell\in\Nbb^*\\ \gamma\tq\kappa_{-\gamma}\succeq^KP_{k+k'+1}(k_{P_{k+k'+1}})
					\end{array}}{} \ln_\ell\kappa_{-\gamma}\\ &&&\qquad\qquad + \Sum{i={i_{k+1}}'+1}{\pinf}\ln |P_{k+k'+1}(i)| + \ln\underbrace{\left|\f{P(0)}{z}\right|}_{\in\Rbb^*_+}\\
				\end{calculs}
				From that we derive that
				\begin{calculs}
					&&& y-\Sum{\ell=m+2}{\pinf}\ln_\ell\kappa_{-\beta}
					-\Sum{\footnotesize\begin{array}{c}
							\gamma>\beta, \ell\in\Nbb^*\\ \gamma\tq\kappa_{-\gamma}\succeq^KP(k_P)
					\end{array}}{} \ln_\ell\kappa_{-\gamma} + \Sum{i=0}{\pinf}\ln |P(i)| \\
					&&=& y' +  e^{(\beta,m)}\left(\begin{matrix}
						P_1,\dots,P_k \\ P_{k+1},\dots,P_{k+k'+1}\\ i_1,\dots,i_{k+k'+1}
					\end{matrix}\right)
					+\ln\left|\f{P(0)}{z}\right| \in\Rbb + \inner {E^{(\beta,m)}}
				\end{calculs}
				where $i_{k+k'+1}={i_{k+k'+1}}'+1$ and $P_{k+k'+1}(i_{k+k'})=z$ has indeed its support (which is reduced to a singleton) included
				in the one of $e^{(\beta,m)}\left(\begin{matrix}
					P_1,\dots,P_k\\ P_{k+k'},\dots,P_{k+k'}\\ i_1,\dots, i_{k+k'}
				\end{matrix}\right)$.
			\end{itemize}
			Then there is a real number $s$, and $e\in\inner{E^{(\beta,m)}}$ such that 
			\centre{$\omega^b\asymp\partial (\ln_m\beta)\exp(\epsilon + \eta + e + s)\asymp \partial (\ln_m\beta)\exp(\epsilon + \eta + e)$}
			Then we get the property at rank $q+1$.

			\item If $(\beta,m)=(\alpha,n)$, we have $\eta=0$ and write
			\centre{$y=y'+e^{(\alpha,n)}\left(\begin{matrix}
					P_1,\dots, P_k\\ \emptyset\\ i_1,\dots,i_{k+k'}
				\end{matrix}\right)$}
			with, $y'\in\inner{E^{(\beta,m)}}$ and, $k,k'\in\Nbb$.
			Using Lemma \ref{lem:formeEpsilonPhiOmegaA}, we have
			
			\begin{itemize}
				\item \lc{\underline{First case} :}{$\omega^b\asymp\partial u\exp(\epsilon + y + e)$}
				where
				\centre{$e=-\Sum{\footnotesize\begin{array}{c}
							\gamma\geq\alpha, \ell\in\Nbb^*\\ \gamma\tq P_0(k_{P_0})\succ^K \kappa_{-\gamma}\succeq^KP(k_P)
					\end{array}}{} \ln_\ell\kappa_{-\gamma} + \Sum{j=0}{\pinf}\ln \left|\f{P(i+j)}{P_0(j)}\right|$}
				for some path $P\in\Pcal_\Lbb(\epsilon + y)$ and some $i\in\Nbb$ such that
				\centre{$\forall j\in\intn0{i-1}\qquad P(j)=P_0(j)$}
				Indeed, $y\in\inner{E^{(\alpha,n)}}$. In particular, $y\prec\epsilon$ and then $\epsilon+y\sim\epsilon$ so that $P_0$ is also the dominant path of $\epsilon+y$. 
				
				\begin{itemize}
					\item If $P(0)$ is, up to a real factor, a term of $\epsilon$, 
					then we get that there is some path $Q\in\Pcal_\Lbb(\epsilon)$ and a real number $s$ such that 
					\centre{$y+e=y'+e^{(\beta,m)}\left(\begin{matrix}
							P_1,\dots,P_k,Q\\ \emptyset\\ i_1,\dots, i_k,i
						\end{matrix}\right)+s$}
					Since $y\prec\epsilon$, and $P\neq P_0$, we also have $Q\neq P_0$. Then $y+e\in \inner{E^{(\beta,m)}}+E_{1,k+1,k'}^{(\beta,m)}+s$. Let  \centre{$y''=y+e-s\in\inner{E^{(\beta,m)}}$} 
					\lc{then,}{$\omega^b\asymp\partial u\exp(\epsilon + y'')$}
					In particular, $y''\prec\epsilon$.
					
					\item If $P(0)$ is a term of $y$, and more precisely if it can be written as $P(0)=s\ln_p\kappa_{-\gamma}$ for $s\in\Rbb$, $p\in\Nbb$ and $\gamma\geq\alpha$ such that 
					\centre{$P_0(k_{P_0})\succ^K \kappa_{-\gamma}\succeq^K Q(k_Q)$}
					for some path $Q\in\Pcal_\Lbb(\epsilon)\setminus\{P_0\}$. Then,
					\centre{$e=-\Sum{j=0}{\infty}\ln|P_0(j)| - \Sum{\gamma>\zeta>\alpha,\ \ell\in\Nbb^*}{}\ln_\ell\kappa_{-\zeta} - \Sum{\ell=1}{p+i}\ln_\ell\kappa_{-\gamma} + \ind_{i=0}\ln|s| \in E_2^{(\beta,m)}+\Rbb$}
					Then $y+e-\ln|s|\in\inner{E^{(\beta,m)}}$ and  since $e\prec\epsilon$, $y+e-s\prec\epsilon$ and 
					\centre{$\omega^b\asymp\partial u\exp(\epsilon+y+e-\ln|s|)$}
					
					\item If $P(0)$ is a term of $y$, and more precisely if it can be written as $P(0)=s\ln|P_\ell(j)|$ for some $s\in\Rbb$ and some $\ell\in\intn0{k+k'}$ (actually it is true if we have chosen well the $y'$ in the beginning, but up to a renaming, it is true). Consider the following path
					\centre{$ Q(p) = \begin{accolade}
							P_\ell(p) & p\leq j\\ P(p-j) & p>j
						\end{accolade}$}
					We have $Q\in\Pcal_\Lbb(\epsilon)$ and
					\centre{$y+e=y'+e^{(\beta,m)}\left(\begin{matrix}
							P_1,\dots,P_k,Q\\ \emptyset\\ i_1,\dots,i_{k},j
						\end{matrix}\right) + \ln|s|$}
					Then $y+e-\ln|s|\in\inner{E^{(\beta,m)}}$ and  since $e\prec\epsilon$, $y+e-s\prec\epsilon$ and 
					\centre{$\omega^b\asymp\partial u\exp(\epsilon+y+e-\ln|s|)$}
				\end{itemize}
				This concludes the first case.
				
				\item \underline{Second case} : There are $(\beta',m')<_{lex}(\alpha,n)$ and $\eta'\in H^{(\beta,m)}$ such that $\omega^b\asymp\partial(\ln_{m'}\kappa_{-\beta'})\exp(\epsilon+\eta'+y)$. This immediately conclude the second case.
			\end{itemize}
			We then have the property at rank $q+1$.
		\end{itemize}
	\end{itemize}
	Thanks to the induction principle, we conclude that the property holds for any $q\in\Nbb$.
\end{proof}

\begin{corollary} \label{cor:propsupportPhi2} Let $x$ be a surreal number such that
	\centre{$\exists u=\ln_n\kappa_{-\alpha}\quad \exists r_0\in\Rbb^* \quad \exists a_0\in\Nobf \quad \forall a\in\supp x\quad\exists\epsilon\sim r\omega^{a_0}\succ\ln u\qquad \omega^a\asymp\partial u\exp\epsilon$}
	\lc{Let}{$\Pcal_0(x)=\enstq{P\in\Pcal_\Lbb(x)}{\begin{array}{c}
				P(1)=r\omega^{a_0} \\ \forall i\geq 1 \quad P(i+1)\sim\ln|P(i)
		\end{array}}$}
	It is the set of all the possible dominant paths of the epsilon to which we add the corresponding term of $x$ at the beginning.
	We denote for $P_0\in\Pcal_0(x)$, $P_1,\dots,P_{k+k'}\in\Pcal_\Lbb(x)$, $i_1,\dots,i_{k+k'}\in\Nbb^*$ and $(\beta,m)\leq_{lex}(\alpha,n)$,
	\begin{calculs}
		&e^{(\beta,m)}\left(\begin{matrix}
			P_0; P_1,\dots,P_k\\
			P_{k+1},\dots, P_{k+k'}\\
			i_1,\dots,i_{k+k'}
		\end{matrix}\right) &=&
		-k\Sum{i=1}{\infty}\ln|P_0(i)| -k'\Sum{\ell=m+2}{\pinf}\ln_\ell{\kappa_{-\beta}}\\ &&& 
		-\Sum{j=1}{k}\hspace{-1em}\Sum{\scriptsize\begin{array}{c}
				\gamma\geq\alpha, \ell\in\Nbb^*\\ \gamma\tq P_0(k_{P_0})\succ^K \kappa_{-\gamma}\succeq^KP_j(k_{P_j})
		\end{array}}{}\hspace{-3em} \ln_\ell\kappa_{-\gamma}\\
		&&&
		+ \Sum{j=1}{k}\Sum{i=i_j}{\infty}\ln|P_j(i)|\\
		&&&	
		-\Sum{j=k+1}{k+k'}\Sum{\footnotesize\begin{array}{c}
				\gamma>\beta, \ell\in\Nbb^*\\ \gamma\tq\kappa_{-\gamma}\succeq^KP_j(k_{P_j})
		\end{array}}{}\hspace{-2em} \ln_\ell\kappa_{-\gamma}\\
		&&&
		+ \Sum{j=k+1}{k+k'}\Sum{i=i_j}{\pinf}\ln |P_j(i)|
	\end{calculs}

	We now define $E_{1,k,k'}^{(\beta,m)}$ as follows:
	\begin{calculs}
		&&& e^{(\beta,m)}\left(\begin{matrix}
			P_0;P_1,\dots,P_k\\
			P_{k+1},\dots, P_{k+k'}\\
			i_1,\dots,i_{k+k'}
		\end{matrix}\right)\in E_{1,k,k'}^{(\beta,m)} \\&&\Leftrightarrow&
		P_0\in\Pcal_0(x) \wedge P_1,\dots,P_k\in\Pcal_\Lbb(\epsilon)\setminus\{P_0\}\\
		&&&\wedge\quad P_{k+1},\dots,P_{k+k'}\in\Pcal_\Lbb(x)\\
		&&&\wedge\quad i_1,\dots,i_k\in\Nbb^*\\
		&&&\wedge\quad i_{k+1},\dots,i_{k+k'}\in\Nbb\setminus \{0,1\}\\
		&&&\wedge\quad 	\forall j\in\intn{1}{k+k'}\ \exists j'\in\intn 0{j-1}\\ &&&\qquad\qquad \forall i\in\intn0{i_j-1}\quad P_{j'}(i)=P_j(i)\\
		&&&\wedge\quad \forall j \in \intn{k+1}{k+k'}\\
		&&&\qquad\qquad\supp P_j(i_j)\subseteq\supp e^{(\beta,m)}\left(\begin{matrix}
			P_0; P_1,\dots,P_k\\ P_{k+1},\dots,P_j\\ i_1,\dots, i_j
		\end{matrix}\right)
	\end{calculs}
	\centre{$E_1^{(\beta,m)} = \begin{accolade}
			\Union{k\in\Nbb,\ k'\in\Nbb^*}{} E_{1,k,k'}^{(\beta,m)} & (\beta,m)\neq (\alpha,n)\\
			\Union{k\in\Nbb}{} E_{1,k,0}^{(\beta,m)} & (\beta,m)=(\alpha,n) 
		\end{accolade}$}
	Define sets $E_2^{(\beta,m)}$ as follows:
	\begin{itemize}
		\item If $(\beta,m)\neq (\alpha,n)$, then
		\centre{$
			-\Sum{\ell=m+2}{\pinf}\ln_\ell\kappa_{-\beta}
			-\hspace{-1em}\Sum{\gamma'<\gamma,\ \ell\in\Nbb^*}{}\hspace{-1em}\ln_\ell\kappa_{-\gamma'} - \Sum{\ell=1}{p}\ln_\ell\kappa_{-\gamma} \in  E_2^{(\beta,m)} $}
		\tiff $\gamma>\beta$, $p\in\Nbb$ and there is some $P\in\Pcal_\Lbb(\epsilon)$ such that $\kappa_{-\gamma}\succeq^K P(k_P)$
		\item If $(\beta,m)=(\alpha,n)$, then
		\centre{$-\Sum{j=0}{\infty}\ln|P_0(j)| - \Sum{\gamma>\zeta>\alpha,\ \ell\in\Nbb^*}{}\ln_\ell\kappa_{-\zeta} - \Sum{\ell=1}{p}\ln_\ell\kappa_{-\gamma} \in  E_2^{(\beta,m)}$}
		\tiff $\gamma>\alpha$, $p\in\Nbb$, $P_0\in\Pcal_0(x)$, $P_0(k_{P_0}) \succ^K\kappa_{-\gamma}$ and there is some $P\in\Pcal_\Lbb(\epsilon)$ such that $\kappa_{-\gamma}\succeq^K P(k_P)$.
	\end{itemize}

	Let also
	\begin{calculs}
		&E_3^{(\beta,m)} &=& \begin{accolade}
			\enstq{-\Sum{\ell=m+2}{p}\ln_\ell{\kappa_{-\beta}}}{p\geq m+2} & (\beta,m)\neq(\alpha,n)\\
			\emptyset & (\beta,m)=(\alpha,n)
		\end{accolade} \\ [.4cm]
		& E^{(\beta,m)} &=& E_1^{(\beta,m)}\cup E_2^{(\beta,m)}\cup E_3^{(\beta,m)}
	\end{calculs}
	and $\inner {E^{(\beta,m)}}$ be the monoid it generates. Finally, let
	$H^{(\beta,m)}$ defined by cases as follows:
	\begin{itemize}
		\item If $(\beta,m)\neq (\alpha,n)$, then
		\begin{calculs}
			& H^{(\beta,m)} &=& \enstq{\begin{array}{r}
					\Sum{\footnotesize\begin{array}{c}
						(\zeta,p)>_{lex}(\beta,m)\\ \zeta\tq \kappa_{-\zeta}\succeq^KP_0(k_{P_0})
				\end{array}}{}\hspace{-2em} \ln_p\kappa_{-\zeta} - \Sum{(\beta,m)<_{lex}(\zeta,p)<_{lex}(\alpha,n)}{} \hspace{-2em}\ln_p\kappa_{-\zeta}\\ - \Sum{i=0}{\pinf}\ln |P_0(i)|\end{array}}{P_0\in\Pcal_0(x)}\\
			&&&\qquad\bigcup\enstq{\Sum{\footnotesize\begin{array}{c}
						(\zeta,p)\geq_{lex}(\alpha,n)\\ \zeta\tq\kappa_{-\zeta}\succeq^KP_0(k_{P_0})
				\end{array}}{}\hspace{-2em} \ln_p\kappa_{-\zeta}-\Sum{i=0}{\pinf}\ln|P_0(i)|}{P_0\in\Pcal_0(x)}
		\end{calculs}
		\item If $(\beta,m) = (\alpha,n)$, then
		\centre{$H^{(\beta,m)} = \enstq{-\ln|P_0(x)|}{P_0\in\Pcal_0(x)}$}
	\end{itemize}
	Let $b\in\Union{q=0}{\pinf}\supp\Phi^q(x)$. Then, there are $\eta\in H^{(\beta,m)}$ and $y\in \inner {E^{(\beta,m)}}$ such that
	\centre{$\omega^b\asymp\f{\partial \ln_m\kappa_{-\beta}}{\partial u}\exp(\eta + y)$}
\end{corollary}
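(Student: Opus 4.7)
The plan is to reduce the statement directly to Proposition \ref{prop:supportPhi2} applied term-by-term, exploiting the strong linearity of $\Phi$. Since $\Phi$ is strongly linear, for every $q \geq 0$ and every decomposition $x = \sum_{a \in \supp x} r_a \omega^a$ we have $\Phi^q(x) = \sum_a r_a \Phi^q(\omega^a)$, hence $\bigcup_{q} \supp \Phi^q(x) \subseteq \bigcup_{a} \bigcup_{q} \supp \Phi^q(\omega^a)$. Each term $\omega^a$ satisfies $\omega^a \asymp \partial u \exp \epsilon_a$ with $\epsilon_a \sim r\omega^{a_0} \succ \ln u$, so Proposition \ref{prop:supportPhi2} applies verbatim to $\omega^a$: for any $b \in \supp \Phi^q(\omega^a)$ we obtain $(\beta,m) \leq_{\mathrm{lex}} (\alpha,n)$, an $\eta_a$ in the proposition's $H^{(\beta,m)}$-set (relative to $\epsilon_a$), and a $y_a$ in the monoid generated by the proposition's $E^{(\beta,m)}$-set, such that $\omega^b \asymp \partial(\ln_m\kappa_{-\beta}) \exp(\epsilon_a + \eta_a + y_a)$.

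The next step is the canonical identification between paths of each $\epsilon_a$ and paths of $x$ that factor through a term of $x$. To a path $P \in \Pcal_\Lbb(\epsilon_a)$ one associates the shifted path $\tilde P \in \Pcal_\Lbb(x)$ defined by $\tilde P(0) = r_a \omega^a$ and $\tilde P(i+1) = P(i)$ for $i \geq 0$. The dominant path of $\epsilon_a$ then corresponds to an element of $\Pcal_0(x)$, because $\tilde P(1) = P(0)$ is the leading term of $\epsilon_a$, which by hypothesis is $r\omega^{a_0}$, and all subsequent $\tilde P(i+1)$ are dominant terms of $\ln|\tilde P(i)|$. Under this shift, the proposition's sum $\sum_{i \geq 0}\ln|P_0(i)|$ becomes $\sum_{i \geq 1}\ln|\tilde P_0(i)|$, while the index constraints $i_j \in \Nbb$ and $i_j \in \Nbb^*$ become $i_j \in \Nbb^*$ and $i_j \in \Nbb\setminus\{0,1\}$, matching exactly the definitions used in the corollary.

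The last step is to absorb the factor $\exp\epsilon_a$. Using $\omega^a \asymp \partial u \exp \epsilon_a$, one rewrites
\[
\partial(\ln_m\kappa_{-\beta})\exp(\epsilon_a + \eta_a + y_a) \asymp \frac{\partial\ln_m\kappa_{-\beta}}{\partial u}\cdot\omega^a\cdot\exp(\eta_a + y_a),
\]
and then folds $\omega^a$ back into the exponential via $\omega^a \asymp \exp(\ln|\tilde P_0(0)|)$; since $\asymp$ ignores real positive constants, this introduces precisely the missing $i=0$ summand $\pm\ln|\tilde P_0(0)|$ that distinguishes the corollary's $-\sum_{i \geq 0}\ln|P_0(i)|$ from its proposition counterpart. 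The resulting expression has the shape $\frac{\partial\ln_m\kappa_{-\beta}}{\partial u}\exp(\eta^{\mathrm{cor}} + y^{\mathrm{cor}})$ with $\eta^{\mathrm{cor}} \in H^{(\beta,m)}$ and $y^{\mathrm{cor}} \in \langle E^{(\beta,m)}\rangle$. A routine verification then shows that the unions over $a \in \supp x$ of the per-term sets $E_1^{(\beta,m)}, E_2^{(\beta,m)}, E_3^{(\beta,m)}, H^{(\beta,m)}$ produced by the proposition are exactly the sets defined in the corollary, once the path-shifting $P \mapsto \tilde P$ has been applied.

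The main obstacle is the bookkeeping in this last step: one must track carefully that the $\exp \epsilon_a$ absorption, together with the unit shift of $P_0$, yields precisely the $i = 0$ contribution in $-\sum_{i \geq 0}\ln|P_0(i)|$ and the analogous adjustment inside $E_2^{(\alpha,n)}$, and that the path-membership, support-inclusion, and index conditions assemble consistently across the different terms of $x$ (in particular, the conditions $P_j \in \Pcal_\Lbb(x) \setminus \{P_0\}$ for $j \leq k$ and $\supp P_j(i_j) \subseteq \supp e^{(\beta,m)}(\cdots)$ must be preserved under the shift). Because Proposition \ref{prop:supportPhi2} is already proved by induction on $q$ and its conclusion is term-wise, no new induction is required here; the entire content of the corollary lies in recognising that the combinatorial data indexing $E_1^{(\beta,m)}$ is preserved by the prepending operation $P \mapsto \tilde P$ and that strong linearity allows the term-by-term conclusions to be gathered into a single statement about $x$.
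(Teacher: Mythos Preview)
Your proposal is correct and follows essentially the same route as the paper's own proof: use strong linearity of $\Phi$ to reduce to a term-by-term application of Proposition~\ref{prop:supportPhi2}, then reinterpret each path of $\epsilon_a$ as a path of $x$ shifted by one rank (prepending the term $r_a\omega^a$), and finally absorb the factor $\exp\epsilon_a$ into the formula by adjusting $H^{(\beta,m)}$ with the $\ln|\tilde P_0(0)|$ contribution. The paper's proof says exactly this in three sentences; your write-up is simply a more detailed unpacking of the same bookkeeping.
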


\begin{proof}
	Since $\Phi$ is strongly linear\index{Strongly linear function}, we just need to apply Proposition \ref{prop:supportPhi2} to each term of $x$. Each path of $\Pcal_0(x)$ involved is shifted one rank. In $H^{(\beta,m)}$ we the add $\ln |P_0(0)|$ compare to Proposition \ref{prop:supportPhi2}. Then $\exp(\eta)$ gives also $|\partial u\exp\epsilon|$. We just remove it so that it does not appear twice. 
\end{proof}

\begin{Prop}
	\label{prop:bonOrderPhi2}
	Let $x$ be a surreal number such that there $u$ of the form $u=\ln_n\kappa_{-\alpha}$, $ r\in\Rbb^*$ and $a_0\in\Nobf$ such that $r\omega^{a_0}\succ \ln u$ and
	\centre{$\forall a\in\supp x\quad \exists\epsilon\sim r\omega^{a_0}\qquad \omega^a\asymp\partial u\exp\epsilon$}
	\lc{Let}{$\Pcal_0(x)=\enstq{P\in\Pcal_\Lbb(x)}{\begin{array}{c}
				P(1)=r\omega^{a_0} \\ \forall i\geq 1 \quad P(i+1)\sim\ln|P(i)
		\end{array}}$}
	Consider $E_1^{(\beta,m)}$, $E_2^{(\beta,m)}$
	and $E_3^{(\beta,m)}$ as defined in Corollary \ref{cor:propsupportPhi2}.
	Let $\xi$ be the smallest ordinal such that $\kappa_{-\xi}\prec^K P(k_P)$ for all path $P\in\Pcal_\Lbb(x)$.
	Let $\lambda$ the least $\epsilon$-number greater than $\NR(x)$ and $\xi$. Then $E^{(\beta,m)}=E_1^{(\beta,m)}\cup E_2^{(\beta,m)}\cup E_3^{(\beta,m)}$ is reverse well-ordered with order type at most $2\lambda+\omega(\xi+1)$.
\end{Prop}

\begin{proof}
	First notice that $E_3^{(\beta,m)}$ is reverse well-ordered with order type at most $\omega$. $E_2^{(\beta,m)}$ is also reverse well-ordered with order at most $\omega\otimes\xi$. We then focus on $E_1^{(\beta,m)}$. For the moment, we will assume $(\beta,m)<_{lex}(\alpha,n)$.
	
	\begin{enumerate}[label=(\roman*)]
		\item We first claim that for all $i\geq 3$ and all path $P\in\Pcal(x)$, $P(i)\prec P(2)\preceq \ln_{m+2} \kappa_{-\beta}$. It is indeed the same proof as the point (i) of the proof of Proposition \ref{prop:bonOrderPhi1}. 
		
		\item We claim that for all path $P\in\Pcal(x)$, if $P(2)\asymp\ln_{m+2}\kappa_{-\beta}$, then, if $r$ is the real number such that 
		$P(2)\sim r\ln_{m+2}\kappa_{-\beta}$, we have $0<r\leq1$. It is indeed the same proof as the point (ii) of the proof of Proposition \ref{prop:bonOrderPhi1}.
		
		\item For all $j$ and $i\geq 2$, $\ln |P_j(i)|\preceq\ln_{m+3} \kappa_{-\beta}\prec \ln_{m+2}\kappa_{-\beta}$. Indeed, using (i), we know that $P_j(i)\preceq \ln_{m+2} \kappa_{-\beta}$. Then, there is a natural number $m\geq 1$ such that $|P_j(i)|\leq m\ln_{m+2}\kappa_{-\beta}$. Using the fact that $\ln$ is increasing,
		\centre{$\ln|P_j(i)| \leq \ln_{m+3}\kappa_{-\beta}+\ln m\preceq \ln_{m+3}\kappa_{-\beta}\prec \ln_{m+2}\kappa_{-\beta}$} 
		
		\item We now claim that $\Unionin k\Nbb E_{1,k,k'}^{(\beta,m)}>\Unionin k\Nbb E_{1,k,k'+2}^{(\beta,m)}$. Indeed, using (ii) and (iii) if $e_1\in \Unionin k\Nbb E_{1,k,k'}^{(\beta,m)}$, then there is $s\in\intff{-(k+1)}{-k}$ such that 
		$e_1\sim s\ln_{m+2} \kappa_{-\beta}$. Similarly, for $e_2\in\Unionin k\Nbb E_{1,k,k'+2}^{(\beta,m)}$, there is \linebreak $s'\in\intff{-(k+3)}{-(k+2)}$ such that $e_2\sim s'\ln_{m+2}\kappa_{-\beta}$. 
		
		\item We define the following sequence : 
		\begin{itemize}
			\item $a_0=1$
			\item $a_{k+1} = \omega^{\omega^{\omega(\omega(\NR(x)+\xi+1)a_k+1)}}$
		\end{itemize}
		We show that $E_{1,k,0}^{(\beta,m)}$ is reverse well-ordered with order type less than $a_k$. We also claim that the equivalence classes of $E_{1,k,0}^{(\beta,m)}/{\asymp}$ are finite and that 
		\centre{$\NR\pa{\Sum{t\in E_{1,k,0}^{(\beta,m)}}{}\exp t}\leq \omega (\NR(x)+\xi+1)a_k$}
		We show it by induction on $k\in\Nbb$.
		\begin{itemize}
			\item For $k=0$, $E_{1,0,0}^{(\beta,m)}=\{0\}$. Then it is reverse well-ordered with order type $1$. We also have
			\centre{$\NR\pa{\Sum{t\in E_{1,0,0}^{(\beta,m)}}{}\exp t}=\NR(1)=1\leq \omega (\NR(x)+\xi+1)$}
			
			\item Assume the property for some $k\in\Nbb$. Let $t\in E_{1,k+1,0}^{(\beta,m)}$. Let \linebreak $(P_0,0),(P_1,i_1),\dots,(P_{k+1},i_{k+1})$ minimal for the order $(<_{lex},<)_{lex}$ such that 
			\centre{$t=e^{(\beta,m)}\left(\begin{matrix}
					P_0;P_1,\dots,P_{k+1}\\
					\emptyset\\
					i_1,\dots,i_{k+1}
				\end{matrix}\right)$}
			Then,
			\begin{calculs}
				& t&=&e^{(\beta,m)}\left(\begin{matrix}
					P_0;P_1\dots,P_k\\ \emptyset\\i_1,\dots,i_k
				\end{matrix}\right)
				-\Sum{i=1}{\pinf}\ln|P_0(i)|\\
				&&&\qquad-\Sum{\footnotesize\begin{array}{c}
						\gamma\geq\alpha, m\in\Nbb^*\\ \gamma\tq P_0(k_{P_0})\succ^K\kappa_{-\gamma}\succeq^KP_{k+1}(k_{P_{k+1}})
				\end{array}}{}\hspace{-5em} \ln_m\kappa_{-\gamma}
				+ \Sum{i=i_{k+1}}{\pinf}\ln |P_{k+1}(i)|
			\end{calculs}
			Write $s=e^{(\beta,m)}\left(\begin{matrix}
				P_0;P_1,\dots,P_k\\ \emptyset\\i_1,\dots,i_k
			\end{matrix}\right)$ and consider the following path: 
			\centre{$\begin{accolade}
					R(0)=\exp s \\
					R(i)= P_{k+1}(i-1+i_{k+1}) & i>0
				\end{accolade}$}
			It is indeed a path since, by definition of $E_{1,k+1,0}^{(\beta,m)}$, $\supp P_{k+1}(i_{k+1})$ must be contained in $\supp s$. We then have,
			\centre{$\exp t = \f{\partial R}{\partial P_0[1:]}$}
			Moreover, $R\in\Pcal_\Lbb\pa{\Sum{s\in E_{1,k,0}^{(\beta,m)}}{}\exp s}$. By assumption on $x$, the set $\enstq{P_0[1:]}{P_0\in\Pcal_0(x)}$ is a singleton. Therefore, so is the set $\enstq{\partial P_0[1:]}{P_0\in\Pcal_0(x)}$. By induction hypothesis and Proposition \ref{prop:majorationNuPartial}, $E_{1,k+1,0}^{(\beta,m)}$ has order type less than 
			\centre{$\omega^{\omega^{\omega(\omega(\NR(x)+\xi+1)a_k+1)}}=a_{k+1}$}	
			Since the equivalences classes of $\Pcal_\Lbb\pa{\Sum{s\in E_{1,k,0}^{(\beta,m)}}{}\exp s}/{\asymp}$ are finite, the 
			ones of $E_{1,k+1,0}^{(\beta,m)}/{\asymp}$ are also finite. Finally, using Lemmas \ref{lem:NRsum} and \ref{lem:NRSommeLogAtomiques},
			\begin{calculs}
				& \NR(t) &\leq& (\omega\otimes\xi) + 
				\Sum{i=1}{k_{P_0}-1} \NR(\ln|P_0(i)|) + k_{P_0} \\ &&& \qquad +\Sum{j=1}{k+1}\Sum{i=i_j}{k_{P_j}-1}\NR\pa{\ln|P_j(i)|}+\Sum{j=0}{k+1}\max(0,k_{P_j}-i_j)+4\\
				&&\leq& \omega(\NR(x)+\xi+1)
			\end{calculs}
			\lc{Then,}{$\NR\pa{\Sum{t'\in E_{1,k+1}^{(\beta,m)}}{}\exp t'}\leq \omega(\NR(x)+\xi+1)a_{k+1}$}
		\end{itemize}
		We conclude thanks to the induction principle.
		
		\item We have $\Unionin k\Nbb E_{1,k,0}^{(\beta,m)}\subseteq\inner {E_{1,1,0}^{(\beta,m)}}$. Then, using (v) and applying Proposition \ref{prop:orderTypeMonoid}, it has order type at most $\omega^{\hat{a_1}}\leq\omega^{\omega a_1}$.
		
		\item We define the following sequence : 
		\begin{itemize}
			\item $b_0=\omega^{\hat{a_1}}$
			\item $b_{k'+1} = \omega^{\omega^{\omega(\omega(\NR(x)+\xi+4)b_{k'}+1)}}$
		\end{itemize}
		We show that $\Unionin k\Nbb E_{1,k,k'}^{(\beta,m)}$ is reverse well-ordered with order type less than $b_{k'}$. We also claim that the equivalence classes of $\Unionin k\Nbb E_{1,k,k'}^{(\beta,m)}/{\asymp}$ are finite and that 
		\centre{$\NR\pa{\Sum{t\in \Unionin k\Nbb E_{1,k,k'}^{(\beta,m)}}{}\exp t}\leq \omega (\NR(x)+\xi+4)b_{k'}$}
		We show it by induction on $k'\in\Nbb$.
		\begin{itemize}
			\item For $k'=0$, we just apply (vi).
			
			\item Assume the property for some $k'\in\Nbb$. Let $t\in \Unionin k\Nbb E_{1,k,k'+1}^{(\beta,m)}$. Let $(P_0,0)(P_1,i_1),\dots,(P_{k+k'+1},i_{k+k'+1})$ minimal for the order \linebreak $(<_{lex},<)_{lex}$ such that $t=e^{(\beta,m)}\left(\begin{matrix}
				P_0;P_1,\dots,P_k\\ P_{k+1},\dots, P_{k+k'+1}\\ i_1,\dots,i_{k+k'+1}
			\end{matrix}\right)$. Then,
			\begin{calculs}
				&t &=& e^{(\beta,m)}\left(\begin{matrix}
					P_0;P_1,\dots,P_k\\ P_{k+1},\dots, P_{k+k'}\\ i_1,\dots,i_{k+k'}
				\end{matrix}\right)
				-\Sum{\ell=m+2}{\pinf}\ln_\ell{\kappa_{-\beta}}\\
				&&&\qquad -\Sum{\footnotesize\begin{array}{c}
						\gamma>\beta, \ell\in\Nbb^*\\ \gamma\tq\kappa_{-\gamma}\succeq^KP_{k+k'+1}(k_{P_{k+k'+1}})
				\end{array}}{} \ln_m\kappa_{-\beta}
				+ \Sum{i=i_{k+1}}{\pinf}\ln |P_{k+1}(i)|
			\end{calculs}
			Write $s=e^{(\beta,m)}\left(\begin{matrix}
				P_0;P_1,\dots,P_k\\ P_{k+1},\dots, P_{k+k'}\\ i_1,\dots,i_{k+k'}
			\end{matrix}\right)$. We then have,
			\begin{calculs}
				&\partial(\ln_{m+1}\kappa_{-\beta})\exp t &=& 
				\exp(s)\exp\pa{
					\Sum{i=i_{k+1}}{\pinf}\ln |P_{k+1}(i)|\right.\\&&&\left.\qquad
					-\Sum{\footnotesize\begin{array}{c}
							\ell\in\Nbb^*\\ \gamma\tq\kappa_{-\gamma}\succeq^KP_{k+k'+1}(k_{P_{k+k'+1}})
					\end{array}}{}\hspace{-3em} \ln_m\kappa_{-\beta}}
			\end{calculs}
			Consider the following path:
			\centre{$\begin{accolade}
					R(0)=\exp s \\
					R(i)= P_{k+k'+1}(i-1+i_{k+1}) & i>0
				\end{accolade}$}
			It is indeed a path since, by definition of $E_{1,k,k'+1}^{(\beta,m)}$, $\supp P_{k+k'+1}(i_{k+k'+1})$ must be contained in $\supp s$. Then,
			\centre{$\partial(\ln_{m+1}\kappa_{-\beta})\exp t=\partial R$} 
			Moreover, $R\in\Pcal_\Lbb\pa{\Sum{s\in \Unionin k\Nbb E_{1,k,k'}^{(\beta,m)}}{}\exp s}$. By induction hypothesis and Proposition \ref{prop:majorationNuPartial}, $\Unionin k\Nbb E_{1,k,k'+1}^{(\beta,m)}$ has order type less than 
			\centre{$\omega^{\omega^{\omega(\omega(\NR(x)+\xi+4)b_{k'}+1)}}=b_{k'+1}$}	
			Since the equivalences classes of $\Pcal_\Lbb\pa{\Sum{s\in \Unionin k\Nbb E_{1,k,k'}^{(\beta,m)}}{}\exp s}/{\asymp}$ are finite, the 
			ones of $\Unionin k\Nbb E_{1,k,k'+1}^{(\beta,m)}/{\asymp}$ are also finite. Finally, using Lemmas \ref{lem:NRsum} and \ref{lem:NRSommeLogAtomiques},
			\begin{calculs}
				& \NR(t) &\leq& (\omega\oplus\omega\otimes\xi\oplus\omega) + 
				\Sum{j=0}{k+1}\Sum{i=i_j}{k_{P_j}-1}\NR\pa{\ln|P_j(i)|}+\Sum{j=0}{k+1}\max(0,k_{P_j}-i_j)\\
				&&\leq& \omega(\NR(x)+\xi+4)
			\end{calculs}
			\lc{Then,}{$\NR\pa{\Sum{t'\in \Unionin k\Nbb E_{1,k,k'+1}^{(\beta,m)}}{}\exp t'}\leq \omega(\NR(x)+\xi+4)b_{k'+1}$}
		\end{itemize}
		We conclude thanks to the induction principle.
		
		\item By easy induction, for all $k\in\Nbb$, $b_{k'}<\lambda$.
		
		\item Using (iv), we get that for all $N\in\Nbb$, $\Union{k'=0}{N}\Unionin k\Nbb E_{1,,k,2k'}^{(\beta,m)}$ is an initial segment of
		$\Unionin{k'}\Nbb\Unionin k\Nbb E_{1,k,2k'}^{(\beta,m)}$. We also have that $\Union{k'=0}{N}\Unionin k\Nbb E_{1,k,2k'+1}^{(\beta,m)}$ is an initial segment of
		$\Unionin {k'}\Nbb\Unionin k\Nbb E_{1,k,2k'+1}^{(\beta,m)}$. Using (vii), we get that $\Unionin{k'}\Nbb \Unionin k\Nbb E_{1,k,2k'}^{(\beta,m)}$ has order type at most
		\centre{$\sup\enstq{\Oplus{k=0}{N}b_{2k'}}{N\in\Nbb} = \sup\enstq{b_{2N}}{N\in\Nbb}\underset{\text{by (viii)}}{\leq}\lambda$}
		Similarly,  $\Unionin{k'}\Nbb\Union{k\in\Nbb}{}E_{1,k,2k'+1}^{(\beta,m)}$ has order type at most $\lambda$. Using Proposition \ref{prop:unionEnsBienOrd}, we conclude that $E_1^{(\beta,m)}$ has order type at most~$2\lambda$.
	\end{enumerate}
	Now we deal with the case $(\beta,m)=(\alpha,n)$. A close looking at point (v) above reveals that the property it shows does not depend on $(\beta,m)$. Then we have, using a similar argument as in points (viii) and (ix), that $\Unionin k\Nbb E_{1,k,0}^{(\alpha,n)}$ has order type at most~$2\lambda$. Then, for any $(\beta,m)\leq_{lex}(\alpha,n)$, $E_1^{(\beta,m)}$ is reverse well-ordered with order type at most $2\lambda$. Using again Proposition \ref{prop:unionEnsBienOrd} and the properties of $E_2^{(\beta,m)}$ and $E_3^{(\beta,m)}$ mentioned in the beginning of this proof, we get that $E^{(\beta,m)}$ is reverse well-ordered with order type at most $2\lambda+\omega(\xi+1)$.
\end{proof}

\subsection{Length of the series of the anti-derivative of an arbitrary surreal number}

\propsupportPhi*

\begin{proof}
	Let $\alpha<\gamma$ and $n\in\Nbb$. Write $x=\Sumin a{\supp x}r_a\omega^a$. For any ordinal $\alpha<\gamma$, $n\in\Nbb$, $r\in\Rbb\setminus\{-1\}$ and any term $s\omega^{a_0}$, define $S_{\alpha,n,1,s\omega^{a_0}}$ to be the set
	\centre{$\enstq{a\in\supp x}{\exists\epsilon\in\Nobf_\infty\ \forall (\beta,m)<_{lex}(\alpha,n)\quad
	\begin{accolade}
		\ln_n\kappa_{-\alpha} \prec\epsilon\\
		\epsilon\sim s\omega^{a_0}\\
		\epsilon\ln_m\kappa_{-\beta}\\
		\omega^a\asymp\partial (\ln_n\kappa_{-\alpha})\exp\epsilon
	\end{accolade}
	}$}
	and consider also
	\centre{$S_{\alpha,n,2,r}=\enstq{a\in\supp x}{\exists\epsilon\in\Nobf_\infty\quad \epsilon\sim r\ln_n\kappa_{-\alpha}\wedge \omega^a\asymp\partial (\ln_n\kappa_{-\alpha})\exp\epsilon}$}
	\centre{$x_{\alpha,n,1,s\omega^{a_0}}=\Sumin a{S_{\alpha,n,1,s\omega^{a_0}}}r_a\omega^a\qqandqq x_{\alpha,n,2,r}=\Sumin a{S_{\alpha,n,2,r}}r_a\omega^a$}
	All theses surreal numbers have disjoint supports and 
	$$x=\Sumin{s\omega^{a_0}}{\Rbb\omega^\Nobf}\Sumlt\alpha\gamma\Sumin n\Nbb x_{\alpha,n,1,s\omega^{a_0}} + \Sumin r{\Rbb\setminus\{-1\}} \Sumlt\alpha\gamma\Sumin n\Nbb x_{\alpha,n,2,r}$$
	We then study both sums of the above equality.
	
	\begin{itemize}
		\item The set $\enstq{r\in\Rbb\setminus\{-1\}}{S_{\alpha,n,2,r}\neq\emptyset}$ is reverse well-ordered with order type at most $\nu(x)$. Let
		\centre{$S_0=\Unionin i\Nbb \supp\Phi^i\pa{ \Sumin r{\Rbb\setminus\{-1\}} \Sumlt\alpha\gamma\Sumin n\Nbb x_{\alpha,n,2,r}}$}
		Since $\Phi$ is strongly linear, 
		\centre{$S_0\subseteq\Unionin r{\Rbb\setminus\{-1\}}\Unionlt\alpha\gamma\Unionin n\Nbb\Unionin i\Nbb \supp\Phi^i(x_{\alpha,n,2,r})$}
		Using Corollary \ref{cor:supportPhi1}, $\Unionin i\Nbb \supp\Phi^i(x_{\alpha,n,2,r})$ is reverse well-ordered with order type at most $\omega^{\omega(2\lambda+\omega(\gamma+1)+1)}$. Moreover, Lemma \ref{lem:formeEtaPhiOmegaA} ensure that is $(\alpha,n,r)>_{lex}(\alpha',n',r')$, then $\Unionin i\Nbb \supp\Phi^i(x_{\alpha,n,2,r})<\Unionin i\Nbb \supp\Phi^i(x_{\alpha',n',2,r'})$. We end up with the fact that $\Unionin i\Nbb \supp\Phi^i\pa{ \Sumin r{\Rbb\setminus\{-1\}} \Sumlt\alpha\gamma\Sumin n\Nbb x_{\alpha,n,2,r}}$ is reverse well-ordered with order type at most $\omega^{\omega(2\lambda+\omega(\gamma+1)+1)}\nu(x)\gamma$.
		
		\item \lc{Let}{$S_1=\Unionin i\Nbb \supp\Phi^i\pa{ \Sumin{s\omega^{a_0}}{\Rbb\omega^\Nobf}\Sumlt\alpha\gamma\Sumin n\Nbb x_{\alpha,n,1,s\omega^{a_0}} }$}
		Since $\Phi$ is strongly linear,
		\centre{$S_1\subseteq\Unionin {s\omega^{a_0}}{\Rbb\omega^\Nobf}\Unionlt\alpha\gamma\Unionin n\Nbb\Unionin i\Nbb \supp\Phi^i(x_{\alpha,n,1,s\omega^{a_0}})$}
		Denote $H^{(\beta,m)}(x_{\alpha,n,1,s\omega^{a_0}})$, $E^{(\beta,m)}(x_{\alpha,n,1,s\omega^{a_0}})$ the sets defined as in Corollary \ref{cor:propsupportPhi2} for $x_{\alpha,n,1,s\omega^{a_0}}$. Then, using this corollary, $S_1$ is contained in the set
		\centre{$
			\Union{\tiny \begin{array}{c}
					\beta<\gamma \\ m\in\Nbb \\ s\omega^{a_0}\in \Rbb\omega^\Nobf
			\end{array}}{}
			\Union{\tiny \begin{array}{c}
					\alpha,n\tq (\beta,m)\leq_{lex}(\alpha,n)\\\alpha<\gamma
			\end{array}}{}
			\Union{\tiny \begin{array}{c}
					\eta\in H^{(\beta,m)}(x_{\alpha,n,1,s\omega^{a_0}})\\
					y\in \inner{E^{(\beta,m)}(x_{\alpha,n,1,s\omega^{a_0}})}
			\end{array}}{}\hspace{-3em}\supp\pa{ \f{\partial\ln_m\kappa_{-\beta}}{\partial\ln_n\kappa_{-\alpha}}\exp(\eta+y)}
			$}
		We also know that $(\alpha,n,s\omega^{a_0})>_{lex}(\alpha',n',s'\omega^{a_0'})$, then the set
		\centre{$\Union{\tiny \begin{array}{c}
					\eta\in H^{(\beta,m)}(x_{\alpha,n,1,s\omega^{a_0}})\\
					y\in \inner{E^{(\beta,m)}(x_{\alpha,n,1,s\omega^{a_0}})}
			\end{array}}{}\hspace{-3em}\supp\pa{ \f{\partial\ln_m\kappa_{-\beta}}{\partial\ln_n\kappa_{-\alpha}}\exp(\eta+y)}$}
		is contained in the set
		\centre{$\Union{\tiny \begin{array}{c}
					\eta\in H^{(\beta,m)}\pa{x_{\alpha',n',1,s'\omega^{a_0'}}}\\
					y\in \inner{E^{(\beta,m)}\pa{x_{\alpha',n',1,s'\omega^{a_0'}}}}
			\end{array}}{}\hspace{-3em}\supp\pa{ \f{\partial\ln_m\kappa_{-\beta}}{\partial\ln_n\kappa_{-\alpha}}\exp(\eta+y)}$}
		Propositions \ref{prop:bonOrderPhi2} and \ref{prop:orderTypeMonoid} guarantee that all of theses sets are reverse well-ordered with order type less than~$\omega^{2\omega\lambda}$. Let
		\centre{$S_{\beta,m} = \Union{\tiny \begin{array}{c}
					\alpha,n\tq (\beta,m)\leq_{lex}(\alpha,n)\\\alpha<\gamma\\ s\omega^{a_0}\in \Rbb\omega^\Nobf
			\end{array}}{}\Union{\tiny \begin{array}{c}
					\eta\in H^{(\beta,m)}(x_{\alpha,n,1,s\omega^{a_0}})\\
					y\in \inner{E^{(\beta,m)}(x_{\alpha,n,1,s\omega^{a_0}})}
			\end{array}}{}\hspace{-3em}\supp\pa{ \f{\partial\ln_m\kappa_{-\beta}}{\partial\ln_n\kappa_{-\alpha}}\exp(\eta+y)}$}
		The set of possible $s\omega^{a_0}$ is reverse well-ordered with order type at most $\nu(x)$. Moreover, $\alpha$ and $n$ are determined from $s\omega^{a_0}$. Then $S_{\beta,m}$ is reverse well-ordered with order type at most $\omega^{2\omega^{\lambda+1}}\nu(x)$. Finally, if $(\beta,m)>_{lex}(\beta',m')$, then $S_{\beta,m}\subsetneq S_{\beta',m'}$ and there are at most $\omega\gamma$ such couples. Then, $S_1$ is reverse well-ordered with order type at most $\omega^{2\omega\lambda+1}\nu(x)\gamma$.
	\end{itemize}
	
	Both sets have order type less than $\omega^{\omega^{\lambda+2}}$, which is a multiplicative ordinal. Using Proposition \ref{prop:sommeEnsBienOrd}, $\Unionin i\Nbb\supp\Phi^i(x)$ has order type less than $\omega^{\omega^{\lambda+2}}$.

\end{proof}

\subsection{Stability of some surreal fields}

We are ready to exhibit a surreal field that is stable under $\exp, \ln, \partial$ and anti-derivation and that is not $\Nobf$ itself. We actually have a lot of such fields.

To get a field stable under derivation and anti-derivation, it sufficient that the field is stable under $\exp$ and $\ln$, and, if for all $\beta<\alpha$, $\kappa_{-\beta}$ is in the field, and if $\omega\otimes\alpha$ is less than the authorized length of a serie, then $\kappa_{-\alpha}$ must also be in the field. 

%
%
%

\thmcorpsStable*

\begin{proof}
	Let $\Kbb=\Unionlt\beta\alpha\SRF{\epsilon_\beta}{\Gamma_\beta^{\uparrow\epsilon_\beta}}$
	As an increasing union of fields, $\Kbb$ is indeed a field.
	
	\begin{enumerate}[label=(\roman*)]
		\item Using Theorem \ref{thm:SRFGammaUpStableExpLn}, each field $\SRF{\epsilon_\beta}{\Gamma_\beta^{\uparrow\epsilon_\beta}}$ is stable under $\exp$ and $\ln$, then so is $\Kbb$.
		
		\item Write $\Gamma_\beta^{\uparrow\epsilon_\beta}=\suitelt{\Gamma_{i,\beta}}i{\gamma_{\epsilon_{\beta}}}$. We use the notation introduce in the beginning Definition \ref{def:uparrow}. We prove by induction on $i<\gamma_{\epsilon_\beta}$ that for $x\in\Gamma_{i,\beta}$, $\NR(\omega^x)<\eta_\beta e_i$.
		\begin{itemize}
			\item For $i=0$ we have $e_0=1$ and $\Gamma_{0,\beta}=\Gamma_\beta$. By assumption on $\Gamma_\beta$, for all $x\in\Gamma_{0,\beta}$, $\NR(\omega^x)<\eta_\beta=\eta_\beta e_0$.
			
			\item Assume the property for some ordinal $i$. Then let $x\in\Gamma_{i+1,\beta}$. Write $x=u+v+\Sum{k=1}{p}h(w_k)$ with $u\in\Gamma_{i,\beta}$, $v\in\SRF{e_i}{g\pa{(\Gamma_{i,\beta})_+^*}}$ and $w_k$ such $r\omega^{w_k}$ is a term of some element $y_k\in\Gamma_{i,\beta}$, for some $r\in\Rbb$. Using Corollary \ref{cor:NRprod},
			\centre{$\NR(\omega^x)\leq\NR(\omega^u)+\NR(\omega^v)+\Sum{k=1}{p}\NR(\omega^{h(w_k)})+p+1$}
			From induction hypothesis,
			\centre{$\NR(\omega^u)<\eta_\beta e_i$}
			Write $v=\Sum{j<\nu}{}r_j\omega^{g(a_j)}$. Then $\omega^v=\exp\pa{\Sum{j<\nu}{}r_j\omega^{a_j}}$. From induction hypothesis, $\NR(\omega^{a_j})<\eta_\beta e_i$. Then $\NR(r_j\omega^{a_j})<\eta_\beta e_i+1$. Then
			\centre{$\NR(\omega^v)=\NR\pa{\Sum{j<\nu}{}r_j\omega^{a_j}}\leq(\eta_\beta e_i+ 1)\otimes\nu\leq(\eta_\beta e_i +1)\otimes e_i\leq\eta_\beta e_i^2$}
			We also have
			\centre{$\NR(\omega^{h(w_k)})=\NR(\omega^{\omega^{w_k}})\leq\NR(\omega^{y_k})<\eta_\beta e_i$}
			\lc{Finally,}{$\NR(\omega^x)\leq(p+1)(\eta_\beta e_i+1)+\eta_\beta e_i^2<\eta_\beta e_{i+1}$}
			
			\item Assume $i$ is a limit ordinal. Then by definition of $\Gamma_{i,\beta}$ for any $x\in\Gamma_{i,\beta}$ there is some $j<i$ such that $x\in\Gamma_{j,\beta}$. Then induction hypothesis concludes.
		\end{itemize}
		\item Let $x\in\Kbb$ and $\beta<\alpha$ such that $x\in\SRF{\epsilon_\beta}{\Gamma_\beta^{\uparrow\epsilon_\beta}}$. Using (ii), there is $i<\gamma_{\epsilon_\beta}$ such that \centre{$\NR(x)\leq(\eta_\beta e_i+1)\otimes\nu(x)<\eta_\beta\otimes\epsilon_\beta=\epsilon_\beta$}
		Since $\eta_\beta\otimes\epsilon_\beta$ is a limit ordinal, then we also have $\NR(x)+1<\eta_\beta\otimes \epsilon_\beta=\epsilon_\beta$.
		
		\item Let $x\in\Kbb$ and $\beta<\alpha$ such that $x\in\SRF{\epsilon_\beta}{\Gamma_\beta^{\uparrow\epsilon_\beta}}$. Using (iii) and Proposition \ref{prop:majorationNuPartial}, $\nu(\partial x)<\omega^{\omega^{\omega(\NR(x)+1)}}<\epsilon_\beta$. Using Corollary \ref{cor:LogAtomicDeGammaUp} and (i), we also have for all $P\in\Pcal(x)$, $\partial P\in\Rbb\omega^{\Gamma_\beta^{\uparrow\epsilon_\beta}}$. Then, 
		\centre{$\partial x\in\SRF{\epsilon_\beta}{\Gamma_\beta^{\uparrow\epsilon_\beta}}\subseteq\Kbb$}
		Then $\Kbb$ is stable under $\partial$.
		
		\item Let $x\in\Kbb$ and $\beta<\alpha$ such that $x\in\SRF{\epsilon_\beta}{\Gamma_\beta^{\uparrow\epsilon_\beta}}$. Using Proposition \ref{prop:supportPhi} and the definition of $\Acal$, 
		\centre{$\nu\pa{\Acal\circ\pa{\Sumin i\Nbb\Phi^i}(x)}<\omega^{\omega^{\lambda+2}}$}
		where $\lambda$ is least $\epsilon$-number greater $\NR(x)$ and such that
		\centre{$\forall P\in\Pcal_\Lbb(x)\qquad \kappa_{-\lambda}\prec^K P(k_P)$}
		Using (iii), $\NR(x)<\epsilon_\beta$. Let $P\in\Pcal_\Lbb(x)$. Using (i), $\SRF{\epsilon_\beta}{\Gamma_\beta^{\uparrow\epsilon_\beta}}$ is stable under $\exp$ and $ln$. Since $P(i+1)$ is a term of $\ln|P(i)|$, if $P(i)\in\omega^{\Gamma_\beta}$, then $P(i+1)\in\omega^{\Gamma_\beta}$. By induction, $P(k_P)\in\omega^{\Gamma_\beta}$. Since $P(k_P)$ is infinitely large, $P(k_P)\in\omega^{\pa{\Gamma_\beta}^*_+}$. By assumption on $\Gamma_\beta$, $P(k_P)\succ^K\kappa_{-\epsilon_\beta}$. Finally, $\lambda\leq\epsilon_\beta$ and
		\centre{$\nu\pa{\Acal\circ\pa{\Sumin i\Nbb\Phi^i}(x)}<\omega^{\omega^{\epsilon_\beta+2}}<\epsilon_{\beta+1}$}
		Propositions \ref{prop:supportPhi1} and \ref{prop:supportPhi2} and the third assumption about $\Gamma_\beta$ ensure that each term of $\Acal\circ\pa{\Sumin i\Nbb\Phi^i}(x)$ is in $\omega^{\Gamma_\beta}\subseteq\omega^{\Gamma_{\beta+1}}$. Then
		\centre{$\Acal\circ\pa{\Sumin i\Nbb\Phi^i}(x)\in\SRF{\epsilon_{\beta+1}}{\Gamma_{\beta+1}^{\uparrow\epsilon_{\beta+1}}}$}
		Application of Corollary \ref{cor:existencePrimitive} gives that $\Kbb$ is stable under anti-derivation.
	\end{enumerate}
\end{proof}

The previous theorem may seem have a lot of strong hypothesis but we can actually give a non-trivial application.

	Take $\alpha=\omega$ and for $n<\omega$, $\Gamma_n=\enstq{x\in\Nolt{\epsilon_n}}{\NR(\omega^x)<\epsilon_{n-1}}$, with $\epsilon_{-1}:=\omega$.
	We first recall that from Lemma \ref{lem:kappaMoinsAlpha}, for any ordinal $\alpha$, 
	\centre{$\kappa_{-\alpha}=\omega^{\omega^{-\omega\otimes\alpha}}$}
	\lc{in particular}{$\kappa_{-\epsilon_n} = \omega^{\omega^{-\omega\otimes\epsilon_n}} = \omega^{\omega^{-\epsilon_n}} = \omega^{\f1{\epsilon_n}}$}
	From Theorem \ref{thm:serieToSignExp}, we know that the sign sequence of $\omega^{-\omega\otimes\alpha}$ is $(+)(-)^{\omega\otimes\alpha}$, which has length $1\oplus\omega\otimes\alpha$.
	\begin{itemize}
		\item Since $\epsilon_n$ is an $\epsilon$-number, hence an additive ordinal, for any $n\in\Nbb$, $\Gamma_n$ is an abelian group.
		\item Of course for any $n\leq m$, $\Gamma_n\subseteq\Gamma_m$.	
		\item Since $\length{\omega^{-\epsilon_n}}=1\oplus\omega\otimes\epsilon_n=\epsilon_n$, we have $\kappa_{-\epsilon_n}\notin\omega^{\Gamma_\beta}$. However, for $\alpha<\epsilon_n$, $\length{\omega^{-\omega\otimes\alpha}}=1\oplus\omega\otimes\alpha<\epsilon_n$, and, since $\kappa_{-\alpha}\in\Lbb$, $\NR(\kappa_{-\alpha})=0<\epsilon_{n-1}$. Therefor,  we have $\kappa_{-\alpha}\in\omega^{\Gamma_\beta}$. Since, $\kappa_{-\epsilon_\beta}\in\omega^\Nobf$, is $x\in\omega^{\Gamma_\beta}$ is such that $x\preceq^K\kappa_{-\epsilon_\beta}$ then $\kappa_{-\epsilon_\beta}$ is a prefix of $x$ and $\length x\geq\epsilon_\beta$ what is impossible from Lemma \ref{lem:lengthExpLog}.
		\item We can take $\eta_\beta=\epsilon_{\beta-1}<\epsilon_\eta$.
	\end{itemize}
	Theorem \ref{thm:corpsStable} applies and $\Unionin n\Nbb \SRF{\epsilon_n}{\Gamma_n^{\uparrow\epsilon_n}}$ is stable under $\exp$, $\ln$, $\partial$ and anti-derivation. As a final note, we can notice that
	\centre{$\Unionin n\Nbb \SRF{\epsilon_n}{\Gamma_n^{\uparrow\epsilon_n}} = \Unionin n\Nbb \SRF{\epsilon_n}{\Nolt{\epsilon_n}^{\uparrow\epsilon_n}}$}

\bibliographystyle{alpha}
\bibliography{bournez,perso,biblio}

\end{document}